\numberwithin{equation}{section}
\theoremstyle{plain}
\newtheorem{theorem}{Theorem}[section]
\newtheorem{lemma}[theorem]{Lemma}
\newtheorem{proposition}[theorem]{Proposition}
\newtheorem{corollary}[theorem]{Corollary}
\theoremstyle{remark}
\newtheorem{remark}[theorem]{Remark}
\theoremstyle{definition}
\newtheorem{definition}[theorem]{Definition}
\def\Pic{\operatorname{Pic}}
\def\Hom{\operatorname{Hom}}
\newcommand{\bP}{\mathbb{P}}
\newcommand{\bA}{\mathbb{A}}
\newcommand{\bQ}{\mathbb{Q}}
\newcommand{\bZ}{\mathbb{Z}}
\newcommand{\sF}{\mathscr{F}}
\newcommand{\bC}{\mathbb{C}}
\newcommand{\fh}{\mathfrak{h}}
\newcommand{\gp}{\mathfrak{p}}
\newcommand{\sH}{\mathscr{H}}
\newcommand{\calF}{\mathcal{F}}
\newcommand{\calO}{\mathcal{O}}
\newcommand{\Aut}{\mathrm{Aut}}
\newcommand{\Stab}{\mathrm{Stab}}
\newcommand{\Hilb}{\mathrm{Hilb}}
\newcommand{\PGL}{\mathrm{PGL}}
\newcommand{\GL}{\mathrm{GL}}
\newcommand{\SL}{\mathrm{SL}}
\newcommand{\sX}{\mathscr{X}}
\newcommand{\sD}{\mathscr{D}}
\newcommand{\Proj}{\mathrm{Proj}}
\newcommand{\im}{\operatorname{Im}}
\newcommand{\Gr}{\mathrm{Gr}}
\newcommand{\SO}{\mathrm{SO}}
\newcommand{\gquot}{/\!\!/}
\def\Reg{\operatorname{Reg}}
\DeclareMathOperator{\PO}{PO}
\newcommand\aff{{\mathbb A}}
\newcommand{\CC}{{\mathbb C}}
\newcommand{\cC}{{\mathscr C}}
\newcommand{\cD}{{\mathscr D}}
\newcommand{\cH}{{\mathscr H}}
\newcommand{\cI}{{\mathscr I}}
\newcommand{\cL}{{\mathscr L}}
\newcommand{\cO}{{\mathscr O}}
\newcommand{\cP}{{\mathscr P}}
\newcommand{\cS}{{\mathscr S}}
\newcommand{\cT}{{\mathscr T}}
\newcommand{\cZ}{{\mathscr Z}}
\newcommand{\dra}{\dashrightarrow}
\newcommand{\es}{\varnothing}
\newcommand{\FF}{{\mathbb F}}
\newcommand{\gm}{\mathfrak{m}}
\newcommand{\gM}{\mathfrak{M}}
\newcommand{\hra}{\hookrightarrow}
\newcommand{\la}{\langle}
\newcommand{\lra}{\longrightarrow}
\newcommand{\n}{\noindent}
\newcommand{\NN}{{\mathbb N}}
\newcommand{\ov}{\overline}
\newcommand{\PP}{{\mathbb P}}
\newcommand{\QQ}{{\mathbb Q}}
\newcommand{\ra}{\rangle}
\newcommand{\wh}{\widehat}
\newcommand{\wt}{\widetilde}
\newcommand{\ZZ}{{\mathbb Z}}
\DeclareMathOperator{\Chow}{Chow}
\DeclareMathOperator{\cod}{cod}
\DeclareMathOperator{\diag}{diag}
\DeclareMathOperator{\divisore}{div}
\DeclareMathOperator{\initial}{in}
\DeclareMathOperator{\mult}{mult}
\DeclareMathOperator{\rk}{rk}
\DeclareMathOperator{\weight}{wt}
\newcommand{\cit}[1]{{\rm \textbf{#1}}}
\newcommand{\Ref}[2]{\cit{%
\ifthenelse{\equal{#1}{thm}}{Theorem}{}%
\ifthenelse{\equal{#1}{ass}}{Assumption}{}%
\ifthenelse{\equal{#1}{chp}}{Chapter}{}%
\ifthenelse{\equal{#1}{prp}}{Proposition}{}%
\ifthenelse{\equal{#1}{lmm}}{Lemma}{}%
\ifthenelse{\equal{#1}{cnj}}{Conjecture}{}%
\ifthenelse{\equal{#1}{crl}}{Corollary}{}%
\ifthenelse{\equal{#1}{dfn}}{Definition}{}%
\ifthenelse{\equal{#1}{expl}}{Example}{}%
\ifthenelse{\equal{#1}{hyp}}{Hypothesis}{}%
\ifthenelse{\equal{#1}{rmk}}{Remark}{}%
\ifthenelse{\equal{#1}{clm}}{Claim}{}%
\ifthenelse{\equal{#1}{notconv}}{Notation/Convention}{}%
\ifthenelse{\equal{#1}{pred}}{Prediction}{}%
\ifthenelse{\equal{#1}{prb}}{Problem}{}%
\ifthenelse{\equal{#1}{table}}{Table}{}%
\ifthenelse{\equal{#1}{exe}}{Exercise}{}%
\ifthenelse{\equal{#1}{qst}}{Question}{}%
\ifthenelse{\equal{#1}{rcp}}{Recipe}{}%
\ifthenelse{\equal{#1}{sec}}{Section}{}%
\ifthenelse{\equal{#1}{subsec}}{Subsection}{}%
\ifthenelse{\equal{#1}{subsubsec}}{Subsubsection}{}%
\ifthenelse{\equal{#1}{univ}}{Universal Property}{}%
\ifthenelse{\equal{#1}{trm}}{Terminology}{}%
\ifthenelse{\equal{#1}{tbl}}{Table}{}%
\  \ref{#1:#2}%
}}
\begin{document}
 \title[Moduli hyperelliptic quartics]{GIT versus Baily-Borel compactification for $K3$'s which are  double covers of $\PP^1\times\PP^1$}
 
 \author[R. Laza]{Radu Laza}
\address{Stony Brook University,  Stony Brook, NY 11794, USA}
\email{radu.laza@stonybrook.edu}

 \author[K. O'Grady]{Kieran O'Grady}
\address{``Sapienza'' Universit\'a di Roma, Rome, Italy}
\email{ogrady@mat.uniroma1.it}
\date{\today}

\begin{abstract}
In previous work, we have introduced a program aimed at studying the birational  geometry of locally symmetric varieties of Type IV associated to moduli of certain projective varieties of $K3$ type.
In particular, a concrete goal of our program is to understand the relationship between GIT and Baily-Borel compactifications for quartic $K3$ surfaces, $K3$'s which are double covers of a smooth quadric surface, and double EPW sextics.  In our first paper \cite{log1}, based on arithmetic considerations, we have given conjectural decompositions into simple birational transformations of the period maps from the GIT moduli spaces mentioned above  to the corresponding Baily-Borel compactifications. In our second paper \cite{log2} we studied the case of quartic $K3$'s; we have given geometric meaning to this decomposition and we have  partially verified our conjectures. Here, we give a full proof of the conjectures in~\cite{log1} for the moduli space of  $K3$'s which are double covers of a smooth quadric surface. The main new tool here is VGIT for $(2,4)$ complete intersection curves.
\end{abstract}

\thanks{Research of the first author is supported in part by NSF grants DMS-1254812 and DMS-1361143 and a Simons fellowship. Research of the second author is supported in part by PRIN 2015.}
  \maketitle
  
\bibliographystyle{amsalpha}

\section{Introduction}
\subsection{Background and motivation}
Almost 40 years ago, J. Shah~\cite{shah} analyzed the GIT moduli space $\gM_{ps}$ of plane sextic curves. He compared $\gM_{ps}$ to the Baily-Borel compactification $\sF_2^{*}$ of the period space of degree-$2$ polarized $K3$ surfaces, via the period map associating to a sextic $C\subset\PP^2$ (with simple singularities) the primitive Hodge structure on the double cover $X\to\PP^2$ with branch curve $C$. The period map is birational, not regular, with exactly one point of indeterminacy $\omega\in\gM_{ps}$ corresponding to the polystable point $3D$, where  $D\subset\PP^2$ is a smooth conic. The blow up $\wh{\gM}_{ps}\to\gM_{ps}$ of a scheme supported on $\omega$ resolves the indeterminacy of $\gp$, hence there is a regular lift 
$\wh{\gp}\colon \wh{\gM}_{ps}\to\sF^{*}$ of $\gp$. The exceptional divisor of $\wh{\gM}_{ps}\to\gM_{ps}$ is mapped by $\wh{\gp}$ to the closure of the Heegner divisor $H_u\subset\sF$ parametrizing periods of unigonal degree $2$ polarized $K3$'s.

A few years later, Looijenga~\cite{looijengavancouver} revisited Shah's work from a different point of view. Looijenga started from the \lq\lq other end\rq\rq, i.e.~$\sF_2^{*}$, and noticed that the closure of $H_u$ is not $\QQ$-Cartier (notice that $H_u$ itself is $\QQ$-Cartier, in fact $\sF_2$ is $\QQ$-factorial). 
The small birational map $ \wh{\gM}_{ps}\to\sF^{*}$ was constructed by Looijenga as the $\QQ$-Cartierization of $H_u^{*}$, and then 
 $\wh{\gM}_{ps}\to\gM_{ps}$ as the contraction of the strict transform of $H^{*}_u$. Later Looijenga~\cite{looijenga1,looijengacompact} developed analogous ideas for pairs $(\sF,\sH)$, where $\sF$ is a locally symmetric of ball type or of Type IV, and $\sH$ is an effective  Heegner divisor. He constructed the so-called semitoric compactification $(\sF\setminus\sH)\subset\sF^L$. Very often the complement of  $\sF\setminus\sH$ in 
 $\sF^L$ has codimension at least 2; this suggests that in cases where $\sF$ is the period space for a certain class of projective varieties, 
 $\sF^L$ might be equal to a GIT quotient (as in the case of plane sextics). 
 
 One case in which the above prediction works perfectly is that of cubic $4$-folds. Laza~\cite{gitcubic,cubic4fold} and Looijenga~\cite{lcubic} proved that the GIT moduli space, which may be viewed as a projective birational model of the moduli space of hyperk\"ahler $4$-folds of Type $K3^{[2]}$ with a polarization of square $6$ and divisibility $2$, is isomorphic to the semi-toric compactification determined by the \lq\lq unigonal\rq\rq\ Heegner divisor. 
 
 In general, the complexity of the semi-toric compactification $\sF^L$ (i.e.~the number of elementary birational modifications necessary to go from
  $\sF^{L}$ to the Baily-Borel compactification $\sF^{*}$) is essentially equal to the complexity of the  hyperplane arrangement $\wt{\sH}$ in the bounded symmetric domain $\sD$ (here $\sF=\Gamma\backslash\sD$, where $\Gamma$ is an arithmetic group)   given by preimages of $\sH$. The extreme simplicity of the period map of plane sextics is explained by the fact that no two irreducible components of $\wt{\sH}$ meet, and the period map for cubic $4$-folds is relatively simple because  at most two irreducible components of $\wt{\sH}$ meet. 
 
 By way of contrast, the hyperplane arrangement  naturally associated to the period map  of quartic surfaces (notice that the generic degree-$4$ polarized $K3$ surfaces is a quartic surface) is as complex as it possibly can be, i.e.~there are linearly independent hyperplanes with non-empty intersection of any cardinality up to $19$ (the dimension of the period space).
On the other hand, the period map from the GIT moduli space of quartic surfaces  to the relevant Baily-Borel compactification has an intricate structure, but not as intricate as predicted by a direct implementation of Looijenga's philosophy.
In~\cite{log1} we carried out a thorough study of the hyperplane arrangement relevant to the period map for quartic surfaces, and similar arrangements in arbitrary dimension. Beyond that of quartic surfaces, our conjectures cover period maps for GIT moduli spaces  of $(4,4)$-curves on $\PP^1\times\PP^1$ and of EPW-sextics modulo the duality involution (or equivalently EPW cubes). Motivated by the Hassett-Keel program for the moduli space of curves, we reformulated the problem as   that of studying the variation of birational model corresponding to the total ring of sections of $\lambda+\beta\Delta$ on the relevant locally symmetric space $\sF$, where $\lambda$ is the Hodge ($\QQ$) divisor, and $\Delta$ is a suitable effective $\QQ$-divisor with the property that for $\beta=1$ we get the GIT moduli space (in the three cases mentioned above). We formulated precise conjectures on the behaviour of the models $\sF(\beta)$, i.e.~of the birational map 
$\sF(\beta)\dra\sF^{*}=\sF(0)$. We found that Borcherds' celebrated relation explains the mismatch between the naive version of Looijenga's philosophy and the behaviour of the period map for quartic surfaces. 

In~\cite{log2} we gave strong evidence (of geometric nature) in favor of the correctness of our conjectures regarding the period map for quartic surfaces. 

In the present paper we will completely verify our conjectures on the period map for $(4,4)$ curves on $\PP^1\times\PP^1$. We emphasize that the complexity of the period map in this case is only one degree less than that of quartic surfaces, and that our Hassett-Keel-Looijenga (HKL) program for the $D_n$ series has a strong inductive structure. Thus we view our results as compelling evidence in favour of the validity of our conjectures.
\subsection{The main result}
We start by introducing the main actors. Let
\begin{equation}\label{eccoemme}
\gM:=|\cO_{\PP^1\times\PP^1}(4,4)| \gquot\Aut(\PP^1\times\PP^1)
\end{equation}
be the GIT moduli space of $(4,4)$ curves on $\bP^1\times \bP^1$.  Let $C$ be a $(4,4)$ curve with simple singularities (it is stable by 
Shah~\cite[Sect.~4]{shah4}), and let $\pi\colon X_C\to \PP^1\times\PP^1$ be the double cover with branch curve $C$. Then $X_C$ is a $K3$ surface (eventually with canonical singularities). Moreover $\pi^{*}(\cO_{\PP^1}(1)\boxtimes\cO_{\PP^1}(1))$ is a polarization of $X_C$, and with this polarization, $X_C$ is a hyperelliptic polarized $K3$ of degree $4$. The corresponding period space, which we denote by $\sF$ (see~\Ref{subsec}{predictqh} for the definition) is an $18$ dimensional locally symmetric variety. We let $\sF\subset\sF^{*}$ be the Baily-Borel compactification. By associating to a generic $[C]\in\gM$ the    Hodge structure on $H^2(X_C)$ (more precisely, on the orthogonal of $\pi^{*} H^2(\PP^1\times\PP^1)$), we get the rational period map
\begin{equation}\label{eccoper2}
\gp\colon\gM\dashrightarrow \sF^*.
\end{equation}
By Global Torelli, $\gp$ is birational. By Baily-Borel, $\sF^{*}$ is identified with $\Proj R(\sF, \lambda)$, where $\lambda$ is the Hodge ($\QQ$-) line bundle on $\sF$.  We proved that also $\gM$ is identified with $\Proj$ of a ring of sections of a $\QQ$-Cartier divisor on $\sF$. In order to recall the result, we need to introduce another   key actor, namely the Heegner divisor $H_h\subset\sF$ parametrizing periods of $K3$ surfaces which are double covers of a \emph{quadric cone}. Let $\Reg(\gp)\subset\gM$ be the regular locus of $\gp$. Then $\gp(\Reg(\gp))\cap\sF$ contains  $(\sF\setminus H_h)$ - in fact it follows from our main result that they are equal. Our \emph{boundary divisor} is $\Delta=H_h/2$. We proved that $\gM$ is identified with $\Proj R(\sF, \lambda+\Delta)$ - see Proposition~4.0.20 and  Equation~(4.0.21) in~\cite{log1}. Our conjectures are concerned with the behaviour of the graded $\CC$-algebra 
$R(\sF, \lambda+\beta\Delta)$ for $\beta\in[0,1]\cap\QQ$. First, we conjecture that it is finitely generated. Secondly we predict the critical values of $\beta$, i.e.~the Mori chamber decomposition of the sector $\{\lambda+\beta\Delta\}_{\beta\in[0,1]\cap\QQ}$. Lastly, we describe the centers of the corresponding flips or contractions. This last part of the conjecture is formulated in terms of towers
\begin{equation}\label{zetahyp1}
Z^8\subset Z^7\subset Z^6\subset Z^4\subset Z^3\subset Z^2\subset Z^1\subset \sF
\end{equation}
and 
\begin{equation}\label{wstrata1}
W_0\subset W_1\subset W_2\subset W_3 \subset W_5 \subset W_6 \subset W_7=\gM^{IV}\subset \gM.
\end{equation}
(The \lq\lq missing\rq\rq\ indexes are not misprints.)
The superscripts in~\eqref{zetahyp1} denote codimension (in $\sF$), while those in~\eqref{wstrata1} denote dimension.
The  definition of the stratification~\eqref{zetahyp1} is  in~\Ref{subsec}{predictqh}, that of  the stratification~\eqref{wstrata1} is  in~\Ref{dfn}{eccow}. Regarding $Z^{\bullet}$, it will suffice to recall that $Z^1=H_h$, and a general feature of the $Z^k$'s. First, recall that $\sF=\Gamma\backslash\sD^{+}$, where $\sD^{+}$ is a bounded symmetric domain of Type IV, and $\Gamma$ is an arithmetic group. Letting $\rho\colon\sD^{+}\to\sF$ be the quotient map, the inverse image $\rho^{-1}H_h$ breaks up into the union of an infinite collection of  hyperplane sections $\cH_j\subset\sD^{+}$ ($\sD^{+}$ is embedded as an open dense subset of a $18$-dimensional quadric in $\PP^{19}$).
The feature of  the $Z^k$'s that we wish to point out is that they are defined as images of loci defined by suitable \lq\lq Noether-Lefschetz\rq\rq\ conditions, in fact in most cases  we simply require that the  points in $\cD^{+}$ are contained in  (at least) $k$ linearly independent  hyperplanes $\cH_j$.
Regarding $W_{\bullet}$, it will suffice to recall that $\gM^{IV}$ is the locus parametrizing polystable $(4,4)$ curves $C$ such that the corresponding double cover $X_C\to\PP^1\times\PP^1$ has non-slc singularities (i.e.~significant limit singularities in the notation of Mumford and Shah). In particular 
$\gM\setminus\gM^{IV}$ is contained in the regular locus of $\gp$ - in fact it follows from our main result that they are equal. 

\medskip

We are ready to state our main result.
 \begin{theorem} \label{thm:mainthm1}
With notation as above, the following hold:
 \begin{itemize}
 \item[i)]   
 Let  $\beta\in[0,1]\cap\bQ$. The ring of sections $R(\sF, \lambda+\beta\Delta)$ is a finitely generated $\CC$-algebra, and hence $\sF(\beta):=\Proj R(\sF, \lambda+\beta\Delta)$ is a projective variety interpolating between $\sF^{*}=\sF(0)$ and $\gM=\sF(1)$. 
  \item[ii)] 
  The variation of models $\sF(\beta)$ on the interval $[0,1]\cap\QQ$ has a Mori chamber decomposition whose set of critical values is
  $$\left\{0,\frac{1}{8},\frac{1}{6}, \frac{1}{5}, \frac{1}{4},\frac{1}{3}, \frac{1}{2},1\right\}.$$
Hence for consecutive critical values $\beta'>\beta''$ it makes sense to let $\sF(\beta',\beta''):=\sF(\beta)$, where $\beta'>\beta>\beta''$ is arbitary.
  \item[iii)] 
 The  period map $\gp\colon \gM=\sF(1)  \dra \sF(0)=\sF^*$ is the composition of the elementary birational maps in~\eqref{flipper}. 
\begin{equation}\label{flipper}
\xymatrix @R=.07in @C=.01in{
\scriptstyle  & & \scriptstyle \sF ( 1 , \frac{1}{2} ) \ar@{<-->}[rr]\ar[ldd] \ar[rdd] & & \scriptstyle \cdots \ar[ldd] & \scriptstyle  \sF (\beta_{k-1}, \beta_k) \ar[rdd]\ar@{<-->}[rr] & &  
\scriptstyle  \sF ( \beta_k , \beta_{k+1}) \ar[ldd] &\scriptstyle \cdots \ar[rdd]   & & \scriptstyle \sF (\frac{1}{8},0)\ar@{<-->}[ll]\ar[ldd] \ar[rrdd]^{\scriptstyle \Phi}  \\
&&&&&&\\
& \scriptstyle \gM=\sF(1) & & \scriptstyle \sF( \frac{1}{2} ) & & & \scriptstyle \sF( \beta_k )  & & &  \scriptstyle \sF(\frac{1}{8})  & & & \scriptstyle \sF(0)=\sF^{*}  \\
 }
 \end{equation}
Here the critical values of $\beta$ are indexed as in~\eqref{rosetta}.
\begin{equation}\label{rosetta}
\begin{tabular}{c|c|c|c|c|c|c|c | c | c}
$k$  & $0$ & $1$  & $2$  & $3$  &  $4$  &  $5$  & $6$ & $7$ & $8$ \\
\hline
$\beta_k$ &  $1$ & $1/2$ &  $1/3$  & $1/4$ &  $1/4$ &   $1/5$ &   $1/6$ &  $1/8$ & $0$ 
\end{tabular}
\end{equation}
(The equality $\beta_3=\beta_4$ is not a misprint.) Let $\Omega_{-}(\beta_k)\subset \sF(\beta_{k-1},\beta_k)$ 
 and $\Omega_{+}(\beta_k)\subset \sF(\beta_k,\beta_{k+1})$ be the exceptional loci of 
 $\sF(\beta_{k-1},\beta_k)\to \sF(\beta_k)$ and  $\sF(\beta_{k},\beta_{k+1})\to \sF(\beta_k)$ respectively. Then $\Omega_{-}(\beta_k)$ 
  is the strict transform of $W_k\subset \sF(1)=\gM$ for the birational map $\gM\dra \sF(\beta_{k-1},\beta_k)$ and $\Omega_{+}(\beta_k)$   is the strict transform of $Z^{k+1}\subset\sF(0)= \sF^*$ if $k\neq 4$, and of $Z^4\subset \sF^*$ if $k=4$,  for the birational map $\sF^{*}\dra \sF(\beta_{k},\beta_{k+1})$. 
  \item[iv)] 
 The map $\sF(1/8,0)\to\sF^{*}$ is the $\QQ$-Cartierization associated to $H_h$. Moreover $\sF(1/8,0)$ is a moduli space of double covers of  quadrics (possibly singular) in $\PP^3$ \emph{with slc singularities}.
 \end{itemize}
 \end{theorem}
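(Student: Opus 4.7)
The plan is to establish parts (i)--(iv) simultaneously by constructing an explicit interpolation of models via a Variation of GIT (VGIT) for $(2,4)$ complete intersection curves in $\PP^3$, and then matching this VGIT with the ray $\{\lambda+\beta\Delta\}$ on $\sF$. Recall that a $(4,4)$ curve $C\subset\PP^1\times\PP^1$ embeds via Segre into $\PP^3$ as the intersection of the smooth quadric $Q_0$ with a quartic surface. The natural deformation space is $\PP^3$ together with the linear system of pairs $(Q,S)$, with $Q$ a (possibly degenerate) quadric and $S$ a quartic; on the locus of smooth $Q$ this recovers $(4,4)$ curves, while allowing $Q$ to become a quadric cone and then a pair of planes gives a larger parameter space $\gU$ with an $\SL_4$--action. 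One then defines a family of linearizations $L_t=\cO(1,t)\otimes\cO(t)$ (schematically) interpolating between the GIT problem for quartics alone and one in which the quadric condition dominates. The first step is to carry out the VGIT analysis for this family and obtain GIT quotients $\gU\gquot_t \SL_4$ for $t$ in a suitable interval.

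Next, I would prove that the VGIT quotients realize the Mori chambers of $R(\sF,\lambda+\beta\Delta)$. The arithmetic predictions in~\cite{log1} single out candidate critical values; one must show (a) that the rings of sections are finitely generated, and (b) that the quotients $\gU\gquot_t\SL_4$ match $\sF(\beta)$ under an explicit change of variable $t\leftrightarrow\beta$. Finite generation for endpoints follows from Baily--Borel (for $\beta=0$) and from standard GIT (for $\beta=1$); for intermediate $\beta$ one can either interpolate via the fact that $R(\sF,\lambda+\beta\Delta)$ is a Veronese of a finitely generated subring, or invoke finite generation of the section ring in the VGIT. To identify VGIT quotients with $\sF(\beta)$, one uses the period map globally and the Hodge--theoretic computation that the restriction of $\lambda+\beta\Delta$ to the moduli of smooth $(2,4)$--intersections corresponds to the VGIT polarization $L_t$.

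Having established that the VGIT wall-crossings realize the birational modifications of $\sF(\beta)$, I would go through each wall $t_k$ (equivalently each critical $\beta_k$) and carry out a Luna--slice analysis of the polystable orbits that become unstable on one side and newly stable on the other. The semistable replacement on the large-$t$ side produces polystable $(4,4)$ curves with the degenerations characteristic of $W_k$ (up to $W_7=\gM^{IV}$), while the semistable replacement on the small-$t$ side produces pairs with a singular quadric $Q$, which by the standard Hodge--theoretic analysis has period point lying in the appropriate Noether--Lefschetz stratum $Z^{k+1}$ (with the coincidence $\beta_3=\beta_4$ accounting for the $k=4$ exception in the statement). Once the centers are identified, the elementary nature of each birational modification follows from the explicit local description of the slice at a polystable orbit, yielding (iii).

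For (iv), the small--$t$ end of the VGIT corresponds to stability dominated by $Q$: semistable points are pairs $(Q,S)$ with $Q$ a possibly singular quadric and $S\cap Q$ a curve whose double cover has \emph{at worst} slc singularities. This gives the moduli interpretation in (iv). The identification of $\sF(1/8,0)\to\sF^{*}$ with Looijenga's $\QQ$--Cartierization of $H_h$ will then follow from the fact that, by construction, the only divisor contracted is the strict transform of $H_h$, combined with the uniqueness of $\QQ$--Cartierizations. I expect the main obstacle to be step (iii): matching VGIT walls to the predicted $\beta_k$ requires controlling all polystable orbits and their stabilizers at each wall, and in particular understanding the \lq\lq missing'' indices in~\eqref{zetahyp1} and~\eqref{wstrata1} and the coincidence $\beta_3=\beta_4$, which reflects a genuinely non-generic arrangement phenomenon of the hyperplane arrangement $\wt{\sH}$ rather than a formal artifact.
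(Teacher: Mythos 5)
Your proposal follows essentially the same route as the paper: set up a VGIT for $(2,4)$ complete intersections, identify the VGIT quotients with the models $\sF(\beta)$ via the period map, and analyze each wall by a Luna--slice/basin-of-attraction argument. The global strategy is correct. However, several steps as written contain gaps that would need to be repaired.

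First, the parameter space and linearization are more delicate than you indicate. The natural divisor class $\eta+t\xi$ on $\bP E$ is ample only for $t\in(0,1/3)$ (Benoist), so a VGIT on $\bP E$ alone cannot reach the Chow end $t=1/2$. The paper resolves this by passing to the closure $\cP$ of the complete-intersection locus $U$ inside $\bP E\times\Chow_{(2,4)}$ with the polarization $N_t$ defined in \eqref{eccoenne}, and by proving (\Ref{prp}{propssci}, \Ref{prp}{doppiovu}) that all $N_t$-semistable points still lie in $U$, so the GIT analysis only ever sees honest complete intersections. Without this, your ``schematic'' $L_t=\cO(1,t)\otimes\cO(t)$ has no meaning near $t=1/2$.

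Second, the finite generation argument. You suggest interpolating via ``$R(\sF,\lambda+\beta\Delta)$ is a Veronese of a finitely generated subring,'' but this is not a valid deduction: a Veronese subring of a finitely generated ring is finitely generated, not vice versa. The actual argument requires (a) the codimension-one comparison \Ref{prp}{cod2}, showing $\gp(t)$ is an isomorphism in codimension $1$, and (b) an explicit identification of $\Q$-divisor classes, $D(t)=2t\,\gp(t)^{*}(\lambda+\beta(t)\Delta)$ with $\beta(t)=\frac{1-2t}{4t}$, which in turn rests on Borcherds' relation $136\lambda\equiv H_n+16H_h$ and the pullback formulas $\gp(t)_0^{*}H_h=4\eta(t)_0$, $\gp(t)_0^{*}H_n=72\eta(t)_0+68\xi(t)_0$. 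These computational inputs are the engine of the whole matching; a proof sketch that omits them is missing the core step.

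Third, your argument for (iv) contains an error. You assert that ``the only divisor contracted is the strict transform of $H_h$, combined with the uniqueness of $\QQ$-Cartierizations.'' In fact the map $\sF(1/8,0)\to\sF^{*}$ is \emph{small}: no divisor is contracted. The point is the opposite, namely that $\sF(1/8,0)$ is $\QQ$-factorial of Picard number $2$, $\sF^{*}$ is not $\QQ$-factorial (with $\overline{H}_h$ a Weil non-$\QQ$-Cartier divisor), and a small map from a $\QQ$-factorial variety is a $\QQ$-factorialization. The identification with Looijenga's construction then follows from uniqueness of small $\QQ$-factorializations. As written, your argument would describe a divisorial contraction and not a $\QQ$-Cartierization.
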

 Summarizing: Items~(i), (ii), (iii) and the first part of Item~(iv) prove that our conjectures in~\cite{log1} hold for the period space of hyperelliptic $K3$ surfaces of degree $4$, while the second part of Item~(iv) is a \lq\lq bonus\rq\rq\ result which we find very interesting. Namely, it says roughly that  $\sF(1/8,0)$ is a KSBA-like compactification for hyperelliptic quartic $K3$s and that this compactification is nothing but a small partial resolution of the Baily-Borel compactification $\sF^*$. (We refer to \cite{shahinsignificant} and \cite{kk} for some discussion of KSBA versus Hodge theoretic degenerations.) 
 
\smallskip

In addition to the results and techniques of our previous work (\cite{log1,log2}), the main new tool that allows us to prove the theorem above is VGIT for $(2,4)$ complete intersection curves $C$ in $\bP^3$ (obviously, such a curve determines a hyperelliptic quartic $X_C$ or degeneration of it). A similar case, namely $(2,3)$ complete intersections, was analyzed by the first named author and his collaborators in \cite{g4git, g4ball} in the context of the Hassett-Keel program for genus $4$ curves. Here, we follow in rough outline the strategy from loc.~cit., but as the complexity increases, some streamlining and  new ideas (such as the basin of attraction arguments in \Ref{sec}{sec-VGIT}) are necessary.

\begin{remark}
Arguably, the case of quartic $K3$ surfaces would have been of greater geometric interest, but from the perspective of the HKL program it has almost the same  complexity. More precisely, in \cite{log1}, 
we have introduced an  inductive structure on the moduli spaces considered here (quartic $K3$, hyperelliptic quartic $K3$s, etc.). We expect then (both for geometric and arithmetic arithmetic reasons) 
that  \Ref{thm}{mainthm1} for hyperelliptic quartics to imply an exact analogue for quartics. Furthermore, we expect to be able to bootstrap this and understand the relationship between GIT and periods for EPW sextics (\cite{epwperiods,epw}); this is the next step in our inductive structure. The study of the moduli of EPW sextics was our original motivation for this investigation, which in turn led us to the development of a general HKL program. We further remark a non-inductive proof for an analogue of \Ref{thm}{mainthm1}  for quartic $K3$s should be possible by using GIT/VGIT techniques similar to those used in this paper. (We thank O. Benoist for a suggestion that allows  to reduce the HKL program for quartics to a feasible GIT computation.) 
\end{remark}

\begin{remark}
One possible application of  \Ref{thm}{mainthm1} is the computation of the cohomology of the Baily-Borel compactification $\sF^*$. Specifically, Kirwan's techniques allow one to compute the cohomology of $\gM$ (see \cite{kirwanhyp} for the case of quartics in $\bP^3$), while \eqref{flipper} gives a simple wall crossing decomposition which allows one to compute (in a standard way by now) the cohomology of $\sF^*$. The considerably simpler case of degree $2$ $K3$ surfaces (where no flip occurs) was studied by Kirwan-Lee \cite{kirwanlee,kirwanlee2}.
\end{remark}

%

%
\subsection{Structure of the paper}
We start our paper with a review of the basic facts about the moduli space of hyperelliptic quartic $K3$ surfaces. In \Ref{sec}{preliminaries}, we discuss the period space $\sF=\cD/\Gamma$ for hyperelliptic polarized $K3$ surfaces of degree $4$.  We then recall the definition of the (Shimura type) loci $Z^k\subset \sF$ that were identified in \cite{log1} as conjectural centers of the birational transformations occurring in the variation of models $\sF(\beta)$. We also describe the loci $Z^k$ as the periods of double covers $X\to Q$ such that $Q$ is a quadric cone, and 
the  branch curve has a certain singularity at the vertex of $Q$ (see~\Ref{prp}{hypkgeom}) -- roughly, the higher the $k$ (corresponding to the codimension), the worse the singularity of $C$ at the vertex of $Q$. We end the section  with a discussion of the boundary components of the Baily-Borel compactification $\sF^*$.  The analogous case of polarized $K3$'s of degree $4$ was first investigated by Scattone~\cite{scattone}. For inductive purposes, it is convenient to discuss uniformly the Baily-Borel compactification for the entire $D$-tower; this is done in the appendix \Ref{sec}{bbcompact}.

In \Ref{sec}{quattro}, we discuss the other end of the period map, namely the GIT quotient $\gM$
for $(4,4)$ curves on $\bP^1\times \bP^1$. This was first studied by Shah \cite[Sect. 4]{shah4}, but we caution the reader that some of the results of 
loc.~cit. are incomplete. We then discuss the  Hodge-theoretic  stratification of $ \gM$ analogous to that of the GIT moduli space of quartics analyzed in~\cite[Sect. 3]{log2} (building on ideas of Shah \cite{shahinsignificant,shah,shah4}).
In particular, we define the tower in~\eqref{wstrata1}.   

\smallskip

In order to understand the variation of models $\sF(\beta)$, we introduce in \Ref{sec}{shyper} a  VGIT $\gM(t)$ for $t\in(1/6-\epsilon,1/2]\cap\QQ$ which interpolates between the GIT quotient $\gM$ for $(4,4)$ curves on $\bP^1\times\bP^1$ and the GIT quotient $\Chow_{(2,4)}\gquot \SL(4)$ of the Chow variety of $(2,4)$ complete intersection curves in $\bP^3$. The point is that in~\Ref{sec}{proof} we will prove that each $\sF(\beta)$ is isomorphic to $\gM(t(\beta))$, where $t(\beta)$ is given by~\eqref{tidibeta}.  
 The precise definition of $\gM(t)$ is given in \eqref{defgmt}, and  is mostly based  on  ideas  in~\cite{g4git},  where the analogous case of $(2,3)$ curves was discussed. A key point (\Ref{prp}{propssci}) that comes up in our analysis is that only complete intersections $V(f_2,f_4)$ are relevant for the GIT analysis of $\gM(t)$ (and thus, we do not have to deal with complicated schemes). We also note that for an infinite set of values of $t_m$, the moduli space  $\gM(t)$ can be identified with  GIT quotients $\Hilb^m_{(2,4)}\gquot \SL(4)$, where $\Hilb^m_{(2,4)}$ denotes the parameter space for $m$-th Hilbert point of a $(2,4)$ complete intersection. In particular, one sees that the VGIT map $\gM(\frac{1}{2}-\epsilon)\to \gM(\frac{1}{2})\cong \Chow_{(2,4)}\gquot \SL(4)$ is induced by the Hilbert-to-Chow morphism $\Hilb_{(2,4)}\to \Chow_{(2,4)}$. 

\smallskip

The actual GIT analysis for $\gM(t)$  is accomplished in \Ref{sec}{sec-VGIT}. Although the tools that we use are by now standard (we acknowledge the influence of \cite{g4git} and \cite{benoist} that studied a similar set-up to ours, and  \cite{hh2} and \cite{ah} that focus on the relationship between GIT and Hassett-Keel program in general),  the proof 
 is a somewhat indirect, multi-step argument. 
  First, via  the numerical criterion, we can destabilize various ``bad'' $(2,4)$ complete intersection curves. In particular,  curves that are relevant to the change of stability are contained in  a quadric cone.  Since a wall crossing in VGIT involves orbits with positive dimensional stabilizer,   it is not hard to identify a finite list $\{t_k\}\subset(0,\frac{1}{2})$ of  potential critical slopes (or walls), and associated (potential) critical orbits $\{C_k\}$. To check that these potential critical slopes/orbits actually occur, we proceed in two further steps. On one hand, via the numerical criterion, we see that the generic curve in $W_k$ will be destabilized for some $t\le t_k$. Conversely,  an analysis of the basin of attraction for the potential semistable curves $C_k$, shows that the generic curve in $W_k$ can not be destabilized before $t_k$. Thus, the stratum $W_k$ will change from stable to unstable precisely at $t_k$. Furthermore, it is not hard to see that the replacement stratum is (birational to) $Z^{k+1}$ (here we use the geometric characterization of the $Z$-strata given by \Ref{prp}{hypkgeom}). This concludes the GIT analysis of the quotients $\gM(t)$ (we refer the reader to Table \ref{table101} for a quick summary). A key feature of our proof is the basin of attraction argument. Once we know (semi)stability for $\gM=\gM(1/6)$, we can determine which curves are parametrized by $\gM(t)$ for $1/6<t\le 1/2$ via a careful analysis of the change in stability at the critical $t$'s. In particular one gets that 
  $\gM(\frac{1}{2}-\epsilon)$ parametrizes double covers of irreducible cones with slc singularities.

\smallskip

In~\Ref{sec}{proof} we  return to the period space $\sF$ and the variation of models $\sF(\beta)$. First, we note that $\sF$ and $\gM(t)$ for $t\in(1/6,1/2)$ are isomorphic in codimension $1$ because, by the results in~\Ref{sec}{shyper}, a $(2,4)$ curve with at worst a node on an irreducible quadric (and not passing through the vertex if the quadric is singular) is GIT $t$-stable  for $t\in(1/6,1/2)$.  Next, let
\begin{equation}\label{tidibeta}
t(\beta)=\frac{1}{4\beta+2}.
\end{equation}
and consider the composition
\begin{equation}\label{betaper0}
 \gM(t(\beta))\overset{\gp(t(\beta))}{\dashrightarrow} \sF\dashrightarrow \sF(\beta),
\end{equation}
where $\gp(t(\beta))$ is the period map. We prove that the above map matches $\lambda+\beta\Delta$ (ample on $\sF(\beta)$ by definition) and the natural ample GIT polarization of $ \gM(t(\beta))$, up to a positive factor.
It follows (by the codimension $1$ result) that we 
we have an isomorphism
\begin{equation}\label{identifygmf}
\gM(t(\beta))\cong\sF(\beta)
\end{equation}
and that the ring of sections $R(\sF,\lambda+\beta\Delta)$ (for $\beta\in[0,1]\cap \bQ$) is a finitely generated $\CC$-algebra. 

The remaining statements  of \Ref{thm}{mainthm1} follow from the isomorphism in~\eqref{identifygmf} and 
 the VGIT analysis of \Ref{sec}{sec-VGIT}. This concludes the proof (and gives a complete verification of the conjectures in~\cite{log1} for polarized hyperelliptic $K3$'s of degree $4$). As an interesting geometric fact, we note that 
specializing \eqref{identifygmf} to the case $\beta=0$ (i.e.~$t=\frac{1}{2}$), we get: 
 \begin{corollary}\label{crl:corbbchow}
The Baily-Borel compactification $\sF^*$ for hyperelliptic quartics is isomorphic to  the GIT quotient $\Chow_{(2,4)}\gquot \SL(4)$ for the Chow variety of $(2,4)$ curves in $\bP^3$.
 \end{corollary}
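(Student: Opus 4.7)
The plan is to deduce the corollary by specializing the isomorphism $\gM(t(\beta))\cong\sF(\beta)$ of \eqref{identifygmf} to the endpoint $\beta=0$. By definition $\sF(0)=\sF^{*}$, and the formula $t(\beta)=\frac{1}{4\beta+2}$ gives $t(0)=\frac{1}{2}$. So the corollary reduces to combining two ingredients: (a) the isomorphism $\sF(\beta)\cong\gM(t(\beta))$ at the closed endpoint $\beta=0$, and (b) the identification $\gM(\tfrac{1}{2})\cong\Chow_{(2,4)}\gquot\SL(4)$.

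For (b), I would invoke the construction of $\gM(t)$ from \Ref{sec}{shyper}: the VGIT is set up on the parameter space of $(2,4)$ complete intersection curves in $\bP^{3}$, and the family of polarizations interpolates between the Hilbert polarization at small $t$ and the Chow polarization at $t=\tfrac{1}{2}$. By construction, therefore, $\gM(\tfrac{1}{2})$ is the Chow GIT quotient, and the final VGIT map $\gM(\tfrac{1}{2}-\epsilon)\to\gM(\tfrac{1}{2})$ is the one induced by the Hilbert-to-Chow morphism. This part requires no additional work beyond recording the definition.

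For (a), I would appeal to \Ref{sec}{proof}, where the isomorphism \eqref{identifygmf} is proved for every $\beta\in[0,1]\cap\bQ$ by matching the natural ample GIT polarization of $\gM(t(\beta))$ with $\lambda+\beta\Delta$ up to a positive scalar, and combining this with the codimension-one agreement between the two birational models. Evaluating at $\beta=0$, these two ingredients chain together:
\[
\sF^{*}\;=\;\sF(0)\;\cong\;\gM(t(0))\;=\;\gM(\tfrac{1}{2})\;\cong\;\Chow_{(2,4)}\gquot\SL(4),
\]
as required. The one point that deserves attention is that the polarization-matching argument must be valid at the boundary value $\beta=0$, where $\lambda+\beta\Delta$ becomes the Hodge line bundle $\lambda$ and $\sF(\beta)$ becomes the Baily--Borel compactification; this is already covered by \Ref{thm}{mainthm1}(i), which asserts finite generation of $R(\sF,\lambda+\beta\Delta)$ on the full closed interval $[0,1]\cap\bQ$. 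Hence there is no genuine obstacle: the corollary is an immediate consequence of \Ref{thm}{mainthm1} combined with the definition of $\gM(\tfrac{1}{2})$.
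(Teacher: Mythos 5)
This is exactly the paper's own argument: \eqref{identifygmf} specialized at $\beta=0$ (so $t(0)=\tfrac{1}{2}$), combined with the identification $\gM(\tfrac{1}{2})\cong\Chow_{(2,4)}\gquot\SL(4)$ built into the definition of the VGIT (\Ref{rmk}{defgit12}, \Ref{thm}{thmgitmodels}). One small correction to your closing remark: the endpoint $\beta=0$ is not taken care of by \Ref{thm}{mainthm1}(i) alone (finite generation of $R(\sF,\lambda)$ is just Baily--Borel and does not identify $\sF(0)$ with $\gM(\tfrac{1}{2})$); what actually carries the endpoint in the paper is the proof of \Ref{thm}{mainthm1}(iv) in \Ref{sec}{proof}, where the polarization matching of \Ref{prp}{morise} on the common model $\sF(\epsilon)\cong\gM(\tfrac{1}{2}-\epsilon')$ is pushed to the semiample limit classes via \Ref{rmk}{defgit12}, so that the two contractions $\gM(\tfrac{1}{2}-\epsilon')\to\gM(\tfrac{1}{2})$ and $\sF(\epsilon)\to\sF^{*}$ are given by the same section ring.
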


We were able to do the complete analysis for the hyperelliptic quartic $K3$'s because geometrically everything reduces to curves. Essentially, our paper is almost an instance of the Hassett--Keel program (quite similar to \cite{g4git,g4ball}, which is the Hassett-Keel program in genus $4$). However, this is a misleading point of view. We argue that the results in this paper have much more structure than those in the simpler geometric case discussed in \cite{g4git,g4ball} (for instance compare \Ref{crl}{corbbchow} to \cite{g4ball}). More generally, we believe that the Hassett--Keel--Looijenga program (for $K3$ type) has much more structure and  is more regular than the Hassett--Keel program (for curves). The main point (that we call {\it Looijenga's vision}) is that in the HKL setup  everything, while intricate, is controlled very regularly by arithmetic. To drive this point home, we analyze in \Ref{sec}{chow}, the GIT for Chow quotient and Hilbert scheme (for large Hilbert point) for $(2,4)$ curves. As noted in  \Ref{crl}{corbbchow}, we have $\Chow_{(2,4)}\gquot \SL(4)\cong \sF^*$. Similarly, for $m\gg0$, $\Hilb^{m}_{(2,4)}\gquot \SL(4)\cong \widehat \sF(\cong \sF(1-\epsilon))$, where $\widehat \sF$ is the Looijenga $\bQ$-Cartierization (\cite{looijengacompact}) associated to the divisor $\Delta$ (which fails to be $\bQ$-Cartier in $\sF^*$). Furthermore, the Hilbert-to-Chow morphism induces the $\bQ$-factorialization map $\widehat \sF\to \sF^*$. The structure of $\sF^*$, $\widehat \sF$, and of the map $\widehat \sF\to \sF^*$ are purely arithmetic (this is the essential content of \cite{looijengacompact}).  While it is a consequence of our results, that geometry of the variation of the GIT quotients matches the arithmetic variation, it is very striking to see explicitly how this actually happens. Essentially, various geometric phenomena magically align to give a result such as \Ref{crl}{corbbchow} (e.g. see \Ref{thm}{thmchow} and its proof).  We simply reiterate our belief that the GIT analysis of this paper would not have be possible without the prior knowledge of the expected structure which was given by the arithmetic computations of \cite{log1}. 

\section{The period space and its Baily-Borel compactification}\label{sec:preliminaries}
\subsection{Periods of hyperelliptic polarized $K3$'s of degree $4$ according to~\cite{log1}}\label{subsec:predictqh}
Let $\Lambda_N$ be the lattice $U^2\oplus D_{N-2}$, where $U$ is the hyperbolic plane, $D_{N}$ is the negative definite lattice corresponding to the  Dynkin diagram $D_{N}$ (for $N\ge 3$, and where $D_1$ is the rank $1$ lattice with generator of square $(-4)$), and 
$\oplus$ will always mean \emph{orthogonal}  direct sum. Let
\begin{equation*}
\cD(N):=\{[\sigma] \in\PP(\Lambda_{N}\otimes\CC)\mid \sigma^2=0,\ (\sigma+\ov{\sigma})^2>0\}.
\end{equation*}
Then $\cD(N)$ is a complex manifold of dimension $N$, and it has two connected components, interchanged by complex conjugation; let $\cD^{+}(N)$ be one of the two connected components. We note that  $\cD^{+}(N)$ is a Type IV bounded symmetric domain. Let $O^{+}(\Lambda_N)< O(\Lambda_N)$ be the index two subgroup mapping $\cD^{+}(N)$ to itself. In~\cite{log1} we have studied the locally symmetric variety
\begin{equation*}
\sF(N):=\Gamma(N)\backslash\cD^{+}(N)
\end{equation*}
where $\Gamma(N)=O^{+}(\Lambda_N)$ if $n\not\equiv 6\pmod{8}$, and $\Gamma(N)$ is a subgroup of index $3$ in $O^{+}(\Lambda_N)$  if $n\equiv 6\pmod{8}$, see Prop.~1.2.3 ibid.
We let $\sF(N)\subset\sF(N)^{*}$ be  the Baily-Borel compactification. 

 The period space for hyperelliptic polarized $K3$'s of degree $4$ is $\sF(18)$, see Propositions~2.2.1 and~1.4.5 ibid. In order to simplify notation, we let
 \begin{equation}
\Lambda:=\Lambda_{18},\quad \cD^{+}:=\cD^{+}(18),\quad \sF:=\sF(18),\quad \sF^{*}:=\sF(18)^{*}.
\end{equation}
We recall that  $w\in\Lambda$ is a \emph{hyperelliptic vector} if $w^2=-4$, and $\divisore(w)=2$ (see Definition~1.3.4 and Remark~1.1.3  ibid.), where $\divisore(w)$ is the \emph{divisibility} of $w$, i.e.~the positive generator of 
$(w,\Lambda)$. 
The Heegner divisor $H_h\subset\sF$ is the locus of $O^{+}(\Lambda)$-equivalence classes of points $[\sigma]\in\cD^{+}_{\Lambda}$ such that $\sigma^{\bot}$  contains a hyperelliptic vector. The boundary divisor for $\sF$ is given by $\Delta:=H_h/2$, see Definition~1.3.7 in~\cite{log1}.
 
Given a smooth  $C\in|\cO_{\PP^1}(4)\boxtimes\cO_{C}(4)|$, let  $\pi\colon X_C\to\PP^1\times\PP^1$ be the double cover ramified over $C$, and let $L_C:=\pi^{*}\cO_{\PP^1}(1)\boxtimes\cO_{\PP^1}(1)$. Then $(X_C,L_C)$ is a hyperelliptic polarized $K3$ of degree $4$. 
   By associating to $C$ the period point of $(X_C,L_C)$ we get a period map $\gp\colon \gM\dra\sF^{*}$ (Shah proved that smooth $(4,4)$ curves are stable, see Theorem~4.8 in~\cite{shah}, or~\Ref{prp}{git44}).  By Global Torelli, $\gp$ is birational. The intersection of  $\sF$ and the image  of the regular locus of $\gp$  is equal to $\sF\setminus H_h$. 
The indeterminacy locus of $\gp$ is a  subset of $\gM$ of dimension $7$, with an intricate Hodge-theoretic stratification. 
In~\cite{log1}  we have formulated precise predictions on the decomposition of $\gp$ as a composition of elementary birational maps. In particular  we defined a tower of closed subsets
\begin{equation}\label{zetahyp}
Z^8\subset Z^7\subset Z^6\subset Z^4\subset Z^3\subset Z^2\subset Z^1\subset \sF
\end{equation}
(the superscript denotes codimension), and we motivated the expectation that the centers of the elementary birational maps are birational to the $Z^k$'s. In agreement with Looijenga's vision, most of the $Z^k$'s are the images in $\sF$ of the locus of points in $\cD^{+}_{\Lambda}$ which are orthogonal to $k$ (at least) hyperelliptic vectors. 
More precisely, the $Z^k$'s are as follows:
\begin{enumerate}
\item
If $1\le k\le 4$, then $Z^k=\im f_{18-k,18}$, the locus of $O^{+}(\Lambda)$-equivalence classes of points 
 $[\sigma]\in\cD^{+}_{\Lambda}$ such that $\sigma^{\bot}$  contains $k$ pairwise orthogonal   hyperelliptic vectors
\item
$Z^6=\im(f_{13,18}\circ q_{13})$, the locus of $O^{+}(\Lambda)$-equivalence classes of points 
 $[\sigma]\in\cD^{+}_{\Lambda}$ such that $\sigma^{\bot}$  contains  pairwise orthogonal   vectors $v_1,\ldots,v_5,w$, with 
 $v_1,\ldots,v_5$ hyperelliptic, $w^2=-12$, and the divisibility of $w$ in   $\{v_1,\ldots,v_5\}^{\bot}$ equal to $4$ (see Prop.~1.6.1 
 ibid.).
\item
$Z^7=\im(f_{12,18}\circ m_{12})$,  the locus of $O^{+}(\Lambda)$-equivalence classes of points 
 $[\sigma]\in\cD^{+}_{\Lambda}$ such that $\sigma^{\bot}$  contains  pairwise orthogonal   vectors $v_1,\ldots,v_6,w$, with 
 $v_1,\ldots,v_6$ hyperelliptic, $w^2=-2$, and the divisibility of $w$ in   $\{v_1,\ldots,v_6\}^{\bot}$ equal to $2$ (see Prop.~1.5.2 
 ibid.).
\item
$Z^8=\im f_{10,18}\sqcup \im (f_{11,18}\circ l_{11})$, where $\im f_{10,18}$ is as in Item~(1) , with $k=8$, and $\im (f_{11,18}\circ l_{11})$
is the locus of $O^{+}(\Lambda)$-equivalence classes of points 
 $[\sigma]\in\cD^{+}_{\Lambda}$ such that $\sigma^{\bot}$  contains  pairwise orthogonal   vectors $v_1,\ldots,v_7,w$, with 
 $v_1,\ldots,v_7$ hyperelliptic, $w^2=-4$, and the divisibility of $w$ in   $\{v_1,\ldots,v_7\}^{\bot}$ equal to $4$ 
 (see Prop.~1.5.1 ibid.).
\end{enumerate}
Notice that $Z^1=H_h$.
\subsection{Hyperelliptic $K3$'s of degree $4$ whose periods are parametrized by $Z^k$}
In the present subsection we will freely use notation and results  contained in~\cite{log1}. 
\begin{proposition}\label{prp:hypkgeom}
Let $(X,L)$ be a hyperelliptic quartic $K3$ surface, and let  $x\in\sF$ be its period point. Let $\varphi_L\colon X\to |L|^{\vee} \cong\PP^3$ be the map associated to $L$. 
Let $Q:=\varphi_L(X)$ be the image, a quadric, and let $C\in |\cO_Q(4)|$ be the branch curve of $X\to Q$. The following hold:
\begin{enumerate}
\item
Let $1\le k\le 15$. Then $x\in \im f_{18-k,18}$ if and only if $Q$ is a quadric cone, and $C$ has an $A_{m}$-singularity at the vertex of $Q$, where $m\ge(k-1)$. If $k=8$, in addition $C$ must not contain a line.
\item
$x\in \im (f_{13,18}\circ q_{13})$  if and only if  $Q$ is a quadric cone, $C$    has an $A_{m}$-singularity at the vertex of $Q$, where $m\ge 4$, and the support of the tangent cone of $C$ at   the vertex of $Q$ is tangent to a line of $Q$.  
\item
$x\in \im (f_{12,18}\circ m_{12})$  if and only if  $C$   contains a line, and has an $A_{m}$-singularity at the vertex of $Q$, where $m\ge 5$.  
\item
$x\in \im (f_{11,18}\circ l_{11})$  if and only if  $C$  contains a line, and has an $A_{7}$-singularity at the vertex of $Q$. 
\end{enumerate}
\end{proposition}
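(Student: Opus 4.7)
The strategy is to pass to the minimal resolution $\widetilde{X}\to X$, write down explicit classes in $\Pic(\widetilde{X})$ coming from the geometry of $Q$ and from the singularities of $C$, and match them to the lattice vectors in the definitions of the $Z^{k}$'s. The reverse direction will then follow from Global Torelli for polarized $K3$ surfaces together with standard results on linear systems on $K3$s.

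I would first treat the baseline case $k=1$ of item~(1). When $Q$ is smooth, both rulings pull back to a rank-$2$ sublattice of $\Pic(X)$ containing $L$, and $L^{\bot}$ contains no hyperelliptic vector. When $Q$ is a cone with line class $R$, the pullback $\pi^{*}R$ satisfies $(\pi^{*}R)^{2}=0$ and $\pi^{*}R\cdot L=2$, whence $w_{1}=L-2\pi^{*}R$ is a hyperelliptic vector in $L^{\bot}$. For general $k$ in item~(1), the double cover of a cone branched along a curve $C$ with an $A_{m}$-singularity at the vertex acquires canonical surface singularities whose minimal resolution contributes an $A$-type chain of $(-2)$-curves to $\Pic(\widetilde{X})$. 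Combining these $m$ exceptional classes with $\pi^{*}R$ produces $m+1$ pairwise orthogonal hyperelliptic vectors in $L^{\bot}$, which is precisely item~(1). The upper bound $k\le 15$ is a dimension constraint on the transcendental lattice, and the ``no line'' caveat at $k=8$ reflects that a line in $C$ would split $\pi^{-1}(\text{line})$ on $\widetilde{X}$, producing an additional lattice vector that pushes the period point into $\im(f_{11,18}\circ l_{11})$ rather than $\im f_{10,18}$.

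For items~(2)--(4) the extra vector $w$ has a transparent geometric origin in each case. In~(2), the tangency between the tangent cone of the $A_{m\ge 4}$-singularity and a ruling of $Q$ refines how the exceptional chain meets the proper transform of that ruling, and an explicit combination of ruling class, the $v_{i}$'s and exceptional divisors yields a class of square $-12$ and divisibility $4$ inside $\{v_{1},\dots,v_{5}\}^{\bot}$. In~(3), a line contained in $C$ splits on $\widetilde{X}$ into two $(-2)$-curves whose difference class is orthogonal to the hyperelliptic vectors $v_{1},\dots,v_{6}$ coming from the $A_{m\ge 5}$-chain and has square $-2$ with divisibility $2$. Item~(4) is the simultaneous occurrence of a line in $C$ and an $A_{7}$-singularity at the vertex, and the analogous computation exhibits the $(-4)$-class of divisibility $4$.

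The reverse implications follow from Global Torelli: the lattice-theoretic data on the transcendental side forces $\Pic(\widetilde{X})$ to contain the sublattice described above, and standard results on effective cones and elliptic/rational pencils on $K3$ surfaces then force $Q$ to be a cone with the prescribed singularity of $C$ at its vertex (and, in (2)--(4), the extra line or tangency condition). The main obstacle, in my view, is the lattice bookkeeping in items~(2)--(4): one has to verify not only the squares of the extra classes $w$ but also their divisibility \emph{inside} the orthogonal complement of the $v_{i}$'s, and it is precisely this divisibility that distinguishes $Z^{6}$, $Z^{7}$, and the $\im(f_{11,18}\circ l_{11})$ component of $Z^{8}$ from the purely hyperelliptic-vector loci.
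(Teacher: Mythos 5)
Your proposal has a genuine gap at the heart of item~(1): the exceptional configuration over the vertex is \emph{not} an $A$-type chain, and the hyperelliptic vectors are not the exceptional $(-2)$-classes themselves (together with $\pi^{*}R$). To see the correct picture, blow up the vertex $v$ of the cone to get $Q_{0}\cong\mathbb{F}_{2}$ with exceptional section $A$. When $C$ has an $A_{k-1}$-singularity at $v$, the pulled-back branch divisor is $C_{0}=A+D$ with $D\in|3A+8F|$, and $C_{0}$ acquires a \emph{$D_{k}$-singularity} at the point(s) $A\cap D$ (this is Remark~\ref{rmk:moresetup}). Consequently the double cover $\wt{X}\to Q_{0}$ has a $D_{k}$ surface singularity there, and the full preimage of $v$ on $\wt{X}$ is a $D_{k+1}$-configuration $\Gamma_{0},\dots,\Gamma_{k}$ of $(-2)$-curves (with $\Gamma_{0}$ the reduced preimage of $A$). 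The $(-2)$-classes $\Gamma_{i}$ have the wrong square to be hyperelliptic vectors; the $k$ pairwise orthogonal $(-4)$-classes of divisibility $2$ in $L^{\perp}$ are the nontrivial combinations
\[
[\Gamma_{k}]-[\Gamma_{k-1}],\quad [\Gamma_{k}]+[\Gamma_{k-1}],\quad [\Gamma_{k}]+[\Gamma_{k-1}]+2[\Gamma_{k-2}]+\dots+2[\Gamma_{i}]\quad (1\le i\le k-2),
\]
living inside the $D_{k}$-sublattice spanned by $\Gamma_{1},\dots,\Gamma_{k}$. The divisibility check itself uses the relation $\wt{\varphi}_{0}^{*}A=2\Gamma_{0}+2\Gamma_{1}+\dots+2\Gamma_{k-2}+\Gamma_{k-1}+\Gamma_{k}$. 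Your ``$m$ exceptional classes plus $\pi^{*}R$'' accounting does not produce the right number or the right squares, and this error propagates into your outlines of items~(2)--(4).

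Your reverse direction is also under-argued relative to what the paper does. You invoke Global Torelli plus ``standard results on effective cones'' to deduce the cone geometry and the singularity type of $C$ from the lattice data, but the paper never needs to run this deduction pointwise. Instead it fixes a quadric cone $Q$, shows (Proposition~\ref{prp:esiste}) that the stratum $\cS_{k}(Q)$ of branch curves with an $A_{k-1}$-singularity at $v$ is nonempty of codimension $\le k-1$, observes that its image under the period map $\Phi$ is closed and $\Aut(Q)$-invariant and (by the forward direction, Proposition~\ref{prp:dikappa}) contained in $\im f_{18-k,18}$, and then concludes equality from the irreducibility and codimension of $\im f_{18-k,18}$. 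This dimension-count argument is what carries the reverse implication; your sketch would need a substitute for it, and as written it is missing the step that rules out $\im f_{18-k,18}$ containing periods not of the prescribed geometric form.
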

We will prove~\Ref{prp}{hypkgeom} at the end of the present subsection.  
\begin{remark}\label{rmk:conesetup}
 Let $Q\subset\PP^3$  be a quadric cone, with vertex $v$. Let   $C\in |\cO_Q(4)|$, and suppose that $C$ has simple singularities.   Let $\varphi\colon X\to Q$ be the double cover ramified over $C$, and let  $L:=\varphi^{*}\cO_Q(1)$. Then  $(X,L)$  is a hyperelliptic quartic $K3$ surface, and the image of $\varphi_L\colon X\to |L|^{\vee}$ is identified with $Q$.   Let $\mu_0\colon Q_0\to Q$ be the blow up of the vertex $v$; thus $Q_0\cong\FF_2$.  
Let $A,F\subset Q_0$ be the negative section and a fiber of the $\PP^1$-fibration $Q_0\to \PP^1$, respectively. 
  Let    $C_0:=\mu_0^{*}C\in |4A+8F|$,  and let $\varphi_0\colon X_0\to Q_0$ be the double cover ramified over $C_0$. Then    $X_0$ is a $K3$ surface as well.  It follows that $C_0$ has simple singularities. Conversely, if  $C_0$ has simple singularities, then $X$ is a $K3$ surface.   
  
  Let $\wt{X}\to X_0$ be the minimal desingularization of $X_0$, and  let $\wt\varphi_0\colon\wt{X}\to Q_0$ be  the composition $\wt{X}\to X_0\to Q_0$. The orthogonal 
  $\wt{\varphi}_0^{*} H^2(Q_0;\ZZ)^{\bot}\subset H^2(\wt{X};\ZZ)$ is isomorphic to the lattice $\Lambda$ defined in~\Ref{subsec}{predictqh}. 
  Let
$\psi\colon \wt{\varphi}_0^{*} H^2(Q_0;\ZZ)^{\bot}\overset{\sim}{\lra}\Lambda$ be an isomorphism, and let $\psi_{\CC}$ be its $\CC$-linear extension. Then 
$\psi_{\CC}H^{2,0}(\wt{X})\in\cD_{\Lambda}$ and composing, if necessary, with a suitable automorphism of $\Lambda$, we may assume that 
$\psi_{\CC}H^{2,0}(\wt{X})\in\cD^{+}_{\Lambda}$. The period point of $(X,L)$ in $\sF$ is the $O^{+}(\Lambda)$-equivalence class of $\psi_{\CC}H^{2,0}(\wt{X})$; we will denote it by $\Pi(X,L)$.
\end{remark}
\begin{proposition}\label{prp:keyone}
Let $(X,L)$ be a hyperelliptic quartic $K3$. Then $\Pi(X,L)\in H_h$ if and only if the image quadric $\varphi_L(X)\subset |L|^{\vee}$ is a cone.   
\end{proposition}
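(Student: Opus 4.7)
The statement is an iff and I would attack the two directions separately. For $(\Leftarrow)$, a direct geometric construction produces an explicit hyperelliptic vector in the Picard lattice of the resolved $K3$ whenever $Q$ is a cone; for $(\Rightarrow)$ the argument is essentially a dimension count combined with the irreducibility of $H_h$.

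For $(\Leftarrow)$, I would work in the setup of \Ref{rmk}{conesetup}. The negative section $A\subset Q_0\cong\FF_2$ satisfies $A\cdot C_0 = A\cdot(4A+8F) = 0$, so $A$ is disjoint from the branch divisor $C_0$. Hence $\wt{\varphi}_0\colon\wt{X}\to Q_0$ is étale over $A$, and $\wt{\varphi}_0^{-1}(A)=A_1\sqcup A_2$ splits as a disjoint union of two smooth rational $(-2)$-curves. I claim that $v:=[A_1]-[A_2]$ is a hyperelliptic vector, and since $v$ is of type $(1,1)$ it is automatically orthogonal to $H^{2,0}(\wt{X})$, forcing $\Pi(X,L)\in H_h$. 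The computation $v^2 = -2 + 0 - 2 = -4$ is immediate, and $v\in\Lambda$ follows from the projection formula since $\wt{\varphi}_{0*}v = A-A = 0$. The key lattice-theoretic point is $\divisore_\Lambda(v)=2$: for any $u\in\Lambda$ one has $u\cdot(A_1+A_2) = u\cdot\wt{\varphi}_0^* A = 0$, so $u\cdot v = u\cdot(2A_1 - (A_1+A_2)) = 2(u\cdot A_1)\in 2\ZZ$, showing $\divisore(v)\geq 2$. Since $\divisore(v)\mid |v^2|=4$, only $\divisore(v)=4$ remains to be excluded, and this follows because the discriminant group of $\Lambda=U^2\oplus D_{16}$ is $(\ZZ/2)^2$ and contains no element of order $4$.

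For $(\Rightarrow)$, I would argue by a dimension count together with the irreducibility of $H_h$ (a consequence of Eichler's criterion, which implies $\Ort^+(\Lambda)$ acts transitively on the set of hyperelliptic vectors). Let $\gM^{\text{cone}}\subset\gM$ denote the locus of $(X,L)$ whose image quadric $Q$ is a cone; this is a closed irreducible subvariety of dimension $17$. By $(\Leftarrow)$, the period map sends its general point into the $17$-dimensional irreducible divisor $H_h\subset\sF$, and since $\gp$ is birational, $\gp$ must restrict to a birational map between $\gM^{\text{cone}}$ and $H_h$. In particular, for generic $y\in H_h$ the unique $\gp$-preimage lies in $\gM^{\text{cone}}$, proving the converse on a dense open of $H_h$. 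The extension to arbitrary $(X,L)$ with $\Pi(X,L)\in H_h$ then follows from the closedness of the condition ``$Q$ is a cone'' on the moduli of hyperelliptic quartic $K3$s together with a standard specialization argument.

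The main obstacle is not $(\Leftarrow)$, which is a clean explicit construction, but carrying out the dimension/irreducibility argument in $(\Rightarrow)$ while keeping track of the indeterminacy of $\gp$: one must ensure that no spurious components of $\gp^{-1}(H_h)$ appear when $\rk\Pic(\wt{X})$ exceeds the generic value (e.g.\ when $C$ has additional simple singularities contributing extra $(-2)$-classes). Resolving this cleanly requires either a case analysis along the Hodge-theoretic stratification of $\gM$, or a direct argument using the cover involution $\iota$: the anti-invariant part of $H^2(\wt{X})$ is exactly $\Lambda$, and when $Q$ is smooth the absence of an exceptional curve on the desingularization prevents the construction of any anti-invariant effective divisor with the numerical profile of a hyperelliptic vector.
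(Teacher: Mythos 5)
Your overall strategy coincides with the paper's: the forward direction is a dimension count with the irreducibility of $H_h$, and the reverse direction is the explicit exhibition of the hyperelliptic class $[A_1]-[A_2]$. However, there is a genuine gap in your $(\Leftarrow)$ argument. You infer from the numerical equality $A\cdot C_0 = A\cdot(4A+8F)=0$ that ``$A$ is disjoint from the branch divisor $C_0$''; this inference fails precisely when the vertex $v$ lies on $C$. In that case $C_0=\mu_0^{*}C$ is the total transform $A+D$ (as described in~\Ref{rmk}{moresetup}), so $A$ is a \emph{component} of the branch divisor and the preimage $\wt\varphi_0^{-1}(A)$ is a single connected (indeed non-reduced, before resolving) curve rather than two disjoint $(-2)$-curves. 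Your construction therefore only applies on the dense open locus of cone covers with $v\notin C$. The paper handles this by first proving the claim under the assumption $v\notin\Supp(C)$, then observing that $H_h$ is closed in $\sF$, so the containment $\Pi(X,L)\in H_h$ persists for arbitrary cone covers by specialization. You should add that final step, since the statement you are proving quantifies over \emph{all} hyperelliptic quartic $K3$'s whose quadric is a cone.

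Two minor remarks. First, your argument for $\divisore(v)=2$ (excluding $\divisore(v)=4$ via $A_\Lambda\cong(\bZ/2)^2$) is valid but slightly longer than needed: as the paper notes, a primitive vector of square $-4$ in $\Lambda$ automatically has divisibility $1$ or $2$, so once you know the divisibility is even you are done; your observation that $u\cdot(A_1-A_2)=u\cdot(2A_1-\wt\varphi_0^*A)=2\,u\cdot A_1$ for $u\in\Lambda$ is exactly the paper's computation. Second, your concern at the end about ``spurious components of $\gp^{-1}(H_h)$'' is resolved by the fact that you are working with the Torelli isomorphism $\Pi$ from (the moduli of) hyperelliptic quartic $K3$'s onto $\sF$, not the rational period map from $\gM$; under an isomorphism the preimage of the irreducible divisor $H_h$ is irreducible, and since it contains the $17$-dimensional cone locus it must equal it. Making this explicit (rather than gesturing at ``a direct argument using the cover involution'') closes the loop on $(\Rightarrow)$.
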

\begin{proof}
Let  $Q:=\varphi_L(X)$. Suppose that  $Q$ is a quadric cone, and let us prove that $\Pi(X,L)\in H_h$. 
We will adopt the notation  in~\Ref{rmk}{conesetup}. Assume that the vertex $v$ of $Q$ is not in the support of the branch curve $C$.  Since $A:=\mu_0^{-1}(v)$, the support of the branch divisor $C_0$ is disjoint from $A$, and hence 
$\wt\varphi_0^{*}A=A_1+A_2$, where $A_1,A_2\subset \wt{X}$ are disjoint smooth rational curves. It follows that $[A_1-A_2]\in \wt{\varphi}_0^{*} H^2(Q_0;\ZZ)^{\bot}$.   By definition of $H_h$, it will suffice to prove that $\psi([A_1-A_2])$ is a hyperelliptic vector. First, 
$\psi([A_1-A_2])^2=[A_1-A_2]^2=-4$, secondly $\psi([A_1-A_2])$ has divisibility equal to $2$ because $[A_1-A_2]+\wt{\varphi}_0^{*}A=2A_1$ (we recall that the  divisibility of a vector in $\Lambda$ of square $-4$ is either  $1$ or $2$).  This proves that if $Q$ is a quadric cone, and  the branch curve  does not contain the vertex of the cone, then $\Pi(X,L)\in H_h$. Since $H_h$ is closed in $\sF$, the same is true whenever  $Q$ is a quadric cone. By irreducibility of $H_h$, and a dimension count, the proposition follows.
\end{proof}
\begin{remark}\label{rmk:moresetup}
Retain the notation of~\Ref{rmk}{conesetup}.  Suppose that the branch curve $C$ contains the vertex $v$, and that the double cover   $\varphi\colon X\to Q$  ramified over $C$ is a $K3$.  Then $C_0=A+D$, where $D\in | 3A+8F|$, and $C_0$ has simple singularities. In particular $D$ does not contain $A$. Now notice that 
$D\cdot A=2$. Thus, either $D$  intersects transversely $A$ in $2$ points, or it intersects $A$ in a single point $p$. Assume that the latter holds. Since $C_0=A+D$ has simple singularities, it follows that  $D$ has an $A_{n}$-singularity at $p$ (if $n=0$, this means that $C$ is smooth  at $p$, and simply tangent to $A$), and simple singularities elsewhere. Conversely, if  $D$ is as described above, then $X_0$ is a $K3$, and hence so is $X$.
For $k\ge 2$ the following are equivalent:
\begin{enumerate}
\item
$C$ has an $A_{k-1}$-singularity at $v$.
\item
$D$ has an  $A_{k-3}$-singularity at the point(s) of intersection with $A$. 
\item
$C_0$ has a  $D_{k}$-singularity at the point(s) of intersection with $A$. 
\end{enumerate}
(If $k=2$, then Item~(2) is to be interpreted as saying that $D$ intersects $A$ transversely in two points, and Item~(3) accordingly.) 
\end{remark}
\begin{definition}\label{dfn:essekappa}
Let $Q\subset\PP^3$ be a quadric cone, with vertex $v$. Let $|\cO_Q(4)|_{K3}\subset |\cO_Q(4)|$ be the (open) subset of $C$ with simple singularities (i.e.~such that the double cover $X\to Q$ ramified over $B$ is a $K3$), and  let $\cS_k(Q)\subset|\cO_Q(4)|_{K3}$  be the set of curves   with   an $A_{k-1}$-singularity at $v$. 
\end{definition}
Notice that   $\cS_k(Q)$ is locally closed in $|\cO_Q(4)|_{K3}$. By~\Ref{rmk}{moresetup},
\begin{equation}
|\cO_Q(4)|_{K3}=\bigcup_{k=1}^{\infty}\cS_k(Q).
\end{equation}
\begin{proposition}\label{prp:dikappa}
Let $Q\subset\PP^3$ be a quadric cone, with vertex $v$. Let $C\in \cS_k(Q)$, with 
$k\ge 1$. Let $\varphi\colon X\to Q$ be the double cover ramified over $C$, and let $L:=\varphi^{*}\cO_Q(1)$.  Then 
\begin{equation*}
\Pi(X,L)\in 
\begin{cases}
\im f_{18-k,18} & \text{if $k\not=8$,} \\
\im f_{10,18}\cup \im(f_{11,18}\circ l_{11}) & \text{if $k=8$.}
\end{cases}
\end{equation*}
\end{proposition}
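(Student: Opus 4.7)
The plan is to exhibit the required pairwise orthogonal hyperelliptic vectors inside $\sigma^{\perp}\cap\Lambda$ via the exceptional divisor of the minimal resolution of $X_0$, building on the local description of the branch curve in~\Ref{rmk}{moresetup}. The case $k = 1$ is already~\Ref{prp}{keyone}, so I assume $k\ge 2$ throughout.

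First, I would invoke~\Ref{rmk}{moresetup} to see that the branch curve $C_0 = A + D \subset Q_0$ has a singularity of type $D_k$ at its intersection point(s) with $A$ (two disjoint $A_1$'s for $k = 2$, $A_3 = D_3$ for $k = 3$, and a genuine $D_k$ for $k \geq 4$). A direct local computation with the double cover equation (e.g.~for $k=3$, $z^2 = y(y - x^2)$ reduces to $uv = -x^4/4$, an $A_3 = D_3$ surface singularity) shows that $X_0$ inherits a Du Val singularity of the same type, whose minimal resolution $\pi\colon\wt X\to X_0$ contributes $k$ exceptional $(-2)$-curves $E_1,\ldots,E_k$ with intersection matrix equal to the negative $D_k$-Cartan matrix. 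By the projection formula, $E_i\cdot\wt\varphi_0^{*}\alpha = \alpha\cdot\wt\varphi_{0*}E_i = 0$ for every $\alpha\in H^2(Q_0;\ZZ)$, so the sublattice $\bigoplus\ZZ E_i\cong D_k$ embeds in $\wt\varphi_0^{*} H^2(Q_0;\ZZ)^{\bot}\cong\Lambda$.

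Next, I would realize this $D_k$ in its standard embedding in $\ZZ^k$ with orthogonal generators $e_j$, identifying the simple roots as $E_i = e_i - e_{i+1}$ for $i < k$ and $E_k = e_{k-1} + e_k$. The vectors $2e_1,\ldots,2e_k$ then lie in $\bigoplus\ZZ E_i$, are pairwise orthogonal with common square $-4$, and admit explicit expressions as integer combinations of the $E_j$ (for instance $2 e_{k-1} = E_{k-1} + E_k$, $2 e_k = E_k - E_{k-1}$, and $2 e_i = 2 E_i + \cdots + 2 E_{k-2} + E_{k-1} + E_k$ for $i \leq k-2$). Since $(2e_j, E_j) = -2$, each $2e_j$ has divisibility $2$ inside $\bigoplus\ZZ E_i$; the crucial step is to verify that this embedding is primitive in $\Lambda$ and that all inner products of $2 e_j$ with classes in $\Lambda$ (notably contributions coming from the strict transform $\bar A$ of the preimage of $A$, which meets the exceptional configuration at the predicted node dictated by the resolution) remain even. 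Consequently each $2 e_j$ is a bona fide hyperelliptic vector, producing $k$ pairwise orthogonal hyperelliptic vectors in $\sigma^{\perp}\cap\Lambda$, so $\Pi(X, L) \in \im f_{18 - k, 18}$ for $k \neq 8$.

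For $k = 8$, the same construction in the $D_8$-exceptional places $\Pi(X, L) \in \im f_{10, 18}$ whenever $C$ contains no line. When $C$ does contain a line $\ell \in |\cO_Q(1)|$ (necessarily a ruling through the vertex), the strict transform of $\ell$ is a fiber $F_0 \subset C_0$ belonging to the branch locus, whose reduced preimage $\bar\ell = \wt\varphi_0^{-1}(F_0)$ is an integral $(-2)$-curve satisfying $2\bar\ell = \wt\varphi_0^{*} F_0$. The resulting enlargement of the integer span obstructs primitivity of the $D_8$-embedding, and a careful combination of one of the $2 e_j$ with $\bar\ell$ minus an appropriate integer multiple of $\wt\varphi_0^{*}F$ yields a class of square $-4$ and divisibility $4$ in $\Lambda$; together with the remaining seven $2 e_j$'s, this matches the $l_{11}$-construction of Proposition~1.5.1 of~\cite{log1}, giving $\Pi(X,L)\in\im(f_{11,18}\circ l_{11})$. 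The main obstacle is this last paragraph: controlling precisely how the presence of the line $\ell$ modifies the embedding of the $D_8$-exceptional into $\Lambda$, so that exactly one of the would-be hyperelliptic vectors jumps to divisibility $4$ in the way specified by $l_{11}$, rather than producing a different Heegner-type configuration.
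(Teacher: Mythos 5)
Your approach coincides structurally with the paper's: you use the $D_k$ sublattice $R$ of $\Lambda$ spanned by the exceptional $(-2)$-curves over $A\cap C$ (your $E_1,\dots,E_k$ are the paper's $\Gamma_1,\dots,\Gamma_k$), and your list $2e_1,\dots,2e_k$ of square $-4$ vectors agrees, after translation, with the paper's list~\eqref{lista}. The orthogonality and square computations are fine, and the local computation confirming the $D_k$ surface singularity is correct.

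The genuine gap is in the divisibility step, which is in fact the crux. You observe that $(2e_j,E_j)=-2$, so $\divisore(2e_j)\mid 2$ \emph{inside} $R$, and then write that ``the crucial step is to verify that this embedding is primitive in $\Lambda$ and that all inner products of $2e_j$ with classes in $\Lambda\dots$ remain even.'' But you don't supply that verification, and the mechanism you suggest (primitivity of $R\hra\Lambda$) is not what makes it work. What the paper uses is the explicit multiplicity structure of the total transform
\begin{equation*}
\wt{\varphi}_0^{*}A=2\Gamma_0+2\Gamma_1+\cdots+2\Gamma_{k-2}+\Gamma_{k-1}+\Gamma_k,
\end{equation*}
combined with the fact that $\wt{\varphi}_0^{*}A\in\wt{\varphi}_0^{*}H^2(Q_0;\ZZ)$ and hence pairs to zero with every $z\in\Lambda$. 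Reducing this identity mod $2$ yields $z\cdot(\Gamma_{k-1}+\Gamma_k)\equiv 0\pmod 2$, which immediately forces all the classes in~\eqref{lista} to have even pairing with every $z\in\Lambda$; combined with $(\Gamma_k,\Gamma_k\pm\Gamma_{k-1})=-2$ this pins down $\divisore=2$. Without this pullback relation your claim that the $2e_j$ are hyperelliptic vectors in $\Lambda$ (rather than merely in $R$) is unproved — knowing the divisibility inside $R$ bounds it from above but says nothing about whether some class in $\Lambda$ outside $R$ pairs oddly.

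A secondary point: your extended discussion of the $k=8$ case (splitting according to whether $C$ contains a line) is more than the proposition requires at this stage. Once you have $8$ pairwise orthogonal hyperelliptic vectors in $\psi(R)$, Proposition~1.7.2 of~\cite{log1} directly delivers the dichotomy $\Pi(X,L)\in \im f_{10,18}\cup\im(f_{11,18}\circ l_{11})$ with no further geometric case analysis needed. The finer identification of which branch one lands in (line or no line) is treated separately, in the proof of~\Ref{prp}{hypkgeom}, not here — and you rightly flag that your sketch of the $l_{11}$ construction is incomplete.
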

\begin{proof}
The statement for $k=1$ has been proved in~\Ref{prp}{keyone}. Thus we may assume that $k\ge 2$. 
Let $R\subset H^{1,1}(\wt{X};\ZZ)$ be the span of the classes of curves which are mapped to a point of $A\cap C$ (notation as in~\Ref{rmk}{conesetup}) by $\wt{\varphi}_0$. Then the following hold:
\begin{enumerate}
\item
$R\subset \wt{\varphi}_0^{*}H^2(Q_0;\ZZ)^{\bot}$,
\item
$R\cong D_k$,
\end{enumerate}
Item~(1) is clear.  If $k=2$, then $C$ intersects $A$ at two points, and the intersection is transverse; thus  $R\cong A_1^2\cong D_2$. If $k\ge 3$ then $C$ intersects $A$ at a single point $p$, which is a $D_k$-singularity of $B_0$ (see~\Ref{rmk}{moresetup}), and Item~(2) follows. Let $\psi\colon \wt{\varphi}_0^{*}H^2(Q_0;\ZZ)^{\bot} \overset{\sim}{\lra}\Lambda$ be as in~\Ref{rmk}{conesetup}; then $\Pi(X,L)\subset \psi(R)^{\bot}$. By Proposition~1.7.2 in~\cite{log1}, it will suffice to prove that   $\psi(R)$ contains $k$ pairwise orthogonal hyperelliptic vectors. 

A straightforward computation shows  that
\begin{equation*}
\wt{\varphi}_0^{*}A=2\Gamma_0+2\Gamma_1+\ldots+2\Gamma_{k-2}+\Gamma_{k-1}+\Gamma_k,
\end{equation*}
where  $\wt{\varphi}_0(\Gamma_0)=A$,  $\wt{\varphi}^{-1}_0(A\cap D)=\Gamma_1\cup\ldots\cup\Gamma_k$, and the rational curves $\Gamma_0,\Gamma_1,\ldots,\Gamma_k$  form a $D_{k+1}$-Dyinkin diagram
\begin{equation}\label{dynkdien}
 \centering
\xy
\POS(-10,0) = "z",
\POS(0,0) *\cir<2pt>{}="a",
\POS(10,0) *\cir<2pt>{}="b",
\POS(20,0) *\cir<2pt>{}="c",
\POS(30,0) *\cir<2pt>{}="d",
\POS(40,5) *\cir<2pt>{}="e",
\POS(40,-5) *\cir<2pt>{}="f",
\POS(50,5) ="g",
\POS(50,-5) ="h"

\POS "a" \ar@{-}^<<{\Gamma_0} "b",
\POS "b" \ar@{.}^<<{\Gamma_1} "c",
\POS "c" \ar@{.}^<<{\Gamma_l} "d",
\POS "d" \ar@{-}^<<<<{} "e",
\POS "d" \ar@{-}^<{\Gamma_{k-2}} "f",
\POS "e" \ar@{}^<<{\Gamma_{k-1}} "g",
\POS "f" \ar@{}^<{\Gamma_{k}} "h",
\endxy 
\end{equation}
Thus $R$ is spanned by the cohomology classes $[\Gamma_1],\ldots,[\Gamma_k]$.  
The $k$ cohomology classes
\begin{equation}\label{lista}
\scriptstyle
[\Gamma_k]-[\Gamma_{k-1}],\ [\Gamma_k]+[\Gamma_{k-1}],\ [\Gamma_k]+[\Gamma_{k-1}]+2[\Gamma_{k-2}]+\ldots+2[\Gamma_{i}], \quad 1\le i\le k-2,
\end{equation}
are pairwise orthogonal, and of square $-4$. 
In order to finish the proof it suffices to show that each of the classes in~\eqref{lista} has divisibility  $2$ in $\wt{\varphi}_0^{*}H^2(Q_0;\ZZ)^{\bot}$. 
Let $1\le i\le k-1$, and $z\in \wt{\varphi}_0^{*}H^2(Q_0;\ZZ)^{\bot}$; then
\begin{equation*}
\scriptstyle
0=z\cdot \wt{\varphi}_0^{*}A=z\cdot(2\Gamma_0+2\Gamma_1+\ldots+2\Gamma_{k-2}+\Gamma_{k-1}+\Gamma_k)
\equiv z\cdot(2\Gamma_i+2\Gamma_{i+1}+\ldots+2\Gamma_{k-2}+\Gamma_{k-1}+\Gamma_k)\pmod{2}
\end{equation*}
(If $i=k-1$, the expression in parentheses is to be understood as $(\Gamma_{k-1}+\Gamma_k)$.) This proves that all of the classes listed in~\eqref{lista} have divisibility a multiple of $2$. On the other hand 
$$\Gamma_k\cdot (\Gamma_k\pm \Gamma_{k-1})=-2,\quad \Gamma_i\cdot (\Gamma_k+\Gamma_{k-1}+2\Gamma_{k-2}+\ldots+2\Gamma_{i})=-2, \quad 1\le i\le k-2,$$
and hence their divisibility is equal to $2$. 
\end{proof}
\begin{proposition}\label{prp:esiste}
Let $Q\subset\PP^3$ be a quadric cone, with vertex $v$. Then $\cS_k(Q)$  is not empty for  $1\le k\le 16$, and each of its 
 irreducible components  has codimension at most $(k-1)$ in $|\cO_Q(4)|_{K3}$.  There exist irreducible 
  components $\cZ_1(Q),\ldots,\cZ_{16}(Q)$ of $\cS_1(Q),\ldots,\cS_{16}(Q)$ respectively, such that $\cZ_k(Q)\subset\ov{\cZ_{k-1}(Q)}$. 
\end{proposition}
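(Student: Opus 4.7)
The plan is to reduce to the minimal resolution $\pi\colon Q_0 \cong \mathbb{F}_2 \to Q$ and use \Ref{rmk}{moresetup}: for $k \ge 2$ the locus $\cS_k(Q)$ is in bijection with the sublocus of $|3A+8F|$ consisting of curves $D$ with $D|_A = 2p$ for some $p \in A$ and, for $k \ge 3$, an $A_{k-3}$-singularity at $p$. The case $k=1$ is immediate since $\cS_1(Q)$ is open and dense in $|\cO_Q(4)|_{K3}$, and $k=2$ is generic among curves through $v$.

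First, I would establish the codimension bound by working in a local analytic chart $(X,s)$ at a chosen $p = (0,0) \in A$ with $A = \{X=0\}$. In this chart, sections of $\cO(3A+8F)$ restrict to polynomials of the form $g(X,s) = P_1(s) + X P_2(s) + X^2 P_3(s) + X^3 P_4(s)$ with $\deg P_n \le 2n$, accounting for all $24$ coefficients of the linear system. Forcing $P_1(s) = cs^2$ (with $c\ne 0$) imposes $D|_A = 2p$ as $2$ conditions, and requiring an $A_{k-3}$-singularity at $p$ adds $k-3$ further conditions: passing singularly through $p$ is codimension $1$, and distinguishing $A_{k-3}$ from $A_{k-4}$ is an additional codimension $k-4$. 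After absorbing the $1$-dimensional choice of $p \in A$, the total codimension in $|3A+8F|$ is $k-2$, hence $k-1$ in $|\cO_Q(4)|$, as claimed.

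For non-emptyness and nestedness, the key observation is that after Weierstrass preparation in $s$, the germ of $g$ at $p$ is analytically equivalent to $s^2 + h(X)$ for a power series $h$, and $D$ has an $A_n$-singularity at $p$ iff $\operatorname{ord}_X h = n+1$. The leading coefficients of $h$ are determined by the polynomials $A_1(X)$, $A_0(X)$ (the $s^1$- and $s^0$-parts of $g$, both polynomials of degree $\le 3$ in $X$), while the higher-order coefficients of $h$ receive nonlinear corrections from the higher-$s$-degree coefficients in $P_2, P_3, P_4$. Starting from a generic $D \in \cS_2(Q)$ and successively tuning the $22$ remaining parameters to kill the next coefficient of $h$, one produces explicit $D_k \in \cS_k(Q)$ realizing $A_{k-3}$ for every $k \le 16$; the nested inclusion $\cZ_k \subset \ov{\cZ_{k-1}}$ then follows from the classical fact that $A_{k-2}$ lies in the closure of the $A_{k-3}$-stratum in the versal deformation, supplying a $1$-parameter degeneration $D_{k-1}\rightsquigarrow D_k$ inside $|3A+8F|$.

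The main obstacle is verifying that the tuning can be carried out all the way up to $n = 13$, i.e.\ that the nonlinear system imposing the first $14$ vanishing conditions on $h(X)$ remains consistent. This reduces to a jet-separation statement for $\cO(3A+8F)$ at points of $A$: a direct inspection of the available monomials $X^is^j$ (with $0\le i\le 3$ and $0\le j \le 2i+2$) appearing in $g$ confirms that the high-$s$-degree coefficients in $P_3$ and $P_4$ provide enough nonlinear cross-terms via Weierstrass preparation to realize any correction to $h$ of order up to $14$, so that $\cS_{16}(Q)$ is non-empty with codimension exactly $15$ and the full chain $\cZ_1 \supset \cZ_2 \supset \cdots \supset \cZ_{16}$ exists.
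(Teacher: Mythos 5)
Your proof differs substantially from the paper's, and the key step is left unverified. You reduce (correctly, via Remark~\ref{rmk:moresetup}) to producing $D\in|3A+8F|$ on $Q_0\cong\FF_2$ with an $A_{13}$-singularity at a point $p\in A$ where $D|_A=2p$, and your local model $g(X,s)=P_1(s)+XP_2(s)+X^2P_3(s)+X^3P_4(s)$ with $\deg P_i\le 2i$ (so $24$ coefficients in all) is right. But the crux of the non-emptiness statement is exactly what you flag as ``the main obstacle'' and then dispose of in one sentence: that the nonlinear system forcing $\operatorname{ord}_X h=14$ after Weierstrass preparation and completing the square is consistent. Saying ``a direct inspection of the available monomials confirms that the high-$s$-degree coefficients in $P_3$ and $P_4$ provide enough nonlinear cross-terms'' is an assertion, not a verification. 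Parameter counts alone ($20$ free parameters against $\sim 13$ conditions) do not settle a nonlinear system, and the fact that the range of $k$ stops sharply at $16$ is a signal that the constraints are tight enough that a naive linearization/genericity argument cannot be trusted without checking it. As written, the non-emptiness of $\cS_{16}(Q)$ — and therefore the whole chain — is not established. (A minor additional slip: the inclusion $\cZ_k\subset\overline{\cZ_{k-1}}$ uses that an $A_{k-3}$-germ lies in the closure of the $A_{k-4}$-stratum, not $A_{k-2}$ in the closure of $A_{k-3}$.)

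The paper avoids this obstacle entirely by exhibiting an explicit member of the deepest stratum: choose smooth curves $D_1\in|A+3F|$ and $D_2\in|2A+5F|$ meeting at a single point $p\in A$ with the full intersection multiplicity $(A+3F)\cdot(2A+5F)=7$. Since both are smooth branches meeting to order $7$, the union $D=D_1+D_2$ has an $A_{13}$-singularity at $p$, and since $A\cdot D_1=A\cdot D_2=1$, one also has $D|_A=2p$; hence $D\in\cT_{16}(Q)$ and $\cS_{16}(Q)\neq\varnothing$. Producing $D_1,D_2$ with this contact is an elementary cohomological argument, and it replaces your open-ended nonlinear jet computation with a one-line verification. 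The paper then constructs the nested chain $\cZ_{16}\subset\overline{\cZ_{15}}\subset\cdots$ not by a degeneration argument inside the linear system, but by going to the $K3$ double cover $\wt X$ and using its versal deformation with periodicity constraints: one keeps the sublattice spanned by $\wt\varphi_0^*F,[\Gamma_0+\Gamma_1],[\Gamma_2],\ldots,[\Gamma_k]$ of type $(1,1)$, and observes that the generic such deformation comes again from a double cover of $Q_0$ along $A+D'$ with $D'\in\cT_{k-1}(Q)$. This simultaneously gives non-emptiness for all $k\le 16$ and the chain of components. If you want to salvage your approach, you should either supply a concrete $(P_2,P_3,P_4)$ realizing $\operatorname{ord}_X h=14$, or replace the jet-separation claim by the paper's $D_1+D_2$ construction, which is what actually closes the argument.
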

\begin{proof}
First we prove that  $\cS_k(Q)$  is not empty, and that its codimension is as stated. If $k=1$, the result is trivially true. 
Since $\cS_2(Q)$ is an open dense subset of $|\cI_v\otimes\cO_Q(4)|$,  the statement of the proposition is  true for $k=2$. It is equally easy to check that the statement  is true for $k=3$, if we identify $\cS_3(Q)$ with the set of $D\in|3A+8F|$  with simple singularities, which intersect $A$ at a single point, and are smooth at that point, see~\Ref{rmk}{moresetup}.
Thus we may assume that $k\ge 4$. 
By~\Ref{rmk}{moresetup} we may identify $\cS_k(Q)$ with the subset $\cT_k(Q)\subset |3A+8F|$ parametrizing divisors $D$ with simple singularities, and  with an  $A_{k-3}$-singularity at the unique point of intersection with $A$.  Once we know that  $\cT_k(Q)$ is not empty,  the codimension statement follows from general results about the versal deformation space of an $A_{k-3}$-singularity. 
Let us prove that $\cT_k(Q)$ is not empty.  There exist  $D_1\in |A+3F|$, and  $D_2\in |2A+5F|$ intersecting at a point $p\in A$  (notice that $A\cdot(A+3F)=1$) with multiplicty $7=(A+3F)\cdot(2A+5F)$, and  smooth at $p$; in fact one argues that $D_1,D_2$ exist via a simple cohomological argument. Then $D=D_1+D_2\in \cT_{16}(Q)$, and hence $\cS_{16}(Q)$ is not empty.  Let $\varphi\colon X\to Q$ be the double cover ramified over a curve $C\in \cS_{16}(Q)$, and let $\wt{X}$ be the minimal desingularization of $X$.  
Let $\Gamma_0,\Gamma_1,\ldots,\Gamma_{16}\subset\wt{X}$ be the rational curves introduced in the proof of~\Ref{prp}{dikappa}. In the versal deformation of $\wt{X}$, consider deformations that keep the classes $\wt{\varphi}_0^{*}F, [\Gamma_0+\Gamma_1],[\Gamma_2],\ldots,[\Gamma_k]$ of type $(1,1)$. The generic such deformation is the desingularization of a double cover $X'\to Q_0$ ramified over  a divisor $A+D'$, where $D'\in \cT_{15}(Q)$. Iterating, one proves that $\cT_k(Q)$  is not empty for $1\le k\le 16$. This argument also produces  irreducible  components  $\cZ_1(Q),\ldots,\cZ_{16}(Q)$ of  $\cS_1(Q),\ldots,\cS_{16}(Q)$ respectively with the stated property.
\end{proof}

\n
{\it Proof of~\Ref{prp}{hypkgeom}.\/} We have $\im f_{18-k,18}\subset H_h(18)$ for $1\le k$ (see \S1.7.1 in~\cite{log1}),  and $\Pi(X,L)\in H_h$ if and only if $\varphi_L(X)$ is a quadric cone by~\Ref{prp}{keyone}. It follows that we may  \emph{fix} a quadric cone $Q\subset\PP^3$, with vertex $v$, and consider only double covers $X\to Q$ ramified over  curves  
$C\in |\cO_Q(4)|_{K3}$. Let 
$\Phi\colon  |\cO_Q(4)|_{K3} \to H_h$
 be the period map associating to $C$ the  point  $\Pi(X,L)$, where  
$X\to Q$ is the double cover  ramified over  $C$. By Global Torelli the fibers of $\Phi$ are the orbits for the action of $\Aut(Q)$ on $ |\cO_Q(4)|_{K3}$.
The image by $\Phi$ of an open  subset of $ |\cO_Q(4)|_{K3}$ is open in $H_h$. It follows  that  the image by $\Phi$ of a closed and $\Aut(Q)$-invariant 
subset of $ |\cO_Q(4)|_{K3}$  is closed. 

Let us prove Items~(1) and~(4). Let $1\le k\le 15$. 
The subset $\Phi(\ov{\cS(Q)}_k)\subset \sF$ is closed because $\ov{\cS(Q)}_k$ is closed and $\Aut(Q)$-invariant. Moreover 
\begin{equation}\label{twooptions}
\Phi(\ov{\cS(Q)}_k)\subset 
\begin{cases}
\im f_{18-k,18} & \text{if $k\not=8$,} \\
\im f_{10,18}\cup \im(f_{11,18}\circ l_{11}) & \text{if $k=8$.}
\end{cases}
\end{equation}
 by~\Ref{prp}{dikappa}. 
 On the other hand,  by~\Ref{prp}{esiste} every irreducible component of $\ov{\cS(Q)}_k$  has codimension at most $(k-1)$ in $|\cO_Q(4)|_{K3}$.
  
  Now assume that  $k\not=8$.  Since $ \im f_{18-k,18}$ 
   is irreducible, closed, of codimension $k$ in $\sF$ (and hence of codimension $k-1$ in $H_h$), it follows from~\eqref{twooptions} that  $\Phi(\ov{\cS(Q)}_k)=\im f_{18-k,18}$. 
 
Next, assume that  $k=8$. 
 Let $\cS_8^{\rm ex}(Q)\subset \cS_8(Q)$ be the subset of divisors  $(L+D)$, where 
$L$ is a line on $Q$. Then $\cS_8^{\rm ex}(Q)$ is closed, $\Aut(Q)$-invariant,  and a straightforward parameter count shows that $\cS_8^{\rm ex}(Q)$ has codimension $7$ in $|\cO_Q(4)|_{K3}$. 
Recalling Proposition~1.7.2 in~\cite{log1}, and arguing as above, it follows that $\Phi(\cS_8^{\rm ex}(Q))$ is an irreducible component of $\im f_{10,18}\cup \im (f_{11,18}\circ l_{11})$. On the other hand, $\cS_8^{\rm ex}(Q)$ is not the whole of $\cS_8(Q)$, because  $\cS_8(Q)$ is not closed, by~\Ref{prp}{esiste}. It follows that there is a unique other irreducible component of $\cS_8(Q)$ (unicity follows from Proposition~1.7.2 ibid.), call it  $\cS_8^{\rm st}(Q)$, and that  
\begin{equation*}
\Phi(\ov{\cS}_8^{\rm st}(Q))=\im f_{10,18},\qquad 
\Phi(\cS_8^{\rm ex}(Q))=\im(f_{11,18}\circ l_{11}).
\end{equation*}
Now let us prove Item~(2). Suppose that $C\in \cS_5(Q)$, and that the support of the tangent cone of $C$ at   the vertex of $Q$ is tangent to a line of $Q$.
Let $(X,L)$ be the hyperelliptic polarized $K3$ surface of degree $4$ corresponding to $C$. 
Let us prove that $\Pi(X,L)\in \im(f_{13,18}\circ q_{13})$. Retaining the notation of~\Ref{rmk}{conesetup} and~\Ref{rmk}{moresetup}, we have $C_0=A+D$, where by our hypotheses $D$ intersects $A$ in a single point $p$,  $D$ has an $A_4$ singularity at $p$, and the tangent cone $\cC_p(D)$ is equal to $2T_p(E)$, where $E\in |F|$ is the unique divisor containing $p$. 
Thus $E$ intersects $C_0$ in the single point $p$, with multiplicity $4$, and hence $\wt{\varphi}_0^{*}E$ splits as the sum of two divisors: $\wt{\varphi}_0^{*}E=E'+E''$. Let 
$\Gamma_0,\ldots,\Gamma_5$ be as in the proof of~\Ref{prp}{dikappa}. The cohomology class $[E'-E'']$ is orthogonal to $\wt{\varphi}_0^{*}H^2(Q_0;\ZZ)$.  One checks that 
\begin{equation*}
(E'-E'')\cdot \Gamma_i=
\begin{cases}
0 & \text{if $i\in\{0,3\}$,} \\
1 & \text{if $i=4$,} \\
-1 & \text{if $i=5$.} \\
\end{cases}
\end{equation*}
Thus $\alpha:=[\Gamma_4-\Gamma_5+2E'-2E'']$ is orthogonal both  to $\wt{\varphi}_0^{*}H^2(Q_0;\ZZ)$ and to $R$ (the span of $[\Gamma_1],\ldots,[\Gamma_5]$, notation as  in the proof of~\Ref{prp}{dikappa}). Since  $R$ contains $5$ pairwise orthogonal hyperelliptic vectors (see the proof  of~\Ref{prp}{dikappa}), it suffices to show that $\alpha$ has square $-12$ and divisibility $4$ in the orthogonal of  $\wt{\varphi}_0^{*}H^2(Q_0;\ZZ)\cup R$. The equality $\alpha^2=-12$ is straightforward. Let $z\in H^2(\wt{X};\ZZ)$ be orthogonal to 
$\wt{\varphi}_0^{*}H^2(Q_0;\ZZ)\cup R$. Since  $\wt{\varphi}^{*}_0 E\equiv E'+E''\pmod{R}$, we have $z\cdot [E']=-z\cdot [E'']$, and hence $z\cdot \alpha=4z\cdot [E']$. Thus the divisibility of $\alpha$ is a multiple of $4$. Since the orthogonal to 
$\wt{\varphi}_0^{*}H^2(Q_0;\ZZ)\cup R$ is a $D$ lattice (see~\cite{log1}), the divisibility of a (primitive) vector of $\wt{\varphi}_0^{*}H^2(Q_0;\ZZ)\cup R$ is at most $4$, hence 
$\divisore(\alpha)=4$. 

Now, let  $\cZ\subset \cS_5(Q)$ be the subset parametrizing curves as above, and let $\ov{\cZ}$ be its closure in $|\cO_Q(4)|_{K3}$. We proved above that $\Phi(\cZ)\subset \im(f_{13,18}\circ q_{13})$, and since $\im(f_{13,18}\circ q_{13})$ is closed, it follows that 
$\Phi(\ov{\cZ})\subset \im(f_{13,18}\circ q_{13})$.
 On the other hand, an easy dimension count shows that $\cZ$ has codimension $5$ in $|\cO_Q(4)|_{K3}$; since  $\ov{\cZ}$ is $\Aut(Q)$-invariant, it follows that $\Phi(\ov{\cZ})$ is closed, of codimension $5$ in $H_h$. Since $ \im(f_{13,18}\circ q_{13})$ is irreducible of codimension $5$ in $H_h$, Item~(2) follows.

Lastly, we prove Item~(3). Suppose that $C\in \cS_6(Q)$ contains a line, and let $(X,L)$ be the hyperelliptic polarized $K3$ surface of degree $4$ corresponding to $C$. 
Let us prove that $\Pi(X,L)\in \im(f_{12,18}\circ m_{12})$.
Retaining the notation of~\Ref{rmk}{conesetup} and~\Ref{rmk}{moresetup}, we have $D=D'+E$ with
$D'\in |3A+7F|$ and $E\in |F|$.
Moreover there is a unique point in $A\cap D'\cap E$, call it $p$, and $\mult_p(D'\cdot E)=2$ (because $D$ has an $A_3$ singularity at $p$, see~\Ref{rmk}{moresetup}). Since $D'\cdot E=3$, there is a unique point in $(D'\cap E)\setminus A$, call it $q$, and $\mult_q(D'\cdot E)=1$.  Then $\wt{\varphi}_0^{-1}(q)$ is a smooth rational curve $N\subset \wt{X}$ (notation as in~\Ref{rmk}{conesetup}). Next, let $M\subset H^{1,1}(\wt{X};\ZZ)$ be the subgroup spanned by $\wt{\varphi}_0^{*}H^2(Q_0;\ZZ)$ and $R$ (notation as  in the proof of~\Ref{prp}{dikappa}). Notice that 
$[N]\in M^{\bot}$. We have $N\cdot N=-2$. By definition of $\im(f_{12,18}\circ m_{12})$, it will suffice to show that
\begin{equation}\label{uragano}
([N],M^{\bot})=2\ZZ.
\end{equation}
Let $\wt{E}\subset\wt{X}$ be the strict transform of $E$. Then (notation as in the proof of~\Ref{prp}{dikappa})
\begin{equation}\label{manyterms}
\wt{\varphi}_0^{*}E=N+2\wt{E}+\Gamma_1+2\Gamma_2+3\Gamma_3+4\Gamma_4+3\Gamma_5+2\Gamma_6.
\end{equation}
Since $\wt{\varphi}_0^{*}E,\Gamma_1,\ldots,\Gamma_5\in M$, Equation~\eqref{uragano} follows at once from~\eqref{manyterms}.  
This proves that $\Pi(X,L)\in \im(f_{12,18}\circ m_{12})$.

One concludes the proof of Item~(3)   arguing exactly as in the proof of Item~(2).
\qed
\subsection{The boundary of the Baily-Borel compactification}\label{subsec:sectbb}
We recall from~\cite{log1} that $\sF(19)$ is the period space of polarized $K3$'s of degree $4$. 
The boundary of  $\sF(19)^{*}$, i.e.~$\sF(19)^{*}\setminus \sF(19)$ has been studied by Scattone in~\cite{scattone}. In the appendix \Ref{sec}{bbcompact} we generalize Scattone's discussion to the initial segment  $\sF(3)\subset\ldots\subset\sF(20)$  of the $D$-tower (Camere~\cite{camere} has analyzed the boundary of  $\sF(20)^{*}$). In particular, for hyperelliptic quartic $K3$s we get

\begin{theorem}\label{thm:thmbbstrata}
The Baily-Borel compactification $\calF^*$ (associated to $D_{16}\oplus U^2$) consists of
\begin{itemize}
\item[i)] Two Type III boundary points, that we label $III_a$ and $III_b$ respectively.
\item[ii)] Eight Type II boundary components. Six of the Type II boundary components, call them of {\it type a} (labeled by $D_{16}$, $D_8\oplus E_8$, $(E_7)^2\oplus D_2$, $A_{15}\oplus D_1$, and $D_{8}^2$) are incident only to $III_a$. The remaining two Type II boundary component (label $(E_8)^2$ and $D_{16}^+$) are incident to both $III_a$ and $III_b$. The incidence diagram is given in \eqref{incidencebb}. 
\begin{equation}\label{incidencebb}
\xymatrix{
&&& \ \ \ \ \ \ \ \circ^{II(D_{16})}\\ 
&&& \ \ \ \ \ \ \circ^{II(D_7)}\\ 
 & \ \ \ \ \ \ \ \circ^{II(D_{16}^+)}\ar@{-}[rd]&& \ \ \ \ \ \ \ \circ^{II(D_8\oplus E_8)}\\
 \bullet^{III_b}\ar@{-}[ru]\ar@{-}[rd]&&\bullet^{III_a}\ar@{-}[ruuu]\ar@{-}[ruu]\ar@{-}[ru]\ar@{-}[r]\ar@{-}[rd]\ar@{-}[rdd]&\circ^{II(D_{12}\oplus D_4)}\\
&\ \ \ \ \ \ \ \ \circ_{II((E_8)^2)}\ar@{-}[ru]&& \ \ \ \ \ \ \circ_{II(D_7)}\\
&&& \ \ \ \ \ \ \ \circ_{II(D_7)}\\ 
}
\end{equation}
\item[iii)] Furthermore, each of the six Type IIa boundary components are isomorphic to the modular curve $\fh/ \SL(2,\bZ)$. Each of the two Type IIb boundary components are isomorphic to the modular curve $\fh/\Gamma_0(2)$ where $\Gamma_0(2)=\left\{\left(\begin{matrix} a&b\\c&d\end{matrix}\right), \ c\equiv 0 (2)\right\}$.
\end{itemize}
\end{theorem}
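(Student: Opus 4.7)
The plan is to apply the standard description of the Baily--Borel compactification of a Type IV locally symmetric variety, which reduces the problem to an arithmetic enumeration of $\Gamma$-orbits of primitive isotropic sublattices of $\Lambda = U^2 \oplus D_{16}$ of ranks $1$ and $2$. Type III boundary points correspond to $\Gamma$-orbits of primitive isotropic vectors $e \in \Lambda$; Type II boundary curves correspond to $\Gamma$-orbits of primitive rank-two isotropic sublattices $J \subset \Lambda$; and the incidence $III \in \overline{II}$ is the containment $\ZZ e \subset J$, modulo $\Gamma$. Each Type II stratum carries a canonical structure of a modular curve $\fh / \Gamma_J$, where $\Gamma_J$ is the image in $\SL(J) \cong \SL(2, \ZZ)$ of the stabilizer of $J$ in $\Gamma$.

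For the Type III enumeration (Item~(i)), I would classify primitive isotropic vectors $e \in \Lambda$ via the pair consisting of the isometry class of the cusp lattice $K(e) := e^\perp / \ZZ e$ together with the divisibility $\divisore(e)$. An Eichler-type criterion applied to the $2$-elementary lattice $D_{16}$ identifies exactly two orbits, distinguished by their discriminant-form invariants, yielding the two points $III_a$ and $III_b$. For the Type II enumeration (Items~(ii)--(iii)), I would attach to each $J$ its cusp lattice $K(J) := J^\perp / J$, a negative-definite even lattice of rank $16$, and invoke the genus-theoretic classification of such lattices together with Nikulin's theorem on primitive embeddings into $U^2 \oplus D_{16}$. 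This is expected to produce exactly the eight candidate $K(J)$'s appearing in~\eqref{incidencebb}. The modular-curve structure and the dichotomy $\SL(2, \ZZ)$ versus $\Gamma_0(2)$ would then follow from inspecting, for each $K(J)$, the stabilizer of $J$ acting on the plane, and from the classical fact that $\Gamma_0(2)$ has exactly two cusps while $\SL(2, \ZZ)$ has one, matching the incidences with $\{III_a\}$ or with $\{III_a, III_b\}$ respectively.

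The main obstacle is the clean organization of the lattice-theoretic bookkeeping: realizability of each candidate $K(J)$, uniqueness of the corresponding $\Gamma$-orbit, and identification of which Type III points each Type II curve passes through (which in turn depends on a gluing datum between $J$ and $J^\perp/J$). Rather than attacking $\sF(18)$ in isolation, I would carry out the enumeration uniformly for the entire initial segment $\sF(3) \subset \cdots \subset \sF(20)$ of the $D$-tower; this exploits the inductive structure underlying the HKL program and simultaneously extends the analyses of Scattone for $\sF(19)$ and Camere for $\sF(20)$. The uniform treatment, developed in the appendix~\Ref{sec}{bbcompact}, specializes at $N = 18$ to the statement of the theorem.
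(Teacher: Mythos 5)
Your proposal follows essentially the same route as the paper. The paper delegates the arithmetic enumeration to the uniform $D$-tower analysis in the appendix (Section~\ref{sec:bbcompact}) — Type III strata via $\Gamma$-orbits of rank-one isotropic subgroups distinguished by divisibility, Type II strata via the map from rank-two isotropic subgroups to isomorphism classes of rank-$16$ negative definite cusp lattices classified by genus theory (Theorem~\ref{thm:classifydn}), with the incidence data controlled by whether the hyperbolic part splits as $U^2$ or $U\oplus U(2)$ (Proposition~\ref{prp:isoexe}) — and then the proof of Theorem~\ref{thm:thmbbstrata} proper only needs the stabilizer computation identifying $\Gamma_0(2)$ for the two Type~IIb components via the divisibility-preserving base-change argument; this matches your outline (including your intended use of cusp counting to match each $\fh/\Gamma_0(2)$ curve with both $III_a$ and $III_b$). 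The one cosmetic difference is that where you cite Nikulin's embedding theorem, the paper follows Scattone and Nishiyama and works via embeddings of $D_k$ into Niemeier lattices (Table~\ref{tabledkembed}), but these are interchangeable techniques for the same classification.
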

\begin{proof}
The only remaining thing to check is the stabilizer of the two special Type II components ($D_{16}^+$ and $(E_8)^2$).  They correspond specifically to the following situation $E\subset \Lambda$ is a rank $2$ isotropic lattice such that there exists $v\in E\subset \Lambda$ primitive with $\divisore_{\Lambda}(v)=2$. It is immediate to see that the image of the natural morphism (from the normalizer of the cusp to the automorphism group of $E$)
$$N_E(O^+(\Lambda))\to \Aut(E)=\GL(2,\bZ)$$
is $\Gamma_0(2)$. Simply, a basis of $E$ can be taken to be $\{v,w\}$ with $\divisore(v)=2$ and $\divisore(w)=1$. Any base change of $E$ induced by an isometry of the lattice $\Lambda$ will preserve the divisibility of $v$. Thus, we only allow base changes of type 
$v'=av+cw$ with $c\equiv 0 (2)$ (and no restriction for $w$), which gives precisely $\Gamma_0(2)$. 
\end{proof}

\begin{remark}\label{rmk:remnormalize}
As we mentioned above, $\calF(19)$ is the period space of polarized $K3$ surfaces of degree $4$. As discussed in our previous paper~\cite{log1}, the period space of  hyperelliptic polarized $K3$ surfaces of degree $4$ is  a normal Heegner divisor:
$$\calF\subset \calF(19).$$
(The embedding is induced from the natural lattice embedding $\Lambda_{18}=D_{16}\oplus U^2\subset \Lambda_{19}=D_{17} \oplus U^2$.) This extends to a morphism at the level of the Baily-Borel compactification 
$$\calF^*\to \calF(19)^*$$
which is the normalization of the image. There are $9$ Type II Baily-Borel components for $\calF(19)^*$ (cf. \cite[\S6.3]{scattone}). One of these components (labelled $E_6\oplus D_7\oplus A_{11}$) does not meet the image of $\calF^*$ and thus it does not appear as a Type II component in $\sF^*$. For the remaining $8$ Type II components we have the following behavior. First, the six Type IIa components of $\calF^*$ will map isomorphically to six Type II components in $\calF(19)^*$. The remaining two components of Type IIb (with label $D_{16}^+$ and $(E_8)^2$) will map $3$-to-$1$ to the two Type II components (labeled $E_8^2\oplus D_1$ and $D_{16}\oplus D_1$ respectively) in $\calF(19)^*$ (N.B. $3=[\SL(2,\bZ):\Gamma_0(2)]$).  Similarly, the two Type III points in $\calF^*$ will map to the unique Type III point in $\calF(19)^*$. Furthermore,  the  closure of a Type IIb component in $\sF^*$ will contain both the $III_a$ and  $III_b$ points (N.B. $(\mathfrak h/\Gamma_0(2))^*\cong \bP^1$ with $2$ cusps) with $III_b$ a ramification point, and IIIa not ramification. 
\end{remark}

\begin{remark}
The subgroup $\Gamma_0(2)$ corresponds to the moduli of elliptic curves with a choice of non-trivial torsion point of order $2$. Equivalently, we can view it as moduli of $4$ points in $\bP^1$ where the points are coming in pairs. The point at infinity for the moduli of $4$ points in $\bP^1$ corresponds to two double points in $\bP^1$. When the points are coming in pairs, there are two possibilities for the point at infinity: the double points are coming from different pairs, or the double points are coming from the same pair. 
\end{remark}

\section{Moduli of $(4,4)$ curves on a smooth quadric}\label{sec:quattro}
\subsection{GIT for $(4,4)$ curves on $\bP^1\times\bP^1$} 
The present subsection contains results of Shah~\cite{shah}, sometimes reformulated and completed. Let  $q:=x_0x_3-x_1x_2$, and let $Q:=V(q)\subset\PP^3$. In what follows we will  identify 
  $\bP^1\times\bP^1$ and $Q$ via the Segre isomorphism 
\begin{equation}
\begin{matrix}
\bP^1\times\bP^1 & \overset{\sim}{\lra} & Q \\
([u_0,u_1],[v_0,v_1]) & \mapsto & [u_0v_0,u_0v_1,u_1v_0,u_1 v_1].
\end{matrix}
\end{equation}
Given the above identification, $\Aut(\bP^1\times\bP^1)$ can be described either as the group generated by $\PGL(2)\times\PGL(2)$ and the involution exchanging the factors, or 
 as the orthogonal group  $\PO(q)$. A curve on $\bP^1\times\bP^1 $ is a \emph{line} if, with the above identification,  it is a line in $\PP^3$.

Let $r_0,\ldots,r_n$ be integers (not all zero) adding up to $0$. Let $(s^{r_0},\ldots,s^{r_n})$ be  the diagonal $(n+1)\times(n+1)$ matrix with entries $s^{r_0},\ldots,s^{r_n}$, and 
let $\diag(r_0,\ldots,r_n)$   be the $1$-PS of  $\SL(n+1)$ mapping $s\in\CC^{*}$ to  $(s^{r_0},\ldots,s^{r_n})$. 

(Semi)stability of points of $|\cO_{\PP^1}(4)\boxtimes\cO_{\PP^1}(4)|$ is analyzed by contemplating Table~\ref{quadrato}.
\bigskip
\begin{equation}\label{quadrato}
 \centering
\xy
(20,5) *{\wt{\lambda}_3},
(30,5) *{\wt{\lambda}_1},
(40,5) *{\wt{\lambda}_2},
\POS(0,10) *\cir<1pt>{}="a",
\POS(0,20) *\cir<1pt>{}="b",
\POS(0,30) *\cir<1pt>{}="c",
\POS(0,40) *\cir<1pt>{}="d",
\POS(0,50) *\cir<1pt>{}="e",
\POS(10,10) *\cir<1pt>{}="f",
\POS(10,20) *\cir<1pt>{}="g",
\POS(10,30) *\cir<1pt>{}="h",
\POS(10,40) *\cir<1pt>{}="i",
\POS(10,50) *\cir<1pt>{}="j",
\POS(20,10) *\cir<1pt>{}="k",
\POS(20,20) *\cir<1pt>{}="l"
\POS(20,30) *\cir<1pt>{}="m"
\POS(20,40) *\cir<1pt>{}="n"
\POS(20,50) *\cir<1pt>{}="o"
\POS(30,10) *\cir<1pt>{}="p",
\POS(30,20) *\cir<1pt>{}="q"
\POS(30,30) *\cir<1pt>{}="r"
\POS(30,40) *\cir<1pt>{}="s"
\POS(30,50) *\cir<1pt>{}="t"
\POS(40,10) *\cir<1pt>{}="u",
\POS(40,20) *\cir<1pt>{}="v"
\POS(40,30) *\cir<1pt>{}="w"
\POS(40,40) *\cir<1pt>{}="x"
\POS(40,50) *\cir<1pt>{}="y"
\POS "e" \ar@{-} "i",
\POS "i" \ar@{-} "m",
\POS "m" \ar@{-} "q",
\POS "q" \ar@{-} "u",
\POS "j" \ar@{-} "m",
\POS "m" \ar@{-} "p",
\POS "o" \ar@{-} "n",
\POS "n" \ar@{-} "m",
\POS "m" \ar@{-} "l",
\POS "l" \ar@{-} "k",
\endxy 
\end{equation}
\bigskip

The dots represent the elements of a monomial basis of $H^0(\cO_{\PP^1}(4)\boxtimes\cO_{\PP^1}(4))$. More precisely, the dot at position $(m,n)$ represents the monomial  $u^{4-m}_0,u^m_1,v^{4-n}_0,v^n_1$.  Each of the three segments is associated to a  $1$-PS  of $\SL(2)\times\SL(2)$: it passes through the monomials of weight $0$ for the action of the $1$ PS  on $H^0(\cO_{\PP^1}(4)\boxtimes\cO_{\PP^1}(4))$.
The three $1$ PS's are the following:
\begin{equation*}
\wt{\lambda}_1(s)=((s^2,s^{-2}),(s,s^{-1})),\ \wt{\lambda}_2(s)=((s,s^{-1}),(s,s^{-1})),\ \wt{\lambda}_3(s)=((s,s^{-1}),(1,1)). 
\end{equation*}
They correspond, in  Table~\ref{quadrato}, to the middle segment, the diagonal of the square, and the vertical segment respectively.
We notice that, up to isogeny, $\wt{\lambda}_1$, $\wt{\lambda}_2$ and $\wt{\lambda}_3$ correspond respectively to the  $1$-PS's of  $\SO(q)$ 
\begin{equation}\label{1234}
\lambda_1:=\diag(3,1,-1,-3),\ \lambda_2:=\diag(1,0,0,-1), \ \lambda_3:=\diag(1,1,-1,-1). 
\end{equation}
A straightforward argument gives the following results:
\begin{enumerate}
\item
If  $C\in | \cO_{\PP^1}(4)\boxtimes\cO_{\PP^1}(4)|$ is unstable, then, up to conjugation, there exists $i\in\{1,2,3\}$ such that $C$ is $\wt{\lambda}_i$-unstable.  
\item
Let  $C\in | \cO_{\PP^1}(4)\boxtimes\cO_{\PP^1}(4)|$ be properly semistable (i.e.~semistable and not stable), and $\sigma\in H^0(\cO_{\PP^1}(4)\boxtimes\cO_{\PP^1}(4))$ be a section whose divisor is $C$. Then, up to conjugation, there exists $i\in\{1,2,3\}$ such that $C$ is destabilized by $\wt{\lambda}_i$, i.e.~the limit $\lim\limits_{s\to 0}\wt{\lambda}_i(s)^{*}(\sigma)$ exists. 
\end{enumerate}
Before discussing the geometric consequences of the above result, we recall a few facts regarding singularities of double covers.
\begin{remark}\label{rmk:constrip}
A planar curve singularity $(C,p)$ has consecutive triple points if it has multiplicity $3$, and the blow up of $C$ at $p$ has a unique singular point lying over $p$, and that point has multiplicity $3$.  If $S$ is a smooth surface, and $\pi\colon T\to S$ is a double cover ramified over a reduced curve $C\subset S$, then  $T$ has  ADE singularities over $p\in S$ if and only if   $\mult_p(C)\le 3$, with the additional condition, if $\mult_p(C)= 3$, that  $C$  does \emph{not} have  consecutive triple points at $p$. 

How does one recognize whether  $(C,p)$ has consecutive triple points? First, if $(C,p)$ has consecutive triple points then the tangent cone $\cC_p(C)$ of $C$ at $p$ is a triple line. Next,  suppose that $C\subset S$ where $S$ is an open subset of the affine plane, and let $x,y$ be affine coordinates at $p$ such that $\cC_p(C)\subset\cC_p(S)=\aff^2$ is  given by $y^3=0$: then $C$ has consecutive triple points at $p$ if and only if an  equation of $C$ in a neighborhood of $p$ is
\begin{equation}
y^3+y^2f_2(x,y)+yf_4(x,y)+f_6(x,y)+\ldots+ f_n(x,y)=0,
\end{equation}
where each $f_d(x,y)$ is a homogeneous polynomial of degree $d$. 
\end{remark}
A straightforward analysis gives the following result.
\begin{lemma}\label{lmm:git44}
Let $C\in | \cO_{\PP^1}(4)\boxtimes\cO_{\PP^1}(4)|$. The following hold:
\begin{enumerate}
\item[(a)]
$C$ is desemistabilized by a $1$ PS conjugated to $\wt{\lambda}_2$ if and only if it has a point of multiplicity at least $5$, and it is destabilized  by a $1$ PS conjugated to 
$\wt{\lambda}_2$ if and only if it has a point of multiplicity at least $4$.
\item[(b)]
$C$ is desemistabilized by a $1$ PS conjugated to $\wt{\lambda}_3$ if and only if $C=3L+C'$, where $L$ is a line (on the other hand, if $C=3C'+C''$, where $C',C''$ are arbitrary, then $C$ has points of multiplicity at least $4$, hence we go to Item~(a) above). $C$ is destabilized by a $1$ PS conjugated to $\wt{\lambda}_3$ if and only if 
$C=2L+C'$, where $L$ is a line.
\item[(c)]
If $C$ is desemistabilized by a $1$ PS conjugated to $\wt{\lambda}_1$, then it is   destabilized by a $1$ PS conjugated to $\wt{\lambda}_2$ (hence we go to to Item~(a) above). $C$ is destabilized by a $1$ PS conjugated to $\wt{\lambda}_1$ if and only if  either it has a singular point $p$ with consecutive triple points and tangent cone  equal to $3T_p(L)$ where $L$ is a line, or  a point of multiplicity at least $4$ (if the latter holds,   we go to Item~(a) above).
\end{enumerate}
\end{lemma}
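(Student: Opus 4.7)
The plan is to apply the Hilbert--Mumford numerical criterion to each of the three 1-PS's $\wt\lambda_1, \wt\lambda_2, \wt\lambda_3$ in turn, invoking the reduction from items~(1) and~(2) above to these three (up to conjugation in $\Aut(\bP^1 \times \bP^1)$). The first step is to record the weights of $\wt\lambda_i(s)^{*}$ on the monomial basis of $H^0(\cO(4, 4))$. A direct calculation gives $w_1(m, n) = 4m + 2n - 12$, $w_2(m, n) = 2(m + n) - 8$, and $w_3(m, n) = 2m - 4$ on $u_0^{4-m} u_1^m v_0^{4-n} v_1^n$. By the numerical criterion, $C$ is destabilized (resp.\ desemistabilized) by $\wt\lambda_i$ precisely when all nonzero coefficients $c_{mn}$ of a defining section of $C$ satisfy $w_i(m, n) \geq 0$ (resp.\ $w_i(m, n) > 0$).

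For cases~(a) and~(b) the translation into geometry is immediate. In affine coordinates $(x, y) = (u_1/u_0, v_1/v_0)$ at the fixed point $p = ([1{:}0], [1{:}0])$ the equation of $C$ is $\sum c_{mn} x^m y^n$ and $\mult_p C = \min\{m + n : c_{mn} \neq 0\}$. The $\wt\lambda_2$ thresholds $m + n \geq 4$ and $m + n \geq 5$ therefore correspond to $\mult_p C \geq 4$ and $\mult_p C \geq 5$; transitivity of $\PGL(2) \times \PGL(2)$ on $\bP^1 \times \bP^1$ gives the ``at some point'' formulation. For $\wt\lambda_3$, the condition $m \geq 2$ (resp.\ $m \geq 3$) is $u_1^2 \mid \sigma$ (resp.\ $u_1^3 \mid \sigma$), i.e.\ the line $L = \{u_1 = 0\}$ appears with multiplicity $\geq 2$ (resp.\ $\geq 3$) in $C$; conjugating by $\PGL(2) \times \PGL(2)$ together with the swap of factors lets $L$ be any line on $\bP^1 \times \bP^1$. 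The parenthetical remark about $C = 3 C' + C''$ with $C'$ not a line is a short bidegree check: the only option with effective residual $C''$ is $C'$ of bidegree $(1, 1)$, and then the two points of $C' \cap C''$ acquire multiplicity $\geq 4$ on $C$.

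Case~(c) is where the work really lives. The condition $w_1 \geq 0$ reads $2m + n \geq 6$, which says that the affine equation at $p$ involves only monomials of weighted order $\geq 6$ with weights $(2, 1)$ on $(x, y)$. I would dichotomize on whether $c_{30} \neq 0$. If $c_{30} \neq 0$, then the tangent cone at $p$ is $c_{30} x^3$, a triple line along $L = \{u_1 = 0\}$, and unpacking the allowed monomials shows the equation has exactly the consecutive-triple-points normal form $x^3 + x^2 f_2(x, y) + x f_4(x, y) + f_6(x, y) + \cdots$ of \Ref{rmk}{constrip} (with the roles of $x$ and $y$ swapped against the convention there); hence $C$ has consecutive triple points at $p$. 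If $c_{30} = 0$, only monomials of total degree $\geq 4$ survive, so $\mult_p C \geq 4$ and we reduce to case~(a), which accounts for the ``hence we go to Item~(a) above'' clauses. The converses move any such singularity to $p$ and its triple line to $\{u_1 = 0\}$ via an element of $\Aut(\bP^1 \times \bP^1)$, after which destabilization by $\wt\lambda_1$ can be read off. Finally, the implication ``$\wt\lambda_1$-desemistabilized $\Rightarrow$ $\wt\lambda_2$-destabilized'' is automatic: $w_1(3, 0) = 0$, so the strict inequality $w_1 > 0$ forces $c_{30} = 0$, leaving $\mult_p C \geq 4$. I expect the main obstacle to be verifying, in~(c), that the weighted-degree condition really does coincide with \Ref{rmk}{constrip}'s consecutive triple points normal form: the bidegree truncation $m, n \in \{0, \ldots, 4\}$ simply kills some of the higher-degree monomials in the $f_d$'s and does not affect the equivalence. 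Everything else is elementary numerical bookkeeping on the weight functions $w_i$.
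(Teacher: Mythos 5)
Your proof is correct and is essentially the argument the paper intends: the paper gives no written proof beyond Table~\ref{quadrato} and \Ref{rmk}{constrip} (``a straightforward analysis''), and your explicit weight bookkeeping — in particular the identification in case~(c), when $c_{30}\neq 0$, of the condition $2m+n\geq 6$ with the consecutive-triple-points normal form of \Ref{rmk}{constrip} (roles of $x,y$ swapped, bidegree truncation harmless) — is exactly that analysis carried out. One caveat on case~(c): you prove the converse only for the consecutive-triple-points alternative (``move any such singularity to $p$ and its triple line to $\{u_1=0\}$''), not for the multiplicity-$\geq 4$ alternative, and that is in fact the right thing to do, because that branch of the literal ``if and only if'' fails: in your convention every admissible monomial for $\wt{\lambda}_1$ has $m\geq 1$, so a $\wt{\lambda}_1$-destabilized section is divisible by $u_1$ and the curve contains a line of a ruling; hence e.g. $V(u_1^4v_0^4+u_0^4v_1^4)$, a union of four $(1,1)$-conics with two points of multiplicity $4$ and no line component, is not destabilized by any conjugate of $\wt{\lambda}_1$. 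So item~(c) should be read (as your argument implicitly does, and as the only use in \Ref{prp}{git44} requires) as the forward implication with your $c_{30}$-dichotomy together with the converse for the consecutive-triple-points case, the multiplicity-$\geq 4$ case being handled by item~(a).
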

The  result below follows (one applies the results in~\cite{luna} in order to determine polystable non stable curves).
\begin{proposition}[cf.~Proposition 4.5 in~\cite{shah}]\label{prp:git44}
Let $C=V(\sigma)\in | \cO_{\PP^1}(4)\boxtimes\cO_{\PP^1}(4)|$. 
\begin{enumerate}
\item
$C$ is stable if and only if each of its irreducible components has multiplicity at most $2$, no component of multiplicity $2$ is a line, and  each of its points has multiplicity at most $3$, with the extra condition, if $C$ is singular at a point $p$ with consecutive triple points, that its tangent cone is \emph{not} equal to $3T_p(L)$ where $L$ is a line.  
\item
$C$ is properly semistable and polystable (i.e.~the $\Aut(\PP^1\times\PP^1)$ orbit of $\sigma$ is closed) if and only if one of the following holds:
\begin{enumerate}
\item
\emph{$\sigma$ is stabilized by $\wt{\lambda}_1$:} $\sigma=u_0 u_1(a_1 u_0 v_1^2+b_1 u_1 v_0^2)(a_2 u_0 v_1^2+b_2 u_1 v_0^2)$ in suitable homogeneous coordinates $([u_0,u_1],[v_0,v_1])$ , and $(a_1\cdot b_2,a_2\cdot b_1)\not=(0,0)$. Equivalently $C=L_1+L_2+T_1+T_2$, where $L_1,L_2$ are skew lines, $T_1$, $T_2$ are  twisted cubics (eventually singular) intersecting each line  $L_i$  tangentially at the same point $p_i$ (with $p_1$, $p_2$ not belonging to a line), satisfying the condition that no $L_i$ has multiplicity greater than $2$. The moduli space of such curves is $\PP^1$: map  $V(\sigma)$ to $[a_1^2b_2^2+a_2^2b_1^2,a_1 a_2 b_1 b_2]$. 
\item
\emph{$\sigma$ is stabilized by $\wt{\lambda}_2$:} $\sigma=\prod_{i=1}^4(a_i u_0 v_1+b_i u_1 v_0)$ in suitable homogeneous coordinates $([u_0,u_1],[v_0,v_1])$, and at most one of the $a_i$'s vanishes, and similarly for the $b_j$'s. Equivalently $C$ is the sum of four members of the pencil of conics through  two points not on a line, and no 
 reducible conic appearing in $C$ (if there are any) has multiplicity greater than one.
The moduli space of such curves  is $3$ dimensional. The moduli map is the composition of the map
\begin{equation*}
\begin{matrix}
(\PP^1\times\PP^1\times\PP^1\times\PP^1)^{ss} & \lra & \PP^5 \\
([a_1,b_1],\ldots,[a_4,b_4]) & \mapsto & [a_1 a_2 b_3 b_4,a_1 a_3 b_2 b_4,\ldots,a_3 a_4 b_1 b_2]
\end{matrix}
\end{equation*}
and the quotient map for the natural action of $\cS_4$ on the image of the above map.  
\item
\emph{$\sigma$ is stabilized by $\wt{\lambda}_3$:} $\sigma=u^2_0 u^2_1 F(v_0,v_1)$ in suitable coordinates $([u_0,u_1],[v_0,v_1])$, and $F(v_0,v_1)$ is polystable for the action of $\PGL(2)$ on $\PP(\CC[v_0,v_1]_4)$. Equivalently $C=2L+2L'+R_1+\ldots+R_4$, where $L,L'$ are distinct lines in the same ruling, $R_1,\ldots,R_4$ are lines in the other ruling, and either $R_1,\ldots,R_4$ are distinct, or $R_1=R_2\not= R_3=R_4$.
The moduli space is identified with that of binary quartics, i.e.~$\PP^1$ (we map  $V(\sigma)$ to the moduli  point corresponding to $[F]$).  
\end{enumerate}
\end{enumerate}
The remark below will be useful later on.
\begin{remark}\label{rmk:mult4}
Suppose that  $C\in | \cO_{\PP^1}(4)\boxtimes\cO_{\PP^1}(4)|$  has a point $p$ of multiplicity $4$. Then, in the closure of the orbit $\Aut(\PP^1\times\PP^1)C$  there exists $C^{*}=C_1+\ldots+C_4$ where  $C_1,\ldots,C_4$ are conics through distinct points $q_1,q_2$   not on a line - see Item~(a) of~\Ref{lmm}{git44}.  Moreover the tangent cone $\cC_p(C)$ is isomorphic to the tangent cone $\cC_{q_i}(C^{*})$ at either one of the 
(multiplicity $4$) points $q_1,q_2$ of $C^{*}$. 
\end{remark}
\end{proposition}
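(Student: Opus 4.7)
The plan is to combine Lemma~\Ref{lmm}{git44} with a Luna-style analysis of the semistable boundary. For part~(1), by the two reduction statements recorded just above Lemma~\Ref{lmm}{git44}, a curve $C$ is non-stable if and only if it is destabilized by some 1-PS conjugate to $\wt{\lambda}_1$, $\wt{\lambda}_2$, or $\wt{\lambda}_3$. Negating the three destabilization criteria of items~(a), (b), (c) of Lemma~\Ref{lmm}{git44} yields: ``no point of multiplicity $\ge 4$'' from (a), ``no line component of multiplicity $\ge 2$'' from (b), and ``no consecutive triple point with tangent cone $3T_p(L)$ for a line $L$'' from (c). I then argue that ``no component of multiplicity $\ge 3$'' follows: if $C = 3C'+C''$ with $C'$ of bidegree $(a,b)$, then either $C'$ is a line (already excluded by (b), since a triple line contains a double line), or $a,b \ge 1$, in which case a quick intersection count on $\PP^1\times\PP^1$ forces a point of multiplicity $\ge 4$ on $C'\cap C''$, so (a) applies.

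For part~(2), I use the standard Luna picture: a properly semistable polystable point has positive-dimensional stabilizer, hence is fixed by a 1-PS, which by the reduction above is conjugate to one of $\wt{\lambda}_1, \wt{\lambda}_2, \wt{\lambda}_3$. I then compute the weight-zero subspace of $H^0(\cO_{\PP^1}(4)\boxtimes\cO_{\PP^1}(4))$ directly from Table~\ref{quadrato}. For $\wt\lambda_1$ (weights $12-4m-2n$), the three invariant monomials span sections of the form $u_0 u_1 \cdot Q(u_0 v_1^2, u_1 v_0^2)$ with $Q$ a binary quadratic, which factors as two ruling-lines plus two $(1,2)$-twisted cubics tangent to them at the $\wt\lambda_1$-fixed points. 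For $\wt\lambda_2$ (weights $8-2m-2n$), the five invariant monomials factor as $\prod_{i=1}^{4}(a_i u_0 v_1 + b_i u_1 v_0)$, i.e.~four bidegree-$(1,1)$ conics through the two $\wt\lambda_2$-fixed points. For $\wt\lambda_3$ (weights $8-2m$), the invariant sections are exactly those of the form $u_0^2 u_1^2 F(v_0,v_1)$.

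Polystability within each fixed locus is governed, by Luna's criterion, by GIT for the action of the centralizer $C_G(\wt\lambda_i)$ (a rank-$2$ torus, possibly extended by the involution swapping the two rulings) on that fixed locus. These reduce to classical problems: binary quartics for $\wt\lambda_3$, giving the moduli $\PP^1$; four unordered points on $\PP^1$ with a residual $\Gm$-action for $\wt\lambda_2$, yielding the $3$-dimensional moduli written via the elementary symmetric invariants in the statement; and a weighted $\Gm$-quotient of the pairs $(a_i,b_i)$ for $\wt\lambda_1$, yielding $\PP^1$ with invariant coordinates $[a_1^2 b_2^2 + a_2^2 b_1^2,\ a_1 a_2 b_1 b_2]$. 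In each case the stated non-degeneracy conditions (``at most one $a_i=0$'', no repeated line of multiplicity $>1$, $(a_1b_2,a_2b_1)\neq(0,0)$, etc.) are exactly the Hilbert--Mumford conditions for the restricted centralizer GIT problem, since further collapse of factors would produce a further destabilizing 1-PS inside $C_G(\wt\lambda_i)$.

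The main obstacle I expect is the careful bookkeeping in the last step: verifying that the listed non-degeneracy conditions within each centralizer-GIT problem are exhaustive and that the explicit moduli maps are indeed the categorical quotients. Finally, Remark~\Ref{rmk}{mult4} drops out from the $\wt\lambda_2$-analysis: if $C$ has a multiplicity-$4$ point $p$, Lemma~\Ref{lmm}{git44}(a) yields a 1-PS $\wt{\lambda}$ conjugate to $\wt\lambda_2$ with $p$ one of its fixed points, so $C^{*} := \lim_{s \to 0} \wt{\lambda}(s)^{*}C$ lies in $\ov{\Aut(\PP^1\times\PP^1)\cdot C}$. Since $C^{*}$ is the $\wt\lambda$-weight-zero truncation of the local equation of $C$ at $p$, it records exactly the degree-$4$ tangent cone of $C$ at $p$; and by case (b), $C^{*}$ has the form $C_1+\cdots+C_4$ with the two fixed points $q_1,q_2$ as its multiplicity-$4$ points, giving $\cC_p(C) \cong \cC_{q_i}(C^{*})$.
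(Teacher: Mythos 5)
Your proposal is correct and takes essentially the same route as the paper: the paper reduces (semi)stability testing to the three distinguished one-parameter subgroups $\wt{\lambda}_1,\wt{\lambda}_2,\wt{\lambda}_3$, obtains stability by negating Lemma~\Ref{lmm}{git44}, and then simply cites Luna's results for the polystable strictly semistable orbits — your computation of the weight-zero loci and the centralizer-GIT analysis is precisely what that citation is shorthand for. Only a cosmetic slip: the $\wt{\lambda}_3$-weight of $u_0^{4-m}u_1^m v_0^{4-n}v_1^n$ is $4-2m$ rather than $8-2m$, but your identification of the fixed locus as $u_0^2u_1^2F(v_0,v_1)$ is nevertheless correct.
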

\subsection{Hodge-theoretic stratification of $\gM$}
We quickly recall some notions which have been treated in~\cite{log2}.
 \begin{definition}\label{dfn:definsignificant}
A reduced  (not necessarily irreducible)  projective surface $X_0$ is a \emph{degeneration of $K3$ surfaces} if it is the central fiber of a flat proper family $\sX/B$ over a pointed smooth curve $(B,0)$  such that $\omega_{\sX/B}\equiv 0$ and the general fiber $X_b$ is a smooth $K3$ surface. 
If $p\in X_0$, then  $X_0$ has an \emph{insignificant limit singularity} at $p$ if it has a semi-log-canonical singularity at $p$.
 \end{definition}
The above definition ties in with the terminology of Shah. More precisely, the list of singularities baptized as \emph{insignificant limit singularities} by Shah~\cite{shahinsignificant} coincides with the list of Gorenstein slc singularities (see~\cite{sbnef,ksb}). For a degeneration of $K3$ surfaces, the Gorenstein assumption is automatic. We recall (see Theorem~4.21 of~\cite{ksb})  that a Gorenstein surface singularity $(X,p)$ with the property that $X\setminus\{p\}$ is semi-smooth (i.e.~either smooth, normal crossings with two components, or a pinch point) is  semi-log-canonical if and only if it is semi-canonical (see Definition~4.17 ibid.), simple elliptic, cusp or a degenerate cusp. 
\begin{definition}\label{dfn:cavalier}
Let $X_0$ be a degeneration of $K3$ surfaces, and let $p\in X_0$ be an  insignificant limit singularity. Then  
\begin{itemize}
\item[i)]  $X_0$ is of Type I at $p$ if $(X_0,p)$ is an ADE singularity (this includes smooth points).
\item[ii)]   $X_0$ is of Type II at $p$ if  $(X_0,p)$ is  simple elliptic, locally normal crossings with exactly two irreducible components containing $p$, or  a pinch point. 
\item[iii)] $X_0$ is of Type III  at $p$ if   $(X_0,p)$ is either a cusp or a degenerate cusp.
\end{itemize}
\end{definition}
\begin{definition}
Let $X_0$ be a degeneration of $K3$ surfaces. 
\begin{enumerate}
\item 
If $X_0$  has  insignificant limit singularities, then  
\begin{enumerate}
\item  $X_0$ is of Type I if all its points are of Type I.
\item   $X_0$ is of Type II if all its points are of Type I and II, it does have points of Type II.
\item $X_0$ is of Type III,  if all its points are of Type I, II or III, and  it does have points of Type III. 
\end{enumerate}
\item
 $X_0$ has \emph{Type IV} if it has  significant  limit  singularities (i.e.~there exists $p\in X_0$ such that $(X_0,p)$ is  \emph{not} an   insignificant limit singularity).
\end{enumerate}
\end{definition}

\begin{remark}\label{rmk:jinv}
We note that the $1$-dimensional components in the singular locus of a Type II degeneration $X_0$ of $K3$s are either  smooth elliptic with no pinch points, or rational with $4$ pinch points. Also, recall that the resolution of a simple elliptic singularity is an elliptic curve (of negative self-intersection). Thus, one sees that in all cases, for Type II degeneration $X_0$ of $K3$ surfaces, there is an associated $j$-invariant. Typically, $X_0$ has a single Type II singularity (i.e. simple elliptic, or elliptic double curve, or rational double curve with $4$ pinch points), but even if there are multiple Type II singularities, the $j$-invariant for the various singularities coincides. 
\end{remark}
The above definitions are of interest to us because they are related to the period map $\gp\colon\gM\dra\sF^{*}$.
Before explaining this, we introduce one more piece of terminology. Let $C\in|\cO_{\PP^1}(4)\boxtimes\cO_{\PP^1}(4)|$; we let $X_C\to \PP^1\times\PP^1$ be the double cover ramified over $C$. Let  $p\in C$, and let $
\wt{p}\in X_C$ be the unique point lying over $p$.
We say that \emph{$C$ has an insignificant limit singularity at $p$} if $(X_C,\wt{p})$ is an insignificant limit singularity, and if that holds then $C$ has Type I, II or III at $p$ according to the type of $(X_C,\wt{p})$. If   $(X_C,\wt{p})$ is a significant limit singularity, we say that $C$ has Type IV at $p$. Similarly $C$ has insignificant limit singularities if all of its points are insignificant limit singularities, and if that is the case the Type of $C$ is that of $X_C$.

Let $\gM^I,\gM^{II},\gM^{III},\gM^{IV}\subset\gM$ be the subsets of points represented by polystable curves $C\in|\cO_{\PP^1}(4)\boxtimes\cO_{\PP^1}(4)|$   of Type I,   II, III  and  IV  respectively. On the other side, let $\sF^I:=\sF$, let  $\sF^{II}$ be the union of the Type II boundary components of $\sF^{*}$, and let $\sF^{III}$ be the union of the Type III boundary components of 
$\sF^{*}$. The proof Proposition 3.16 of~\cite{log2} has a straightforward extension   to our case, and gives the following result.
 \begin{proposition}\label{prp:est3}
 The  period map $\gp\colon\gM\dra\sF^{*}$  is regular away from $\gM^{IV}$, and 
\begin{equation*}
\gp(\gM^I)\subset\sF,\qquad \gp(\gM^{II})\subset\sF^{II},\qquad  \gp(\gM^{III})\subset\sF^{III}.
\end{equation*}
 \end{proposition}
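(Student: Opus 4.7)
The plan is to adapt \cite[Proposition~3.16]{log2} essentially verbatim; the key ingredient is the KSBA--Hodge comparison for semi-log-canonical $K3$ degenerations. For the regularity statement, let $[C]$ be a point of $\gM\setminus\gM^{IV}$, so that every singularity of the double cover $X_C$ is Gorenstein slc. Choose a $1$-parameter family $\sC\to B$ inside $|\cO_{\PP^1}(4)\boxtimes\cO_{\PP^1}(4)|$ with $\sC_0=C$ and smooth generic member, and consider the associated family $\sX\to B$ of double covers. After finite base change, I would run Kulikov--Pinkham--Persson semistable reduction to obtain a Kulikov model $\wt\sX\to\wt B$, whose central fiber $\wt X_0$ is proper and smooth with $K_{\wt X_0}\sim 0$. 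The resulting limit MHS on $H^2$ defines a point of $\sF^*$, and by the Shah--Kulikov--Pinkham--Persson principle (see \cite{shahinsignificant,kk}) this point depends only on $(X_C,L_C)$ as an slc $K3$ pair, not on the smoothing. This yields a continuous and hence regular extension of $\gp$ across $\gM\setminus\gM^{IV}$.

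Next I would verify compatibility of types. Since $X_C$ has slc singularities, it admits a Kulikov smoothing whose Kulikov type matches the Shah type of $X_C$: Type I (ADE) corresponds to monodromy $N=0$, so the period is a pure Hodge structure and lies in the interior $\sF$; Type II (simple elliptic, double curves with only pinch points, or normal-crossing double surfaces) yields a Kulikov model whose double locus has elliptic components (cf.~\Ref{rmk}{jinv}), hence $N\neq 0$ but $N^2=0$, placing the period on a Type II boundary component of $\sF^*$; Type III (cusp or degenerate cusp) forces triple points in the Kulikov model, giving $N^2\neq 0$ and a Type III period.

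The main obstacle is to show that the slc pair $(X_C,L_C)$ itself canonically determines the limit period point, independent of the chosen smoothing. Fortunately this is supplied by KSBA theory for $K3$ pairs, and it can be cross-checked orbit by orbit: \Ref{prp}{git44} enumerates the polystable orbits of $\gM$ (the three families (2a), (2b), (2c) and their degenerations), and for each orbit one exhibits an explicit Kulikov smoothing of the associated double cover and reads off the type (e.g.~for family~(2c) the cover has two simple-elliptic singularities, yielding Type~II; similar inspections handle the remaining strata). Everything else is bookkeeping paralleling \cite[\S3]{log2}, and in particular the inclusions $\gp(\gM^I)\subset\sF$, $\gp(\gM^{II})\subset\sF^{II}$, $\gp(\gM^{III})\subset\sF^{III}$ follow immediately from the trichotomy of the limit MHS.
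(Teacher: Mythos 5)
Your proposal matches the paper's approach: the paper proves this proposition in one line by observing that Proposition 3.16 of \cite{log2} extends straightforwardly to the hyperelliptic case, and the argument you sketch---semistable (Kulikov--Pinkham--Persson) reduction of a smoothing of the slc double cover, well-definedness of the limit MHS via the Shah--Kulikov principle (with the Type II $j$-invariant pinned down by \Ref{rmk}{jinv}), and matching of Shah's Type I/II/III trichotomy with the monodromy weight filtration $N=0$, $N\neq 0$ with $N^2=0$, $N^2\neq 0$---is exactly the content of that argument.
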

\subsection{The  components of $\gM^{II}$}%

\begin{proposition}\label{prp:TypeIIShah}
The irreducible components of $\gM^{II}$ are the following: 
\begin{itemize}
\item[i)] $\gM^{II}_{D_8\oplus E_8}$, the set parametrizing  stable  reduced curves $C$ with a singularity of type $\widetilde E_8$ at a point $p$ (by~\Ref{prp}{git44} the tangent cone at $p$ is  $3L$ with $L$ \emph{not} the tangent space to a line through $p$). The dimension of $\gM^{II}_{D_8\oplus E_8}$ is $9$.
\item[ii)] $\gM^{II}_{D_{12}\oplus D_4}$,  the set parametrizing  stable   divisors $C=2C_0+D$ where $C_0$ is a smooth conic, and such that the residual curve $D$ intersects $C_0$ transversely. The dimension of $\gM^{II}_{D_{12}\oplus D_4}$ is $5$. 
\item[iii)] $\gM^{II}_{A_{15}\oplus D_1}$,  the set parametrizing  stable   divisors $C=2E$, where $E$ is smooth. The dimension of  $\gM^{II}_{A_{15}\oplus D_1}$  is $2$.
\item[iv)] $\gM^{II}_{D_{16}^+}$,  the set parametrizing   stable  divisors $C=2C_0+L_1+L_2$, where $C_0$ is a twisted cubic, and $L_1$,  $L_2$ are distinct lines intersecting $C_0$ transversally. The dimension of  $\gM^{II}_{D_{16}^+}$  is $1$.
\item[v)] $\gM^{II}_{(E_8)^2}$,   the set parametrizing   polystable  divisors as in Item~(2a) of~\Ref{prp}{git44} such that there are two $\widetilde E_8$ singularities\footnote{Recall that a surface singularity is $\wt{E}_r$ if the exceptional divisor of a minimal desingularization is a (smooth) elliptic curve of self intersection $9-r$.} (i.e.~$a_1b_2\not=0\not= a_2b_1$). The dimension of  
$\gM^{II}_{(E_8)^2}$ is $1$. 
\item[vi)] $\gM^{II}_{(E_7)^2\oplus D_2}$,  the set parametrizing   polystable  divisors as in Item~(2b) of~\Ref{prp}{git44} such that there are two $\widetilde E_7$ singularities, i.e.~the divisor is reduced. The dimension of  $\gM^{II}_{(E_7)^2\oplus D_2}$ is $3$. 
\item[vii)]  $\gM^{II}_{(D_8)^2}$,  the set parametrizing   polystable  divisors as in Item~(2c) of~\Ref{prp}{git44} such that the lines $R_1,\ldots,R_4$ are distinct. The dimension of  $\gM^{II}_{(D_8)^2}$ is $1$. 
\end{itemize}
\end{proposition}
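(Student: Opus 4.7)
The strategy is to combine the GIT classification of polystable $(4,4)$ curves (Proposition~\Ref{prp}{git44}) with the singularity-theoretic criterion distinguishing Type~I from Type~II degenerations. Recall from Remark~\Ref{rmk}{constrip} that for a reduced branch curve $C$ the double cover $X_C$ has an ADE singularity over $p\in C$ iff $\mult_p C\le 3$ and $(C,p)$ is not a consecutive triple point. A consecutive triple point (resp.~an appropriate quadruple point) of $C$ produces a simple elliptic ($\widetilde E_r$) singularity on $X_C$, while a non-reduced curve $C=2D+R$ with $R$ meeting $D$ transversally yields a double curve over $D$ with pinch points over $D\cap R$. All other non-ADE configurations give significant limit (Type~IV) singularities on $X_C$. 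Hence the Type~II polystable curves are exactly those fitting one of these two local patterns with at least one non-ADE singular point, and my task is to list which of these actually occur in the GIT-polystable loci of Proposition~\Ref{prp}{git44}.

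First I would treat the stable reduced case. By Proposition~\Ref{prp}{git44}(1) the only reduced stable $(4,4)$ curves with a non-ADE slc singularity are those having a consecutive triple point whose tangent cone $3L$ has $L$ not tangent to a line of $C$; a direct local computation shows that $X_C$ then acquires a single $\widetilde E_8$ simple elliptic singularity. This yields stratum~(i), whose irreducibility and dimension follow from a parameter count in $|\cO(4,4)|$ (imposing the singularity has the expected codimension, and $\Aut(\PP^1\times\PP^1)$ acts with generically trivial stabilizer).

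For stable non-reduced $C=2D+R$, stability forbids $D$ from containing a line, so $D$ has bidegree $(1,1)$ (a smooth conic, with $R$ of bidegree $(2,2)$ meeting $D$ transversally), or $(2,2)$ (whence $R=\emptyset$ and $C=2E$ for a smooth $(2,2)$-curve), or $(1,2)$ resp.~$(2,1)$ (a twisted cubic, with $R$ a pair of lines in the opposite ruling meeting $D$ transversally). These three cases produce strata~(ii), (iii), (iv); the transversality of $R\cap D$ is precisely the slc condition, and stability excludes the failures. The strictly semistable orbits are parametrized by the three families of Proposition~\Ref{prp}{git44}(2): in family~(2a), the tangency of $T_i$ to $L_i$ at $p_i$ together with $a_ib_j\ne 0$ ($i\ne j$) produces two consecutive triple points with $\widetilde E_8$ local model, giving stratum~(v); in family~(2b), four distinct conics through two non-collinear points produce two quadruple points with $\widetilde E_7$ local model at the base points, giving~(vi); in family~(2c), distinct $R_i$ yield two $D_8$-type normal-crossings double curves with four pinch points each, giving~(vii). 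The remaining degenerations inside each family (coinciding $R_i$ in~(2c), multiple components in~(2b), further degeneration of $T_i$ in~(2a)) push $X_C$ into Type~III and are excluded from~$\gM^{II}$.

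The main obstacle will be verifying, via explicit local analytic normal forms, that the planar singularities of $C$ produced by the GIT list give exactly the announced $\widetilde E_r$, $D_k$, or pinch-point singularities on $X_C$, and that the seven loci enumerated are genuinely distinct and maximal (no two secretly sit in a common irreducible component, and no polystable orbit has been overlooked). Once this local verification is carried out, exhaustion by the GIT classification of Proposition~\Ref{prp}{git44}, together with the irreducibility of each parametrizing family and the dimension counts above, shows that these seven loci are precisely the irreducible components of~$\gM^{II}$.
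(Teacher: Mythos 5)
Your route is genuinely different from the paper's. The paper's proof is essentially a citation: it invokes Shah's Theorem~4.8 for the classification, notes that Shah omitted cases~(iii) and~(iv) and explains how to recover them by degenerating semistable quartics, and defers the normal forms and dimension counts to \S\ref{GITtype2}, giving only the $\widetilde E_8$-stratum codimension count ($10-1=9$) as a sample. You instead attempt a self-contained bottom-up enumeration directly from the GIT classification of Proposition~\ref{prp:git44}, combined with the local dichotomy in Remark~\ref{rmk:constrip}. This is a reasonable and in some respects more robust strategy: a systematic sweep through the stable and polystable loci is exactly the kind of argument that would have flagged the two cases Shah overlooked, and it keeps all the bookkeeping internal to the paper.

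However, the sketch has a gap that you should be alert to. You write that ``the transversality of $R\cap D$ is precisely the slc condition, and stability excludes the failures.'' The first clause is essentially right, but the second is not. Take $C=2C_0+D$ with $C_0$ a smooth conic and $D$ a $(2,2)$-curve simply tangent to $C_0$ at a generic point $p$. Then $\mult_p C=3$ with tangent cone $3T_pC_0$, which is a consecutive triple point; since for generic $p$ the tangent line $T_pC_0$ is not a ruling line, Proposition~\ref{prp:git44}(1) says $C$ is \emph{stable}. But the local equation is of type $J_{2,\infty}$ (a curve of the form $y^2(y-x^2)$), and the resulting double-cover singularity is a degenerate cusp, landing $[C]$ in $\gM^{III}$, not $\gM^{II}$. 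So transversality is an additional, non-GIT restriction built into the definition of the strata, not something you get for free from semistability; the same remark applies to case~(iv). Two further things you flag as ``obstacles'' are genuine: (a) within the stable consecutive-triple-point case, you still have to identify that the \emph{Type~II} subcase is exactly $J_{2,0}=\widetilde E_8$ and that the deeper Arnold classes ($E_{12},E_{13},\ldots,J_{k,p}$ with $k\ge 3$) are excluded (they are Type~IV and appear instead in the tower $W_\bullet$); and (b) exhaustiveness requires ruling out mixed configurations (a stable non-reduced curve carrying, in addition, a consecutive triple point on its reduced part) — here Proposition~\ref{prp:4unico} and B\'ezout-type estimates do the work, but that should be said. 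With those two points repaired and the dimension counts actually carried out (the paper's count via Milnor number minus equisingular modulus is the right template), your argument would stand as an alternative, self-contained proof.
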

\begin{proof}
This follows from Shah \cite[Theorem 4.8, Type II]{shah4}. The cases A-iii and A-iv are omitted in Shah, but it is clear they occur - for instance they can be viewed as limits of semistable quartics (degenerating to the double quadric), see \cite[Theorem 2.4-A-II-ii]{shah4}.

We will discuss in more detail the Type II strata (e.g. normal forms) in \Ref{sec}{chow} (esp. \S\ref{GITtype2}). In particular, the dimensions will be seen to be as in the statement of the Proposition. For the moment, we just sketch the argument for the largest possible stratum: II-A-i. Via a local computation, one can see that the space of $(4,4)$ curves versaly unfolds the $\widetilde E_8$ singularity. Thus, the codimension in the moduli is $10$ (the Milnor-Tjurina number for $\widetilde E_8$). However, there is one dimension for the equisingular deformations (corresponding to varying the modulus of the elliptic curve). Giving finally, dimension $18-10+1=9$ for the $\widetilde E_8$ stratum. 

The labels are chosen so that they match the labels from \Ref{thm}{thmbbstrata}. The justification for this is given by \cite[Prop. 7.11, Def. 7.7]{log2} (and it is based on 
 a heuristic of Friedman \cite[Sect. 5]{friedmanannals}). We will revisit the issue in \Ref{sec}{chow}.
\end{proof}

\begin{remark}[{Type III}] We will not discuss in detail the stratification of the Type III locus $\gM^{III}$. We will only note that it corresponds to degenerations of the Type II cases for which the $j$-invariant associated by \Ref{rmk}{jinv} becomes $\infty$ (e.g. simple elliptic singularity degenerate to cusp singularity, or some of the $4$ pinch points come together, but at worst with multiplicity $2$). There are $5$ strata identified by Shah \cite[Thm. 4.8]{shah4} (labeled A-III-i, A-III-ii, B-III-i, B-III-ii, and B-III-iii respectively). We note however, that there is a stratum missing in Shah's analysis, namely, the case of a double twisted cubic together with two tangent lines. This stratum is a specialization of the Type II case labeled $D_{16}^+$ above (again missing from Shah's list). We will label this case $III_b$; explicitly:
\begin{equation}
(III_b) \ :\  V(x_0x_3-x_1x_2,x_0x_2^3+2x_1^2x_2^2+x_1^3x_3).
\end{equation}
Similarly, the case where $C$ consists of $4$ double lines forming a cycle (the case B-III-iii in \cite[Thm. 4.8]{shah}) will be labeled $III_a$; explicitly, 
\begin{equation}
(III_a) \ : \ V(x_0x_3-x_1x_2,x_1^2x_2^2).
\end{equation}
Clearly, $III_a$ and $III_b$ are isolated points in $\gM^{III}$. It is not hard to see that the closure of any other Type III stratum (or similarly Type II) contains one of those two points. In other words, $III_a$ and $III_b$ are the deepest strata in $\gM^{II}\cup \gM^{III}$. The labels are chosen such that the adjacency of GIT Type II and III strata reflects the adjacency of Type II and III strata in the Baily-Borel compactification (see \eqref{incidencebb}). (For a typical picture of the behavior of the GIT vs Baily-Borel Type II and III strata see \cite[Figure 2, p. 234]{k3pairs}.)
\end{remark}

\section{Stratification of $\gM^{IV}$}\label{sec:sec-stratify}
As discussed above, the indeterminacy locus of the period map $\gM\dashrightarrow \sF^*$ is contained in the Type IV locus\footnote{A  posteriori  the indeterminacy locus of the period map  is \emph{equal} to  $\gM^{IV}$}. The main result of our paper is to decompose this period map into simple flips, and this will be achieved by a variation of GIT quotients as discussed in the subsequent sections. The purpose of this section is to define a finite (increasing) stratification $\{W_k\}_k$ (with $k$ standing for the dimension) that we will correspond to the centers of these flips.  The definitions of $W_k$ are inspired by our previous analysis \cite{log2} for quartics (which in turn is a refinement of Shah \cite{shah4}). The reader can ignore all this background information, and just regard $W_k$ as a natural stratification of the Type IV stratum in terms of the complexity of the singularities (the lower the $k$ the worse the singularity). Results of Arnold et al.~\cite{arnold1} play an essential role here, and will be reviewed below. 

\subsection{Singularity types}\label{singclassification}
Let $C\in|\cO_{\PP^1}(4)\boxtimes\cO_{\PP^1}(4)|$ be polystable, with isolated singularities, and let $X_C\to\PP^1\times\PP^1~$ be the double cover ramified over $C$. 
Suppose that $\mult_p(C)\le 3$ for all points $p$ (this condition holds for all stable $C$ by~\Ref{prp}{git44}). If $C$ does not have singular points with consecutive triple points, then $X_C$ has ADE singularities, and hence $[C]\in\gM^I$, in particular the period map is regular at $[C]$.  If instead $C$ does have consecutive triple points at $p$, then the 
initial germ of a defining equation of $C$ at $p$ is equal to $x^3$, for a suitable local parameter $x$. The isomorphism classes of such singularities have been classified, see~\cite[Ch. 16]{arnold1}. We recall the classification, and how to recognize the isomorphism class to which a given singularity belongs. Most of the isomorphism classes of such singularities are of Type IV, i.e.~the corresponding double cover has significant limit singularities. They define (together with certain  non isolated singularities) a stratification of $\gM^{IV}$ which determines the sequence of flips that resolve the period map $\gp\colon\gM\dra\sF^{*}$. The stratification is defined in~\Ref{subsec}{eccostrati}.

\begin{theorem}[{Arnold et al.~\cite[Ch. 16]{arnold1}}]\label{thm:zoo}
Let $f\in\cO_{\CC^2,0}$ be the germ of an analytic function of two variables  in a neighborhood of the origin. Suppose that $f$ has  an isolated singularity at $(0,0)$, of multiplicity $3$ with tangent cone a triple line.
 Then there exist analytic coordinates $(x,y)$ in a neighborhood of $0$ (centered at $0$) and a decomposition $f(x,y)=u(x,y)\cdot g(x,y)$, where $u(x,y)$ is a unit and $g(x,y)$ is one (and only one) of the functions  appearing in the first column of the table below. 
 \begin{table}[htb!]
\renewcommand\arraystretch{1.5}
\begin{tabular}{l| l | l|l | l  | l  }
Normal form & Leading Term & $wt(x)$ &  $wt(y)$ & Name & Type \\
\hline
$x^3+y^{3k+1}+{\bf a}xy^{2k+1}$& $x^3+y^{3k+1}$ & $\frac{1}{3}$&$\frac{1}{3k+1}$   &$E_{6k}$ & $I$ if $k=1$, $IV$  if $k\ge 2$ \\
$x^3+xy^{2k+1}+{\bf a}y^{3k+2}$& $x^3+xy^{2k+1}$ & $\frac{1}{3}$ &$\frac{2}{3(2k+1)}$  &$E_{6k+1}$  &  $I$ if $k=1$, $IV$  if $k\ge 2$ \\
$x^3+y^{3k+2}+{\bf a}xy^{2k+2}$& $x^3+y^{3k+2}$ & $\frac{1}{3}$&$\frac{1}{3k+2}$   &$E_{6k+2}$ &  $I$ if $k=1$, $IV$  if $k\ge 2$ \\
$x^3+b x^2y^k+y^{3k}+{\bf c}xy^{2k+1}$ & $x^3+b x^2y^k+y^{3k}$ & $\frac{1}{3}$&$\frac{1}{3k}$   & $J_{k,0}$ & $II$ if $k=2$, $IV$  if $k\ge 3$ \\
\hdashline
$x^3+x^2y^k+{\bf a}x^{3k+p}$ & $x^3+x^2y^k$ & $\frac{1}{3}$&$\frac{1}{3k}$  & $J_{k,p}$ &  $III$  if $k=2$, $IV$  if $k\ge 3$ \\
\hline
\end{tabular}
\vspace{0.2cm}
\caption{}\label{tavola}
\end{table}
In the first three rows of the table $k\ge 1$, in the last two rows $k\ge 2$, and in the last row $p>0$. Moreover
\begin{equation*}
{\bf a}:=a_0+\ldots+a_{k-2}y^{k-2},\quad {\bf c}:=a_0+\ldots+a_{k-3}y^{k-3},\quad    4b^3+27\neq 0,
\end{equation*}
(${\bf a}=0$ if $k=1$, and ${\bf c}=0$ if $k=2$) and in the last row $a_0\not=0$.
\end{theorem}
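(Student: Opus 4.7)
The plan has two parts. The normal-form classification in columns 1--5 is essentially Arnold's theorem from \cite[Ch.~16]{arnold1}, and I would derive it via the standard combination of Newton-diagram arguments and finite determinacy. The Type assignment in the last column, however, is specific to our context and requires a separate case-by-case verification using \Ref{rmk}{constrip} and \Ref{dfn}{cavalier}.

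For the normal forms, after a linear change of coordinates sending the tangent cone $3L$ to $V(x^3)$, I write $f(x,y)=x^3+\sum_{i+j\ge 4}c_{ij}x^iy^j$. A Tschirnhaus substitution $x\mapsto x-\tfrac{1}{3}a_2(y)$ removes the coefficient of $x^2$, and the resulting dichotomy is whether some monomial $x^2y^k$ is re-introduced by subsequent coordinate manipulations or can be eliminated entirely. When a minimal $x^2y^k$ survives we are in the $J$-series; otherwise we are in the $E$-series, distinguished further by whether the lowest surviving $xy^m$ or pure $y^n$ dominates. In each row a weighted filtration with weights $(w(x),w(y))$ as in columns 3--4 makes the proposed leading term $g_0$ quasi-homogeneous of weight one. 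Finite determinacy, via the Mather--Tougeron criterion applied to the weighted Jacobi ideal $J(g_0)=(\partial_xg_0,\partial_yg_0)$, then gives that $f$ is right-equivalent (up to a unit) to $g_0$ plus a tail of monomials chosen as representatives of the finite-dimensional cokernel of $J(g_0)$ in $\cO_{\CC^2,0}$ restricted to weighted degrees $\ge 1$. A direct computation identifies this cokernel with the spaces spanned by the tails $\mathbf{a}$ and $\mathbf{c}$ recorded in the table. The condition $4b^3+27\ne 0$ in the $J_{k,0}$ row is precisely the requirement that $x^3+bx^2y^k+y^{3k}$ have three distinct roots as a polynomial in $x/y^k$, which is what keeps $g_0$ a nondegenerate quasi-homogeneous representative.

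For the Type assignments, in each case I resolve the double cover branched over $V(g)$ and identify the resulting surface singularity. For $k=1$ in rows 1--3, the branch germ is a simple planar $E_6$, $E_7$, $E_8$ singularity and the double cover is ADE, hence Type I. For row 4 with $k=2$, the principal part is a nondegenerate sextic in the variables $(x,y^2)$, and the double cover acquires a simple-elliptic singularity, hence Type II. Row 5 with $k=2$ corresponds to a degenerate sextic and produces a cusp singularity on the double cover, hence Type III. In the remaining cases ($k\ge 2$ in rows 1--3 and $k\ge 3$ in rows 4--5), either the branch curve has a non-isolated singularity on blow-up or the multiplicity sequence exceeds the slc threshold of \Ref{dfn}{cavalier}, so the double cover carries a significant limit singularity and we fall into Type IV.

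The main obstacle will be the $J_{k,0}$ row: the parameter $b$ is a genuine modulus, and one must rule out its removability by any further analytic change of variables. The technical heart of Arnold's argument is the explicit verification that $x^2y^k$ represents a nonzero class in $\cO_{\CC^2,0}/J(g_0)$ in the relevant weighted grading, combined with a careful treatment of the tangent space to the right-equivalence orbit. A secondary subtlety is ensuring that the Tschirnhaus-type simplifications of step one can be carried out in the analytic (not merely formal) category and that they interact cleanly with the Newton-polygon filtration of step two; this follows from the implicit function theorem once the weights have been fixed, but must be tracked carefully in order to guarantee convergence of the normalizing change of coordinates.
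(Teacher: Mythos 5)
The paper's own proof of this theorem is a single sentence: it is quoted verbatim from Arnold's book and the proof merely cites Theorems $6_k$ through $12_k$ of \cite[\S16.2]{arnold1}. You have instead sketched a full re-derivation of Arnold's classification via Newton filtration, Tschirnhaus reduction, and finite determinacy, together with a case-by-case justification of the Type column by analyzing the singularity of the double cover. The overall strategy is sound and is essentially the approach Arnold himself uses, so this is a legitimate (if much longer) route. The explicit treatment of the Type column is genuinely useful, since the paper leaves it implicit (it follows from the characterization of Gorenstein slc singularities as ADE, simple elliptic, cusp, or degenerate cusp, together with the log-canonical-threshold computation for Brieskorn/Pham-type double covers), and your ADE/simple-elliptic/cusp identifications for $k=1$ (rows 1--3), $J_{2,0}$, and $J_{2,p}$ are correct.

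There is, however, a real gap in the normal-form argument: the finite-determinacy step you invoke --- Mather--Tougeron applied to the weighted Jacobi ideal of a quasi-homogeneous leading part $g_0$ --- presupposes that $g_0$ has an \emph{isolated} critical point at the origin, so that $\dim_\bC\cO_{\bC^2,0}/J(g_0)<\infty$. This holds for the first four rows (the semi-quasihomogeneous cases, as the paper itself notes immediately after the theorem), but fails for the last row $J_{k,p}$ with $p>0$, where the leading part $x^3+x^2y^k$ has the whole $y$-axis as critical locus. Arnold handles this degenerate case by a second, separate reduction (using the fact that the curve contains a smooth branch $\{x=0\}$ of multiplicity one and replaying the filtration argument relative to that branch), and your sketch as written would not yield the $J_{k,p}$ normal form. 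You should either carve out the $J_{k,p}$, $p>0$, case and treat it separately, or simply follow the paper and cite Arnold's Theorems $6_k,\ldots,12_k$ directly. A secondary, smaller imprecision: the $E$-versus-$J$ dichotomy is not really about whether a monomial $x^2y^k$ ``is re-introduced''; after the Tschirnhaus it is governed by the Newton polygon of $f=x^3+x\varphi(y)+\psi(y)$ --- the $J$-series occurs when $\operatorname{ord}\varphi=2k$, $\operatorname{ord}\psi=3k$ and the cubic $t^3+\varphi_{2k}t+\psi_{3k}$ (in $t=x/y^k$) has simple roots, and Arnold's $b$-parametrization of that cubic is what makes the condition $4b^3+27\neq0$ appear.
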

\begin{proof}
This is obtained by putting together  Theorems $6_k,\dots, 12_k$ in \cite[\S16.2]{arnold1}.
\end{proof} 
We explain the r\^ole of the weights appearing in the table above. First notice that the monomials in the leading term have weight $1$, and the remaining monomials in the normal form have weight strictly greater than $1$. Moreover, with the exception of singularities $J_{k,p}$ with $p>0$, the leading term has an isolated critical point at the origin. Thus, with the exception of singularities $J_{k,p}$ with $p>0$, the singularities in~\Ref{thm}{zoo} are  semiquasihomogeneous. 
\begin{theorem}[{Arnold et al.~\cite[Ch. 16]{arnold1}}]\label{thm:recognize}
Let $f(x,y)\in\cO_{\CC^2,0}$ be the germ of an analytic function of two variables  in a neighborhood of the origin, and suppose that $f$ has  an isolated singularity at $(0,0)$. Assign weights to $x$ and $y$ according to a chosen row of the table in~\Ref{thm}{zoo}. If $f=f_0+f_1$, where $f_0$ is the leading term of the chosen row and every monomial appearing in $f_1$ has weight strictly greater than $1$, then there exist  
 analytic coordinates  in a neighborhood of $0$ (which we denote again by $x,y$) such that $f=u\cdot g$, where  $u$ is a unit and $g$ is the normal form in the chosen row.
\end{theorem}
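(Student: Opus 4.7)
The plan is to invoke Arnold's theorem on semi-quasihomogeneous singularities (\cite[Ch.~12]{arnold1}), supplemented by a separate argument for the last row of the table. For the first four rows $E_{6k},E_{6k+1},E_{6k+2},J_{k,0}$, the leading term $f_0$ is quasi-homogeneous of weighted degree $1$ and has an isolated critical point at the origin, so the Jacobian algebra $\cO_{\bC^2,0}/J(f_0)$ is finite-dimensional, where $J(f_0)=(\partial_x f_0,\partial_y f_0)$. In this setting, any germ $f=f_0+f_1$ with every monomial of $f_1$ of weight strictly greater than $1$ is right-equivalent to $f_0+\sum_\alpha c_\alpha m_\alpha$, where $\{m_\alpha\}$ is any fixed weight-homogeneous monomial basis for the weight-$>1$ part of a complement of $J(f_0)$ in $\cO_{\bC^2,0}$. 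Since the hypothesis of \Ref{thm}{recognize} allows multiplication by a unit, this is even stronger than what we need.

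The core of the proof is a weight-by-weight induction: if $m$ is a weight-$w$ monomial of $f_1$ not in the fixed basis, then one expresses $m\equiv a\partial_x f_0+b\partial_y f_0\pmod{\mathrm{wt}>w}$ with $\mathrm{wt}(a)=w-\mathrm{wt}(x)$, $\mathrm{wt}(b)=w-\mathrm{wt}(y)$, and removes $m$ by the substitution $x\mapsto x-a$, $y\mapsto y-b$, which introduces only terms of weight strictly greater than $w$. Finite-dimensionality of $\cO_{\bC^2,0}/J(f_0)$ implies that $f_0$ is finitely determined, so the inductive process stabilises after finitely many weights and converges in the analytic (not merely formal) category. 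A direct computation of a monomial basis of $\cO_{\bC^2,0}/J(f_0)$ in each of the four cases then recovers the listed normal-form monomials, together with the modulus $b$ for $J_{k,0}$ that reflects a one-dimensional graded piece of $\cO_{\bC^2,0}/J(f_0)$ at weight exactly $1$.

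For the final row $J_{k,p}$ with $p>0$, the leading term $f_0=x^3+x^2y^k=x^2(x+y^k)$ vanishes identically along $x=0$ and therefore does not define an isolated singularity, so Arnold's semi-quasihomogeneous theorem does not apply directly; this is the main obstacle. The strategy is first to change coordinates via $X:=x+y^k$, so that the leading term becomes $X(X-y^k)^2$ with critical locus the smooth curve $X=0$. The hypothesis $a_0\not=0$ on the coefficient of $x^{3k+p}$ then forces the full germ $f$ to have an isolated singularity along this curve, and a further weight-filtered reduction performed along $X=0$, using Weierstrass preparation to control the non-isolated factor $X^2$, eliminates all monomials outside the normal form and leaves precisely $x^3+x^2y^k+a_0 x^{3k+p}$ up to multiplication by an analytic unit. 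The full details of both \Ref{thm}{zoo} and \Ref{thm}{recognize} are carried out in \cite[\S16.2]{arnold1}, and for our applications we will simply cite that reference.
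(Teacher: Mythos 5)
The paper does not prove \Ref{thm}{recognize}; the bracketed attribution in the theorem heading \emph{is} the proof, exactly as you conclude by citing \cite[\S16.2]{arnold1}. Your expanded sketch of the semi-quasihomogeneous case (the first four rows) is a correct account of Arnold's normal-form theorem: the leading term has an isolated critical point, $\cO_{\CC^2,0}/J(f_0)$ is finite dimensional, and a weight-increasing induction eliminates all high-weight monomials except those lying in a fixed monomial complement to $J(f_0)$; and you are right that the weight-$1$ graded piece is what produces the modulus $b$ in $J_{k,0}$ (it is already part of $f_0$, not introduced by the reduction).

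Your treatment of the last row contains a concrete slip. With $X:=x+y^k$, the leading term $x^2(x+y^k)$ becomes $(X-y^k)^2\,X$, whose double factor is $(X-y^k)^2$: the non-isolated critical locus of $f_0$ is therefore $\{X=y^k\}$, equivalently $\{x=0\}$, \emph{not} $\{X=0\}$ as you state. The shift is in fact unnecessary — the critical locus is already the coordinate line $\{x=0\}$ in the original variables, and the ``non-isolated factor'' you want to control by Weierstrass preparation is $x^2$, not $X^2$. Beyond this, the phrase ``a further weight-filtered reduction performed along $X=0$'' is too loose to stand as an argument for the non-semiquasihomogeneous case, which is precisely the delicate part that the citation to Arnold--Gusein-Zade--Varchenko is meant to cover. (You also silently correct an apparent typo in the paper's table — the modular term in the $J_{k,p}$ row should be $\mathbf{a}\,y^{3k+p}$ rather than $\mathbf{a}\,x^{3k+p}$, since the latter would still vanish along $\{x=0\}$ and your own argument that $a_0\neq 0$ forces an isolated singularity requires the $y$-power.) Since both your proposal and the paper ultimately defer to the same reference, the overall approach matches; the points above are what to fix in the sketch.
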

\begin{remark}
Singularities often have more than one name. Here we note that  $J_{2,p}$ is also denoted $T_{2,3,6+p}$ (cusp singularity), and $J_{2,0}=T_{2,3,6}$ is also denoted $\widetilde{E_8}$ (simple elliptic singularity).
\end{remark}
We will also make use of the following terminology for certain non isolated singularities.
\begin{definition}\label{dfn:kappainf}
The germ $(C,p)$ of a one dimensional singularity is of \emph{Type $J_{k,\infty}$} if it is isomorphic to the germ at $(0,0)$  of the planar singularity defined by 
$x^3+x^2y^k=0$.
\end{definition}
\subsection{The stratification}\label{subsec:eccostrati}
\begin{proposition}\label{prp:4unico}
Let $C\in|\cO_{\PP^1}(4)\boxtimes\cO_{\PP^1}(4)|$ be stable. Then there is at most one Type IV point of $C$. 
\end{proposition}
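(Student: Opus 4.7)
The plan is to derive a contradiction by bounding the $\delta$-invariant at a Type IV point from below and comparing with the arithmetic genus.

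First, by \Ref{prp}{git44} a stable curve $C \in |\cO(4,4)|$ satisfies $\mult_p(C) \le 3$ at every point. By the discussion opening \Ref{subsec}{singclassification}, if $C$ has no singular point with consecutive triple points then $X_C$ has only ADE singularities and $[C] \in \gM^I$. Hence a Type IV point $p$ of $C$ must satisfy $\mult_p(C) = 3$ with tangent cone $3L$, and after one blow-up of $p$ there remains a point of multiplicity $3$ on the strict transform. The standard formula $\delta_p(C) = \binom{\mult_p(C)}{2} + \sum_{q \mapsto p} \delta_q(\wt{C})$ then yields
\begin{equation*}
\delta_p(C) \;\ge\; \binom{3}{2} + \binom{3}{2} \;=\; 6.
\end{equation*}
Equivalently, a direct check of each normal form in \Ref{thm}{zoo} carrying the label Type IV yields Milnor number $\mu \ge 12$, and $\delta \ge \mu/2 \ge 6$ follows.

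Second, adjunction on $\PP^1\times\PP^1$ gives $p_a(C) = (4-1)(4-1) = 9$. If $C$ is irreducible, the inequality $\sum_p \delta_p(C) \le p_a(C) = 9$ is incompatible with two Type IV points, whose combined $\delta$-contribution is $\ge 12$. This settles the irreducible case.

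The main obstacle is the reducible case. Writing $C = C_1 \cup \cdots \cup C_n$ with geometric genera $g_i$, one has $p_a(C) = \sum_p \delta_p(C) + \sum_i g_i + 1 - n$, so $\sum_p \delta_p \le 8 + n$ and two Type IV points force $n \ge 4$. To rule this out I would combine two further ingredients. By \Ref{lmm}{git44}(c), stability forces the tangent direction $L$ at a Type IV point to be \emph{not} the tangent of any ruling line (else $C$ is destabilized by a conjugate of $\wt{\lambda}_1$); in particular two Type IV points cannot share a common ruling line, since otherwise a ruling line $\ell$ through both would satisfy $\ell \cdot C \ge 3 + 3 = 6 > 4$, forcing $\ell \subset C$ and hence $T_{p_i}\ell \subset \text{tangent cone} = 3L$, contradicting the previous sentence. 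Second, a local check shows that a Type IV singularity cannot arise from a configuration of only smooth analytic branches: three smooth branches with a common non-ruling tangent produce the $\widetilde{E}_8 = J_{2,0}$ singularity, which is Type II (not Type IV) in \Ref{thm}{zoo}, while fewer branches cannot attain multiplicity $3$. Consequently at each Type IV point at least one component $C_j$ through $p$ is itself singular there with a substantial local $\delta$, hence $(a_j-1)(b_j-1) = p_a(C_j) \ge \delta_p(C_j)$, which (together with the already restrictive list of Type IV singularities) bounds the bidegrees of the singular components of $C$. A short combinatorial check over decompositions of $(4,4)$ into $n \ge 4$ summands, paying attention to which components can meet with the required high contact at each $p_i$, rules out any such configuration and completes the argument. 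The case analysis for $n \ge 4$ is the only nontrivial step; everything else is immediate from the $\delta \ge 6$ bound and $p_a(C) = 9$.
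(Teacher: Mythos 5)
Your strategy (bounding the delta-invariant at a Type IV point and playing it against the arithmetic genus via the normalization sequence) is genuinely different from the paper's. The paper splits into the reduced case, for which it simply cites \Ref{prp}{git44} together with Lemma~4.7 of~\cite{shah4}, and the non-reduced case, for which it writes $C=2C_0+D$ with $C_0$ the unique multiplicity-two component and observes that a Type IV point must be a $J_{3,\infty}$ or $J_{4,\infty}$ point of $C_0\cap D$, so B\'ezout ($C_0\cdot D\le 4$) finishes. Your approach is more self-contained in spirit, and the opening steps are fine: $\delta_p\ge 6$ at a Type IV point is correct (for the isolated Type IV singularities $E_{12},E_{13},E_{14},J_{3,0},J_{3,r}$ from~\Ref{crl}{norm3} one checks $\delta=6,7,7,9,\ge 9$ respectively), $p_a(C)=9$, and the conclusion $n\ge 4$ for the reduced reducible case are all correct. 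However, there are two serious gaps.

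\textbf{The non-reduced case is not addressed.} A stable $(4,4)$ curve can have a component of multiplicity $2$ (any multiplicity-$2$ component which is not a line is permitted by~\Ref{prp}{git44}), and this case genuinely occurs (e.g.\ the strata $\gM^{II}_{D_{12}\oplus D_4}$, $\gM^{II}_{A_{15}\oplus D_1}$, and the Type IV strata $\gM^{IV}_{J_{3,+}}$, $\gM^{IV}_{J_{4,\infty}}$ all contain non-reduced curves). Your formula $p_a(C)=\sum_p\delta_p+\sum_i g_i+1-n$ only holds for \emph{reduced} curves, and the relevant Type IV singularities of a non-reduced stable $C$ are the non-isolated germs $J_{k,\infty}$ whose delta-invariant is infinite; the argument simply does not apply. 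This case must be handled separately.

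\textbf{The key claim in the $n\ge 4$ case is false.} You assert that ``a Type IV singularity cannot arise from a configuration of only smooth analytic branches,'' arguing that three smooth branches with a common non-ruling tangent produce $\widetilde E_8=J_{2,0}$. This is not so: the family $J_{k,0}$ with normal form $x^3+bx^2y^k+y^{3k}+\cdots$ and $4b^3+27\ne 0$ (see \Ref{thm}{zoo}) consists, for every $k\ge 2$, of \emph{three smooth branches} with a common tangent and pairwise intersection multiplicity $k$; $J_{2,0}$ is only the case $k=2$. In particular $J_{3,0}$, which is Type IV and occurs on stable curves (\Ref{crl}{norm3}), is three smooth branches with pairwise contact of order $3$ (and similarly $J_{3,r}$ for $r$ even). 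So the intermediate conclusion ``at each Type IV point at least one component $C_j$ through $p$ is itself singular there with a substantial local $\delta$'' does not follow, and the combinatorial bound on bidegrees that you build on it is unsupported. Patching this would require a careful case analysis over the singularity types in \Ref{crl}{norm3} and the pairwise intersection numbers of components (using, e.g., that any two branches at a Type IV point other than $E_{12},E_{14}$ have local intersection at least $3$, which constrains the bidegrees heavily) --- this is doable but not obviously shorter than the paper's route, and it is not written out here.

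In summary: the irreducible case of your argument is correct and provides an independent proof for that subcase; the reducible reduced case has an error that needs repair; and the non-reduced case is missing entirely.
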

\begin{proof}
If $C$ is reduced, this follows from~\Ref{prp}{git44} and Lemma~4.7 of~\cite{shah4}. Suppose that $C$ is non reduced, and let 
$C_0\in|\cO_{\PP^1}(a)\boxtimes\cO_{\PP^1}(b)|$ be a component of $C$ of multiplicity at least $2$ (and hence of multiplicity $2$, 
by~\Ref{prp}{git44}). Then  $a>0$ and $b>0$  by~\Ref{prp}{git44}. It follows that there is a unique such $C_0$. Let $C=2C_0+D$. Then  $D$ is reduced, it has no component equal to $C_0$,  it has no point of multiplicity greater than $3$, and it is smooth at each point of intersection with $C_0$. Thus, if $p$ is a point of Type IV  of $C$, it belongs to $C_0$ and it is of Type $J_{3,\infty}$ or $J_{4,\infty}$, 
i.e.~$\mult_{p}(C_0\cap D)=3$ or $\mult_{p}(C_0\cap D)=4$ respectively. By B\'ezout, there is at most one such point.
\end{proof}
The definition below makes sense by the above proposition.
\begin{definition}
\begin{enumerate}
\item 
For each   type of isolated singularity $T$ appearing in~\Ref{thm}{zoo} (i.e.~isolated singularities of multiplicity $3$ with tangent cone of multiplicity $3$) \emph{of Type IV}, let $\gM^{IV}_T\subset\gM^{IV}$ be the set parametrizing stable curves which have a point of Type $T$.
\item 
Let $\gM^{IV}_{J_{k,\infty}}\subset\gM^{IV}$ be the set parametrizing stable curves which have a point of Type $J_{k,\infty}$. 
\item 
Let $\gM^{IV}_{J_{k,+}}:=\gM^{IV}_{J_{k,\infty}}\sqcup\coprod_{r>0}  \gM^{IV}_{J_{k,r}}$.
\item 
 Let $\gM^{IV}_{(3,1)}\subset\gM^{IV}$ be the set parametrizing curves $3C_0+C_1$, where $C_0,C_1\in|\cO_{\PP^1}(1)\boxtimes \cO_{\PP^1}(1)|$ are distinct,  and $C_0$ is smooth.
\item 
 Let $\gM^{IV}_{(4)}\subset\gM^{IV}$ be the  singleton whose unique point  corresponds to $4C_0$, where $C_0\in|\cO_{\PP^1}(1)\boxtimes \cO_{\PP^1}(1)|$ is  smooth.
\end{enumerate}
\end{definition}
 We have
\begin{equation}
\gM^{IV}=\coprod_{T}\gM^{IV}_T\sqcup \gM^{IV}_{(3,1)}\sqcup\gM^{IV}_{(4)}.
\end{equation}
This follows immediately from the results proved in the last two sections. 

Our next task is to describe explicitly the curves in the  subsets $\gM^{IV}_T$. This will allow us to determine which subsets are non empty, and  will also be useful in the GIT analysis later on.
\begin{lemma}[Lemma 4.6 in~\cite{shah4}]\label{lmm:norm3}
Let $C\subset\PP^3$ be a $(2,4)$ complete intersection curve,  let $Q$ be the quadric containing it, and let $p\in C$ be a point \emph{not} belonging to the singular locus of  $Q$. Then the following are equivalent:
\begin{enumerate}
\item 
There exist  homogeneous coordinates $[x_0,\ldots,x_3]$ such that $p=[1,0,0,0]$,  and 
\begin{eqnarray}
Q & = & V(x_0x_2+x_1^2+ax_3^2), \\
C & = &  V(x_0x_2+x_1^2+ax_3^2, x_0x_3^3+x_1^2g_2(x_2,x_3)+x_1g_3(x_2,x_3)+g_4(x_2,x_3)). \label{norm3}
\end{eqnarray}
\item 
 $C$ has consecutive triple points at $p$, with tangent cone $3L$, where $L$ is  \emph{not} the tangent space to  a line of $Q$. 
\end{enumerate}
\end{lemma}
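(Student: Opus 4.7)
The whole proof is a local coordinate computation on the affine chart $x_0 = 1$ of $\PP^3$. In the normal form of (1), the quadric $Q$ is parametrized near $p = [1,0,0,0]$ by $(x_1, x_3)$ via $x_2 = -x_1^2 - a x_3^2$, and the restriction of a quartic $\tilde F$ to $Q$ is $F(x_1,x_3) := \tilde F(1, x_1, -x_1^2 - ax_3^2, x_3)$. The implication $(1)\Rightarrow(2)$ is a direct expansion in these local coordinates; $(2)\Rightarrow(1)$ first normalizes $Q$ and then matches the $12$ vanishing coefficients of $\tilde F$ mod $q$ to the singularity conditions.

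\medskip

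\noindent\textit{$(1)\Rightarrow(2)$.} After substitution, each of $x_1^2 g_2$, $x_1 g_3$, $g_4$ produces only terms of order $\geq 4$ in $(x_1, x_3)$, so $F = x_3^3 + (\text{order}\geq 4)$: thus $C$ has multiplicity $3$ at $p$ with tangent cone $3V(x_3)$. Lines on $Q$ through $p$ have directions $(t_0, t_1, t_2, t_3)$ satisfying $t_2 = 0$ and $t_1^2 + a t_3^2 = 0$, whose tangent directions in $T_pQ = V(x_2)$ are $V(x_1)$ (if $a = 0$) or $V(x_1 \mp \sqrt{-a}\,x_3)$ (if $a\neq 0$) --- none equal to $V(x_3)$. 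For consecutive triple points, divide $F(x_1, x_1 t)$ by $x_1^3$ in the blow-up chart $x_3 = x_1 t$:
\[
\widetilde F(x_1,t) = t^3 + x_1(c_{20} t^2 + c_{30} t^3 + c_{40} t^4) + x_1^2\bigl(-c_{21} t - c_{31} t^2 - c_{41} t^3 + O(t^3)\bigr) + c_{22} x_1^3 + \cdots,
\]
which vanishes at $(0,0)$ through order $2$ and so has multiplicity $3$ at the unique point over $p$.

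\medskip

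\noindent\textit{$(2)\Rightarrow(1)$, normalizing $Q$.} Choose $p = [1,0,0,0]$ and write $q = x_0 L + R$ with $L, R$ in $x_1, x_2, x_3$. Smoothness at $p$ forces $L \neq 0$; a linear change in $(x_1, x_2, x_3)$ gives $L = x_2$, and shifts $x_0 \mapsto x_0 + (\text{linear in } x_1,x_2,x_3)$ remove all $x_2$-monomials from $R$, leaving $R = A x_1^2 + B x_1 x_3 + C x_3^2$. A rotation of $(x_1, x_3)$ places the tangent cone line along $V(x_3) \subset T_p Q$. The hypothesis that $V(x_3)$ is not tangent to any line of $Q$ forces $A \neq 0$: otherwise $R$ is divisible by $x_3$ and $V(x_2, x_3) \subset Q$ is a line through $p$ with tangent $V(x_3)$. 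Completing the square in $x_1$ and rescaling then yield $Q = V(x_0 x_2 + x_1^2 + a x_3^2)$.

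\medskip

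\noindent\textit{$(2)\Rightarrow(1)$, normalizing $\tilde F$.} Every quartic is congruent mod $q$ to a unique combination of the $25$ basis monomials $\{x_0^a x_1^b x_3^d : a+b+d = 4\} \cup \{x_1^b x_2^c x_3^d : c\geq 1,\, b+c+d = 4\}$, of which the target form $x_0 x_3^3 + x_1^2 g_2 + x_1 g_3 + g_4$ uses exactly $13$. Write $\tilde F = \sum \alpha_{abd} x_0^a x_1^b x_3^d + \sum \beta_{bcd} x_1^b x_2^c x_3^d$ in this basis. A second-type monomial $x_1^b x_2^c x_3^d$ contributes to $F$ only terms of total degree $8 - b - d \geq 5$, so the low-order part of $F$ is read off directly from the $\alpha_{abd}$'s. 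Multiplicity $\geq 3$ at $p$ kills six coefficients ($\alpha_{abd}$ with $b+d \leq 2$); tangent cone $3V(x_3)$ forces $\alpha_{130} = \alpha_{121} = \alpha_{112} = 0$, with $\alpha_{103}\neq 0$ which we normalize to $1$; and consecutive triple points forces the degree-$1$ and degree-$2$ parts of $\widetilde F$ at $(0,0)$ --- namely $\alpha_{040} x_1$ and $\alpha_{031} x_1 t - \beta_{310} x_1^2$ (using that the coefficient of $x_1^5$ in $F$ equals $-\beta_{310}$) --- to vanish. These $6+3+3 = 12$ vanishing conditions match exactly the $12$ monomials excluded from the target form, giving the desired shape. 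The one delicate step, and the main obstacle in the plan, is the low-order analysis of $\widetilde F$: tracking which monomials of $\tilde F$ can contribute and checking that only $\alpha_{040}$, $\alpha_{031}$, $\beta_{310}$ appear, which rests crucially on the degree bound $8-b-d\geq 5$ for second-type monomials.
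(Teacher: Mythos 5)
Your proof is correct and follows the same route as the paper: normalize $p$ and $Q$, parametrize $Q$ near $p$ by $(x_1,x_3)$, and read off the consecutive--triple--point conditions of~\Ref{rmk}{constrip} from the local plane equation. The only difference is one of explicitness: for $(2)\Rightarrow(1)$ you track vanishing coefficients against a reduced monomial basis of $\CC[x_0,\dots,x_3]_4/(q)$, whereas the paper simply asserts the existence of the corrector $q(x,y,z)$ in~\eqref{newyear}; both amount to the same twelve conditions.
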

\begin{proof}
Let $(x,y,z)$ be the affine coordinates, centered at $p$, given by  $x=\frac{x_1}{x_0}$, $y=\frac{x_3}{x_0}$, and $z=\frac{x_2}{x_0}$ (this agrees with   Shah's notation). 

First we check that if~(1) holds then~(2) holds.  Let    $C_{x_0}=C\setminus V(x_0)$; thus $C_{x_0}$ is an affine neighborhood of $p$ in $C$.  Projection to the $(x,y)$ plane defines an isomorphism between $C_{x_0}$ and the plane affine curve with equation
\begin{equation}\label{affeq}
y^3+x^2 g_2(-x^2-ay^2,y)+x g_3(-x^2-ay^2,y)+g_4(-x^2-ay^2,y)=0.
\end{equation}
Since $p$ is mapped to $(0,0)$, it follows that $C$ has consecutive triple points at $p$ (see~\Ref{rmk}{constrip}), with tangent cone $3V(y,z)$. The line(s) in $Q$ containing $p$  have equations $V(x\pm \sqrt{a}i y,z)$, hence $V(y,z)$ is  \emph{not} the tangent space to  a line of $Q$. 

Next, let us prove that if~(2) holds then~(1) holds. By our hypothesis on the tangent cone of $C$ at $p$, the quadric is either smooth or it has a unique singular point. Thus  we may choose homogeneous coordinates $[x_0,\ldots,x_3]$ such that $p=[1,0,0,0]$,  $Q  =  V(x_0x_2+x_1^2+ax_3^2)$, and the tangent cone to $C$ at $p$ is equal to $3L$ where $L$ is tangent to the line $V(x_2,x_3)$ 
(again use  our hypothesis on the tangent cone of $C$ at $p$). Thus $C_{x_0}$ has equations
\begin{equation*}
z+x^2+ay^2=0,\quad f(x,y,z)=0,
\end{equation*}
where $f$ is a quartic polynomial. 
Notice that we are free to modify $f$ by adding any polynomial $q(x,y,z)\cdot (z+x^2+ay^2)$, where $q$ has degree at most $2$. 
Now,  projecting to the $(x,y)$ plane (as above), we get an isomorphism between   $C_{x_0}$ and the plane affine curve with equation
\begin{equation*}
f(x,y,-x^2-ay^2)=0,
\end{equation*}
such that $p$ is mapped to $(0,0)$. By invoking~\Ref{rmk}{constrip}, one shows that there exists a choice of polynomial  $q(x,y,z)$ of degree at most $2$ such that 
\begin{equation}\label{newyear}
f+q(x,y,z)\cdot (z+x^2+ay^2)=y^3+y^2 p_2(x,y)+z q_3(x,y,z),\quad q_3(1,0,0)=0,
\end{equation}
where $p_2$, $q_3$ are homogeneous of degrees $2$ and $3$ respectively. Thus the right hand side of~\eqref{newyear} is a quartic polynomial of degree at most $2$ in $x$, and hence its homogenization  may be rewritten as the quartic polynomial appearing in~\eqref{norm3}. 
\end{proof}
The observation below will be handy when computing the dimensions of the strata $\gM^{IV}_T$.
\begin{remark}\label{rmk:qunico}
An easy argument shows that there is \emph{unique} polynomial $q$ such that~\eqref{newyear} holds. 
\end{remark}
\begin{proposition}\label{prp:norm3}
Let $C\subset\PP^3$ be a $(2,4)$ complete intersection curve,  let $Q$ be the quadric containing it, and let $p\in C$ be a point \emph{not} belonging to the singular locus of  $Q$. Suppose that $C,Q,p$ satisfy one of the (equivalent) Items~(1) or~(2) of~\Ref{lmm}{norm3}. Retain the notation of the quoted  lemma, and let $g_d(x_2,x_3)=\sum_{i+j=d} g^{i,j}_d x_2^i x_3^j$. 
Then the following hold:
\begin{enumerate}
\item If $g_2\neq 0$, then $(C,p)$ is a  $J_{2,r}$ singularity, where $r\in\NN\cup\{\infty\}$. 
\item If $g_2=0$, and $g_3^{3,0}\not=0$, then $(C,p)$ is an  $E_{12}$  singularity.
\item If $g_2=0$, $g_3^{3,0}=0$, and $g_3^{2,1}\not=0$,  then $(C,p)$ is an  $E_{13}$  singularity.
\item If $g_2=0$, $g_3^{3,0}=g_3^{2,1}=0$, and $g_4^{4,0}\not=0$,   then $(C,p)$ is an  $E_{14}$  singularity.
\item If $g_2=0$, $g_3^{3,0}=g_3^{2,1}=g_4^{4,0}=0$,  and  $g_4^{3,1}((g_3^{1,2})^2+4g_4^{3,1})\not=0$,  then $(C,p)$ is a  $J_{3,0}$  singularity.
\item If $g_2=0$, $g_3^{3,0}=g_3^{2,1}=g_4^{4,0}=g_4^{3,1}(g_3^{1,2}\cdot g_3^{1,2}+4g_4^{3,1})=0$,  and $(g_3^{1,2},g_4^{3,1})\not=(0,0)$,   then $(C,p)$ is a  $J_{3,r}$  singularity, for some $r>0$ (possibly 
$r=\infty$).
\item If  $g_2=0$, $g_3^{3,0}=g_3^{2,1}=g_4^{4,0}=g_3^{1,2}=g_4^{3,1}=0$,   and 
$g_4^{2,2}\not=0$, then $(C,p)$ is a  $J_{4,\infty}$  singularity.
\item If  $g_2=0$, $g_3^{3,0}=g_3^{2,1}=g_4^{4,0}=g_3^{1,2}=g_4^{3,1}=g_4^{2,2}=0$,  then  $C=3C_0+C_1$, where $C_0$  is a smooth conic.
\end{enumerate}
\end{proposition}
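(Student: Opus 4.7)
The plan is to compute the local equation $f(x,y)$ of $C$ at $p$ in an affine chart and verify the singularity type case by case, using the Recognition Theorem \Ref{thm}{recognize} for the isolated cases (1)--(6) and direct factorization for the non-isolated cases (7)--(8). Setting up: on the chart $\{x_0\neq 0\}$ with $(x,y,z)=(x_1/x_0,x_3/x_0,x_2/x_0)$ one has $z=-x^2-ay^2$ on $Q$, and projecting $C$ to the $(x,y)$-plane as in the proof of \Ref{lmm}{norm3} yields
\[
f(x,y)=y^3+x^2\,g_2(-x^2-ay^2,y)+x\,g_3(-x^2-ay^2,y)+g_4(-x^2-ay^2,y).
\]
Expanding $g_d(-x^2-ay^2,y)=\sum_{i+j=d}(-1)^i g_d^{i,j}(x^2+ay^2)^i y^j$ writes $f$ as an explicit polynomial whose monomials are labeled by the $g_d^{i,j}$.

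For cases (1)--(6) the strategy is uniform: read from Table \ref{tavola} the weights $(\mathrm{wt}(x),\mathrm{wt}(y))$ making $y^3$ and the claimed $x$-side monomial of the normal form both of weight $1$ (this amounts to swapping the roles of $x$ and $y$ in the table, since our tangent cone is $y^3$); compute the weight-$1$ part $f_0$ of $f$ and verify that all remaining monomials have strictly higher weight; then invoke \Ref{thm}{recognize}. Concretely, case (2) at weights $(1/7,1/3)$ gives $f_0=y^3-g_3^{3,0}x^7$, of type $E_{12}$; cases (3) and (4) at weights $(2/15,1/3)$ and $(1/8,1/3)$ give $f_0=y^3+g_3^{2,1}x^5y$ (type $E_{13}$) and $f_0=y^3+g_4^{4,0}x^8$ (type $E_{14}$); and case (1) at weights $(1/6,1/3)$ gives $f_0=y^3+g_2^{0,2}x^2y^2-g_2^{1,1}x^4y+g_2^{2,0}x^6$, a cubic in $y$ with $x^2$-homogeneous coefficients whose non-triviality under $g_2\neq 0$ places the germ in the $J_{2,r}$ family (specifically $J_{2,0}$ if the associated cubic in $t=y/x^2$ has distinct roots, and $J_{2,r}$ with $r>0$ otherwise).

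Cases (5) and (6) share weights $(1/9,1/3)$, with
\[
f_0=y^3-g_3^{1,2}x^3y^2-g_4^{3,1}x^6y=y\bigl(y^2-g_3^{1,2}x^3y-g_4^{3,1}x^6\bigr).
\]
Setting $t=y/x^3$ identifies $f_0/x^9$ with the cubic $t(t^2-g_3^{1,2}t-g_4^{3,1})$; three pairwise distinct roots (equivalent to an isolated singularity at the origin) decompose as $g_4^{3,1}\neq 0$ (the root $t=0$ being simple) and $(g_3^{1,2})^2+4g_4^{3,1}\neq 0$ (the residual quadratic having distinct roots), which is precisely the hypothesis of case (5). The substitution $y\mapsto y+\lambda x^3$, with $\lambda$ chosen as a root of $3\lambda^2-2g_3^{1,2}\lambda-g_4^{3,1}=0$, eliminates the $x^6y$ coefficient; a short computation (using the defining equation for $\lambda$) shows that under the case (5) hypothesis the resulting $x^9$ coefficient $-\lambda(g_3^{1,2}\lambda+2g_4^{3,1})/3$ is non-zero, so after rescaling $x$ the weight-$1$ part becomes the $J_{3,0}$ normal-form leading term $y^3+by^2x^3+x^9$ with $4b^3+27\neq 0$, and \Ref{thm}{recognize} concludes. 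In case (6) the cubic in $t$ has a repeated root, so $f_0$ is non-isolated; the index $r$ in the $J_{3,r}$ type is then read off from the next Newton-polygon edge of $f$.

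Finally, cases (7) and (8) are non-isolated and are handled by direct factorization. Under the hypotheses of (7), only the monomials $g_3^{0,3}y^3$, $g_4^{0,4}y^4$, $g_4^{1,3}(-x^2-ay^2)y^3$, and $g_4^{2,2}(x^2+ay^2)^2y^2$ survive, giving
\[
f=y^2\bigl[y\,U(x,y)+g_4^{2,2}(x^2+ay^2)^2\bigr],\qquad U(0,0)=1;
\]
the implicit function theorem then factors the bracket as $(y-\phi(x))\cdot(\text{unit})$ with $\mathrm{ord}_0\phi=4$, so the germ is the union of the double line $\{y=0\}$ and a simple smooth branch meeting it to order $4$, i.e.\ type $J_{4,\infty}$ by \Ref{dfn}{kappainf}. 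Under (8) every surviving monomial of $g_3,g_4$ is divisible by $y^3$, so $f=y^3\cdot(\text{unit})$; globally $\{y=0\}\cap Q$ is the smooth conic $C_0=Q\cap\{x_3=0\}$, which is therefore a triple component of $C$, giving $C=3C_0+C_1$ as claimed. The main technical obstacle is the weighted-degree bookkeeping in cases (5)--(6) --- verifying that every monomial of $f$ outside $f_0$ has weight strictly greater than $1$ --- together with the substitution $y\mapsto y+\lambda x^3$ that normalizes $f_0$ to the tabulated $J_{3,0}$ leading form; the remaining cases are routine expansions.
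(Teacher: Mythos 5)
Your proof is correct and follows the same route as the paper: express the local plane equation of $(C,p)$ via the projection from \Ref{lmm}{norm3}, assign weights as in the table of \Ref{thm}{zoo} with the roles of $x$ and $y$ interchanged, and conclude via \Ref{thm}{recognize} in the isolated cases and by direct factorization in the non-isolated ones. You spell out several steps that the paper declares \lq\lq totally elementary\rq\rq\ (the normalization $y\mapsto y+\lambda x^3$ verifying the nonvanishing $x^9$-coefficient in case~(5), and the implicit-function-theorem factorizations in cases~(7)--(8)), but the underlying method and all weight assignments coincide with the paper's.
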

\begin{proof}
Recall that the germ of $C$ at $p$ is isomorphic to germ at $(0,0)$ of the affine plane curve with equation given in~\eqref{affeq}. 
Items~(1) - (5) follow from~\Ref{thm}{recognize}. In fact, assign weights to $x,y$ according to the table in~\Ref{thm}{zoo}, with the r\^oles of $x$ and $y$ exchanged, i.e.~$wt(x)=1/6$ when proving Item~(1), 
$wt(x)=1/7$ when proving Item~(2), 
$wt(x)=2/15$  when proving  Item~(3), $wt(x)=1/8$  when proving  Item~(4),  $wt(x)=1/9$  when proving  Item~(5), and $wt(y)=1/3$ in all  cases. Then Item~(1) holds because the leading term of the equation is  $y^3+x^2 g_2(-x^2,y)$ (in order to recognize the leading term one  needs to pass to  analytic coordinates $(x,y+\alpha x^2)$ for a suitable choice of $\alpha$),   Item~(2) holds because the leading term of the equation is $y^3-g_3^{3,0}x^7$,   Item~(3) holds because the leading term of the equation is $y^3+g_3^{2,1}x^5 y$,  Item~(4) holds because the leading term of the equation is $y^3+g_4^{4,0}x^8$, and Item~(5) holds because the leading term of the equation is $y^3-g_3^{1,2}x^3 y^2-g_4^{3,1}x^6y$, and there exist non zero distinct $\alpha,\beta$ such that 
 $y^3-g_3^{1,2}x^3 y^2-g_4^{3,1}x^6y=y(y+\alpha x^3)(y+\beta x^3)$ if and only if  $g_4^{3,1}((g_3^{1,2})^2+4g_4^{3,1})\not=0$. 

The proof of~(6), (7) and (8) is totally elementary.
\end{proof}
\begin{corollary}\label{crl:norm3}
Let $T\in\{E_{12},E_{13},E_{14},J_{3,0},J_{3,+},J_{4,\infty},(3,1),(4,0)\}$. Then $\gM^{IV}_T$ is a (non empty) irreducible locally closed subset of $\gM^{IV}$, of dimension  given   below
\begin{equation}\label{strati4}
\begin{tabular}{c|c|c|c|c|c|c|c | c }
$\gM^{IV}_T$ &  $\gM^{IV}_{E_{12}}$ & $\gM^{IV}_{E_{13}}$ &  $\gM^{IV}_{E_{14}}$  &  $\gM^{IV}_{J_{3,0}}$  &  $\gM^{IV}_{J_{3,+}}$ &  $\gM^{IV}_{J_{4,\infty}}$ & 
$\gM^{IV}_{(3,1)}$ &  $\gM^{IV}_{(4)}$ \\
\hline
$\dim \gM^{IV}_T$  & $7$ & $6$  & $5$  & $4$  &  $3$  &  $2$  & $1$ & $0$ 
\end{tabular}
\end{equation}
Moreover
\begin{equation}\label{decomp4}
\gM^{IV}=\gM^{IV}_{E_{12}}\sqcup \gM^{IV}_{E_{13}}\sqcup \gM^{IV}_{E_{14}}\sqcup \gM^{IV}_{J_{3,0}}\sqcup 
 \gM^{IV}_{J_{3,+}}\sqcup \gM^{IV}_{J_{4,\infty}}\sqcup \gM^{IV}_{(3,1)}\sqcup\gM^{IV}_{(4)},
\end{equation}
and the closure of  $\gM^{IV}_T$ is the union of  $\gM^{IV}_T$  and the strata   $\gM^{IV}_{T'}$    to the right 
 of  $\gM^{IV}_T$    in~\eqref{decomp4}.  
\end{corollary}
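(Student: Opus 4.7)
My plan is to establish the four claims---the decomposition~\eqref{decomp4}, non-emptiness and irreducibility of each $\gM^{IV}_T$, the dimensions in~\eqref{strati4}, and the closure relations---by combining Proposition~\ref{prp:4unico} with the explicit normal-form classification in Proposition~\ref{prp:norm3}. First, for the set-theoretic decomposition and the local closedness: by Proposition~\ref{prp:4unico} every polystable $C\in\gM^{IV}$ has a unique Type IV point $p$, and attaching to $[C]$ the analytic type of $(X_C,\widetilde p)$---or, when $C$ is non-reduced, the intersection structure of the residual $(1,3)$-curve as in Remark~\ref{rmk:moresetup}---places $[C]$ in exactly one listed stratum. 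For each isolated type $T\in\{E_{12},E_{13},E_{14},J_{3,0},J_{3,+},J_{4,\infty}\}$, Proposition~\ref{prp:norm3} realizes $\gM^{IV}_T$ as the image in $\gM$ of a Zariski-open subset of an irreducible affine subspace of the normal-form space of Lemma~\ref{lmm:norm3}, cut out by the listed equalities and open inequalities on the coefficients $g^{i,j}_d$; this immediately yields non-emptiness, irreducibility, and local closedness. For $\gM^{IV}_{(3,1)}$ and $\gM^{IV}_{(4)}$ the same three properties follow from the explicit polystable normal form of Proposition~\ref{prp:git44}(2b).

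Next, I would compute the dimensions by the versal-unfolding argument already used for $\widetilde E_8$ in the proof of Proposition~\ref{prp:TypeIIShah}(i): the universal family $|\cO(4,4)|$ versally unfolds each of the isolated germs $(C,p)$ appearing in Proposition~\ref{prp:norm3}, so the codimension of $\gM^{IV}_T$ in $\gM$ equals $\tau(T)-\mathrm{mod}(T)$. Reading the Milnor-Tjurina numbers and modalities off the normal forms of Theorem~\ref{thm:zoo} and using $\dim\gM=18$ yields the asserted values $7,6,5,4,3$ for $E_{12},E_{13},E_{14},J_{3,0},J_{3,+}$. The non-isolated strata I would handle directly: $\gM^{IV}_{J_{4,\infty}}$ corresponds to $C=2C_0+D$ with $C_0$ a smooth $(1,1)$-curve and $D\in|\cO(2,2)|$ meeting $C_0$ with multiplicity $4$ at a point, a parameter count that produces dimension $2$ after the $\Aut$-quotient; similarly $\gM^{IV}_{(3,1)}$ (three of the four conics in the pencil of Proposition~\ref{prp:git44}(2b) coincident) and $\gM^{IV}_{(4)}$ (all four coincident) have dimensions $1$ and $0$.

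Finally, the closure relations will follow from the nested structure of the defining conditions in Proposition~\ref{prp:norm3}: imposing one additional coefficient vanishing (or collapsing the non-reduced residue one more step) passes from a given stratum into the next one listed, which yields the adjacency chain $E_{12}\leadsto E_{13}\leadsto E_{14}\leadsto J_{3,0}\leadsto J_{3,+}\leadsto J_{4,\infty}$, continued geometrically by $J_{4,\infty}\leadsto(3,1)\leadsto(4)$. Combined with the strictly decreasing dimensions along this chain, this forces the closure of each $\gM^{IV}_T$ to equal the union of $\gM^{IV}_T$ with all strata lying to its right in~\eqref{decomp4}. The main obstacle will be verifying the versal-unfolding hypothesis underlying the dimension count, which amounts to a weighted-jet computation on the coefficients of the normal form against a Tjurina basis of each germ; once this is in place, the remainder of the argument is bookkeeping.
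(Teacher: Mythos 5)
Your plan is sound and produces the correct answers, but for the dimension count you take a genuinely different route from the paper's. Rather than invoking versal unfolding and Arnold modalities, the paper fixes a smooth quadric $Q=V(x_0x_2+x_1^2-x_3^2)$ and a point $p=[1,0,0,0]$, defines $\cS_T\subset|\cO_Q(4)|$ as the set of curves in the explicit normal form of Lemma~\ref{lmm:norm3} with $g_2=0$ satisfying the constraints of Proposition~\ref{prp:norm3} for type $T$, and then computes $\dim\gM^{IV}_T=\dim\cS_T-\dim G_T$, where $G_T$ is the subgroup of $\Aut(Q)$ preserving $\cS_T$. One sees $\dim\cS_{E_{12}}=9$ and that the dimension drops by one at each step to the right, while $G_T$ is shown by direct matrix computation to be $2$-dimensional (an affine line group) in all cases, yielding $7,6,5,4,3,2$ directly without any versality input. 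Your approach is more conceptual and matches the precedent of Proposition~\ref{prp:TypeIIShah}, but the paper sidesteps the versality verification entirely, which is the concrete trade-off: a weighted-jet/Tjurina-basis computation on your side versus an elementary stabilizer computation on theirs.

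Two cautions on your side. First, be careful with the invariant in the codimension formula: you write $\tau(T)-\mathrm{mod}(T)$, but for the unimodal and bimodal germs in play the actual Tjurina number of a generic member of the family is $\mu-1$ (or $\mu-2$), not $\mu$; what enters the codimension count is the Milnor number $\mu(T)$ (the dimension of the versal base), so the formula should be $\mu(T)-\mathrm{mod}(T)$. With that correction and $\dim\gM=18$ you get $18-12+1=7$ for $E_{12}$, $18-16+2=4$ for $J_{3,0}$, $18-17+2=3$ for generic $J_{3,+}$, and so on, as required. Second, your handling of $\gM^{IV}_{J_{4,\infty}}$ by a separate parameter count on $2C_0+D$ with $C_0\in|\cO(1,1)|$ and $D\in|\cO(2,2)|$ is fine and gives $2$, but note that the paper treats $J_{4,\infty}$ inside the same normal-form slice as the isolated types via Proposition~\ref{prp:norm3}(7); the two routes agree, but if you use your parameter count you should check compatibility with the closure relations at the step $J_{3,+}\leadsto J_{4,\infty}$, where one passes from isolated germs to a non-isolated one, and this is most transparent in the unified normal-form description.
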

\begin{proof}
Let $T\in\{E_{12},E_{13},E_{14},J_{3,0},J_{3,+},J_{4,\infty}\}$, and let $[C]\in \gM^{IV}_T$, with $C$ polystable. Then $C$ is stable by~\Ref{prp}{git44} and, by the same proposition, the hypotheses of~\Ref{prp}{norm3} are satisfied, where $Q$ is the smooth quadric containing $C$, and $p\in C$ is the unique point of Type IV (see~\Ref{prp}{4unico}). By~\Ref{prp}{norm3} it follows that 
$ \gM^{IV}_T$ is non empty, irreducible and locally closed. It is elementary that $\gM^{IV}_{(3,1)}$ and $\gM^{IV}_{(4)}$ are irreducible, locally closed. The statement about the closure   follows from~\Ref{prp}{norm3}, and also the decomposition in~\eqref{decomp4}. 

It remains to prove that the  dimensions are given by~\eqref{strati4}. In the case of $\gM^{IV}_{(3,1)}$ and $\gM^{IV}_{(4)}$, the computation is straightforward. It remains to deal with $ \gM^{IV}_T$ for 
$T\in\{E_{12},E_{13},E_{14},J_{3,0},J_{3,+},J_{4,\infty}\}$. We may assume that $C$ is contained in the smooth quadric $x_0x_2+x_1^2-x_3^2$, i.e.~we set $a=-1$ in~\Ref{lmm}{norm3}. By~\Ref{lmm}{norm3} and~\Ref{prp}{norm3}, $C$ is equivalent (under $\Aut(Q)$) to a curve whose equation is given by~\eqref{norm3}, with $g_2=0$ and  such that $g_3$ and $g_4$ satisfy the conditions in the Item of~\Ref{prp}{norm3} corresponding to the chosen $T$. Let $\cS_T\subset|\cO_Q(4)|$ be the subset of such curves.
Since $C$ is stable, it follows that the dimension of  $ \gM^{IV}_T$ is equal to the difference between the dimension of $\cS_T$  and the dimension of the subgroup of $G_T<\Aut(Q)$ mapping $\cS_T$ to itself. The dimension of $\cS_T$ is easily computed (recall~\Ref{rmk}{qunico}): it is $9$ if $T=E_{12}$, and it decreases by  $1$ each time we move one step to the right. The  subgroup  $G_T<\Aut(Q)$ mapping $\cS_T$ to itself is contained in the subgroup of automorphisms $\phi$ stabilizing $p=[1,0,0,0]\in Q$, and such that the differential $d\phi(p)$  maps to itself  the tangent line $V(x_2,x_3)$. Thus $\dim G_T\le 3$. In fact  a computation shows that the connected component of the identity of $G_T$ is equal to  the subgroup of $\PGL(4)$ given by matrices
\begin{equation*}
\left(\begin{array}{rrrr}
1 & 2\beta &  -\beta^2  & 0 \\
0 & \alpha & -\alpha\beta  & 0 \\
0 & 0 & \alpha^2  & 0 \\
0 & 0  & 0 & \alpha \\
\end{array}\right)
\end{equation*}
with $\alpha\not=0$. (Isomorphic to the subgroup of $2\times 2$ matrices $\left(\begin{array}{rr}
1 & \beta \\
0 & \alpha \\
\end{array}\right)$, i.e.~the group of automorphisms of an affine  line.) 
Hence  $\dim G_T=2$, and this gives the dimensions in~\eqref{strati4}.
\end{proof}
At this point, we can give the key definition of the  $W$-stratification.
\begin{definition}\label{dfn:eccow}
For $d\in\{0,1,2,4,5,6,7\}$ (no misprint: $d=3$ \emph{is} missing), we let $W_d\subset\gM^{IV}$ be the union of all the strata $\gM^{IV}_T$  of dimension at most $d$. 
\end{definition}
\begin{remark} 
 The  stratum $ \gM^{IV}_{J_{3,+}}$ is skipped in the definition of the $W$-stratification because it  is  flipped together with  $ \gM^{IV}_{J_{3,0}}$. 
 \end{remark}
By~\Ref{crl}{norm3} we have a ladder of irreducible closed subsets indexed by dimension:
\begin{equation}\label{wstrata}
W_0\subset W_1\subset W_2\subset W_4 \subset W_5 \subset W_6 \subset W_7=\gM^{IV}\subset \gM.
\end{equation}
This is the counterpart of the stratification $Z^k$ (indexed by codimension) in $\sF^*$ (see \eqref{zetahyp}).


\section{GIT for $(2,4)$ complete intersections in $\bP^3$}\label{sec:shyper}
 Let $U$ be the parameter space for $(2,4)$ complete intersection curves in 
$\PP^3$, with the natural action of $\SL(4)$. The main tool in this paper is a variation of GIT quotients for $U$. Since $U$ is not projective, we will consider  the closure of $U$ in the Hilbert scheme, call it $\Hilb_{(2,4)}$. In order to define a GIT quotient  of  $\Hilb_{(2,4)}$ modulo the natural action of  $\SL(4)$ we must choose an $\SL(4)$-linearized  line bundle.
For large $m$, we have the (naturally linearized) ample Plucker line bundle $L_m$ corresponding to the $m$-th Hilbert point (i.e.~the fiber over a scheme  $C$ is equal to the determinant of $H^0(C,\cO_C(m))$); we let $\Hilb_{(2,4)}\gquot_{L_m} \SL(4)$ be the corresponding quotient. 

 There is a regular map $c\colon U\to\Chow$ to the Chow variety parametrizing $1$-dimensional cycles in $\PP^3$. Let $\Chow_{(2,4)}$ be the closure of the image of $c$, and let $L_{\infty}$ be the restriction to $\Chow_{(2,4)}$ of the natural polarization of the Chow variety. As suggested by the notation, for $m\to\infty$ the polarization $L_m$ approaches the pull-back of $L_{\infty}$, and hence  the quotient $\Hilb_{(2,4)}\gquot_{L_m} \SL(4)$ approaches  the GIT quotient of the  Chow variety 
  $\Chow_{(2,4)}\gquot \SL(4)$.

In the opposite direction, we may consider $m$ as small as possible, namely $m=4$. The corresponding line bundle $L_4$ is  ample on $U$ but  \emph{not} ample on  $\Hilb_{(2,4)}$. Here we recall that (semi)stability makes sense with respect to any linearized line bundle, and hence there is a quasi-projective quotient of the open set of semistable points $\Hilb_{(2,4)}^{ss}(L_4)$, see Theorem 1.10 in~\cite{GIT}.

On the other hand, we may identify  $\Hilb_{(2,4)}\gquot_{L_4} \SL(4)$ with the GIT quotient of a space birational  to  $\Hilb_{(2,4)}$, with a linearization that is \emph{ample}.  In fact, let  $E$ be the vector-bundle over $ |\cO_{\PP^3}(2)|$ defined by
\begin{equation}
\xymatrix{H^0(Q,\cO_Q(4))  \ar[d] & \subset  & E \ar[d]  \\   
 Q & \in &  |\cO_{\PP^3}(2)|.}
\end{equation}
Let $\pi\colon\PP E\to |\cO_{\PP^3}(2)|$ be the structure map. The  Picard group of $\PP E$ is generated by 
\begin{equation}\label{genero}
 \eta:=\pi^* \calO(1), \quad \xi:=\calO_{\bP E}(1).
\end{equation}
 \begin{proposition}[{\cite[Thm 2.7]{benoist}}]\label{prp:benample}
Let $t\in\QQ$.  The $\QQ$-Cartier divisor class  $\eta+t\xi$ on $\bP E$ is ample if and only if $t\in (0,\frac{1}{3})\cap\QQ$. 
 \end{proposition}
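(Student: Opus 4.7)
Since $\mathrm{Pic}(\bP E)\otimes\bQ = \bQ\eta\oplus\bQ\xi$ has rank $2$, the ample cone of $\bP E$ is the interior of a two-dimensional nef cone bounded by two rays; my plan is to identify both rays via explicit test curves, and then to verify ampleness on the open interval using Kleiman's criterion.

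First, I would establish the fundamental exact sequence
$$0 \to V\otimes\calO_{|\calO(2)|}(-1) \to W\otimes\calO_{|\calO(2)|} \to E \to 0,$$
with $V = H^0(\bP^3,\calO(2))$ and $W = H^0(\bP^3,\calO(4))$, by pushing forward the Koszul resolution of the universal quadric in $|\calO(2)|\times\bP^3$ tensored with $\calO_{\bP^3}(4)$. Under the classical-projectivization convention (compatible with the birational identification $\bP E\dashrightarrow\Hilb_{(2,4)}$ used later in the paper), sections of $\pi$ over a curve $B\subset|\calO(2)|$ correspond to line subbundles $L\subset E|_B$, with $\xi\cdot\sigma = -\deg L$. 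The lower bound $t>0$ is then immediate: a line $L$ in a fiber $\pi^{-1}([Q])\cong\bP^{24}$ has $\eta\cdot L = 0$ and $\xi\cdot L = 1$, so ampleness of $\eta+t\xi$ forces $t>0$.

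For the upper bound $t<1/3$, my plan is to exhibit a section $\sigma:\ell\to\bP E$ over a line $\ell\subset|\calO(2)|$ realizing a degree-$3$ line subbundle, so that $\eta\cdot\sigma = 1$ and $\xi\cdot\sigma = -3$, and hence $(\eta+\tfrac{1}{3}\xi)\cdot\sigma = 0$. Restricting the fundamental sequence to $\ell = \bP(U)$ with $U = \langle Q_1,Q_2\rangle\subset V$, and computing via the long exact sequence on $\bP^1$, yields for $d\geq 2$
$$H^0(E(-d)|_\ell)\ \cong\ \ker\bigl(V\otimes\Sym^{d-1}U\to W\otimes\Sym^{d-2}U\bigr),$$
with the map induced by the multiplication $U\otimes V\to W$.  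For a generic pencil $(Q_1,Q_2)$ (a regular sequence), Koszul exactness forces the kernel to vanish for $d\geq 3$, so the maximum line-subbundle degree of $E|_\ell$ is $2$. For the \emph{special} pencil $Q_i = L\cdot M_i$ sharing a common linear factor $L$, the pair $(Q_1,Q_2)$ is not a regular sequence, and a direct computation shows the kernel at $d = 3$ becomes one-dimensional, spanned by
$$M_2^2\otimes Q_1^2 \;-\; 2M_1 M_2\otimes Q_1 Q_2 \;+\; M_1^2\otimes Q_2^2,$$
while still vanishing at $d = 4$.  This produces the required line subbundle $\calO_\ell(3)\hookrightarrow E|_\ell$ and hence the desired section $\sigma$.

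The main obstacle is the converse direction, namely proving $\eta+t\xi$ is genuinely ample for every $t\in(0,1/3)\cap\bQ$.  By Kleiman's criterion this reduces to showing that $3\eta+\xi$ is nef, equivalently the uniform bound $\deg L\leq 3\deg B$ for every irreducible curve $B\subset|\calO(2)|$ and every line subbundle $L\subset E|_B$.  I would establish this by extending the Koszul-cohomology analysis above to higher-degree base curves $B$, showing that the maximum line-subbundle degree of $E|_B$ is bounded by $3\deg B$; the value $1/3 = 1/(d_2-d_1+1)$ for $(d_1,d_2)=(2,4)$ is characteristic of projective bundles built from $(d_1,d_2)$-complete intersections and is the technical heart of the cited theorem of Benoist.
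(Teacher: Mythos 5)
The paper does not actually prove \Ref{prp}{benample}: it is quoted directly from Benoist [Thm.~2.7], so there is no internal argument to compare against, and your proposal has to be judged as a standalone proof. Your ``only if'' half is essentially right. Lines in the fibres give $\eta\cdot\ell=0$, $\xi\cdot\ell=1$, forcing $t>0$; and your Koszul computation over the pencil spanned by $Q_1=LM_1$, $Q_2=LM_2$ does produce a degree-$3$ line subbundle of $E|_\ell$ (concretely it is the saturation of the constant section given by the fixed quartic $L\,M_aM_bM_c$, for three pairwise distinct members $M_a,M_b,M_c$ of the pencil of planes: this quartic lies in $Q_\mu\cdot H^0(\cO_{\PP^3}(2))$ exactly at $\mu\in\{a,b,c\}$, each to first order for generic choices). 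The associated section $\sigma$ satisfies $(\eta+t\xi)\cdot\sigma=1-3t$, so $t<\tfrac13$ is necessary. The reduction of the converse to nefness of $3\eta+\xi$ is also fine: $\Pic(\bP E)_{\QQ}$ has rank $2$, $\eta$ is nef as the pullback of an ample class, and by Kleiman the ample cone is the interior of the nef cone, so nefness of $3\eta+\xi$ gives ampleness of $\eta+t\xi$ for all $t\in(0,\tfrac13)$.

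The genuine gap is that this converse is exactly the content of Benoist's theorem, and your proposal leaves it as a plan (``I would establish this by extending the Koszul-cohomology analysis\ldots''). As written you have proved one implication of an ``if and only if'', and the one you omit is the hard one: one needs the uniform bound that, for every smooth curve $\wt C$ with a finite map $\nu\colon\wt C\to|\cO_{\PP^3}(2)|$, every line subbundle of $\nu^{*}E$ has degree at most $3\deg\nu^{*}\cO(1)$. Note also that your formulation in terms of curves $B\subset|\cO_{\PP^3}(2)|$ and subbundles of $E|_B$ is not quite enough, since irreducible curves of $\bP E$ may be multisections over their image; one must pass to a finite cover (or the normalization) as above. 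So either carry out this degree bound over arbitrary base curves --- the common-linear-factor pencil being the extremal case --- or do what the paper does and simply cite Benoist; as it stands, your argument is a correct explanation of why $\tfrac13$ is the right number, but not a proof of the proposition.
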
 
Now, $\PP E$ contains naturally $U$, and the complement of $U$ has  codimension greater than $1$, so that the restriction of $L_4$ to $U$ extends uniquely to a line-bundle $\ov{L}_4$. A straightforward computation (see the proof of Item~(3) of~\Ref{thm}{thmgitmodels}) shows that  $c_1(\ov{L}_4)=10\eta+\xi$. Thus $\ov{L}_4$ is ample, and from this it follows that the categorical quotient 
 $\Hilb_{(2,4)}^{ss}(L_4)$ is identified with the GIT quotient  $\PP E\gquot_{\ov{L}_4} \SL(4)$. Then, a key fact is  that   $\PP E\gquot_{\ov{L}_4} \SL(4)$  is naturally isomorphic to the GIT moduli space $\gM$ of  $(4,4)$ curves on $\bP^1\times \bP^1$ (see Item~(2) of~\Ref{thm}{thmgitmodels}). 
 
On the other hand, the extension of the Chow polarization from $U$ to all of $\PP E$ corresponds to $t=1/2$ 
 in~\Ref{prp}{benample}, and one of our final goals is to prove that the
Chow quotient 
  $\Chow_{(2,4)}\gquot \SL(4)$ is isomorphic to the Baily-Borel compactification $\sF^{*}$. 
  
  Thus it is natural to study the one-parameter variation of GIT models  $\PP E\gquot_{\eta+t\xi} \SL(4)$, for 
  $t\in (0, 1/2]\cap\QQ$. Since we need to consider   values of $t$ for which   $\eta+t\xi$ is not  ample, i.e.~$t\in [1/3, 1/2]\cap\QQ$, our approach is not as straightforward as one would like. In~\Ref{subsec}{interpoldef} we will define a VGIT on an auxiliary $\SL(4)$-space $W$, which is somewhat intermediate between $\PP E$ and $\Chow_{(2,4)}$. It is natural to expect that the quotient  of $W$ with respect to a suitable polarization $N_t$ which morally corresponds to $(\eta+t\xi)$ is a  
    projective GIT moduli space $\gM(t)$ interpolating between $\gM$ and  $\Chow_{(2,4)}\gquot \SL(4)$. The main  result of the present section, i.e.~~\Ref{thm}{thmgitmodels},  establishes the expected interpolation. The proof is in~\Ref{subsec}{interpolgit}.

A very similar analysis  was done in \cite{g4git} (which in turn was influenced by Benoist \cite{benoist}) for $(2,3)$ complete intersections in $\bP^3$. We have modified somewhat the arguments of \cite{g4git}, leading in particular to a more streamlined definition for $\gM(t)$, but the main point remains the same: because of~\Ref{prp}{propssci}, the GIT analysis for $\gM(t)$ only involves complete intersection schemes. Thus, morally speaking, we are investigating a  VGIT quotient $U\gquot_{(\eta+t\xi)|_{U}} \SL(4)$, but some extra work is required since $U$ is not projective.
\subsection{Set up of the VGIT, and statement of the  main result}\label{subsec:interpoldef}
As before, let $U$ be the parameter space for $(2,4)$ complete intersection curves in 
$\PP^3$.
Thus we have a regular embedding
\begin{equation}
U\hra \PP(E)\times\Chow_{(2,4)}.
\end{equation}
\begin{definition}\label{dfn:wpergit}
Let $\cP\subset  \PP(E)\times\Chow_{(2,4)}$ be the closure of $U$. 
\end{definition}
The action of $\SL(4)$ on $\PP^3$ defines an $\SL(4)$-action on $\cP$: this is the space on which we will follow the VGIT. To do this, we need to define a variable $\SL(4)$-linearized ample line bundle on $\cP$. 
Choose
\begin{equation}\label{ecceps}
0<\delta< 1/6,\quad \delta\in\QQ.
\end{equation}
Let $p_1,p_2$ be the projections of $\cP$ onto the first and second factor respectively. For $t\in(\delta,1/2)\cap \QQ$,  let
\begin{equation}\label{eccoenne}
N_t:=\frac{1-2t}{1-2\delta} \cdot p_1^{*}(\eta+\delta\xi) +\frac{t-\delta}{2(1-2\delta)}p_2^{*}L_{\infty}.
\end{equation}
Clearly $N_t$ is  ample  for  $t\in(\delta,1/2)\cap\QQ$, and   semi-ample for $t=1/2$. Thus for each  $t\in(\delta,1/2]\cap\QQ$, we have a GIT quotient
\begin{equation}\label{defgmt}
\gM(t):=\cP\gquot_{N_t}\SL(4).
\end{equation}
Notice that, a priori, $\gM(t)$ depends also on the choice of $\delta$. Formally, we choose one $\delta$, and this justifies the absence of $\delta$ from the notation. More substantially, a posteriori we will see that  $\gM(t)$ does \emph{not} depend on the choice of $\delta$.

\begin{remark}\label{rmk:defgit12}
For $t\in(\delta,1/2)\cap\QQ$, $N_t$ is ample and thus there is no ambiguity in the definition of $\gM(t):=\cP\gquot_{N_t}\SL(4)$. For $t=\frac{1}{2}$, $N_{t}$ is only semi-ample, thus some care should be taken in defining $\gM(\frac{1}{2})$. Specifically, we define
$$\gM\left(\frac{1}{2}\right):=\Proj R\left(\cP, N_{\frac{1}{2}}\right)^{\SL(4)}.$$
By  \eqref{eccoenne}, it is clear that 
\begin{equation}\label{m12chow}
\gM\left(\frac{1}{2}\right)\cong \Chow_{(2,4)}\gquot \SL(4).
\end{equation}
Furthermore, a straightforward application of the functoriality of the numerical function $\mu$ (\cite[p. 49]{GIT}) and an application of the numerical criterion, gives that $\gM(t)$ behaves as expected as $t$ attains the value $\frac{1}{2}$. Specifically, the following hold:
\begin{itemize}
\item[i)] If $x\in \cP^{ss}(t)$ for $t\in\left(\frac{1}{2}-\epsilon,\frac{1}{2}\right)$, then $p_2(x)\in \Chow_{(2,4)}^{ss}$ (where $p_2:\cP\to \Chow_{(2,4)}$ is the projection). Conversely, if $y\in \Chow_{(2,4)}^{s}$, then $p_2^{-1}(y)\in \cP^{s}(t)$. Thus, we can write (by a slight abuse of notation)
$$\cP^s\left(\frac{1}{2}\right)\subset \cP^{s}\left(\frac{1}{2}-\epsilon\right)\subset \cP^{ss}\left(\frac{1}{2}-\epsilon\right)\subset \cP^{ss}\left(\frac{1}{2}\right)$$
(compare \cite[(3.18)]{VGIT} in the standard VGIT set-up). 
\item[ii)] In particular, there exists a natural birational map 
\begin{equation}
\gM\left(\frac{1}{2}-\epsilon\right)\to \gM\left(\frac{1}{2}\right)
\end{equation}
which is compatible with the Hilbert-to-Chow morphism $\Hilb_{(2,4)}\to \Chow_{(2,4)}$. 
\end{itemize} 
\end{remark}
 
In order to make the connection with the discussion at the beginning of the present section,  and to explain the choice of coefficients in~\eqref{eccoenne}, we prove the following result.
\begin{proposition}\label{prp:chowlb}
Let $\ov{L}_{\infty}\in\Pic(\PP E)_{\QQ}$ be the unique extension of $c^{*}L_{\infty}$ (recall that $c\colon U\to \Chow_{(2,4)}$ 
is the restriction of the Hilbert-Chow map). Then 
\begin{equation}\label{chowlb}
\ov{L}_{\infty}=4\eta+2\xi.
\end{equation}
\end{proposition}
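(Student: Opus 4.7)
The projective bundle $\pi\colon\PP E\to |\cO_{\PP^3}(2)|$ has $\Pic(\PP E)=\ZZ\eta\oplus\ZZ\xi$, so I would write $\ov{L}_{\infty}=a\eta+b\xi$ and determine $(a,b)$ by intersecting $\ov{L}_\infty$ with two well-chosen test curves in $U\subset\PP E$. The computation of $\ov{L}_\infty\cdot T$ will rely on the elementary fact that for any $\bP^1$-family of $(2,4)$ complete intersection curves $\{C_s\}$ lying in $U$, the degree of $c^{*}L_{\infty}|_T$ equals the degree (as a polynomial in $s$) of the Chow form $R_{C_s}$ evaluated at any line $\ell\subset\bP^3$ in sufficiently general position.

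For the first test curve $T_1$, I fix a smooth quadric $Q_0\in|\cO_{\PP^3}(2)|$ and take a generic pencil $\{f_s=f_0+sf_1\}$ in $H^0(Q_0,\cO_{Q_0}(4))$, yielding a line $T_1$ in the fiber $\pi^{-1}(Q_0)\cong\bP^{24}$. Immediately $\eta\cdot T_1=0$ and $\xi\cdot T_1=1$. For a generic line $\ell$ meeting $Q_0$ transversely in two points $p_1,p_2$, one has $\ell\cap C_s\neq\emptyset$ iff $f_s(p_1)f_s(p_2)=0$, a polynomial of degree $2$ in $s$; hence $\ov{L}_\infty\cdot T_1=2$, giving $b=2$.

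For the second test curve $T_2$, I fix a generic quartic $F\in H^0(\bP^3,\cO(4))$ (in particular, not divisible by any quadric) and a pencil of quadrics $\{Q_s=V(q_0+sq_1)\}_{s\in\bP^1}$, and take $T_2\subset\PP E$ to be the image of the regular section $s\mapsto(Q_s,F|_{Q_s})$. Since $F|_{Q_s}$ is nowhere zero on the pencil, $T_2$ is induced by the trivial sub-line-bundle $\cO_{\bP^1}\hra E|_{\bP^1}$, so that $\eta\cdot T_2=1$ and $\xi\cdot T_2=0$. For a generic line $\ell$ meeting $V(F)$ transversely at four points $r_1,\ldots,r_4$, $\ell\cap C_s\neq\emptyset$ iff $\prod_{i=1}^4 q_s(r_i)=0$, a polynomial of degree $4$ in $s$; hence $\ov{L}_\infty\cdot T_2=4$, giving $a=4$.

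Combining the two computations yields $\ov{L}_{\infty}=4\eta+2\xi$. The only subtlety I anticipate is justifying precisely that the scalar-valued polynomial $s\mapsto R_{C_s}(\ell)$ computes $\deg c^*L_\infty|_T$ with the correct normalization; this is standard once one recalls the construction of $L_\infty$ as the restriction of the hyperplane class under the Chow embedding into $\bP(H^0(G(2,4),\cO(8)))$.
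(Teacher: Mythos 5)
Your proof is correct and uses essentially the same two test curves as the paper (a pencil of quartics on a fixed quadric, and a pencil of quadrics cut by a fixed quartic), with the same intersection numbers $(\eta\cdot T_1,\xi\cdot T_1)=(0,1)$, $(\eta\cdot T_2,\xi\cdot T_2)=(1,0)$. You spell out the degree computations $\deg(L_\infty|_{T_1})=2$ and $\deg(L_\infty|_{T_2})=4$ via the Chow form slightly more explicitly, but the argument is the same as the paper's.
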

\begin{proof}
There exist $x,y\in\QQ$ such that $\ov{L}_{\infty}=x\eta+y\xi$. We compute $x$ and $y$ by computing intersection indices with the following two projective curves in $U$:
\begin{eqnarray}
\Gamma & := & \{V(q, \mu_0 f +\mu_1 g)\mid \deg q=2,\ \deg f=\deg g=4\}, \label{curvagam} \\
\Omega & := & \{V( \mu_0 q +\mu_1 r, f)\mid \deg q=\deg r=2,\ \deg f=4\}. \label{curvaom}
\end{eqnarray}
(Here $q,r,f,g$ are chosen generically.) Notice that both $\Gamma$ and $\Omega$ are contained in $U$. The degree of $L_{\infty}|_{\Gamma}$ is equal to the number of curves parametrized by $\Gamma$ meeting a generic line in $\PP^3$, and similarly for $L_{\infty}|_{\Omega}$. 
Thus
\begin{equation}\label{duequattro}
\deg(L_{\infty}|_{\Gamma})=2,\qquad \deg\left(L_{\infty}|_{\Omega}\right)=4.
\end{equation}
On the other hand, we have the following intersection indices 
\begin{equation}\label{etaxi}
\begin{tabular}{c|c|c }
 &  $\eta$ & $\xi$  \\
\hline
$\Gamma$  & $0$ & $1$  \\
\hline
$\Omega$  & $1$ & $0$
\end{tabular}
\end{equation}
Equation~\eqref{chowlb} follows at once from~\eqref{duequattro} and~\eqref{etaxi}.
\end{proof}
\begin{corollary}\label{crl:chowlb}
With notation as above,  $\eta+t\xi$ is the unique divisor class in $\Pic(\PP E)$ which, restricted to $U$, is equal to  $N_t|_U$. 
\end{corollary}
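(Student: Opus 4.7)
The plan is to verify the corollary by a direct computation, relying on two inputs already at hand: Proposition~5.3 giving $\ov{L}_\infty = 4\eta + 2\xi$ on $\PP E$, and the codimension observation (stated just before Proposition~5.2) that the complement of $U$ in $\PP E$ has codimension at least $2$.

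First I would restrict $N_t$ to $U$. Since $p_2 \circ \iota_U = c$ and $p_1 \circ \iota_U$ is the inclusion $U \hookrightarrow \PP E$, Proposition~5.3 yields
\begin{equation*}
p_1^{*}(\eta+\delta\xi)\big|_U = (\eta+\delta\xi)\big|_U, \qquad p_2^{*}L_\infty\big|_U = c^{*}L_\infty = \ov{L}_\infty\big|_U = (4\eta+2\xi)\big|_U.
\end{equation*}
Substituting into~\eqref{eccoenne}, I would then collect the $\eta$- and $\xi$-coefficients of $N_t|_U$. The coefficient of $\eta$ should simplify as
\begin{equation*}
\frac{1-2t}{1-2\delta} + \frac{4(t-\delta)}{2(1-2\delta)} = \frac{(1-2t)+2(t-\delta)}{1-2\delta} = 1,
\end{equation*}
and the coefficient of $\xi$ as
\begin{equation*}
\frac{\delta(1-2t)}{1-2\delta} + \frac{2(t-\delta)}{2(1-2\delta)} = \frac{\delta(1-2t)+(t-\delta)}{1-2\delta} = \frac{t(1-2\delta)}{1-2\delta} = t.
\end{equation*}
Thus $N_t|_U = (\eta+t\xi)|_U$. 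The cancellations are, of course, by design: the coefficients chosen in~\eqref{eccoenne} were calibrated precisely so that the two pullbacks combine to $\eta + t\xi$ on $U$.

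For the uniqueness statement, I would use that $\PP E$ is smooth and $\PP E \setminus U$ has codimension $\ge 2$, so that the restriction map $\Pic(\PP E) \to \Pic(U)$ is injective. Consequently the class $\eta + t\xi \in \Pic(\PP E)_{\QQ}$ is the unique one whose restriction to $U$ equals $N_t|_U$. There is no real obstacle here—the corollary is a bookkeeping statement whose main content is precisely the motivation for the particular coefficients appearing in~\eqref{eccoenne}, namely that the family $N_t$ on $\cP$ does indeed interpolate along the one-parameter family $\eta + t\xi$ that~\Ref{prp}{benample} singles out as (the ample locus of) the natural VGIT on $\PP E$.
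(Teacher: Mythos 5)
Your computation is correct, and it is precisely the argument the paper intends (the corollary is stated without proof in the paper as an immediate consequence of Proposition~\ref{prp:chowlb} and the fact that $\PP E$ is smooth with $\codim(\PP E\setminus U)\ge 2$, so restriction to $U$ is injective on $\Pic$). The coefficient cancellation you carry out is exactly what the coefficients in \eqref{eccoenne} were engineered to produce.
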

With these preliminaries out of the way, we are almost ready to state the  main result of the present section. We will compare GIT quotients of $\cP$, $\PP E$,  $\Hilb_{(2,4)}$ and $\Chow_{(2,4)}$ modulo the natural $\SL(4)$-action. With the exception of the latter, all the listed spaces contain $U$, the parameter space for $(2,4)$ complete intersections in $\PP^3$ (with  the natural $\SL(4)$-action),  as an open dense subset. Thus $U$ induces  birational maps between  any  $\SL(4)$-quotients of  $\cP$, $\PP E$ or  $\Hilb_{(2,4)}$. Similarly, if $V\subset \Chow_{(2,4)}$ is the dense subset of Chow forms of $(2,4)$ intersections, the Hilbert-Chow map induces a birational (regular) map $U\to V$, and hence also a  
birational map between  any  $\SL(4)$-quotient of  $\cP$, $\PP E$  or $\Hilb_{(2,4)}$ and  any  $\SL(4)$-quotient
 of  $\Chow_{(2,4)}$. In the result below, whenever we state that certain moduli spaces are isomorphic, what we really mean   is that one of the birational maps discussed above is, in fact, an isomorphism. We will also compare $\SL(4)$-quotients of 
 $\PP E$ and the quotient $\gM:=|\cO_{\PP^1}(4)\boxtimes \cO_{\PP^1}(4)|\gquot\Aut(\PP^1\times\PP^1)$; when we state that they are isomorphic, it is understood that the isomorphism is induced by the inclusion $|\cO_{\PP^1}(4)\boxtimes \cO_{\PP^1}(4)|\subset U$ determined by the choice of an isomorphism between $\PP^1\times\PP^1$ and a smooth quadric in $\PP^3$.
\begin{theorem}\label{thm:thmgitmodels}
Keep notation notation as above, in particular   $\delta$ is as in~\eqref{ecceps}. The following hold:
\begin{enumerate}
\item For $t\in(\delta,1/3)$, $\gM\left(t\right) \cong \bP E \gquot_{\eta+t\xi} \SL(4)$. 
\item For $t\in(\delta,1/6)$, $\gM(t)\cong \gM$. 
\item For $ m\ge 4$, we have $\Hilb_{(2,4)} \gquot_{L_m} \SL(4)\cong \gM(t(m)) $, where
 $$t(m):= \frac{(m-3)^2}{2(m^2 -4m+5)}.$$
\item $\gM(1/2)\cong \Chow_{(2,4)}\gquot \SL(4)$.
\end{enumerate}
\end{theorem}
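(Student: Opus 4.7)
The plan is to prove the four items in the order (4), (1), (3), (2): Item (4) is essentially tautological, Item (1) is the structural heart, and Items (2)--(3) are then explicit applications of (1).

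Item (4) follows by inspection of~\eqref{eccoenne}: setting $t=1/2$ kills the first summand and leaves $N_{1/2}$ a positive rational multiple of $p_2^{*}L_{\infty}$. The projection $p_2\colon\cP\to\Chow_{(2,4)}$ is proper, surjective (since $c(U)$ is dense in $\Chow_{(2,4)}$), and generically one-to-one. Combined with normality of $\Chow_{(2,4)}$ (or via an explicit Stein factorization), this gives $(p_2)_{*}\cO_{\cP}=\cO_{\Chow_{(2,4)}}$, whence $R(\cP,N_{1/2})^{\SL(4)}=R(\Chow_{(2,4)},L_{\infty})^{\SL(4)}$, as required.

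The core of the argument is Item (1). By~\Ref{crl}{chowlb}, on the open subset $U\subset\cP$ the $\QQ$-Cartier classes $N_t$ and $p_1^{*}(\eta+t\xi)$ coincide, so they differ on all of $\cP$ only by a class supported on the exceptional locus of the birational projective map $p_1\colon\cP\to\bP E$. I would then verify that this exceptional locus has codimension at least two --- i.e.\ that $p_1$ is \emph{small} --- by analysing which pairs $(Q,[f])\in\bP E$ fail to cut out a pure one-dimensional subscheme of $\bP^3$. Granting this, $\SL(4)$-invariant sections of powers of $N_t$ extend uniquely via Hartogs from $U$ to $\bP E$, yielding $R(\cP,N_t)^{\SL(4)}\cong R(\bP E,\eta+t\xi)^{\SL(4)}$. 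For $t\in(\delta,1/3)$, $\eta+t\xi$ is ample on $\bP E$ by~\Ref{prp}{benample}, so the latter is the honest GIT quotient. \textbf{This smallness claim is the main obstacle:} it requires a careful local analysis of the closure of $U$ inside $\bP E\times\Chow_{(2,4)}$ near pairs $(Q,[f])$ for which $Q$ is of low rank (e.g.\ a double plane) or for which $f$ vanishes on a higher-dimensional component.

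For Item (3), I would use the Koszul resolution $0\to\cO_{\bP^3}(-6)\to\cO_{\bP^3}(-4)\oplus\cO_{\bP^3}(-2)\to\cO_{\bP^3}\to\cO_{V(q,g)}\to 0$ of a $(2,4)$-complete intersection to express $c_1(L_m)\in\Pic(\bP E)_{\QQ}$ as $a(m)\eta+b(m)\xi$ with explicit polynomials $a(m), b(m)$; the two coefficients are then pinned down by intersecting with the test curves $\Gamma$ and $\Omega$ from~\eqref{curvagam}--\eqref{curvaom}. Requiring $\eta+t(m)\xi$ to be a positive multiple of $a(m)\eta+b(m)\xi$ yields the formula $t(m)=(m-3)^{2}/(2(m^{2}-4m+5))$; Item (1) (together with the codimension-$\geq 2$ complement of $U$ in $\Hilb_{(2,4)}$) then identifies $\Hilb_{(2,4)}\gquot_{L_m}\SL(4)$ with $\gM(t(m))$. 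For Item (2), I would reduce via Item (1) to a Hilbert--Mumford analysis on $\bP E$ with polarization $\eta+t\xi$: for any one-parameter subgroup $\lambda$ destabilizing a singular quadric, the $\eta$-contribution to $\mu^{\eta+t\xi}((Q,[f]),\lambda)$ is a positive multiple of the minimal weight of a monomial appearing in $q$, while the $\xi$-contribution is controlled by the maximal weight on $H^{0}(Q,\cO_{Q}(4))$; a direct comparison shows that the $\eta$-term dominates precisely for $t<1/6$. Hence for $t\in(\delta,1/6)$ every semistable $(Q,[f])$ has $Q$ smooth, and restricting to a single $\SL(4)$-orbit of smooth quadrics reduces $\bP E\gquot_{\eta+t\xi}\SL(4)$ to $|\cO_{\bP^1\times\bP^1}(4,4)|\gquot\Aut(\bP^1\times\bP^1)=\gM$. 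The critical value $t=1/6$ then appears naturally as the first slope at which a destabilizing $1$-PS of a singular quadric is numerically balanced by its action on $H^{0}(Q,\cO_Q(4))$.
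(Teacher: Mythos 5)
Your route to Items (2)--(4) is broadly in the spirit of the paper's: Item (4) is treated as tautological in both (cf.~\Ref{rmk}{defgit12}), Item (3) is a degree computation on the test curves $\Gamma$ and $\Omega$ (your Koszul route and the paper's direct intersection count will produce the same coefficients), and for Item (2) your Hilbert--Mumford analysis showing that every semistable pair has a smooth quadric is exactly the argument given, followed by restriction to a single $\SL(4)$-orbit of smooth quadrics as in Lemma 4.18 of \cite{g4git}.

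For Item (1), however --- which you rightly call the structural heart --- your argument is genuinely different from the paper's, and as you yourself flag, it has an unfilled gap. You propose to show that $p_1\colon\cP\to\bP E$ is \emph{small}, and then invoke Hartogs to transfer $\SL(4)$-invariant sections from $U$ to both ambient spaces. Two things are missing here. First, the smallness claim itself: while $\bP E\setminus U$ has very large codimension (it requires $\rk f_2\le 2$), the exceptional locus of $p_1$ sits over it and its dimension is governed by the fibres of the Chow correspondence over these very degenerate pairs; bounding those fibres is a nontrivial analysis of Chow-limit cycles of non-pure schemes and is not proved anywhere in the paper. Second, and independently, Hartogs extension from $U$ to $\cP$ requires $\cP$ to be normal (or at least $S_2$), and $\cP$ is defined merely as the closure of $U$ in $\bP E\times\Chow_{(2,4)}$; nothing guarantees this closure is normal, so the inclusion $R(\bP E,\eta+t\xi)^{\SL(4)}\hra R(\cP,N_t)^{\SL(4)}$ (extension of sections from $U$ to $\cP$) is not automatic. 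The paper's proof sidesteps both issues completely: it uses the stability results already established in \Ref{sec}{shyper} --- namely \Ref{prp}{doppiovu} ($\cP^{ss}(N_t)\subset U$) and \Ref{crl}{facile} ($\bP E^{ss}(\eta+t\xi)\subset U$), both proved by exhibiting explicit destabilizing one-parameter subgroups --- together with \Ref{crl}{chowlb} ($N_t|_U=(\eta+t\xi)|_U$) and \Ref{lmm}{quasiprojgit} to identify both quotients with $U^{ss}(N_t|_U)\gquot\SL(4)$. No birational geometry of $p_1$, no Hartogs, and no normality of $\cP$ is needed. The same pattern is used for Item (3): rather than a codimension argument on $\Hilb_{(2,4)}$, the paper uses \Ref{prp}{propssci}, which says all points of $\Hilb^m_{(2,4)}\setminus U$ are GIT-unstable. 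If you want to pursue your route you would need to establish the smallness of $p_1$ and address the normality of $\cP$; the paper's route is the safer one precisely because it replaces geometry-of-closures input by an explicit Hilbert--Mumford computation.
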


\Ref{thm}{thmgitmodels} is inspired by~\cite{g4git} (esp.~Theorem 6.7 in loc.~cit.),  where the analogous question for canonical genus $4$ curves (or equivalently $(2,3)$ complete intersections in $\bP^3$) was studied (in connection to the Hassett-Keel program). The main point that allows to establish the theorem is the fact that GIT (semi)stable points of $\Hilb_{(2,4)}$, or of $\PP E$ (with any of our polarizations) are contained in $U$. This was proved for $(2,3)$ curves in \cite[Prop. 5.2]{g4git}. In the next subsection we will prove that the analogous result holds for $(2,d)$ curves in $\PP^3$ for any $d$.

\subsection{GIT for $\Hilb_{(2,d)}$}
\subsubsection{The $m$-th Hilbert point}\label{subsubsec:mHilbert}
Let $U_{(2,d)}$ be the open  subset of the Hilbert scheme of $\PP^3$  parametrizing complete intersections of a quadric and a  surface of degree $d$ (in short $(2,d)$ c.i.'s), and let 
$\Hilb_{(2,d)}$ be its closure (thus $U=U_{(2,4)}$).  
Let $P_d\in\ZZ[m]$ be defined by $P_d=2dm-d^2+2d$. If $C\in U_{(2,d)}$, then 
\begin{equation*}
\rk(H^0(\PP^3,\cO_{\PP^3}(m))\lra H^0(C,\cO_{C}(m)))=P_d(m),\qquad m\ge d-1.
\end{equation*}
For $m\ge d$, let  
\begin{equation}
\begin{matrix}
U_{(2,d)} & \overset{f_{d,m}}{\lra} & \Gr(H^0(\PP^3,\cO_{\PP^3}(m)),P_d(m)) \\
C & \mapsto & H^0(\PP^3,\cO_{|PP^3}(m))\twoheadrightarrow H^0(C,\cO_{C}(m))
\end{matrix}
\end{equation}
be the natural map, where $\Gr(H^0(\PP^3,\cO_{\PP^3}(m)),P_d(m))$ is the Grassmannian parametrizing $P_d(m)$-dimensional quotients of $H^0(\PP^3,\cO_{\PP^3}(m))$. Let $\Hilb^m_{(2,d)}$
 be the closure of the image of $f_m$. A point of $\Hilb^m_{(2,d)}$ is determined by a quotient $\varphi\colon H^0(\PP^3,\cO_{\PP^3}(m))\twoheadrightarrow  V$, where $V$ is a vector space of dimension $P_d(m)$;  we denote the corresponding point by $I:=\ker\varphi$. 
 Thus $I\subset H^0(\PP^3,\cO_{\PP^3}(m))$ is a vector subspace of codimension $P_d(m)$.

 If $m\gg 0$ then  $\Hilb^m_{(2,d)}$ is isomorphic to $\Hilb_{(2,d)}$.
 
 By associating to a curve  $C\in U_{(2,d)}$ its Chow point (the hypersurface in $(\PP^3)^{\vee}\times (\PP^3)^{\vee}$ parametrizing couples $(H_1,H_2)$ of hyperplanes such that $H_1\cap H_2\cap C\not=\es$), we get an embedding $U_{(2,d)}\subset \Chow(\PP^3)$. We let    $\Chow_{(2,d)}\subset  \Chow(\PP^3)$ be the closure of $U_{(2,d)}$.

\subsubsection{(Semi)stability of points in $\Hilb^m_{(2,4)}\setminus \im f_m$}
The group $\SL(4)$ acts on  $\Hilb^m_{(2,d)}$, and on the restriction  to  $\Hilb^m_{(2,d)}$ of the Pl\"ucker (ample) line bundle on $\Gr(H^0(\PP^3,\cO_{\PP^3}(m)),P_d(m))$. Thus we have a notion of (semi)stability of elements of  $\Hilb^m_{(2,d)}$. 
We recall how to determine whether a point of  $\Hilb^m_{(2,d)}$ is (semi)stable, following~~\cite{hhl-stab}.

Let  $\lambda$ be a $1$ PS of  $\SL(4)$.  Let $[x_0,\ldots,x_3]$ be homogeneous coordinates that diagonalize $\lambda$, i.e.~$\lambda(s)=\diag(s^{r_0},s^{r_1},s^{r_2},s^{r_3})$.  Given $A=(A_0,\ldots,A_3)\in\NN^4$, we let $x^A:=x_0^{A_0}x_1^{A_1}x_2^{A_2}x_3^{A_3}$. The $\lambda$-\emph{weight} of $x^A$ is 
\begin{equation}\label{pesomon}
\weight_{\lambda}(x^A):=\sum_{i=0}^3 r_i A_i.
\end{equation}
Given pairwise distinct monomials $x^{A(1)},\ldots,x^{A(P_d(m))}$ of degree $m$, we let
\begin{equation*}
\weight_{\lambda}(x^{A(1)}\wedge\ldots\wedge x^{A(P_d(m))}):=\sum_{j=1}^{P_d(m)}\weight_{\lambda}(x^{A(j)}).
\end{equation*}
Let  $I\in\Hilb^m_{(2,d)}$. The Hilbert-Mumford index of $\lambda$ with respect to the Pl\"ucker linearization is 
\begin{equation}\label{hmindex}
\scriptstyle
\mu_m(I,\lambda)=\max\{-\weight_{\lambda}(x^{A(1)}\wedge\ldots\wedge x^{A(P_d(m))})\mid\ \text{$\{x^{A(1)},\ldots, x^{A(P_d(m))}\}\pmod{I}$ is a basis of 
$H^0(\PP^3,\cO_{\PP^3}(m))/I$}\}.
\end{equation}
By the Hilbert-Mumford Criterion, the point $I$ is semistable if and only if $\mu_m(I,\lambda)\ge 0$ for all $\lambda$, and it is stable if and only if strict inequality holds for each $\lambda$. 

Define a total ordering $\overset{\lambda}{\prec}$ on the set of monomials $\{x^A\}$ by declaring that $x^A \overset{\lambda}{\prec} x^B$ if
\begin{enumerate}
\item
$\deg x^A<\deg x^B$, or
\item
$\deg x^A=\deg x^B$, and $\weight_{\lambda}(x^A)<\weight_{\lambda}(x^B)$, or
\item
$\deg x^A=\deg x^B$,  $\weight_{\lambda}(x^A)=\weight_{\lambda}(x^B)$, and $x^A$ precedes $x^B$ in the lexicographical ordering.
\end{enumerate}
Let $f\in\CC[x_0,\ldots,x_3]$; we let  $\initial^{\lambda}_{\prec}(f)$ be the maximum (with respect to $\overset{\lambda}{\prec}$)   among monomials appearing with non zero coefficient in the expansion of $f$. 
Given a non zero vector  subspace $I\subset\CC[x_0,\ldots,x_3]$, we let  $\initial^{\lambda}_{\prec}(I)$ be the set whose elements are the monomials $\initial^{\lambda}_{\prec}(f)$ for $f\in I$. 
\begin{proposition}\label{prp:grobstab}
Let   $I\in\Hilb^m_{(2,d)}$, and let  $\lambda$ be a  $1$ PS of  $\SL(4)$. Then
\begin{equation}\label{hhlineq}
\mu_m(I,\lambda)=\sum_{x^A\in \initial^{\lambda}_{\prec}(I)}\weight_{\lambda}(x^A).
\end{equation}
\end{proposition}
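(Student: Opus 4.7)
The plan is to compute $\mu_m(I,\lambda)$ via a Gröbner-basis description of $I$ with respect to $\overset{\lambda}{\prec}$, along lines that have become standard in the Hilbert-scheme GIT literature (cf.~\cite{hhl-stab}). Set $V := H^0(\bP^3, \cO_{\bP^3}(m))$, $k := \dim I$ and $N := \dim V$. Since $\lambda$ is a $1$-PS of $\SL(4)$, one has $\sum_i r_i = 0$, and a straightforward symmetry argument gives $\sum_{|A|=m}\weight_\lambda(x^A)=0$. Consequently, for any monomial basis $S = \{x^{A(i)}\}$ of $V/I$ (of size $P_d(m)=N-k$) with complement $J:=\{\text{degree-}m\text{ monomials}\}\setminus S$ (of size $k$), we have $-\weight_\lambda(S)=\weight_\lambda(J)$, so~\eqref{hmindex} rewrites as
$$
\mu_m(I,\lambda)\;=\;\max\big\{\weight_\lambda(J)\;:\;J^c\text{ is a monomial basis of }V/I\big\}.
$$

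Next I will identify a canonical maximizer. Choose a reduced Gröbner basis $f_1,\ldots,f_k$ of $I$ with respect to $\overset{\lambda}{\prec}$, so that $f_j = x^{A_j}-h_j$ with $\initial^\lambda_{\prec}(f_j)=x^{A_j}$ (the $x^{A_j}$'s enumerating $\initial^\lambda_{\prec}(I)$) and $h_j$ supported on monomials of $T:=\{\text{degree-}m\text{ monomials}\}\setminus\initial^\lambda_{\prec}(I)$ that are strictly $\overset{\lambda}{\prec}x^{A_j}$. A descending induction on $\overset{\lambda}{\prec}$ then shows $V = I\oplus\Span(T)$, so that $T$ is itself a monomial basis of $V/I$. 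Taking $S=T$ (equivalently $J=\initial^\lambda_{\prec}(I)$) in the extremum above displays $\weight_\lambda(\initial^\lambda_{\prec}(I))$ as a lower bound for $\mu_m(I,\lambda)$.

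For the reverse inequality, given any monomial basis $S$ of $V/I$, it will suffice to exhibit a bijection $\sigma\colon S\to T$ such that $\sigma(s)\overset{\lambda}{\preceq}s$ for every $s\in S$: because $\overset{\lambda}{\prec}$ refines the weight ordering on the degree-$m$ monomials, such a $\sigma$ forces $\weight_\lambda(T)\le \weight_\lambda(S)$, equivalently $-\weight_\lambda(S)\le \weight_\lambda(\initial^\lambda_{\prec}(I))$. By Hall's marriage theorem, producing $\sigma$ reduces to the dimension bound
$$
|S\cap V^{\preceq x^C}|\;\le\;|T\cap V^{\preceq x^C}|
$$
for every degree-$m$ monomial $x^C$, where $V^{\preceq x^C}:=\Span\{x^B : x^B\overset{\lambda}{\preceq}x^C\}$.

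The only substantive step is this dimension bound, and I expect it to be the main obstacle. The idea is to observe that the Gröbner reduction procedure above, applied to any $v\in V^{\preceq x^C}$, stays inside $V^{\preceq x^C}$: each elimination subtracts a multiple of some $f_j$ whose tail monomials are strictly $\overset{\lambda}{\prec}x^{A_j}\overset{\lambda}{\preceq}x^C$. This shows that $T\cap V^{\preceq x^C}$ is an honest basis for the image of $V^{\preceq x^C}$ in $V/I$, while $S\cap V^{\preceq x^C}$ is merely a linearly independent subset of that image (since $S$ is a basis of $V/I$ and these elements do lie in $V^{\preceq x^C}$). The bound follows, Hall's theorem supplies $\sigma$, and the proposition is proved.
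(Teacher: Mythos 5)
Your proof is correct, and it is the ``direct'' route that the paper alludes to but does not carry out: the paper instead deduces \eqref{hhlineq} from equation (2.5) of \cite{hhl-stab} by a rescaling of weights. Both arguments begin from the same observation, namely that $\sum_A\weight_\lambda(x^A)=0$ because $\lambda$ is a $1$-PS of $\SL(4)$, so $-\weight_\lambda$ of a monomial basis of $V/I$ equals $\weight_\lambda$ of the complementary set of monomials; this is exactly how the paper converts \eqref{opposto2} into \eqref{hhlineq}. The substantive step---that the maximum in \eqref{hmindex} is attained precisely by the complement of $\initial^\lambda_\prec(I)$---you supply with a reduced row-echelon basis of the linear subspace $I$ (what you call a Gr\"obner basis; for a subspace of a single graded piece the two notions coincide) together with a Hall's-marriage-theorem counting bound. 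That Hall argument is a matroid exchange in disguise: the monomial bases of $V/I$ form the bases of a matroid on the degree-$m$ monomials, and the $\prec$-lex-minimal base is the set of standard monomials $T$. The trade-off is clear: your route is longer but self-contained and elementary, whereas the paper's is shorter by outsourcing the key computation to \cite{hhl-stab}.
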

\begin{proof}
This may be proved directly, or deduced from~\cite{hhl-stab} by showing that (in the notation of pp.~43-44 of loc.~cit.)
\begin{equation}\label{opposto}
\mu([X]_m,\rho')=-\sum_{i=1}^{P(m)}\weight_{\rho'}(x^{a(i)}).
\end{equation}
The above formula follows from Eqtn~(2.5) of~\cite{hhl-stab}, and the (easily checked) relation
\begin{equation*}
\weight_{\rho}(x^{a(i)})=\frac{1}{N+1}(\weight_{\rho'}(x^{a(i)})+rm).
\end{equation*}
In our case Equation~\eqref{opposto} reads
\begin{equation}\label{opposto2}
\mu_m(I,\lambda)=-\sum_{x^A\notin \initial^{\lambda}_{\prec}(I)}\weight_{\lambda}(x^A).
\end{equation}
The above equation is equivalent to~\eqref{hhlineq}, because the sum of  $\weight_{\lambda}(x^A)$ over all degree-$m$ monomials $x^A$ is equal to $0$. 
\end{proof}
The group $\SL(4)$ acts also on  $\Chow_{(2,d)}$, and on the restriction  to  $\Chow_{(2,d)}$ of the Chow polarization. There is a relation between the Hilbert-Mumford index of points in the Hilbert scheme and corresponding points of the Chow variety. First notice that the Hilbert-Chow morphism restrict to a (birational) morphism $\gamma\colon \Hilb_{(2,d)}\to \Chow_{(2,d)}$; then 
\begin{equation}\label{chowlim}
\mu(\gamma(I),\lambda)=\lim_{m\to +\infty}\frac{1}{m^2}\mu_m(I,\lambda).
\end{equation}
(The above equation  makes sense because  $\Hilb^m_{(2,d)}=\Hilb_{(2,d)}$ for $m\gg 0$.)

The result below generalizes  Proposition~5.2 in~\cite{g4git}. 
 \begin{proposition}\label{prp:propssci}
Let $d\ge 3$, and $m\ge d$. Then all points of $\Hilb^m_{(2,d)}\setminus U_{(2,d)}$ are $\SL(4)$-unstable, and similarly  all points of $\Chow_{(2,d)}\setminus U_{(2,d)}$ are $\SL(4)$-unstable. 
\end{proposition}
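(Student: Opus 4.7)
My plan is to adapt the argument of Proposition~5.2 in \cite{g4git}, which proves the analogous statement for $(2,3)$ complete intersections. The overall structure is to show that any point $I \in \Hilb^m_{(2,d)} \setminus U_{(2,d)}$ corresponds to a subscheme $X \subset \PP^3$ with excess structure (either contained in too many quadrics, or carrying embedded components), and that in each case one can exhibit a destabilizing $1$-PS via the Hilbert--Mumford criterion.

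First, I would use the Hilbert polynomial $P_d(m) = 2dm - d(d-2)$ to constrain the schemes $X$ appearing in the boundary. Since a generic $(2,d)$ complete intersection has $h^0(I_X(2)) = 1$, by semicontinuity any flat limit $X$ satisfies $h^0(I_X(2)) \geq 1$, leading to two cases:
(a) $h^0(I_X(2)) = 1$, so $X$ lies on a unique quadric $Q$; or
(b) $h^0(I_X(2)) \geq 2$, so $X$ is contained in a pencil of quadrics, and hence by B\'ezout in a curve of degree at most $4$. In case~(b), since $\deg X = 2d \geq 6$ for $d \geq 3$, $X$ must carry embedded components or non-reduced thickenings. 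In case~(a), if $X \not\in U_{(2,d)}$, then $X$ cannot equal $V(q,f)$ ideal-theoretically for any degree-$d$ form $f$; the matching Hilbert polynomials again force embedded components or non-reduced structure at some point of $Q$.

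Second, with the distinguished locus of $X$ identified, I would choose a $1$-PS $\lambda = \diag(r_0, r_1, r_2, r_3)$ of $\SL(4)$ adapted to it, generalizing the explicit choices of \cite{g4git}. By \Ref{prp}{grobstab}, the Hilbert--Mumford index $\mu_m(I, \lambda)$ equals the sum of $\lambda$-weights over the initial monomials $\initial^\lambda_\prec(I)$. The excess structure of $X$ forces additional high-weight monomials to appear in $\initial^\lambda_\prec(I)$ in every degree $m \geq d$, yielding a strictly negative total $\mu_m(I, \lambda) < 0$. For the Chow variety statement, I would use \eqref{chowlim}: bounding $\mu_m(I, \lambda)$ below by $-cm^2$ with $c > 0$ gives $\mu(\gamma(I), \lambda) < 0$, showing simultaneous instability in $\Chow_{(2,d)}$.

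The hard part will be exhibiting the destabilizing $1$-PS uniformly across the various types of boundary degenerations, since the combinatorial complexity of the limit schemes can grow with $d$. Nevertheless, the key mechanism -- that the extra ideal generators in the saturation of $I$ (either the second quadric in case~(b), or the excess generators arising from embedded components in case~(a)) inject high-$\lambda$-weight monomials into the initial ideal -- should be robust enough to produce the required negative sum for a suitable choice of $\lambda$ depending on the geometry of the degeneration.
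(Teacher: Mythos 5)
There is a genuine gap: your argument stops exactly where the proposition starts. The paper's proof hinges on a structural observation that your case analysis never reaches, namely that a boundary point $I$ contains (as limits of the generating data) a whole subspace $Q\cdot \CC[x_0,\ldots,x_3]_{m-2}$ for some quadric $Q$ and $F\cdot\CC[x_0,\ldots,x_3]_{m-d}$ for some degree-$d$ form $F$ not divisible by $Q$, and that $I\notin U_{(2,d)}$ forces $V(Q,F)$ to have dimension $2$; since $Q\nmid F$, this means $Q$ and $F$ share a \emph{plane}, so in suitable coordinates only two normal forms occur, independently of $d$: $Q=x_0x_1$, $F=x_0G$ with $x_1\nmid G$, or $Q=x_0^2$, $F=x_0G$ with $x_0\nmid G$. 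This collapses the ``combinatorial complexity growing with $d$'' that you worry about into two cases, and the \emph{single} explicit $1$-PS $\lambda=\diag(-3,1,1,1)$, adapted to the common plane $x_0$, destabilizes both: one computes via \Ref{prp}{grobstab} the exact weight sum over $\initial^{\lambda}_{\prec}(T)$ for the subspace $T=Q\cdot\CC[x]_{m-2}+x_0G\cdot\CC[x]_{m-d}\subset I$ (a Grassmann-type inclusion--exclusion of the three pieces), bounds every remaining initial monomial of $I$ by weight $m$, and obtains an explicit cubic-in-$m$ upper bound for $\mu_m(I,\lambda)$ that is negative for all $m\ge d$ and has negative $m^2$-coefficient, so that \eqref{chowlim} gives Chow instability as well. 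Your proposal instead defers the choice of $\lambda$ to ``a suitable choice depending on the geometry of the degeneration'' and merely asserts that excess structure injects high-weight monomials; that assertion \emph{is} the proposition, so nothing has been proved.

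There is also a concrete error in your preliminary reduction. In your case (b), ``contained in a pencil of quadrics, hence by B\'ezout in a curve of degree at most $4$'' fails precisely in the relevant degenerations, where the quadrics of the pencil share a plane and the base locus is two-dimensional; and in case (a) ``embedded components or non-reduced structure at some point of $Q$'' is too coarse to single out a destabilizing direction. Moreover, for $m$ not large the points of $\Hilb^m_{(2,d)}$ are by definition codimension-$P_d(m)$ subspaces $I\subset H^0(\cO_{\PP^3}(m))$ (the closure is taken in a Grassmannian), not $m$-th Hilbert points of genuine subschemes $X$, so semicontinuity of $h^0(\cI_X(2))$, flat limits and embedded points are not directly available; the correct substitute is to take limits of the pair $(Q,[F]\bmod Q)$ along a one-parameter degeneration, which is exactly how the paper produces the forms $Q$ and $F$ above. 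If you redo the reduction in those terms, the dichotomy ``$V(Q,F)$ is a curve or $Q,F$ share a plane'' replaces your case split on $h^0$, and the uniform $1$-PS computation goes through.
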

 \begin{proof}
Let $I\in\Hilb^m_{(2,d)}$. Then $I$ contains a  quadratic form $Q$, and a degree-$d$ form $F$ which is not a multiple  of $Q$. Now suppose that $I\notin \im f_{d,m}$; then the common zero locus of $Q$ and $F$ is not a curve. It follows that there exist homogeneous coordinates $[x_0,\ldots,x_3]$ on $\PP^3$ such that one of the following holds:
\begin{enumerate}
\item
$Q=x_0x_1$, and $F=x_0 G$, where $G\in\CC[x_0,\ldots,x_3]_{d-1}$, and $x_1\not | G$. 
\item
$Q=x^2_0$, and $F=x_0 G$, where $G\in\CC[x_0,\ldots,x_3]_{d-1}$, and $x_0\not | G$. 
\end{enumerate}
First assume that Item~(1) holds.  Let $\lambda$ be the $1$ PS of $\SL(4)$ defined by $\lambda=\diag(-3,1,1,1)$ (with respect to the chosen homogeneous coordinates). 
Let us prove that
\begin{equation}\label{negativo}
\mu_m(I,\lambda)\le 
-2(a+1)m^2-(d^2-2(2a+3)d+6(a+1))m+\frac{2}{3}(d-1)(d-2)(d-3(a+1)).
\end{equation}
Let 
\begin{equation*}
U:= x_0 x_1\CC[x_0,\ldots,x_3]_{m-2},\quad V:= x_0 G\CC[x_0,\ldots,x_3]_{m-d},\quad T:=U+V.
\end{equation*}
Notice that $U,V,T$ are subspaces of  $I$. 
We claim that
\begin{equation}\label{pesoti}
\scriptstyle 
\sum\limits_{x^A\in \initial^{\lambda}_{\prec}(T)}\weight_{\lambda}(x^A)
=-\frac{1}{2}m^3+\frac{1}{2}(2d-4a-5)m^2-\frac{1}{6}(9d^2-3(8a+13)d+36(a+1))m+\frac{2}{3}(d-1)(d-2)(d-3(a+1)).
\end{equation}
In fact, since  $\weight_{\lambda}(x_0 x_1)=-2$ and the representation of $\lambda$ on $\dim\CC[x_0,\ldots,x_3]_{m-2}$ has determinant $1$, we have
\begin{equation}\label{pesou}
%
\sum\limits_{x^A\in \initial^{\lambda}_{\prec}(U)}\weight_{\lambda}(x^A)  =  (-2)\cdot\dim\CC[x_0,\ldots,x_3]_{m-2}=(-2)\cdot{m+2\choose 3}.
\end{equation}
 Next, let $0\le a\le(d-1)$ be the maximum number such that $x_0^a | G$. Then
\begin{equation}\label{pesov}
%
\sum\limits_{x^A\in \initial^{\lambda}_{\prec}(V)}\weight_{\lambda}(x^A)  =(d-4a-4)\cdot\dim\CC[x_0,\ldots,x_3]_{m-d} =(d-4a-4)\cdot{m+3-d\choose 3}.
\end{equation}
Lastly, since $U\cap V= x_0 x_1 G\CC[x_0,\ldots,x_3]_{m-d-1}$, we have 
\begin{equation}\label{pesoint}
%
\sum\limits_{x^A\in \initial^{\lambda}_{\prec}(U\cap V)}\weight_{\lambda}(x^A)  =(d-4a-3)\cdot\dim\CC[x_0,\ldots,x_3]_{m-d-1} =(d-4a-3)\cdot{m+2-d\choose 3}.
\end{equation}
Next, we have the Grassmann-like formula 
\begin{equation*}
\sum_{x^A\in \initial^{\lambda}_{\prec}(T)}\weight_{\lambda}(x^A)=\sum_{x^A\in \initial^{\lambda}_{\prec}(U)}\weight_{\lambda}(x^A)+\sum_{x^A\in \initial^{\lambda}_{\prec}(V)}\weight_{\lambda}(x^A)-\sum_{x^A\in \initial^{\lambda}_{\prec}(U\cap V)}\weight_{\lambda}(x^A).
\end{equation*}
Equation~\eqref{pesoti} follows from the above formula, together with~\eqref{pesou}, \eqref{pesov}, and~\eqref{pesoint}.

In order to prove Equation~\eqref{negativo}, we notice that $\weight_{\lambda}(x^A)\le m$ for every monomial of degree $m$, and hence 
\begin{equation*}
\scriptstyle 
\mu_m(I,\lambda)=\sum\limits_{x^A\in \initial^{\lambda}_{\prec}(I)}\weight_{\lambda}(x^A)\le \sum\limits_{x^A\in \initial^{\lambda}_{\prec}(T)}\weight_{\lambda}(x^A)+
m\cdot (\dim I- \dim T)=\sum\limits_{x^A\in \initial^{\lambda}_{\prec}(T)}\weight_{\lambda}(x^A)+\frac{1}{2}(m^3-(2d-1)m^2+d(d-1) m).
\end{equation*}
Thus  Equation~\eqref{negativo} follows from~\eqref{pesoti}. 

Let $P(d,a,m)$ be the right hand side of~\eqref{negativo}; we claim that $P(d,a,m)<0$ for $d\ge 3$, $a\in\{0,\ldots,d-1\}$,  and $m\ge d$.
One easily checks that $P(d,a+1,m)<P(d,a,m)$ if $m>0$. Hence  it suffices to check that $P(d,0,m)<0$ for  $m\ge d$. The function $P(d,0,m)$ is decreasing for $m\ge d$, hence one is reduced to proving that $P(d,0,d)<0$ for $d\ge 3$; this is straightforward. 

This proves that  if Item~(1) holds, then $\mu_m(I,\lambda)<0$, and hence $I$ is unstable.  Moreover
\begin{equation}
\scriptstyle
\lim\limits_{m\to +\infty}\frac{1}{m^2}\mu_m(I,\lambda)\le \lim\limits_{m\to +\infty} \frac{1}{m^2}(-2(a+1)m^2-(d^2-2(2a+3)d+6(a+1))m+\frac{2}{3}(d-1)(d-2)(d-3(a+1)))=-2(a+1)<0.
\end{equation}
By Equation~\eqref{chowlim} it follows that $\mu(\gamma(I),\lambda)<0$, i.e.~the corresponding point of the Chow variety is unstable. 

Next, assume that  Item~(2) holds. Again, let $\lambda$ be the $1$ PS of $\SL(4)$ defined by $\lambda=\diag(-3,1,1,1)$.  We claim that
\begin{equation}\label{negancora}
\mu_m(I,\lambda)\le 
-(2d+6)m^2-(d^2-6d+17)m-(2d^2-6d+10).
\end{equation}
We quickly go over the proof of~\eqref{negancora}. Let 
\begin{equation*}
U:= x_0^2\CC[x_0,\ldots,x_3]_{m-2},\quad V:= x_0 G\CC[x_0,\ldots,x_3]_{m-d},\quad T:=U+V.
\end{equation*}
Then $U,V,T$ are subspaces of  $I$, and computations similar to those performed above give that 
\begin{equation}\label{pesiti}
\sum_{x^A\in \initial^{\lambda}_{\prec}(T)}\weight_{\lambda}(x^A)=-\frac{1}{2}m^3-\frac{2d+13}{2}m^2+\frac{d^2+5d-32}{2}m-2d^2+6d-10.
\end{equation}
Since $\weight_{\lambda}(x^A)\le m$ for every monomial of degree $m$, it follows that  
\begin{equation}\label{stima}
\scriptstyle 
\mu_m(I,\lambda)=\sum\limits_{x^A\in \initial^{\lambda}_{\prec}(I)}\weight_{\lambda}(x^A)\le \sum\limits_{x^A\in \initial^{\lambda}_{\prec}(T)}\weight_{\lambda}(x^A)+
m\cdot (\dim I- \dim T)=\sum\limits_{x^A\in \initial^{\lambda}_{\prec}(T)}\weight_{\lambda}(x^A)+\frac{1}{2}(m^3-(2d-1)m^2-(3d^2-7d+2) m).
\end{equation}
Inequality~\eqref{negancora} follows at once from~\eqref{pesiti} and~\eqref{stima}. The right hand side of~\eqref{negancora} is negative for $d\ge 3$ and $m\ge d$; it follows that $I$ is unstable. Proceeding as in the previous case, we also get that the corresponding point of the Chow variety is unstable. 
\end{proof}
\subsection{On (non-semi)stability of points of $\PP E$ and of $\cP$}
We will prove results on $\PP E$ and $\cP$ that parallel~\Ref{prp}{propssci} (in the case $d=4$).

We denote points of $\PP E$ as follows:
\begin{equation}\label{eqpe}
\bP E=\{([f_2],[\bar{f}_4])\mid f_d\in\Gamma(\calO_{\bP^3}(d)), \bar{f}_4=f_4|_{V(f_2)}\}
\end{equation}
The Hilbert-Mumford numerical function for points of $\PP E$ relative to $\eta+t\xi$ has been computed by Benoist. 
First we recall that the Hilbert-Mumford numerical function of a non zero homogeneous polynomial $f=\sum_A f_A x^A$ with respect to a $1$ PS $\lambda(s)=\diag(s^{r_0},\ldots,s^{r_n})$ is
\begin{equation}\label{numfun}
\mu(f,\lambda)=\max\{ \weight_{\lambda}(x^{A}) \mid f_A\not=0\}.
\end{equation}
Thus 
\begin{equation*}
\mu(f,\lambda)
\begin{cases}
<0 & \text{if and only $\lim\limits_{s\to 0}\lambda(s)f=0$,} \\
=0 & \text{if and only $\lim\limits_{s\to 0}\lambda(s)f\not=0$,} \\
>0 & \text{ if and only $\lim\limits_{s\to 0}\lambda(s)f$ does not exist.}
\end{cases}
\end{equation*}
(Recall that  $g\in\SL(n+1)$ acts on $f\in\CC[x_0,\ldots,x_n]$ by the formula $gf(x)=f(g^{-1}x)$.) 
Let $([f_2],[\ov{f}_4])\in  \bP E$ and let $\lambda$ be a $1$-PS of $\SL(4)$. Then, by   Proposition~2.15 of~\cite{benoist}
 \begin{equation}\label{benfor}
\mu^{\eta+t\xi}(x,\lambda):=\mu(f_2,\lambda)+t\min_{f\in[f_4]}\mu(f,\lambda).
\end{equation}
   The  straightforward proof of the result below is left to the reader.
\begin{proposition}\label{prp:facile}
Let $t\in(0,1/2]\cap\QQ$.  Let $([f_2],[\bar{f}_4])\in \PP E\setminus U$. Then the following hold:
\begin{enumerate}
\item
There exist homogeneous coordinates $[x_0,\ldots,x_3]$ on $\PP^3$ such that 
one of the following holds:
\begin{enumerate}
\item
$f_2=x_0x_1$, and $f_4=x_0 g$, where $g\in\CC[x_0,\ldots,x_3]_{3}$, and $x_1\not | g$. 
\item
$f_2=x^2_0$, and $f_4=x_0 g$, where $g\in\CC[x_0,\ldots,x_3]_{3}$, and $x_0\not | g$. 
\end{enumerate}
\item
Let $\lambda$ be the $1$-PS of $\SL(4)$ given by $\lambda=\diag(-3,1,1,1)$. Then $\mu^{\eta+t\xi}([f_2],[\bar{f}_4])\le -2$. 
\end{enumerate}
\end{proposition}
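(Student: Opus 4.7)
The plan is to extract the two normal forms for part~(1) from a factoring analysis of $f_2$, and then compute the Hilbert--Mumford numerical function for part~(2) by straightforward application of Benoist's formula~\eqref{benfor}.

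For part~(1), since $([f_2],[\bar f_4]) \in \bP E$ we have $f_2 \nmid f_4$ (else $\bar f_4 = 0$), and since the point is not in $U$, the scheme $V(f_2, f_4')$ fails to be one-dimensional for every representative $f_4' \in [\bar f_4]$. Equivalently, some irreducible component of $V(f_2)$ is contained in $V(f_4')$. Observe that whether a linear factor $L$ of $f_2$ divides $f_4' = f_4 + h f_2$ is independent of $h$, so this is a well-defined condition on the class $[\bar f_4]$. If $V(f_2)$ were an irreducible quadric (smooth or a cone), its unique positive-dimensional component would be itself, forcing $f_2 \mid f_4$ and hence $\bar f_4 = 0$, a contradiction. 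So $f_2$ is reducible, and two cases arise: $f_2 = L_1 L_2$ with distinct linear forms, or $f_2 = L^2$. In the first case, choose coordinates with $L_1 = x_0$, $L_2 = x_1$, so $f_2 = x_0 x_1$; after swapping $L_1 \leftrightarrow L_2$ if necessary we may assume $x_0 \mid f_4$. Write $f_4 = x_0 g$; the condition $f_2 \nmid f_4$ then forces $x_1 \nmid g$. Analogously, in the second case choose $L = x_0$, giving $f_2 = x_0^2$, $f_4 = x_0 g$ with $x_0 \nmid g$.

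For part~(2), apply Benoist's formula~\eqref{benfor}:
$$\mu^{\eta+t\xi}(([f_2],[\bar f_4]),\lambda) = \mu(f_2,\lambda) + t \min_{f \in [f_4]} \mu(f,\lambda).$$
With $\lambda = \diag(-3,1,1,1)$, the $\lambda$-weight of a monomial $x^A$ of total degree $d$ is $-3A_0 + A_1 + A_2 + A_3 = d - 4 A_0$. In case~(a) this gives $\mu(f_2,\lambda) = \weight_\lambda(x_0 x_1) = -2$; in case~(b) it gives $\mu(f_2,\lambda) = \weight_\lambda(x_0^2) = -6$. In either case $f_4 = x_0 g$, so every monomial of $f_4$ has $A_0 \geq 1$ and hence weight $4 - 4 A_0 \leq 0$. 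Thus $\mu(f_4,\lambda) \leq 0$ for this one representative, and \emph{a fortiori} $\min_{f \in [f_4]} \mu(f,\lambda) \leq 0$. For any $t \in (0, 1/2] \cap \QQ$ we therefore conclude
$$\mu^{\eta+t\xi}(([f_2],[\bar f_4]),\lambda) \leq \mu(f_2,\lambda) + t \cdot 0 = \mu(f_2,\lambda) \leq -2.$$

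The only mildly delicate point is part~(1): one needs to check that divisibility of $f_4$ by a linear factor of $f_2$ is indeed well-defined on $[\bar f_4]$ and that the classification of reducible quadrics into $L_1 L_2$ and $L^2$ exhausts the failure cases. Once this is established, the weight bookkeeping for part~(2) is immediate because every monomial of $f_4$ carries a factor of $x_0$, which pins the weight at most $0$ without any representative-changing optimization being needed.
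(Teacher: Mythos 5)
Your proof is correct, and since the paper explicitly leaves this proof to the reader (``The straightforward proof of the result below is left to the reader''), you are supplying exactly the intended ``straightforward'' argument: classify the ways $f_4$ can vanish on a component of a degenerate $V(f_2)$ to get the two normal forms, then plug into Benoist's formula~\eqref{benfor} with $\lambda = \diag(-3,1,1,1)$ and read off the weights. The bookkeeping is right ($\mu(f_2,\lambda)=-2$ or $-6$, and $\min_{f\in[f_4]}\mu(f,\lambda)\le 0$ because every monomial of $x_0 g$ has weight $4-4A_0\le 0$), and your remark that divisibility of $f_4$ by a linear factor of $f_2$ is well-defined on the class $[\bar f_4]$ correctly disposes of the one potentially delicate point.
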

\begin{corollary}\label{crl:facile}
Let $t\in(0,1/3)\cap\QQ$.  Then $\PP E^{ss}(\eta+t\xi)\subset U$.
\end{corollary}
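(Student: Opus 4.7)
The plan is to deduce the corollary as an immediate application of Proposition~\ref{prp:facile} together with the Hilbert--Mumford numerical criterion. For $t \in (0, 1/3) \cap \QQ$, Proposition~\ref{prp:benample} guarantees that the $\QQ$-Cartier class $\eta + t\xi$ is ample on $\PP E$, so the standard numerical criterion applies: a point $x \in \PP E$ fails to be semistable whenever there exists a non-trivial 1-PS $\lambda$ of $\SL(4)$ with $\mu^{\eta+t\xi}(x,\lambda) < 0$ (in the sign convention of~\eqref{benfor}, where strict negativity encodes that $\lim_{s\to 0}\lambda(s)\cdot x$ drives both $f_2$ and a representative of $[\ov{f}_4]$ to zero).

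I would then argue by exhibiting such a destabilizing 1-PS for every point outside $U$. Fix an arbitrary $x = ([f_2], [\bar{f}_4]) \in \PP E \setminus U$. By Proposition~\ref{prp:facile}(1), after applying a suitable element of $\SL(4)$ (which preserves (semi)stability), we may assume the pair $(f_2, f_4)$ is in one of the two explicit normal forms (a) or (b) listed there; these precisely record the two ways in which $V(f_2) \cap V(f_4)$ can fail to cut out a $(2,4)$ complete intersection curve (either $V(f_2)$ is reducible with a common component of $V(f_4)$, or $V(f_2)$ is non-reduced along a plane contained in $V(f_4)$). For these normal forms, Proposition~\ref{prp:facile}(2) shows that the 1-PS $\lambda = \diag(-3, 1, 1, 1)$ satisfies $\mu^{\eta+t\xi}(x, \lambda) \le -2 < 0$, so $x$ is unstable.

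Since $x$ was arbitrary in $\PP E \setminus U$, we conclude $\PP E^{ss}(\eta+t\xi) \subset U$, as desired. Note that the bound $\mu \le -2$ from Proposition~\ref{prp:facile}(2) is uniform for $t \in (0,1/2] \cap \QQ$, so the only reason the corollary is stated on the narrower interval $(0, 1/3)$ is that outside this range $\eta + t\xi$ ceases to be ample and the semistable locus has to be defined more carefully; this is precisely what motivates the introduction of the auxiliary space $\cP$ and the linearization $N_t$ used in the subsequent VGIT analysis. The corollary therefore presents no genuine obstacle once Proposition~\ref{prp:facile} is in hand --- all the substantive work (producing the normal forms and estimating the Hilbert--Mumford weights on them) is carried out there.
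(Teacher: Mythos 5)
Your proof is correct and follows essentially the same route as the paper's: apply Proposition~\ref{prp:facile} to exhibit a destabilizing $1$-PS for any $x\in\PP E\setminus U$, then invoke the Hilbert--Mumford numerical criterion, which is available because $\eta+t\xi$ is ample on $\PP E$ for $t\in(0,1/3)\cap\QQ$ by Proposition~\ref{prp:benample}. Your closing observation about why the interval is restricted to $(0,1/3)$ is also accurate and matches the paper's motivation for introducing $\cP$ and $N_t$.
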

\begin{proof}
The corollary follows at once from~\Ref{prp}{facile} and the Hilbert-Mumford numerical criterion for semistability, which holds because $\eta+t\xi$ is ample for $t\in(0,1/3)\cap\QQ$.
\end{proof}
\begin{proposition}\label{prp:doppiovu}
Let $t\in(\delta,1/2]\cap\QQ$.  Then $\cP^{ss}(N_t)\subset U$.
\end{proposition}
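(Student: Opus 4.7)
I will prove that any $x=(y,z)\in\cP\setminus U$ is $N_t$-unstable for every $t\in(\delta,1/2]\cap\QQ$, by exhibiting a single one-parameter subgroup $\lambda$ of $\SL(4)$ for which $\mu^{N_t}(x,\lambda)<0$. The main idea is to combine \Ref{prp}{facile} on the $\PP E$ factor with \Ref{prp}{propssci} on the $\Chow_{(2,4)}$ factor via a Hilbert-scheme limit argument, so that the same $\lambda$ in the same coordinates witnesses instability on both sides.

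First I would reduce to the statement $y:=p_1(x)\in\PP E\setminus U$. The graph of $c\colon U\to U_{(2,4)}$ is closed in $U\times\Chow_{(2,4)}$ because $\Chow_{(2,4)}$ is separated. Since $\cP$ is the closure of this graph in $\PP E\times\Chow_{(2,4)}$, intersecting $\cP$ with the open set $U\times\Chow_{(2,4)}$ recovers precisely the graph; thus $p_1^{-1}(U)\cap\cP=U$. With $y\in\PP E\setminus U$ in hand, apply \Ref{prp}{facile} to choose coordinates $[x_0,\ldots,x_3]$ placing $(f_2,f_4)$ in one of the two normal forms of item~(1), and set $\lambda:=\diag(-3,1,1,1)$. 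Item~(2) of that proposition then gives $\mu^{\eta+\delta\xi}(y,\lambda)\leq -2$.

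Next I would produce a Hilbert point $I\in\Hilb_{(2,4)}$ with $\gamma(I)=z$ whose graded pieces contain $f_2$ and $f_4$ in the chosen normal form. Pick a sequence $(y_n,c(y_n))\in U$ converging to $x$, and let $I_n\in\Hilb_{(2,4)}$ be the Hilbert point of $C_n:=V(f_2^{(n)},f_4^{(n)})$. By compactness, after passing to a subsequence $I_n\to I$; continuity of $\gamma$ yields $\gamma(I)=z$, and passing to limits in the degree-$2$ and degree-$4$ graded pieces gives $f_2\in I_2$ and $f_4\in I_4$ (the latter for some lift of $\bar f_4$). This puts $I$ exactly in Case~(1) or Case~(2) of the proof of \Ref{prp}{propssci}; the argument of loc.~cit., specialized to $d=4$, then yields $\mu_m(I,\lambda)\leq -2(a+1)m^2+O(m)$ in case~(a) and $\mu_m(I,\lambda)\leq -14 m^2+O(m)$ in case~(b). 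Dividing by $m^2$ and letting $m\to\infty$, formula~\eqref{chowlim} gives
\[
\mu^{L_\infty}(z,\lambda)\leq -2.
\]

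Finally, by functoriality and linearity of the Hilbert-Mumford index applied to the decomposition~\eqref{eccoenne},
\[
\mu^{N_t}(x,\lambda)\leq -2\left(\frac{1-2t}{1-2\delta}+\frac{t-\delta}{2(1-2\delta)}\right)=-\frac{2-3t-\delta}{1-2\delta}.
\]
Since $\delta<1/6$ and $t\leq 1/2$, one has $2-3t-\delta\geq 1/3>0$, whence $\mu^{N_t}(x,\lambda)<0$ and $x$ is $N_t$-unstable. The step I expect to require the most care is the Hilbert-scheme limit construction: verifying that the specific normal forms chosen for $y$ in $\PP E$ survive to the graded pieces of the limit ideal, so that the proof of \Ref{prp}{propssci} applies with the \emph{same} $\lambda$ in the \emph{same} coordinates. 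Everything else is bookkeeping with the explicit weight estimates and the elementary inequality displayed above.
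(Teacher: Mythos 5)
Your proposal is correct and takes essentially the same route as the paper's proof: the same $1$-PS $\lambda=\diag(-3,1,1,1)$ supplied by \Ref{prp}{facile}, linearity of the Hilbert--Mumford index with respect to the decomposition~\eqref{eccoenne} of $N_t$, strict negativity of the $\PP E$-term from \Ref{prp}{facile}, and strict negativity of the Chow-term from the weight estimates in the proof of \Ref{prp}{propssci} together with~\eqref{chowlim}. The only difference is that you spell out, via the graph argument $p_1^{-1}(U)\cap\cP=U$ and the limit of Hilbert points, the existence of a Hilbert point over the Chow factor whose ideal contains $f_2$ and a lift of $\ov{f}_4$ in the degenerate normal form, a fact the paper simply asserts when writing $z=(([f_2],[\ov{f}_4]),c(\cC))$.
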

\begin{proof}
Let $z\in(\cP\setminus U)$. There exist a curve $\cC\subset\PP^3$, parametrized by a point of $\Hilb_{(2,4)}$, and a decomposition
\begin{equation*}
H^0(\PP^3,\cI_{\cC}(4))=H^0(\PP^3,\cO_{\PP^3}(2))\cdot f_2+\CC f_4,\quad f_d\in H^0(\PP^3,\cO_{\PP^3}(d)),
\end{equation*}
such that
\begin{equation*}
z=( ([f_2],[\ov{f}_4]), c(\cC)),
\end{equation*}
(here $c\colon \Hilb_{(2,4)}\to\Chow_{(2,4)}$ is the Hilbert-to-Chow map) and either Item~(a) or Item~(b) of~\Ref{prp}{facile} holds. Let $\lambda$ be the $1$ PS of $\SL(4)$ in Item~(2)  of~\Ref{prp}{facile}. Then, by linearity of the Hilbert-Mumford numerical function
\begin{equation}\label{combnum}
\mu^{N_t}(z,\lambda)=\frac{1-2t}{1-2\delta} \mu^{\eta+\delta\xi} (([f_2],[\ov{f}_4]),\lambda)+
\frac{t-\delta}{2(1-2\delta)}\mu^{L_{\infty}}(c(\cC),\lambda).
\end{equation}
We claim that both numerical functions in the right hand side of~\eqref{combnum} are strictly negative. In fact 
$ \mu^{\eta+\delta\xi} (([f_2],[\ov{f}_4]),\lambda)<0$ by~\Ref{prp}{facile}. On the other hand, the proof of~\Ref{prp}{propssci} gives that 
$\mu^{L_{\infty}}(c(\cC),\lambda)<0$.
\end{proof}
\subsection{Proof of the main result}\label{subsec:interpolgit}
We will prove~\Ref{thm}{thmgitmodels}. 

Item~(1): We will apply Lemma 4.17 in~\cite{g4git}. For the reader's convenience,  we record below the part of that lemma that we will need.
\begin{lemma}[\cite{g4git}, Lemma 4.17]\label{lmm:quasiprojgit}
Let $X$ be a projective variety, let $G$ be a reductive group acting on $X$, and let $L$ be a $G$-linearized ample  line bundle on $X$.  Then the natural map 
\begin{equation*}
X^{ss}(L)\gquot G \lra X\gquot_L G=\Proj\left(\bigoplus_{n=0}^{\infty}H^0(X,L^{\otimes n})^G\right)
\end{equation*}
is an isomorphism.
\end{lemma}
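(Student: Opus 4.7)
The plan is to reduce the statement to the classical construction of Mumford (GIT, Theorem 1.10). Recall that a point $x\in X$ is $L$-semistable if there exist $n\geq 1$ and $s\in H^0(X,L^{\otimes n})^G$ with $s(x)\neq 0$. Writing $X_s:=\{x\in X\mid s(x)\neq 0\}$ for homogeneous $G$-invariants $s$, one has the open cover
$$X^{ss}(L)=\bigcup_{s} X_s.$$
Because $X$ is projective and $L$ is ample, each $X_s$ is affine. The good categorical quotient $X^{ss}(L)\gquot G$ is then constructed by gluing the affine quotients $X_s\gquot G=\Spec \mathcal{O}_X(X_s)^G$ along the intersections $X_s\cap X_{s'}=X_{ss'}$, using reductivity of $G$ to ensure that invariants behave well under localization by $G$-invariant sections.

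On the Proj side, the graded ring $R:=\bigoplus_{n\ge 0}H^0(X,L^{\otimes n})^G$ satisfies, by construction, $\Proj R=\bigcup_{s} D_+(s)$ with $D_+(s)=\Spec R_{(s)}$, where $R_{(s)}$ denotes the degree-zero part of $R[s^{-1}]$. The whole theorem reduces, therefore, to producing compatible canonical isomorphisms
$$\alpha_s\colon R_{(s)}\xrightarrow{\ \sim\ }\mathcal{O}_X(X_s)^G$$
for each homogeneous $G$-invariant $s$, since these will glue along $X_{ss'}=X_s\cap X_{s'}$ to give the desired global isomorphism $X^{ss}(L)\gquot G\xrightarrow{\sim} X\gquot_L G$. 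The map $\alpha_s$ is defined by sending $f/s^n$ (with $\deg f=n\deg s$) to the regular $G$-invariant function on $X_s$ obtained by restriction. Well-definedness and injectivity are immediate from the fact that $s$ is nowhere zero on $X_s$.

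The main obstacle, and the only nontrivial step, is surjectivity of $\alpha_s$. Given $\varphi\in\mathcal{O}_X(X_s)^G$, ampleness of $L$ implies that, for $n\gg 0$, the function $s^n\varphi$ extends to a global section $g\in H^0(X,L^{\otimes n\deg s})$ (a standard consequence of $L$ being ample on the projective variety $X$, applied to the affine open $X_s$). A priori $g$ need not be $G$-invariant, but applying the Reynolds operator of the reductive group $G$ produces a $G$-invariant section $\tilde g\in H^0(X,L^{\otimes n\deg s})^G$ whose restriction to $X_s$ equals $s^n\varphi$ (because $\varphi$ was already $G$-invariant). Hence $\varphi=\alpha_s(\tilde g/s^n)$, which establishes surjectivity. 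Checking compatibility of the $\alpha_s$ with further localization by another $G$-invariant $s'$ is then a formal verification, after which the isomorphisms glue to the desired identification. This is precisely the content of Mumford's original argument in \cite{GIT}.
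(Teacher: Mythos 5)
Your argument is correct: the chart-by-chart identification of $R_{(s)}$ with $\mathcal{O}_X(X_s)^G$ (extension of $s^n\varphi$ to a global section plus the Reynolds operator, then gluing over $X_{ss'}$) is exactly the classical proof of Mumford's Theorem 1.10, which is what the paper relies on here — it does not reprove the lemma but quotes it from \cite{g4git}, where it rests on the same construction. So your proposal matches the intended proof in substance; no gaps beyond the harmless imprecision that the extension step needs only quasi-compactness/noetherianity (ampleness is what makes the $X_s$ affine).
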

Applying~\Ref{lmm}{quasiprojgit} to $X=\cP$, $G=\SL(4)$, $L=N_t$, and to $X=\PP(E)$, $G=\SL(4)$, $L=\eta+t\xi$,  we get two isomorphisms
\begin{equation}\label{duemod}
\cP^{ss}(N_t)\gquot \SL(4) \overset{\sim}{\lra} \gM(t),\qquad 
 \PP E^{ss}(\eta+t\xi)\gquot \SL(4) \overset{\sim}{\lra}  \PP E\gquot_{\eta+t\xi} \SL(4).
\end{equation}
By~\Ref{prp}{doppiovu} we have $\cP^{ss}(N_t)\subset U$, and by~\Ref{crl}{facile} we have $\PP E^{ss}(\eta+t\xi)\subset U$. 
On the other hand, $N_t|_{U}=(\eta+t\xi)|_{U}$ by~\Ref{crl}{chowlb}. It follows that both $\cP^{ss}(N_t)$ and 
$\PP E^{ss}(\eta+t\xi)$ are equal to the set of points in $U$ which are semistable for the action of $\SL(4)$, with respect to the 
the linearized line bundle $N_t|_{U}$. Thus $\cP^{ss}(N_t)=\PP E^{ss}(\eta+t\xi)$ and $\cP^{ss}(N_t)\gquot \SL(4)=\PP E^{ss}(\eta+t\xi)\gquot \SL(4)$. Item~(1) now follows from~\eqref{duemod}.

Item~(2): By Item~(1) it suffices to prove that
\begin{equation}\label{smallti}
\PP E\gquot_{\eta+t\xi}\SL(4)\cong \gM.
\end{equation}
Let $([f_2],[\ov{f}_4])\in \PP E^{ss}(\eta+t\xi)$. We claim that $V(f_2)$ is a smooth quadric. In fact, assume that  
 $V(f_2)$ is singular. Then there exist homogeneous coordinates $[x_0,\ldots,x_3]$ such that $f_2\in\CC[x_1,x_2,x_3]_2$. Let $\lambda=\diag(-3,1,1,1)$. Then $\mu(f_2,\lambda)=-2$, and hence (recall that $t<1/6$)
 \begin{equation}
\mu(f_2,\lambda)+t\min_{f\in[f_4]}\mu(f,\lambda)\le -2+12 t<0.
\end{equation}
This is a contradiction. Hence we have proved that if $([f_2],[\ov{f}_4])\in \PP E^{ss}(\eta+t\xi)$, then $V(f_2)$ is a smooth quadric. Now the isomorphism in~\eqref{smallti} follows from the proof of Lemma~4.18 in~\cite{g4git} (which applies verbatim).  

Item~(3): Arguing as in the proof of Item~(1), it suffices to show that
\begin{equation}\label{hilbmu}
L_m|_U=(2(m^2-4m+5)\eta+(m-3)^2\xi)|_U.
\end{equation}
Let $\Gamma,\Omega\subset U$ be the projective curves in~\eqref{curvagam} and~\eqref{curvaom} respectively. By~\eqref{etaxi}, it suffices to show that
\begin{equation}
\deg(L_m|_{\Gamma})=(m-3)^2,\qquad \deg(L_m|_{\Omega})=2(m^2-4m+5).
\end{equation}
This is a straightforward computation that we leave to the reader.

\section{The stability analysis for $\gM(t)$}\label{sec:sec-VGIT}
In the previous sections, we have defined $\gM(t)$ for $t\in(0,\frac{1}{2}]$ and we have identified it with various natural Hilbert and Chow GIT quotients for $(2,4)$ complete intersection curves. The purpose of this section is to analyze the variation of GIT describing $\gM(t)$, especially to identify the values of $t$ for which the stability conditions change. To start, we recall that the general theory of  variations of GIT quotients \cite{thaddeus,dolgachevhu} (see also \cite{VGIT}) says that the interval $(\delta,\frac{1}{2})\cap\QQ$ will be partitioned into finitely many (open) intervals, called {\it chambers}, on which $\gM(t)$ stays constant. The limits of these intervals are called {\it walls}, or critical values $t$. At such a critical value, there are birational maps $\gM(t\pm \epsilon)\to \gM(t)$. The composition $\gM(t-\epsilon)\dashrightarrow \gM(t+\epsilon)$ is typically a (generalized) flip, that we will refer to as a {\it wall crossing}. Since \cite{thaddeus,dolgachevhu} a significant number of applications of VGIT to moduli problems have appeared in the literature. Most relevant for us are \cite{hh2}, \cite{g4git}, and \cite{benoist} from which we borrow a number of techniques and results. 

 \subsection{Main GIT results and structure of the argument} In order to state the main results of the present section, we introduce the following tables.
\begin{equation}\label{valori}
\begin{tabular}{c|c|c|c|c|c|c|c | c | c}
value of $t_k$ &  $1/6$ & $1/4$ &  $3/10$  & $1/3$ &  $1/3$ &   $5/14$ &   $3/8$ &  $2/5$ & $1/2$ \\
\hline
$k$  & $0$ & $1$  & $2$  & $3$  &  $4$  &  $5$  & $6$ & $7$ & $8$
\end{tabular}
\end{equation}
(The equality $t_3=t_4$ is not a misprint.)
\begin{equation}\label{posizione}
\begin{tabular}{c|c|c|c|c|c|c|c | c }
type of sing. &  $(4,0)$ & $(3,1)$ &  $J_{4,\infty}$  &   $J_{3,+}$ &   $J_{3,0}$ &   $E_{14}$ & 
 $E_{13}$ &   $E_{12}$ \\
\hline
tag  & $0$ & $1$  & $2$  & $3$  &  $4$  &  $5$  & $6$ & $7$ 
\end{tabular}
\end{equation}
 \begin{theorem}\label{thm:improving}
  Let $t\in[1/6,\frac{1}{2})\cap\QQ$, and let $C=V(f_2,f_4)$ be a $(2,4)$ curve which is $N_t$ semistable. Let $X_C\to V(f_2)$ be the double cover ramified over $C$.  Then every   non slc singularity of $X_C$ appears in the list in~\eqref{posizione}, where notation for singularities is as in~\Ref{sec}{sec-stratify}.   More precisely,  letting $k\in\{0,\ldots,7\}$, the following holds:
\begin{enumerate}
\item 
If $t=t_k$,  then  every singularity of $X_C$ has tag  at least $k$.
\item 
If  $t_k< t<t_{k+1}$,  then every singularity of $X_C$ has tag  at least $k+1$.
\end{enumerate}
In particular, if  $2/5< t<1/2$, then every surface parametrized by $\gM(t)$ has slc singularities, and hence we have a \emph{regular} period map $\Phi\colon\gM(t)\to\sF^{*}$.  
 \end{theorem}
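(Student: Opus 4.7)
The plan is to establish the theorem by combining Shah's GIT analysis at $t=t_0=1/6$ with explicit Hilbert--Mumford destabilization for each non-slc singularity type listed in~\eqref{posizione}.

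First I would make the reduction to genuine complete intersections: by~\Ref{prp}{doppiovu}, any $N_t$-semistable point of $\cP$ lies in $U$, so we may write $C=V(f_2,f_4)$ with $V(f_2)$ a quadric. Moreover, combining~\Ref{crl}{chowlb} with Benoist's formula~\eqref{benfor}, a short computation shows that on $U$ the Hilbert--Mumford function simplifies, independently of $\delta$, to the linear expression
\begin{equation*}
\mu^{N_t}(C,\lambda)\;=\;\mu(f_2,\lambda)\;+\;t\cdot\min_{f\in[f_4]}\mu(f,\lambda).
\end{equation*}
The base case $t=t_0$ follows from the identification $\gM(t_0)\cong\gM$ (Item~(2) of~\Ref{thm}{thmgitmodels}): by~\Ref{prp}{git44} together with the singularity classification of~\Ref{sec}{sec-stratify} (in particular~\Ref{crl}{norm3}), the non-slc singularities of $X_C$ for $C$ semistable at $t_0$ are precisely those in~\eqref{posizione}. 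Since the replacement strata arising from subsequent walls parametrize double covers of quadric cones with ADE singularities at the vertex (cf.~\Ref{prp}{hypkgeom}), the ``list'' statement of the theorem persists throughout~$[1/6,1/2)\cap\bQ$.

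Next, for each singularity type $T$ of tag $k\in\{0,\ldots,7\}$, I would exhibit a one-parameter subgroup $\lambda_T\subset\SL(4)$ that destabilizes $C$ precisely for $t>t_k$. Placing the singular point at $p=[1,0,0,0]$ and using the normal forms of~\Ref{lmm}{norm3} and~\Ref{prp}{norm3}, $\lambda_T$ is chosen diagonal in coordinates adapted to the normal form, with weights dictated by the quasihomogeneous gradings appearing in Table~\ref{tavola} (or its analogues for the non-isolated types $(4,0),(3,1),J_{k,\infty}$). For example, for the $(4,0)$-stratum represented (after a suitable choice of quadric $q$) by $C=V(q,x_0^4)$, the $1$-PS $\lambda=\diag(-3,1,1,1)$ yields $\mu(q,\lambda)=2$ and $\min_{f\in[x_0^4]}\mu(f,\lambda)=-12$, so that $\mu^{\eta+t\xi}=2-12t$ changes sign exactly at $t_0=1/6$. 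Analogous, progressively more delicate, computations for the remaining types produce destabilization at the critical slopes listed in~\eqref{valori}. Given these, Item~(2) of the theorem follows: if $X_C$ had a singularity of tag $j\le k$ with $t_k<t<t_{k+1}$, then $\lambda_{T_j}$ would contradict $N_t$-semistability; Item~(1) is the boundary case in which $\mu^{N_{t_k}}(C,\lambda_T)=0$, so tag-$k$ singularities are allowed only polystably at $t=t_k$. The final assertion then follows by applying Item~(2) with $k=7$: for $t\in(2/5,1/2)\cap\bQ$ every non-slc singularity type is forbidden, so every surface parametrized by $\gM(t)$ has slc singularities, and~\Ref{prp}{est3} (applied to the open subset of $\gM(t)$ so described) upgrades the rational period map to a regular morphism $\Phi\colon\gM(t)\to\sF^*$.

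\textbf{Main obstacle.} The principal technical difficulty is to ensure that the $1$-PS $\lambda_T$ chosen for each stratum is genuinely optimal---that no other destabilizing $1$-PS exists at a strictly smaller slope. This requires a systematic state-polytope analysis, in the spirit of the techniques employed in~\cite{g4git} and~\cite{benoist}, to verify that the $\lambda_T$ we construct realize the Kempf--Ness worst-destabilizing direction; otherwise the critical slopes in~\eqref{valori} would shift. A related subtlety is the coincidence $t_3=t_4=1/3$: the strata $J_{3,+}$ and $J_{3,0}$ share a critical slope but differ in their polystable limits, so one must check that their normal forms yield distinct $1$-PSs, both of Hilbert--Mumford index zero at $t=1/3$, in order to separate them in the subsequent wall-crossing analysis of~\Ref{sec}{proof}.
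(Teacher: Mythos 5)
Your plan correctly identifies the reduction to complete intersections (\Ref{prp}{doppiovu}), the formula $\mu^{N_t}=\mu(f_2,\lambda)+t\min\mu(f,\lambda)$ on $U$, the base case via $\gM(1/6)\cong\gM$ and~\Ref{crl}{norm3}, and the role of an explicit destabilizing $1$-PS for each tag (this is exactly~\Ref{prp}{destabilizing}). But there is a genuine gap in how you propose to prove that \emph{every} non-slc singularity of $X_C$, for $C$ that is $N_t$-semistable with $t>1/6$, still appears in the list~\eqref{posizione}. You assert that ``the replacement strata arising from subsequent walls parametrize double covers of quadric cones with ADE singularities at the vertex,'' but that is precisely the content of the basin-of-attraction analysis that feeds into \Ref{thm}{thmcriticalt}, which is proved \emph{after} and \emph{using} \Ref{thm}{improving}. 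Invoking it here is circular. The missing ingredient is~\Ref{rmk}{quintaa} (a non-slc singularity which is a small deformation of one in the list remains in the list) combined with~\Ref{prp}{indentifyt}: the potential critical curves $C^*_k$ in Table~\ref{table101} are explicitly classified and each has its (unique) non-slc singularity in~\eqref{posizione}; if $C$ is $N_{t_k}$-semistable but not $N_{t_k-\epsilon}$-semistable then $C^*_k$ lies in the orbit closure of $C$, so $C$ is a small deformation of $C^*_k$ and~\Ref{rmk}{quintaa} transfers the list statement. This semicontinuity argument is what makes the induction on $k$ close, and it is absent from your proposal.

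Your ``main obstacle'' is also misdiagnosed. For this theorem you do \emph{not} need the $1$-PS $\lambda_T$ to be Kempf-optimal, nor do you need to show that curves with a tag-$k$ singularity stay semistable all the way up to $t_k$ (that converse is the content of~\Ref{thm}{thmcriticalt}, established later via basins of attraction). \Ref{prp}{destabilizing} only needs to show one-sided instability for $t>t_k$, and that suffices for Items~(1) and~(2). The genuine subtlety is the one above: controlling which singularities can newly appear at a wall, which is handled by the classification of potential critical orbits plus semicontinuity, not by optimality of the destabilizing direction. (Your observation about $t_3=t_4$ is correct but again concerns the structure of the wall crossing in~\Ref{thm}{thmcriticalt}, not this theorem.)
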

 \begin{theorem}\label{thm:thmcriticalt}
 The critical slopes of the VGIT for $\gM(t)$ in the interval $(\delta,\frac{1}{2})\cap\QQ$ are given by the $t_k$'s appearing in~\eqref{valori}, with the exclusion of $t_8=1/2$. Hence for $k\in\{0,\ldots,7\}\setminus\{3\}$ it makes sense to let $\gM(t_k,t_{k+1}):=\gM(t)$ for $t_k<t<t_{k+1}$ (we also set $\gM(\delta,1/6):=\gM$). The VGIT  for $\gM(t)$  gives the sequence of birational maps 
\begin{equation}\label{diagramflip}
\xymatrix @R=.07in @C=.01in{
\scriptstyle \gM\cong \gM(\delta,\frac{1}{6})\ar[rdd] & & \scriptstyle \gM ( \frac{1}{6} , \frac{1}{4} ) \ar[ll]\ar@{<-->}[rr]\ar[ldd] \ar[rdd] & & \scriptstyle \cdots \ar[ldd] & \scriptstyle  \gM (t_{k-1}, t_k) \ar[rdd]\ar@{<-->}[rr] & &  
\scriptstyle  \gM ( t_k , t_{k+1}) \ar[ldd] &\scriptstyle \cdots \ar[rdd]   & & \scriptstyle \gM (\frac{2}{5},\frac{1}{2})\ar@{<-->}[ll]\ar[ldd] \ar[rrdd]^{\scriptstyle \Phi}  \\
&&&&&&\\
& \scriptstyle \gM(\frac{1}{6}) & & \scriptstyle \gM( \frac{1}{4} ) & & & \scriptstyle \gM( t_k )  & & &  \scriptstyle \gM(\frac{2}{5})  & & & \scriptstyle \sF^{*}  \\
 }
 \end{equation}
 where each dotted arrow denotes a flip, and each solid arrow is a small contraction, with the exception of $\gM(\frac{1}{6},\frac{1}{4})\to \gM(\frac{1}{6})$ which is a divisorial contraction, and $\gM(\delta,\frac{1}{6})\to \gM(\frac{1}{6})$ which is an isomorphism. 
 Furthermore, the following holds. 
  For a critical value $t_k$, let $\Sigma_{-}(t_k)\subset \gM(t_{k-1},t_k)$ (if $k=0$, we mean $\{[4C]\}\subset \gM(\delta,1/6)$ where $C$ is a smooth conic) and $\Sigma_{+}(t_k)\subset \gM(t_k,t_{k+1})$ be the exceptional loci of 
 $\gM(t_{k-1},t_k)\to \gM(t_k)$ and  $\gM(t_{k},t_{k+1})\to \gM(t_k)$ respectively. Then $\Sigma_{-}(t_k)$ 
  is the strict transform of $W_k\subset \gM$ for the birational map $\gM\dra \gM(t_{k-1},t_k)$ (if $k=0$ we mean the inverse image of $W_0$), 
and $\Sigma_{+}(t_k)$   is the strict transform of $Z^{k+1}\subset \sF^*$ for $k\neq 4$, and of $Z^4\subset \sF^*$ for $k=4$,  for the birational map $\sF^{*}\dra \gM(t_{k},t_{k+1})$. 
 \end{theorem}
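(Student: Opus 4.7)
My plan is to prove the theorem by combining the stability classification of \Ref{thm}{improving} with a basin of attraction analysis at each candidate critical slope $t_k$, and then matching the resulting exceptional loci to the geometric strata $W_k$ and $Z^{k+1}$ via \Ref{prp}{hypkgeom}. First I would observe that \Ref{thm}{improving} immediately gives one inclusion at each candidate wall: a generic curve $C\in W_k$ has a singularity of tag $k$ (by \Ref{crl}{norm3} and the labeling in \eqref{posizione}), so by part~(2) of \Ref{thm}{improving} it is unstable for $t>t_k$, while by part~(1) it can be at worst strictly semistable at $t=t_k$. Hence no chamber can extend past $t_k$ without losing $W_k$. Conversely, no wall can occur strictly inside $(t_{k-1},t_k)$: the classification shows that at such $t$ every semistable curve has a singularity of tag $\ge k$, so its stabilizer is contained in the finite stabilizer of a generic such normal form (one checks this using the weighted homogeneous equations of \Ref{prp}{norm3}), and no properly semistable orbits can arise between walls.

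Next I would pin down the critical orbits. For each $k\in\{0,\dots,7\}$, the normal form for the singularity of tag $k$ given in \Ref{thm}{zoo} is (semi)quasihomogeneous with explicit weights, and these weights recover (up to scalar) the $1$-PS of $\SL(4)$ that realizes the wall at $t_k$. Writing the weights read off from Table~\ref{tavola} and converting them into diagonal $1$-PS's on $[x_0,\ldots,x_3]$ via the local chart of \Ref{lmm}{norm3} yields a finite list of candidate walls, which one checks by direct computation of $\mu^{N_t}$ via \eqref{benfor} and \eqref{chowlim} coincides with \eqref{valori}. The corresponding $\Gm$-fixed (hence polystable) curve $C_k$ is the complete intersection defined by the quasihomogeneous leading term at the singular point, and this is precisely the curve whose double cover has Shimura-type periods parametrized by $Z^{k+1}$: indeed $C_k$ sits on a quadric cone and acquires at the vertex exactly the singularity type needed by \Ref{prp}{hypkgeom} (and by \Ref{prp}{dikappa}) to land in the desired $Z$-stratum.

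The key input is then a basin of attraction computation, in the spirit of \cite{g4git} and Luna's slice theorem. For the $1$-PS $\lambda_k$ realizing the wall at $t_k$, let $A^\pm(C_k)$ denote the positive/negative basin, i.e.\ the set of points whose $\lambda_k$-limit is $C_k$. On the $\gM(t_{k-1},t_k)$ side, $A^-(C_k)$ consists of deformations of $C_k$ in which one unfolds the quasihomogeneous singularity in the direction of weight $<1$; these deformations produce curves with a worse isolated singularity, and the resulting surfaces form an open dense subset of the closure of $W_k$ (again using \Ref{crl}{norm3}). On the $\gM(t_k,t_{k+1})$ side, $A^+(C_k)$ consists of deformations in weights $>1$; these preserve the quadric cone and produce curves on the cone with the same singularity type at the vertex, which by \Ref{prp}{hypkgeom} map under the period map to a dense open subset of $Z^{k+1}$ (and for $k=4$ the basin is one-dimensional, matching $Z^4$ after recalling that $Z^5$ is absent from the tower \eqref{zetahyp}). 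This gives the identification of $\Sigma_\pm(t_k)$ with the strict transforms of $W_k$ and $Z^{k+1}$ (resp.\ $Z^4$ for $k=4$), and the flip/contraction dichotomy follows by comparing $\dim W_k+\dim \Sigma_+(t_k)$ with $\dim \gM(t_k)-1$. The divisorial nature of $\gM(\tfrac16,\tfrac14)\to\gM(\tfrac16)$ is \Ref{prp}{keyone} combined with $\dim W_0=0$: the entire Heegner divisor $H_h$ is contracted to the point $[4C]$.

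The main obstacle will be the coincident walls $t_3=t_4=1/3$. Here the candidate orbit is not unique: the singularities $J_{3,+}$ and $J_{3,0}$ have the same Newton weights, so the $1$-PS $\diag(-3,1,1,1)$-analogue acts with the same normalized slope on both types. I would handle this by noting that the basin of attraction of the $J_{3,0}$ critical orbit contains, as a codimension-one specialization, the $J_{3,+}$ locus; consequently the two potential walls merge into a single genuinely birational map $\gM(\tfrac14,\tfrac13)\dra\gM(\tfrac13,\tfrac{5}{14})$ whose exceptional loci are the (disjoint) strict transforms of $W_3\cup W_4$ on one side and of $Z^4$ (the reason $Z^5$ is not a separate stratum in \eqref{zetahyp}) on the other. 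This is consistent with the arithmetic prediction of \cite{log1} and is what justifies skipping $Z^5$ in the tower. Checking this merger rigorously — and in particular verifying that no extra wall appears strictly between $1/4$ and $1/3$ — is the delicate part, and it reduces to a direct numerical-criterion computation on the explicit normal forms of $J_{3,0}$ and $J_{3,+}$ singularities given by \Ref{prp}{norm3}.
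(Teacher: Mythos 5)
Your proposal follows the same overall strategy as the paper: combine the destabilization result \Ref{prp}{destabilizing} (and hence \Ref{thm}{improving}) to bound walls from above, use basin-of-attraction computations at the critical orbits to show each $t_k$ is genuinely a wall and to identify the two exceptional loci, and then match these loci against $W_k$ and $Z^{k+1}$ via \Ref{prp}{hypkgeom}. The identification of critical $\Gm$-fixed curves from the (semi)quasihomogeneous weights of the normal forms, and the treatment of the collision $t_3=t_4$, are all in line with the paper's argument.

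However, there is a genuine gap in your first paragraph. You claim that no wall can occur strictly inside $(t_{k-1},t_k)$ because, by the stability classification, every semistable curve at such $t$ has a singularity of tag $\ge k$, hence ``its stabilizer is contained in the finite stabilizer of a generic such normal form.'' This inference is invalid: having a singularity of tag $\ge k$ does not force a finite stabilizer. Indeed the critical curves $C_{k'}^*$ with $k'\ge k$ all satisfy the singularity condition and have positive-dimensional stabilizers, so a priori one could have a semistable curve with $\Gm$-stabilizer at an interior $t$. What actually rules this out is \Ref{prp}{indentifyt}, the a priori enumeration of \emph{all} potential critical slopes, obtained by directly solving for $1$-PS's $\lambda$ and pairs $(f_2,f_4)$ satisfying $\mu(f_2,\lambda)+t\,\mu(f_4,\lambda)=0$ together with \Ref{lmm}{zerozero}'s nondegeneracy conditions. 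That computation shows the only possible walls in $(\delta,\tfrac12)$ are the $t_k$'s, independent of any singularity analysis. Your argument should cite that proposition rather than attempting to re-derive the bound via stabilizer finiteness, which does not follow from \Ref{thm}{improving}. One further small point: you write that the exceptional locus on the $\gM$-side at $t=\tfrac13$ is ``the strict transform of $W_3\cup W_4$,'' but $W_3$ is deliberately not defined in \Ref{dfn}{eccow}; the stratum $\gM^{IV}_{J_{3,+}}$ is absorbed into $W_4$, so the exceptional locus is simply the strict transform of $W_4$.
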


\begin{remark}\label{rmk:unmezzo}
The value $t=\frac{1}{2}$ is also a critical value. Since the behavior of $\gM(t)$ at $t=\frac{1}{2}$ is somewhat different from the other cases (e.g. the associated linearization $N_{\frac{1}{2}}$ is only semi-ample, and there are multiple critical orbits), we postpone the discussion of this case until \Ref{sec}{chow}.
\end{remark}

\begin{remark}\label{rmk:flip13}
At $t=\frac{1}{3}$, (the strict transform of) the $4$ dimensional locus $W_4\subset \gM$ is replaced by (the strict transform of) the codimension $4$ locus  $Z^4\subset \sF^*$. At $t=\frac{1}{3}$, the center of the flip is the curve parametrizing equivalence classes of  $(2,4)$ curves 
\begin{equation*}
V(x_0x_2+x_1^2,x_0x_3^3+2\alpha x_1x_2x_3^2-\beta x_2^3x_3),
\end{equation*}
i.e.~the set of $[\alpha,\beta]\in W\bP(2,1)$. Moreover $W_4$ and $Z^4$ are birationally fibered over it. 
The special case, $[\alpha,\beta]=[1,1]$ morally corresponds to (an undefined) $W_3$, but we have avoided defining $W_3$ in \Ref{dfn}{eccow}, because  this stratum is flipped together with $W_4$. Similarly, the undefined $Z^5$ is ``hidden'' in $Z^4$. This behavior is explained by the properties of automorphic forms on $\sF(N)$ (the D-tower of \cite{log1}); roughly speaking it is associated with the transition phase that occurs at $N=14$ (e.g. see \cite[Cor. 3.1.5]{log1}).
\end{remark}

\begin{remark} 
In order to make the meaning of the diagram~\eqref{diagramflip} more transparent, we recall the  heuristics governing the behavior of the VGIT $\gM(t)$ (N.B.  similar situations were previously analyzed in \cite{raduthesis} and \cite{g4git}). For $0<t\ll 1$, $\gM(t)$ is isomorphic to $\gM$, the GIT quotient for $(4,4)$ curves (see \Ref{thm}{thmgitmodels}). Thus, a curve $C$ is $t$-(semi)stable iff it is (semi)stable as a $(4,4)$ curve on $\bP^1\times \bP^1$. In particular, $C$ sits on a smooth quadric. As $t$ increases,   curves on the quadric cone become semistable. Such curves replace curves on the smooth quadric with a bad singularity. 
 The basic geometric behavior of $\gM(t)$ is as follows: as $t$ increases, $C$ will have a milder singularity at $p$, while on the other hand, $C$ is allowed to have a worse singularity at $v$.  Table~\ref{table101} gives a more precise summary of this behaviour: the critical curve $C^{*}_k$ has a non slc singular point in the smooth locus of the unique quadric cone containing it, while it has an $A_m$ singularity at the vertex of the quadric cone. As $k$ increases, the non slc singularity
 becomes milder, while $m$ increases (with the exception of $k=3$). 
\end{remark}
Before stating   the last main result of the present section, we recall that 
\begin{equation}\label{perioti}
\gp(t)\colon\gM(t)\dra \sF^{*}
\end{equation}
is the (birational) period map.
\begin{proposition}\label{prp:cod2}
For $t\in(1/6,\frac{1}{2})\cap\QQ$, the period map $\gp(t)$ is an isomorphism in codimension $1$. 
\end{proposition}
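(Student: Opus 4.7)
The strategy is to exhibit a common open subset $U\subset\gM(t)$ on which $\gp(t)$ restricts to an isomorphism onto an open $V\subset\sF^{*}$, and to verify that $\gM(t)\setminus U$ and $\sF^{*}\setminus V$ both have codimension at least $2$. Let $U\subset\gM(t)$ be the locus of GIT classes of complete intersections $C=V(f_2,f_4)$ satisfying: (i) $V(f_2)$ is an irreducible quadric; (ii) if $V(f_2)$ is a cone with vertex $v$, then $v\notin C$; and (iii) $C$ has only ADE singularities. By the stability results of \Ref{sec}{sec-VGIT}, in particular \Ref{thm}{improving}, which lists all non-slc singularities of $X_C$ that can occur on a $t$-semistable curve (none of them is ADE), every curve parametrized by $U$ is $t$-stable, so $U$ is open in $\gM(t)$. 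Since ADE singularities are slc, $\gp(t)$ is regular on $U$, and Global Torelli, together with the fact that isomorphisms of $(X_C,L_C)$ are induced by elements of $\Aut(V(f_2))\subset\PGL(4)$, shows that $\gp(t)|_U$ is injective. Setting $V:=\gp(t)(U)$ yields the desired isomorphism $\gp(t)|_U\colon U\xrightarrow{\sim}V$ of open subsets.

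\textbf{Codimension on the $\sF^{*}$ side.} The boundary $\sF^{*}\setminus\sF$ has dimension $\le 1$ by \Ref{thm}{thmbbstrata}, hence codimension $\ge 17$ in the $18$-dimensional $\sF^{*}$. Inside $\sF$, by \Ref{prp}{hypkgeom}(1) with $k=1$, the divisor $H_h=Z^1$ parametrizes $K3$'s whose image quadric is a cone; the sub-locus where the branch curve $C$ passes through the vertex is a proper closed subset of $H_h$, of codimension $1$ in $H_h$ and hence $\ge 2$ in $\sF$. Any remaining point of $\sF\setminus V$ corresponds to a $K3$ whose branch curve has a non-ADE singularity and thus lies in one of the higher-codimension strata $Z^k$ with $k\ge 2$.

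\textbf{Codimension on the $\gM(t)$ side.} The complement $\gM(t)\setminus U$ is the union of three loci. First, classes with $V(f_2)$ of rank $\le 2$: since rank $\le 2$ has codimension $3$ in $|\cO_{\bP^3}(2)|=\bP^9$, a dimension count on $\bP E$ modulo $\SL(4)$ gives codimension $\ge 3$ in $\gM(t)$. Second, classes on a quadric cone with $C$ passing through the vertex: the cone-locus in $\gM(t)$ is itself a divisor (an $8$-dimensional family of cones in $\bP^9$ with $24$-dimensional fibers in $\bP E$, quotiented by $\SL(4)$, produces a codimension-$1$ locus), and the vertex-incidence condition cuts out a Cartier divisor inside it, hence codimension $2$. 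Third, classes with $X_C$ having a non-ADE singularity: by \Ref{thm}{improving} these are confined to the birational transforms of the Type II, III, and IV strata of $\gM$, of total dimension $\le 9$ by \Ref{prp}{TypeIIShah} and \Ref{crl}{norm3}, hence codimension $\ge 9$ in $\gM(t)$. Combining, $\gp(t)$ is an isomorphism in codimension $1$.

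\textbf{Main obstacle.} The delicate point is that the codimension-$2$ bound must hold \emph{uniformly} for every $t\in(1/6,1/2)$: as $t$ crosses a wall, a stratum $W_k\subset\gM$ is flipped to a locus birational to $Z^{k+1}\subset\sF^{*}$, and one must verify that no such wall introduces a new exceptional divisor on either side. This is the content of \Ref{thm}{thmcriticalt}, which pins down all flipping centers as codimension $\ge 2$ strata.
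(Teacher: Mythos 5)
Your overall strategy matches the paper's: define an open subset of $\gM(t)$ (the paper calls it $\gM(t)_0$) on which $\gp(t)$ is an isomorphism, then bound the codimensions of the complements on both sides. The definitions of the open sets agree, and the codimension estimates (on $\sF^*$ via \Ref{prp}{hypkgeom} and \Ref{thm}{thmbbstrata}, on $\gM(t)$ via dimension counts) are in the same spirit.

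However, there is a genuine gap in the first step. You deduce that curves in $U$ are $t$\nobreakdash-stable from \Ref{thm}{improving}, but that theorem runs only in one direction: it says that \emph{if} $C$ is $N_t$-semistable \emph{then} every non-slc singularity of $X_C$ appears in a certain list. It does \emph{not} say that a curve without non-slc singularities is semistable. In other words you are using a necessary condition as though it were sufficient. A counterexample to the general shape of your inference (not to your specific $U$, since you exclude it by hand) is a curve on a rank-$2$ quadric with only ADE singularities: it has only good singularities, yet is always unstable by \Ref{lmm}{planarsing}. The paper instead proves semistability of curves in $U_0$ by a two-step argument: first, for $t\in(2/5,1/2)$ the period map on $\gM(t)$ is regular and surjective, so Torelli forces every period in $\sF$ to be hit by a semistable curve in $U_0$ (hence every curve in $U_0$ is $N_t$-semistable in that range); second, a basin-of-attraction analysis (the content of \Ref{subsec}{sectbasin}) shows that these curves never fall into a basin from the right, hence remain semistable for all $t\in(1/6,1/2)$. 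Your ``main obstacle'' paragraph correctly identifies the wall-crossing concern but only points at \Ref{thm}{thmcriticalt} without actually closing the argument; what is needed is precisely the basin-of-attraction step.

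Two minor points in the codimension count on $\gM(t)$: the first case (rank $\le 2$ quadrics) is vacuous since such curves are $N_t$-unstable for all $t$ (\Ref{lmm}{planarsing}), so they simply do not appear in $\gM(t)$ and a codimension estimate there is not meaningful. And on the $\sF^*$ side, the closing sentence about ``a $K3$ whose branch curve has a non-ADE singularity'' is misleading — every point of $\sF$ corresponds to a $K3$ with at most ADE singularities; the point is rather that the complement of the image in $\sF$ is exactly $Z^2$, which is codimension $2$ by definition.
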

The proof of~\Ref{thm}{improving} is in~\Ref{subsec}{dimouno}, the proof of~\Ref{thm}{thmcriticalt} and~\Ref{prp}{cod2} is in~\Ref{subsec}{dimodue}. Here we 
 we outline the main ingredients in the proofs. Similar methods have previously occured in the GIT analysis associated to the Hassett-Keel program (see esp. \cite{hh2} and \cite{ah}). 

\begin{proof}[Outline of the proof of the main results]

\smallskip

\noindent{\bf Step (1)}:  We compute a set of \emph{potential} critical values $t_k$ for the VGIT $\gM(t)$ and the corresponding \emph{potential} critical curves $C^{*}_k$, listed in  Table~\ref{table101}.   If $t$ is a critical value, then there exist a curve with 
$\bC^*$ stabilizer that is $N_t$ semistable but not $N_{t\pm\epsilon}$ semistable. Based on this observation, we give in \Ref{subsec}{spotential}  a straightforward  algorithm that produces a set of $t_k$'s containing all actual critical values, and corresponding curves $C^{*}_k$ (there is one curve $C^{*}_k$ up to projectivities for all $k\not=3$, while for $k=3$ we get $1$ moduli for the $C_k^{*}$'s). The list of potential critical values coincides with the list of critical values in~\Ref{thm}{thmcriticalt},  but we will be able to prove that $C_k^{*}$ is $N_{t_k}$-semistable only at the end of the present section.

\smallskip

\noindent{\bf Step (2)}: As always in a GIT analysis, a key r\^ole is played by the numerical criterion for (semi)stability. In particular it allows us to prove that  a curve $C$ with a singularity with tag $k$ in Table~\ref{posizione} is $N_t$-unstable for $t_k<t$, see~\Ref{prp}{destabilizing}. The desemistabilizing $1$ PS is (conjugated to) the stabilizer of the corresponding curve $C_k^{*}$ (if $k\in\{0,1\}$ the stabilizer is not $1$ dimensional, one has to choose appropriate $1$ PS's of the stabilizers).  
This is the key step in the proof of~\Ref{thm}{improving}. 

\smallskip

\noindent{\bf Step (3)}:  In order to prove that the $t_k$'s are actual critical values we argue via a basin of attraction argument. This means that for each curve $C_k^{*}$ we study the curves $C$ such that $\lim_{s\to 0}\lambda(s)C=C^{*}_k$ for some $1$ PS in the stabilizer of $C^{*}_k$. Each $C_k^{*}$ lies on a quadric $V(f_2)$ of rank $3$, it has an $A_m$ singularity at the vertex of the quadric, it 
has a point in the smooth locus of $V(f_2)$  with tag $k$ in~\eqref{posizione}, and no other singularities.  
We show that if $C$ is in the basin of attraction of $C^{*}_k$ and $N_t$ semistable for $t<t_k$, then it has a point with tag $k$, while if it is $N_t$ semistable for $t_k<t$,
then it lies on a quadric $Q$ of rank $3$, it passes through the vertex of $Q$ and near the vertex it is of the same type as $C^{*}_k$. \Ref{thm}{thmcriticalt} is a straighfroward consequence of this result.
\end{proof}

 \subsection{Potential critical values and potential critical curves}\label{subsec:spotential} 
  By general results on VGIT (\cite{thaddeus}, \cite{dolgachevhu}; see also \cite{VGIT})  there exists a finite set  
  $\{t_i\}\subset (\delta,1/2)\cap\QQ$ of critical values (or {\it walls}) 
  for the VGIT $\gM(t)$. (Recall that in~\Ref{subsec}{interpoldef}  we have chosen a rational $\delta\in (0,1/6)$ in order to define our VGIT; in the end the choice of $\delta$ will make no difference.) A point   $t_0 \in 
  (\delta,1/2)\cap\QQ$ is a critical value   
 if  for all sufficiently small $\epsilon\in\QQ_{+}$, the following holds (e.g. \cite[\S3.2.1]{VGIT}) 
  \begin{equation}\label{nonuguale}
\cP^{ss}(N_{t_0-\epsilon})\cap \cP^{ss}(N_{t_0+\epsilon})\subsetneq \cP^{ss}(N_{t_0}).
\end{equation}
We let $\cP^{ss}(N_{t_0})^{\rm new}\subset \cP^{ss}(N_{t_0})$ be the complement of the left hand side of~\eqref{nonuguale}. 
Notice that 
 $\cP^{ss}(N_{t_0})^{\rm new}$  is a closed $\PGL(4)$-invariant subset of   $\cP^{ss}(N_{t_0})$, and that all its points are 
  strictly semistable (semistable but not stable) because
 $\cP^{s}(N_{t_0})\subset \cP^{s}(N_{t_0\pm\epsilon})$. 

Now let $x\in\cP^{ss}(N_{t_0})^{\rm new}$.  It follows from the results recalled above 
 that there is   a unique closed $\PGL(4)$-orbit  in the closure of 
 $\PGL(4) x$  in   $\cP^{ss}(N_{t_0})^{\rm new}$, and that if $x^{*}$ belongs to such a closed orbit, then its stabilizer is a reductive group of strictly positive dimension. In particular  $x^{*}$ is stabilized by a   $1$-PS $\lambda$. 
  
   In the present subsection we will write down a finite subset of  $(\delta,1/2)$ containing the set of critical values. The numbers in our list \emph{are} critical values, but  before we are in a position to prove that statement, they will be called \emph{potential critical values}. Moreover, for each potential critical value $t_i$ we will give a subset of $\cP^{ss}(N_{t_i})$ containing all elements of 
   $\cP^{ss}(N_{t_0})^{\rm new}$ stabilized by a $1$-PS - the elements of that subset (or any point in the same $\PGL(4)$ orbit) are the  \emph{potential critical curves} (notice that by~\Ref{prp}{doppiovu} any element of  $\cP^{ss}(N_{t_0})$ is in $U$, i.e.~is a $(2,4)$ c.i.~curve).
 \begin{proposition}\label{prp:indentifyt}
Keeping notation as above, the set of critical values  for the VGIT $\gM(t)$ is 
 included in the set (of potential critical values)
 \begin{equation}\label{valcrit}
 \left\{\frac{1}{6},\frac{1}{4},\frac{3}{10},\frac{1}{3},\frac{5}{14},\frac{3}{8}, \frac{2}{5}\right\}.
\end{equation}
For each potential critical value $t_k$, the corresponding potential critical curve(s) $C^{*}_k$
 appear in the row corresponding to $t_k$ in Table~\ref{table101}.  In that table $v=[0,0,0,1]\in \bP^3$ is the vertex of the quadric cone containing $C^{*}_k$, $p=[1,0,0,0]\in\bP^3$ is the unique singular point of $C^{*}_k$,   $\cC_v C^{*}_k$ is the tangent cone to $C^{*}_k$ at $v$, and $L$ is a line of $V(f_2)$ (in fact $L=V(x_0,x_1)$). 
\end{proposition}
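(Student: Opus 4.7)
The approach follows the standard VGIT analysis: a rational $t_0\in(\delta,1/2)$ is a critical value only if there is an orbit $\SL(4)\cdot x^*\subset\cP$ that is closed in $\cP^{ss}(N_{t_0})$ but gets destabilized for $t=t_0\pm\epsilon$. The stabilizer of $x^*$ is then reductive of positive dimension, hence contains a nontrivial $1$-PS $\lambda$ with $\mu^{N_{t_0}}(x^*,\lambda)=0$. By~\Ref{prp}{doppiovu}, $x^*\in U$ is a genuine complete intersection $C^*=V(f_2,f_4)$, and combining~\Ref{crl}{chowlb} with Benoist's formula~\eqref{benfor} gives
\[
\mu^{N_t}(C^*,\lambda)=\mu(f_2,\lambda)+t\min_{f\in[f_4]}\mu(f,\lambda),
\]
so $t_0=-w_2/w_4$, where $w_2:=\mu(f_2,\lambda)$ and $w_4:=\min_{f\in[f_4]}\mu(f,\lambda)$.

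The plan is to enumerate pairs $(\lambda,C^*)$ modulo $\SL(4)$-conjugation. Normalize $\lambda=\diag(s^{r_0},\ldots,s^{r_3})$ with $\sum r_i=0$. Stabilization of $C^*$ by $\lambda$ forces $f_2$ to be a sum of degree-$2$ monomials of a single $\lambda$-weight $w_2$, and allows $f_4$, modulo $f_2\cdot H^0(\cO_{\PP^3}(2))$, to be taken of pure weight $w_4$. A preliminary argument, parallel to~\Ref{prp}{facile}, rules out $f_2$ of rank at most $2$: such an $f_2$ admits a fixed destabilizing $1$-PS (of index $\le -2$) whose effect is not undone by the $t\xi$-contribution anywhere in $t\in(\delta,1/2)$, so the corresponding orbits never become semistable. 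Hence $f_2$ must be a rank-$3$ cone, which up to conjugation we take to be $f_2=x_0x_2+x_1^2$ with vertex $v=[0,0,0,1]$. This imposes $r_0+r_2=2r_1$, leaving $\lambda$ in a rank-$2$ rational cone of diagonal $1$-PS's, which one parametrizes explicitly.

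For each admissible pair $(w_2,w_4)$ one reads off the weight-$w_4$ eigenspace of $f_4$'s modulo $f_2$, computes $-w_2/w_4$, and retains only those ratios lying in $(\delta,1/2)$. A finite lattice check yields exactly the seven values in~\eqref{valcrit}. For each candidate $t_k$, a representative $C^*_k$ is produced by picking the generic $f_4$ in the corresponding weight eigenspace; its singularity at the smooth point $p=[1,0,0,0]\in V(f_2)$ is then identified via \Ref{lmm}{norm3} and the classification of \Ref{prp}{norm3}, and is found to be precisely the type listed in row $k$ of~\eqref{posizione}, while the singularity at the vertex $v$ is the $A_m$ predicted by~\Ref{rmk}{moresetup}. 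These data populate Table~\ref{table101}. The exceptional case $t_3=t_4=1/3$ arises because the relevant weight eigenspace of $f_4$ is $2$-dimensional: the critical orbits form a one-parameter family (a $W\PP(2,1)$), of which the two tagged rows record the generic specializations.

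The main obstacle is distinguishing, at each $t_k$, the genuinely new critical orbit from limits of orbits already accounted for at an earlier critical value. The singularity stratification~\eqref{decomp4} and the dimension counts of~\Ref{crl}{norm3} serve as a sieve: at $t_k$ we require the generic $f_4$ in the weight eigenspace whose stabilizer is exactly $\lambda$, and the calculation of~\Ref{prp}{norm3} shows this curve has the singularity predicted for $W_k$; any further specialization yields a worse singularity and thus lies in the orbit closure of some $C^*_{k'}$ with $k'>k$. Residual $\PGL(2)$-symmetries of the cone (which centralize $\lambda$) yield the expected equivalences among enumerated pairs $(\lambda,C^*)$, and the restriction $t_0\in(\delta,1/2)$ together with $\sum r_i=0$ bounds the enumeration to finitely many lattice points.
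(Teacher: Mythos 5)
Your overall strategy is the paper's: reduce via positive-dimensional stabilizers, normalize $\lambda$ and $f_2=x_0x_2+x_1^2$, and enumerate. However, two steps that are central to the paper's argument are glossed over in a way that leaves genuine gaps.

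\emph{First, the reduction to a rank-$3$ quadric.} You rule out $\rk f_2\le 2$ (correctly, though the relevant reference is \Ref{lmm}{planarsing} rather than \Ref{prp}{facile}, which is about points off $U$), but you never explain why $\rk f_2=4$ is impossible. The paper's \Ref{lmm}{zerozero} shows that for a genuinely critical orbit one may choose $\lambda$ and equations with $\mu(f_2,\lambda)\neq 0$ and $\mu(f_4,\lambda)\neq 0$; for a nondegenerate quadric any stabilizing $1$-PS necessarily has $\mu(f_2,\lambda)=0$, which then forces degeneracy, hence rank exactly $3$. Your proposal never invokes this (or an analog). Relatedly, the formula $t_0=-w_2/w_4$ presupposes $w_4\neq 0$, which is precisely the other half of \Ref{lmm}{zerozero}.

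\emph{Second, finiteness of the enumeration.} Once $f_2=x_0x_2+x_1^2$ is fixed, the admissible $\lambda$'s form, after scaling, a one-\emph{continuous}-parameter family $\lambda_\alpha$ with $\alpha\in[0,1]\cap\QQ$. For a generic (monomial) $f_4$ the weight $w_4$ is a non-constant affine function $c\alpha+d$, so the ratio $-w_2/w_4$ varies continuously with $\alpha$; a naive ``lattice check'' produces infinitely many candidates. The paper achieves finiteness via the dichotomy you do not make explicit: either $f_4$ is not a monomial (Case~(2)), in which case two distinct monomials of equal $\lambda_\alpha$-weight pin down $\alpha$, or $f_4$ is a single monomial (Case~(1)), in which case the \emph{entire} family $\lambda_\alpha$ stabilizes $V(f_2,f_4)$ and semistability at $t_0$ forces $\mu(f_2,\lambda_\alpha)+t_0\mu(f_4,\lambda_\alpha)$ to vanish identically in $\alpha$, i.e.~$d=0$ and $t_0=2/c$; only a handful of monomials satisfy $d=0$, and two of those are already excluded by \Ref{lmm}{planarsing}. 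Without this case distinction your ``finite lattice check'' is unjustified. The remainder of your plan---identifying the singularity types via \Ref{lmm}{norm3}/\Ref{prp}{norm3}, observing the $2$-dimensional eigenspace at $t=1/3$, and accounting for the residual symmetry---is in line with what the paper records in Table~\ref{table101}.
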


\begin{table}[htb!]
\renewcommand\arraystretch{1.5}
\begin{tabular}{|c| l | l | l |c|c|}
\hline
$k$&$t_k$& Critical curve $C_k^{*}=V(f_2,f_4)$ & $1$ PS & Sing.~of $C^{*}_k$ at $p$ & Type of $C^{*}_k$ at $v$\\
\hline
$0$&$\frac{1}{6}$&$V(x_0x_2+x_1^2,x_3^4)$& $(1,\alpha,2\alpha-1,-3\alpha)$ & quadruple conic&$v\not\in C^{*}_k$\\
\hline
$1$&$\frac{1}{4}$&$V(x_0x_2+x_1^2,x_3^3x_1)$ &  $(1,\alpha,2\alpha-1,-3\alpha)$ &  triple conic&$A_1$\\
\hline
$2$&$\frac{3}{10}$&$V(x_0x_2+x_1^2,x_0x_3^3+x_2^2x_3^2)$& $(7,3,-1,-9)$ & $J_{4,\infty}$&$A_2$\\
\hline $3$ &$\frac{1}{3}$ &$V(x_0x_2+x_1^2,x_0x_3^3+2x_1x_2x_3^2-x_2^3x_3)$& $(3,1,-1,-3)$ & $J_{3,\infty}$& $2$(twisted cubic)\\
\hdashline $4$&$\frac{1}{3}$ &$V(x_0x_2+x_1^2,x_0x_3^3+2\alpha x_1x_2x_3^2-\beta x_2^3x_3)$ & 
 $(3,1,-1,-3)$   &$J_{3,0}$&$A_3$\\
\hline
$5$&$\frac{5}{14}$&$V(x_0x_2+x_1^2,x_0x_3^3+x_2^4)$& $(17,5,-7,-15)$  &$E_{14}$&$A_4$, $\cC_v C^{*}_k=2T_v(L)$\\
\hline
$6$&$\frac{3}{8}$&$V(x_0x_2+x_1^2,x_0x_3^3+x_1x_2^2x_3)$& $(11,3,-5,-9)$  & $E_{13}$& $A_5$, $C^{*}_k\supset L$\\
\hline
$7$&$\frac{2}{5}$&$V(x_0x_2+x_1^2,x_0x_3^3+x_1x_2^3)$& $(4,1,-2,-3)$  & $E_{12}$& $A_7$, $C^{*}_k\supset L$\\
\hline
\end{tabular}
\vspace{0.2cm}
\caption{Potential critical values for the VGIT $\gM(t)$}\label{table101}
\end{table}

(In the row corresponding to $k=4$, $(\alpha,\beta)$ and $(1,1)$ are linearly independent. The imprecise notation $2T_v(L)$ means the tangent cone at $v$ of  $V(x_0,x_0x_2+x_1^2)=V(x_0,x_1^2)$.)
Before proving~\Ref{prp}{indentifyt}, we go through a few auxiliary results.
\begin{lemma}\label{lmm:planarsing}
Let $x=V(f_2,f_4)\in U$. If 
\begin{enumerate}
\item
$f_2$ has rank at most $2$, or
\item
there exists a point $p\in V(f_2,f_4)$ which is  singular both for $V(f_2)$ and $V(f_4)$, 
\end{enumerate}
then $x$ is $t$-unstable for all $t\in(\delta,1/2)$. 
\end{lemma}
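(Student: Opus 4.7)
The plan is to apply the Hilbert--Mumford numerical criterion to the ample linearized line bundle $N_t$ on $\cP$. Writing
\[
\mu^{N_t}(x,\lambda)=\frac{1-2t}{1-2\delta}\,\mu^{\eta+\delta\xi}(p_1x,\lambda)+\frac{t-\delta}{2(1-2\delta)}\,\mu^{L_\infty}(p_2x,\lambda),
\]
whose coefficients are both strictly positive on $(\delta,1/2)$, it is enough to produce, in each case, a single 1-PS $\lambda$ of $\SL(4)$ for which $\mu^{\eta+\delta\xi}(p_1x,\lambda)<0$ and $\mu^{L_\infty}(p_2x,\lambda)\le 0$. The first of these two quantities is controlled by Benoist's formula~\eqref{benfor}; the Chow weight $\mu^{L_\infty}$ will be bounded by a Koszul/Hilbert--Samuel computation together with~\eqref{chowlim}.

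In case~(i), after a change of coordinates I may assume $f_2\in\CC[x_2,x_3]_2$, and I take $\lambda=\diag(1,1,-1,-1)$. Every monomial of $f_2$ has $\lambda$-weight $-2$ so $\mu(f_2,\lambda)=-2$, and every degree-$4$ monomial on $\bP^3$ has $\lambda$-weight at most $4$, so $\min_{f\in[f_4]}\mu(f,\lambda)\le 4$. Benoist's formula therefore gives $\mu^{\eta+\delta\xi}(p_1x,\lambda)\le -2+4\delta<0$, using $\delta<1/6$. In case~(ii), I may assume $p=[0,0,0,1]$; the two singularity hypotheses then force $f_2\in\CC[x_0,x_1,x_2]_2$ and $f_4\in(x_0,x_1,x_2)^2$, and I take $\lambda=\diag(-1,-1,-1,3)$. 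Once again every monomial of $f_2$ has weight $-2$, and because $f_2\cdot H^0(\cO_{\bP^3}(2))\subset(x_0,x_1,x_2)^2$, every representative of $[f_4]$ also lies in $(x_0,x_1,x_2)^2$; each such monomial has $A_3\le 2$ and therefore weight $4A_3-4\le 4$. The same bound $\mu^{\eta+\delta\xi}(p_1x,\lambda)\le -2+4\delta<0$ follows.

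For the Chow weight, in both cases $f_2$ is a single-weight $\lambda$-eigenvector, and an optimal representative of $[f_4]$ can be taken to be a single-weight eigenvector of weight $w_4:=\min_{f\in[f_4]}\mu(f,\lambda)\le 4$. Replacing $\cC$ by its $\lambda$-invariant flat limit (which shares the same Chow weight) and applying the Koszul/Hilbert--Samuel calculation for a $(2,4)$ complete intersection whose defining forms are eigenvectors of weights $-2$ and $w_4$ yields
\[
\mu^{L_\infty}(p_2x,\lambda)\;=\;2\mu(f_2,\lambda)+w_4\;\le\;-4+4\;=\;0.
\]
Combining the two bounds gives $\mu^{N_t}(x,\lambda)<0$ for every $t\in(\delta,1/2)$, which is what was to be shown. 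The main technical obstacle is handling the degenerate configurations where the flat limit fails to be a $(2,4)$ complete intersection (equivalently, the initial monomials of the eigenvector defining pair share a common factor); in those cases the flat limit is supported on a non-reduced cycle of degree $8$ whose Chow weight must be computed directly as a weighted sum of Chow weights of lines, and the same estimate $\mu^{L_\infty}(p_2x,\lambda)\le 0$ is recovered.
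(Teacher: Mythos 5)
Your estimates on the $\bP E$ factor are fine and essentially reproduce the paper's weight computations (same one-parameter subgroups up to coordinates), and the splitting of $\mu^{N_t}$ into the $\eta+\delta\xi$ term and the Chow term is legitimate (it is exactly \eqref{combnum}). The gap is in the Chow estimate. Your identity $\mu^{L_\infty}(p_2x,\lambda)=2\mu(f_2,\lambda)+w_4$ is correct only when the $\lambda$-limit of the ideal is the complete intersection of the top-weight parts, i.e.\ when those forms are a regular sequence; and the inequality $\mu^{L_\infty}\le 0$ is not automatic otherwise, because $\lambda$-invariant curves of \emph{positive} Chow weight do exist: for $\lambda=\diag(1,1,-1,-1)$ the sink line $\{x_0=x_1=0\}$ has Chow weight $+1$, and for $\lambda=\diag(-1,-1,-1,3)$ every degree-$d$ curve in the fixed plane $\{x_3=0\}$ has Chow weight $+d$. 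The degenerate configurations are not a fringe case in your setting: in case~(i), with $f_2=x_2x_3$, an optimal representative of $[f_4]$ can have top-weight term divisible by $x_2$ or $x_3$ (e.g.\ $x_0^3x_2$), which cannot be removed modulo $f_2$, so the limit cycle is genuinely not the eigen-complete-intersection and your Koszul formula does not apply. The estimate is in fact true, but proving it requires an argument you do not give, e.g.\ that the limit cycle lies on the $\lambda$-invariant surface $V(f_2)$, which does not contain the unique positive-weight invariant line in case~(i), and in case~(ii) that $\mathrm{mult}_pC\ge 4$ forces at least degree-$4$ worth of lines through $p$ (Chow weight $-1$ each) in the limit, offsetting the positive contribution of components in the fixed plane (whose total degree is bounded by $\deg(V(f_2)\cap\{x_3=0\})$ times their multiplicity). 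Asserting that "the same estimate is recovered" is precisely where the content of your route lies. (Also, you cannot take an optimal representative of $[f_4]$ to be a single $\lambda$-eigenvector; only its top-weight part is one, and in case~(ii) what saves the regular-sequence hypothesis is that this top part cannot be divisible by an irreducible $f_2$ by optimality --- another point left unaddressed.)

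For comparison, the paper avoids the Chow weight altogether: by \Ref{crl}{chowlb} and \Ref{rmk}{stabcomm}, $\mu^{N_t}(x,\lambda)=\mu(f_2,\lambda)+t\,\mu(f_4,\lambda)$ whenever $\lim_{s\to0}\lambda(s)V(f_2,f_4)$ stays in $U$; this is checked for generic members of each family (in case~(i) when $f_4(x_0,x_1,0,0)\neq 0$, in case~(ii) after reducing to $f_2$ irreducible), and the general case follows because the $N_t$-unstable locus is closed. If you prefer to keep your decomposition, you should either carry out the limit-cycle computation indicated above, or bound the Chow term by quoting the method of \Ref{prp}{propssci}/\Ref{prp}{doppiovu} (Hilbert-weight asymptotics via initial ideals), rather than the regular-sequence Koszul formula, in the configurations where the initial forms share a factor.
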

\begin{proof} 
If~(1) holds, the proof is  similar to that of~\cite[Prop. 4.6]{g4git}.  If~(2) holds, the proof is similar to that of~\cite[Prop. 4.7]{g4git}. For the reader's convenience, we sketch the arguments. We may assume that 
$f_2=x_3 x_4$, or $f_2=x_4^2$. Let $\lambda$ be the $1$-PS $\diag(1,1,-1,-1)$. 
Then $\mu(f_2,\lambda)=-2$, $\mu(f_4,\lambda)\le 4$. Hence
$$\mu(f_2,\lambda)+t \mu(f_4,\lambda)\le -2+4t,$$
 which is strictly negative for $t\in(\delta, 1/2)$. By~\eqref{benfor} this would conclude the proof if we were working on $\PP E$ (by~\Ref{thm}{thmgitmodels} it is a complete proof for $t\in(\delta,1/3)$). Since we are working on $\cP$, in order to finish the proof it will suffice to show that
\begin{equation}\label{nonesce}
\lim\limits_{s\to 0}\lambda(s) V(f_2, f_4)\in U.
\end{equation}
If $f_4(x_0,x_1,0,0)\not=0$,  we have
 $$\lim\limits_{s\to 0}V(f_2, f_4)=V(f_2, f_4(x_0,x_1,0,0))\in U.$$ 
This proves that if $f_4$ is generic,  then $ V(f_2, f_4)$ is $N_t$-unstable. Since the locus of $N_t$-unstable points of $\cP$ is closed, it follows that  $V(f_2, f_4)$ is $N_t$-unstable for all $f_4$. 

If Item~(2) holds, we choose homogeneous coordinates such that $p=[1,0,0,0]$. Then $f_2\in\CC[x_1,x_2,x_3]_2$ and $f_4=x_0^2 q_2+x_0 q_3+q_4$, where  $q_d\in\CC[x_1,x_2,x_3]_d$. 
Let $\lambda$ be the $1$-PS $\diag(3,-1,-1,-1)$. 
Then 
$$\mu(f_2,\lambda)+t \mu(f_4,\lambda)\le -2+4t,$$
 which is strictly negative for $t\in(\delta, 1/2)$. 
Arguing as above, we conclude that $V(f_2,f_4)$ is $N_t$-unstable. Alternatively, by the previous case we may assume that $f_2$ is irreducible, and then~\eqref{nonesce} holds for any choice of $f_4\not=0$.
\end{proof}
\begin{remark}\label{rmk:stabcomm}
We will repeatedly use the function $\mu(f_2,\lambda)+t \mu(f_4,\lambda)$ to destabilize curves $C=V(f_2,f_4)$ at specific values of $t$. An attentive reader might notice that this is in fact different from the numerical function $\mu^{N_t}\left((f_2,f_4),\lambda\right)$  that we should use in the application of the numerical criterion for $\cP$ with linearization $N_t$. In fact, $\mu(f_2,\lambda)+t \mu(f_4,\lambda)$ is the numerical function (for the linearization $\eta+t\xi$) on the $\bP E$ model of $\cP$. The point however is that $\mu^{N_t}\left((f_2,f_4),\lambda\right)=\mu(f_2,\lambda)+t \mu(f_4,\lambda)$ as long as 
\begin{equation}\label{conditionstar}
\lim_{t\to 0}\lambda(t)V(f_2,f_4)\in U
\end{equation} (recall $U$ is a common open subset of $\bP E$ and $\cP$, and that the linearizations agree over $U$, cf. \Ref{crl}{chowlb}); this will be always the case in our computations. In other words, assuming \eqref{conditionstar}, our arguments saying $\mu(f_2,\lambda)+t \mu(f_4,\lambda)<0$ implies $t$-unstable are valid. (In \cite{g4git}, we have used $\mu(f_2,\lambda)+t \mu(f_4,\lambda)<0$ to define {\it numerically unstable} (\cite[Def. 4.2]{g4git}), and then  argued that numerically unstable implies unstable (e.g. \cite[Prop. 6.2]{g4git}). The arguments here are similar, but without explicitly defining numerical stability. The key point here (as well as in \cite{g4git}) is \Ref{prp}{propssci} which guarantees that in the cases of interest, the condition \eqref{conditionstar} is satisfied.)
\end{remark}
 \begin{lemma}\label{lmm:zerozero}
Keeping notation as above, let $t_0$ be a critical value  for the VGIT $\gM(t)$. Let $C\in \cP^{ss}(N_{t_i})^{\rm new}$  be a minimal orbit (notice that $\cP^{ss}(N_{t_i})\subset U$  by~\Ref{prp}{doppiovu}). Then, there exists  $\lambda$ $1$-PS of $\PGL(4)$ stabilizing $C$ and equations $C=V(f_2,f_4)$ such that 
\begin{equation}\label{nonzero}
\mu(f_2,\lambda)\not=0,\qquad  \mu(f_4,\lambda)\not=0.
\end{equation}
(and $\mu(\cdot,\lambda)$ is minimized by $f_4$ among representatives of $f_4\pmod{f_2}$).
\end{lemma}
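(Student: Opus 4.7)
Because $C$ is a minimal orbit in $\cP^{ss}(N_{t_0})^{\rm new}$, which is closed in $\cP^{ss}(N_{t_0})$, the $\PGL(4)$-orbit of $C$ is closed in $\cP^{ss}(N_{t_0})$, so the stabilizer $H:=\Stab_{\PGL(4)}(C)$ is reductive of positive dimension and contains a nontrivial $1$-PS. The plan is to \emph{first} exhibit a (possibly non-stabilizing) $1$-PS $\lambda_0$ with the correct numerical profile, and \emph{then} replace it by a conjugate lying in $H$.

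For the first step, the assumption $C\in\cP^{ss}(N_{t_0})^{\rm new}$ means that, after swapping the r\^oles of $t_0+\epsilon$ and $t_0-\epsilon$ if necessary, $C\notin \cP^{ss}(N_{t_0+\epsilon})$ for every small $\epsilon>0$. By Hilbert-Mumford one extracts a $1$-PS $\lambda_0$ with $\mu^{N_{t_0+\epsilon}}(C,\lambda_0)<0$, while semistability at $t_0$ gives $\mu^{N_{t_0}}(C,\lambda_0)\ge 0$; linearity of $t\mapsto \mu^{N_t}(C,\lambda_0)$ then forces $\mu^{N_{t_0}}(C,\lambda_0)=0$. By~\Ref{prp}{doppiovu}, $C\in U$, where $N_t|_U=(\eta+t\xi)|_U$ by~\Ref{crl}{chowlb}, so substituting~\eqref{benfor} and choosing $f_4$ to minimize $\mu(\cdot,\lambda_0)$ over its class modulo $f_2$ gives
\begin{equation*}
0=\mu(f_2,\lambda_0)+t_0\,\mu(f_4,\lambda_0),\qquad \mu(f_2,\lambda_0)+(t_0+\epsilon)\mu(f_4,\lambda_0)<0.
\end{equation*}
Therefore $\mu(f_4,\lambda_0)<0$ and $\mu(f_2,\lambda_0)=-t_0\,\mu(f_4,\lambda_0)>0$; both are nonzero.

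To upgrade $\lambda_0$ to a $1$-PS in $H$, consider $C_0:=\lim_{s\to 0}\lambda_0(s)\cdot C$, a $\lambda_0$-fixed point in the orbit-closure of $C$. The key input is the standard GIT consequence of Luna's slice theorem: at a semistable point whose orbit is \emph{closed}, vanishing of $\mu^{L}(\cdot,\lambda_0)$ forces the limit under $\lambda_0$ to lie in the orbit. Applied to the closed orbit of $C$ in $\cP^{ss}(N_{t_0})$, this yields $C_0=g\cdot C$ for some $g\in\PGL(4)$. Setting $\lambda:=g^{-1}\lambda_0 g$, a direct computation shows $\lambda(s)\cdot C=g^{-1}\lambda_0(s)(g\cdot C)=g^{-1}\cdot C_0=C$, so $\lambda\in H$. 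Equivariance of the Hilbert-Mumford index gives $\mu^{N_t}(C,\lambda)=\mu^{N_t}(g\cdot C,\lambda_0)=\mu^{N_t}(C_0,\lambda_0)=\mu^{N_t}(C,\lambda_0)$, so $\lambda$ inherits all the sign data derived above.

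Finally, since $\lambda\in H$ preserves the ideal $(f_2,f_4)$ and the one-dimensional subspace $\CC\cdot f_2\subset H^0(\cO_{\PP^3}(2))$, the form $f_2$ may be rescaled to be a $\lambda$-eigenvector; the $\lambda$-fixed class $[\ov{f}_4]\in H^0(\cO_{V(f_2)}(4))$ admits a $\lambda$-eigenvector lift $f_4\in H^0(\cO_{\PP^3}(4))$, which automatically realizes $\min_{f\in[\ov{f}_4]}\mu(f,\lambda)$. With these choices $\mu(f_2,\lambda)>0$ and $\mu(f_4,\lambda)<0$, completing the proof. The principal obstacle is the appeal in the third paragraph to the Luna-type GIT statement --- specifically, the verification that $C_0$ is semistable at $t_0$ (so that the closedness of the orbit of $C$ in $\cP^{ss}(N_{t_0})$ pins $C_0$ inside that orbit); and the minor technical point of lifting the resulting element of $H$ to an honest $1$-PS of $\SL(4)$ after possibly reparametrizing $s\mapsto s^n$.
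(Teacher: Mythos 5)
Your overall route is the same as the paper's: produce a $1$-PS whose total $N_{t_0}$-weight on $C$ vanishes while the individual weights on $f_2$ and $f_4$ are nonzero, then use the closedness of the orbit of $C$ in $\cP^{ss}(N_{t_0})$ to convert it into a $1$-PS genuinely stabilizing $C$. (The paper replaces $(f_2,f_4)$ by their $\lambda$-limit, which lies in the same orbit; you conjugate $\lambda_0$ by the element $g$ with $C_0=g\cdot C$ --- these are the same maneuver.) The second point you flag, semistability of $C_0=\lim_{s\to 0}\lambda_0(s)\cdot C$ at $t_0$, is not a real obstacle: if $\mu^{N_{t_0}}(C,\lambda_0)=0$ and $\sigma$ is an $\SL(4)$-invariant section of a power of $N_{t_0}$ with $\sigma(C)\neq 0$, then $\sigma(C_0)=\lim_{s\to 0}\sigma(\lambda_0(s)\cdot C)=\sigma(C)\neq 0$, so $C_0\in\cP^{ss}(N_{t_0})$ and therefore lies in the closed orbit of $C$; this is exactly what is behind the paper's phrase ``since the orbit is closed''.

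The genuine gap is in your first step. From $\mu^{N_{t_0}}(C,\lambda_0)\ge 0$ and $\mu^{N_{t_0+\epsilon}}(C,\lambda_0)<0$, affine-linearity of $t\mapsto\mu^{N_t}(C,\lambda_0)$ only says that this function has its root somewhere in $[t_0,t_0+\epsilon)$; nothing forces the root to be $t_0$ itself. The fact that $C$ is unstable for \emph{every} $t$ in a whole interval above $t_0$ does not help, because the destabilizing $1$-PS may vary with $t$: a single $\lambda_0$ extracted at one value $t_0+\epsilon$ could satisfy $\mu^{N_{t_0}}(C,\lambda_0)>0$ and destabilize only for $t$ above some $t^*>t_0$. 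Since the whole content of the lemma is the relation $\mu(f_2,\lambda)+t_0\,\mu(f_4,\lambda)=0$ with both weights nonzero \emph{at the wall $t_0$ itself}, pinning the root at $t_0$ is the crux and needs a separate input. The paper asserts this existence (``it is clear that we can choose a $1$-PS $\lambda$ \ldots such that $\mu^t$ changes sign at $t_0$''), relying on the standard VGIT wall structure cited at the start of \Ref{sec}{sec-VGIT}; a clean way to justify it is via Luna's slice theorem at the polystable point $C$: failure of semistability just above $t_0$ is detected by a $1$-PS $\lambda$ lying in the (reductive, positive-dimensional) stabilizer of $C$, and for such a $\lambda$ the vanishing $\mu(f_2,\lambda)+t_0\,\mu(f_4,\lambda)=0$ is automatic, since semistability at $t_0$ applies to both $\lambda$ and $\lambda^{-1}$ --- note that this stabilizer-based argument would also make your conjugation/limit step unnecessary. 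As written, however, the deduction ``linearity forces $\mu^{N_{t_0}}(C,\lambda_0)=0$'' is a non sequitur, and it is the one step of your argument that fails.
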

\begin{proof}
Let $t_0\in(\delta,1/2)$ be a critical value, and $C$ an associated critical polystable orbit. Since, the stability of $C$ changes at $t_0$, it is clear that we can choose a $1$-PS $\lambda$ and equations $C=V(f_2,f_4)$ such that $\mu^t((f_2,f_4),\lambda)=\mu(f_2,\lambda)+t\mu(f_4,\lambda)$ and that $\mu^t((f_2,f_4),\lambda)$ changes sign at $t_0\neq 0$. It follows that conditions \eqref{nonzero} are satisfied. 
Finally, the condition $\mu^{t_0}((f_2,f_4),\lambda)=0$ means that 
$$\lim_{s\to 0} \lambda(s)\cdot (f_2)^{\otimes n} \otimes (f_4)^{\otimes m}$$
(for some integers $n,m$ with $t_0=\frac{m}{n}$) exists and it is non-zero (compare \eqref{eqlimittensor} below). Replacing $(f_2,f_4)$ by the limit $(\overline f_2,\overline f_4)$,  we get that $\lambda$ stabilizes $V(\overline f_2,\overline f_4)$ and that $(f_2,f_4)$ and $(\overline f_2,\overline f_4)$ are in the same $\SL(4)$-orbit (since the orbit is closed). Finally, $\overline f_2=\lim_{s\to 0} s^{c m}\cdot \lambda(s)\cdot f_2$ and $\overline f_4=\lim_{s\to 0} s^{-cn}\cdot \lambda(s)\cdot f_4$ for appropriate $n$ and $m$  as before (and a constant c). We get $\mu(\overline f_k,\lambda)=\mu(f_k,\lambda)$ for $k=2,4$ (i.e. the monomial computing the $\lambda$-weight agree for $\overline f_k$ and $f_k$). 
\end{proof}

\begin{proof}[Proof of~\Ref{prp}{indentifyt}]
 Let $t_0\in(\delta,1/2)\cap\QQ$ be a critical value   for the VGIT $\gM(t)$, and let $V(f_2,f_4)\in U(N_{t_0})^{ss}$ be a critical curve for  $t_0$. Thus there exists a $1$ PS $\lambda$ of $\SL(4)$ stabilizing $x$, i.e.~$\lambda(s) f_2=s^m f_2$ and 
 $\lambda(s) f_4\equiv s^n f_4 \pmod{f_2}$ for some $m,n$. Replacing $f_4$ by a suitable multiple of $f_2$, we may assume that  
  $\lambda(s) f_4= s^n f_4$. Since $x$ is $N_{t_0}$-semistable, $\lambda$ acts trivially on the fiber of $\cO_{\cP}(N_{t_0})$ over $x$, i.e.
  \begin{equation}\label{tizero}
\mu(f_2,\lambda)+t_0 \mu(f_4,\lambda)=0.
\end{equation}
(Notice that $\mu(f_2,\lambda)=m$ and $\mu(f_4,\lambda)=n$, where $m,n$ are as above.) By~\Ref{lmm}{zerozero} we know that~\eqref{nonzero} holds. Since a smooth quadric is semistable, it follows that $f_2$ is degenerate. On the other hand, we may suppose  $f_2$ has rank at least $3$ 
by~\Ref{lmm}{planarsing}, and hence it has rank equal to $3$. A straightforward argument shows that there exist  coordinates $(x_0,\ldots,x_3)$ on $\CC^4$  such that 
$$\lambda(s)=\diag(s^{r_0},\ldots,s^{r_3}),\qquad f_2=x_0 x_2+x_1^2.$$
 Since  $\lambda(s) f_2=s^m f_2$, we have 
 $$2 r_1= r_0+r_2.$$
 It follows that
 $$3r_1+r_3=0$$
 because $r_0+\ldots+ r_3=0$. 
 By interchanging $\lambda$ and $\lambda^{-1}$, we can assume $r_1\ge 0\ge r_3$. Interchanging the variables $x_0$ and $x_2$, we can assume $r_0\ge r_1=\frac{r_0+r_2}{2}\ge r_2$ (in particular, $r_0>0$).   At this point  we may rescale  the $r_i$'s so that $r_0=1$ (we will get a virtual $1$ PS, it makes no difference as far as our proof is concerned), and we get
 $r_0=1$, $r_1=\alpha$, $r_2=2\alpha-1$, $r_3=-3\alpha$, where $\alpha\in[0,1]\cap\QQ$. 
Thus we let $\lambda_{\alpha}$ be the virtual $1$ PS 
\begin{equation}\label{alphaps}
\lambda_{\alpha}(s):=\diag(s,s^{\alpha},s^{2\alpha-1},s^{-3\alpha}), \qquad \alpha\in[0,1]\cap\QQ.
\end{equation}
Consider separately the two cases:
\begin{enumerate}
\item
$f_4$ is a multiple of a monomial.
\item
$f_4$ is not a multiple of a monomial.
\end{enumerate}
Suppose that Item~(1) holds.  The numerical function $\mu(f_4,\lambda_{\alpha})$ is a polynomial in $\alpha$ of degree $1$:
\begin{equation}
\mu(f_4,\lambda_{\alpha})=c\alpha+d,\qquad c,d\in\QQ,\quad c\not=0.
\end{equation}
We are assuming that~\eqref{tizero} holds for a certain $t_0\in[0,1/2]\cap\QQ$ and $\lambda=\lambda_{\alpha_0}$. Since $\mu(f_2,\lambda_{\alpha})=-2\alpha$, it follows that $c>0$ and $d=0$. In fact, if $d=0$ and $c\le 0$, then clearly~\eqref{tizero} cannot hold for $t_0>0$, and if   $d\not=0$,  then there exist (many!) values of $\alpha\in[0,1]\cap\QQ$ with the property that $\mu(f_2,\lambda_{\alpha})+t_0 \mu(f_4,\lambda)<0$ or 
$\mu(f_2,\lambda_{\alpha})+t_0 \mu(f_4,\lambda)>0$, i.e.~(after reparametrization, if $\lambda_{\alpha}$ is a virtual $1$ PS) $\lambda_{\alpha}$ fixes $V(f_2,f_4)$ and acts non trivially on the fiber of $\cO_{\cP}(N_{t_0})$ over $V(f_2,f_4)$. That is a contradiction because $V(f_2,f_4)$ is assumed to be $N_{t_0}$-semistable. This proves that $c>0$ and $d=0$. A straightforward computation then shows that, after rescaling, $f_4\in\{x_3^4,x_1x_3^4,x_1^2x_3^2,x_0x_2x_3^2\}$. The critical value for $f_4=x_3^4$ is $t_0=1/6$, 
the critical value for $f_4=x_1 x_3^3$ is $t_0=1/4$, while $f_4\in\{x_1^2x_3^2,x_0x_2x_3^2\}$ is impossible, because 
of~\Ref{lmm}{planarsing}.

Now suppose that Item~(2) holds.  Thus in the expansion of 
 $f_4$ there are two (at least)  monomials $x_0^{i_0}\dots x_3^{i_3}$ and $x_0^{j_0}\dots x_3^{j_3}$ with non-zero coefficients. Let  $k_l=i_l-j_l$ for $l=0,\dots, 3$. Since $\lambda(s) f_4=s^n f_4$ for some $n$, we have
\begin{equation}\label{eqalpha}
k_0+\alpha k_1+(2\alpha-1) k_2-3\alpha k_3=0.
\end{equation}
The above equation determines $\alpha$, and hence we get $t_0$ upon replacing  $\lambda$ by $\lambda_{\alpha}$ in~\eqref{tizero}, and solving  for $t_0$. We get the potential critical values in~\eqref{valcrit} other than $1/6$ by listing all couples of degree $4$ monomials and going through the steps described above (or programming a computer to do it in our place). Once we have the potential critical values, it is clear how to compute the potential critical curves associated to each (potential) critical value.  
\end{proof}

\subsection{Relations between singularities of $C$ and $N_t$-(semi)stability}\label{selementaryss}  
The following proposition is an adaptation to the case of singular quadrics of some of the content of ~\Ref{prp}{git44}. 
\begin{proposition}\label{prp:gitcone}
Let $C\in U$, and assume that   $C$ has consecutive triple points at $p$, and that it has a significant limit singularity at $p$.  Let $Q$ be the unique quadric containing $C$. Suppose that $Q$    is a quadric cone, and that it is smooth at $p$. 
Lastly, let $t\in(\delta,1/2)$, and suppose that  $C$ is $N_t$-semistable.
Then  the tangent cone to $C$ at $p$ is \emph{not} equal to $3T_p(L)$, for  $L$  the unique line in $Q$ through $p$. 
\end{proposition}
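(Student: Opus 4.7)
My plan is to argue by contradiction: assuming such a $C=V(f_2,f_4)$ is $N_t$-semistable for some $t\in(\delta,1/2)$, I will exhibit an explicit one-parameter subgroup $\lambda$ of $\SL(4)$ whose Hilbert-Mumford numerical index is strictly negative. First, I normalize coordinates so that $Q=V(x_0x_2+x_1^2)$, $p=[1,0,0,0]$, the vertex of $Q$ is $v=[0,0,0,1]$, and the unique line of $Q$ through $p$ is $L=V(x_1,x_2)$ (note it passes through $v$). Working in the local chart $x=x_1/x_0$, $y=x_3/x_0$, $z=x_2/x_0$ where $Q$ has local equation $z+x^2=0$ and $L$ is $V(x)$, the hypothesis $\mathcal{C}_p(C)=3T_p(L)$ becomes $\bar f_4|_Q=ax^3+O((x,y)^4)$ with $a\neq 0$.

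Next, I extract concrete vanishing conditions on the coefficients of $f_4$ from the significant-limit-singularity hypothesis. If $v\notin C$ (equivalently the coefficient of $x_3^4$ in $f_4$ is nonzero), then the leading semiquasihomogeneous form of $\bar f_4|_Q$ with weights $(\tfrac{1}{3},\tfrac{1}{4})$ is $ax^3+Ay^4$, and standard classification gives that $X_C$ acquires an $E_6$ singularity over $p$, an ADE (insignificant) singularity, a contradiction. Hence $v\in C$, which forces $L\subset C$ and allows a normal form $f_4=x_1A+x_2B$ modulo $f_2$, with $B$ of degree $3$ not involving $x_0$. An entirely parallel Newton-polygon analysis on the edge through $(3,0)$ and $(1,3)$ shows that if the coefficient of $x_1x_3^3$ in $f_4$ (equivalently the coefficient of $x_3^3$ in $A$) is nonzero, then the local equation of $X_C$ at the preimage of $p$ is analytically equivalent to $t^2+x^3+xy^3$, i.e.\ an $E_7$ singularity, again ADE, contradicting the hypothesis. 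Thus both of these coefficients must vanish.

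With these vanishings in hand, I take
\[
\lambda:=\diag(s^3,s^{-1},s^{-5},s^3).
\]
The weights $(3,-1,-5,3)$ satisfy $r_0+r_2=-2=2r_1$, so $\lambda$ preserves $f_2$ with $\mu(f_2,\lambda)=-2$. Enumerating the $\lambda$-weights of all monomials of total degree $4$ and combining with the vanishings from the previous step (those forced by the tangent-cone condition, by $v\in C$, and by the non-$E_7$ analysis), one checks that every monomial that can occur with nonzero coefficient in $f_4$ has $\lambda$-weight at most $4$; the maximum $4$ is attained only by $x_1^2x_3^2$ and $x_2x_3^3$. Furthermore the flat limit $\lim_{s\to 0}\lambda(s)\cdot C$ has equations $f_2$ and a linear combination of $x_1^2x_3^2$ and $x_2x_3^3$, which defines a genuine $(2,4)$ complete intersection and therefore lies in $U\subset\cP$. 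By \Ref{rmk}{stabcomm} the numerical function on $\cP$ is computed by the $\bP E$ formula, yielding
\[
\mu^{N_t}(C,\lambda)=\mu(f_2,\lambda)+t\,\mu(f_4,\lambda)\le -2+4t<0
\qquad\text{for every } t\in(\delta,1/2)\cap\QQ,
\]
contradicting the assumed $N_t$-semistability and proving the proposition.

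The main obstacle is the second paragraph: translating the abstract significant-limit hypothesis into the precise coefficient vanishings $A=0$ and $A_{003}=0$. Without these two vanishings the monomials $x_3^4$ and $x_1x_3^3$ would have $\lambda$-weights $12$ and $8$ respectively, which would only yield instability for $t<1/6$ and $t<1/4$; once the vanishings are established, the $1$-PS above is essentially forced (up to scaling) as the $f_2$-preserving one whose largest $f_4$-weight, $4$, is sharp for every $t\in(\delta,1/2)$.
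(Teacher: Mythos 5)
Your proof is correct and uses the same overall strategy as the paper: normalize coordinates so $Q=V(x_0x_2+x_1^2)$ and $p=[1,0,0,0]$, translate the significant-limit hypothesis into vanishing conditions on coefficients of $f_4$, and then exhibit an $f_2$-preserving one-parameter subgroup giving a negative Hilbert--Mumford index. The interesting variation is in the choice of $1$-PS and the precise set of vanishings you need. The paper derives \emph{three} vanishings from the hypothesis (coefficients of $x_3^4$, $x_1x_3^3$, and $x_1^2x_3^2$, corresponding to the exclusions of $E_6$, $E_7$, and $J_{2,k}$), and then uses $\lambda=\diag(s^5,s^{-1},s^{-7},s^3)$, for which the surviving monomials of $f_4$ have weight at most $2$, giving $\mu(f_2,\lambda)+t\mu(f_4,\lambda)\le -2+2t<0$ for all $t\le 1/2$. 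You derive only \emph{two} vanishings ($E_6$ and $E_7$; you do not need to invoke the $J_{2,k}$ case) and use $\lambda=\diag(s^3,s^{-1},s^{-5},s^3)$, for which the surviving monomials have weight at most $4$ (attained by $x_1^2x_3^2$ and $x_2x_3^3$), giving $-2+4t<0$ for $t<1/2$. Since the proposition is only asserted for $t\in(\delta,1/2)$, your weaker bound still suffices; the paper's $1$-PS is uniformly negative on the closed interval, which is cleaner but not strictly necessary here. In short, you trade one input vanishing for a slightly weaker output bound --- a legitimate and mildly instructive simplification.

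Two small points worth tightening. First, your assertion that ``the flat limit \ldots has equations $f_2$ and a linear combination of $x_1^2x_3^2$ and $x_2x_3^3$'' is only the generic case: if both of those coefficients happen to vanish, the top weight drops to $0$ and the limit involves $x_0x_1^3$ (whose coefficient is normalized to $1$, hence nonzero), $x_1^3x_3$, $x_1x_2x_3^2$; one checks that this limit is still a complete intersection (the degree-$3$ factor is never a multiple of $f_2$), so the appeal to \Ref{rmk}{stabcomm} remains valid, but the sentence as written oversimplifies. Second, the final paragraph refers to vanishings ``$A=0$ and $A_{003}=0$,'' which does not match the notation you set up earlier ($A$ is a degree-$3$ form, not a scalar); you presumably mean the coefficients of $x_3^4$ and of $x_1x_3^3$. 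Neither issue affects the correctness of the argument.
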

\begin{proof}
We may choose homogeneous  coordinates 
$[x_0,\ldots,x_4]$ so that $p=[1,0,0,0]$, and the quadric containing $C$ is $V(x_0x_2+x_1^2)$.   
Then, since $\mult_p(C)=3$ and the tangent cone to $C$ at $p$ is equal to $3T_p V(x_1,x_2)$, we have $C=V(f_2,f_4)$ where
\begin{equation}
f_2=x_0x_2+x_1^2,\qquad f_4= x_0 x_1^3+P_4(x_1,x_2,x_3), \quad P_4\in\CC[x_1,x_2,x_3]_4.
\end{equation}
Moreover, since  $C$ has a significant limit singularity at $p$, the monomials $x_3^4, x_1 x_3^3, x_1^2 x_3^2$ are missing in $P_4$. In fact, by~\Ref{thm}{recognize},  if $x_3^4$ appears then $C$ has an $E_6$ singularity at $p$, if  $x_3^4$ does not appear and $x_1 x_3^3$  appears then $C$ has an $E_7$ singularity at $p$,  if  $x_3^4,x_1 x_3^3$ do not appear and $x_1^2 x_3^2$  appears then $C$ has a $J_{2,k}$ (including $k=\infty$) singularity at $p$.

Now let $\lambda$ be the $1$ PS defined by $\lambda(s)=\diag(s^5, s^{-1},s^{-7},s^3)$. Since 
\begin{equation*}
\mu(f_2,\lambda)+t\mu(f_4,\lambda)\le -2+2t,
\end{equation*}
which is strictly negative  for $t\in(\delta,1/2]$, we are done.
\end{proof}
\begin{proposition}\label{prp:destabilizing}
Let $C\in U$, and let $p\in C$ be a point contained in the smooth locus of the unique quadric containing $C$. Suppose  that $C$ has a singularity at $p$  appearing 
in~\eqref{posizione}, with tag $k$. Then $C$ is $N_t$-unstable for $t\in(t_k,1/2]$, and it is $N_t$-desemistabilized by a $1$-PS conjugated to the one appearing in the row of Table~\ref{typeinp} with index $k$. 
\end{proposition}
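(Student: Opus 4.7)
The strategy is a direct application of the Hilbert--Mumford numerical criterion, taking as destabilizing $1$-PS the one that stabilizes the critical curve $C^{*}_k$ recorded in the $k$-th row of Table~\ref{table101}. The essential input is that the normal forms developed in~\Ref{lmm}{norm3} and~\Ref{prp}{norm3} are precisely adapted to these $1$-PS's: the classification of the singularity of $C$ at $p$ amounts to recording which low-$\lambda_k$-weight monomials must vanish in a standard expansion of $f_4$. We combine this with~\Ref{rmk}{stabcomm} to detect $N_t$-instability on $\cP$ via the numerical function $\mu(f_2,\lambda)+t\,\mu(f_4,\lambda)$, provided the $\lambda$-limit of $(f_2,f_4)$ stays in $U$.

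First I would reduce to $p=[1,0,0,0]$. For tags $k\in\{2,\dots,7\}$ (the isolated singularities $J_{4,\infty}$ through $E_{12}$), \Ref{lmm}{norm3} lets us write
\begin{equation*}
f_2 = x_0 x_2 + x_1^2 + a x_3^2,\qquad f_4 = x_0 x_3^3 + x_1^2 g_2 + x_1 g_3 + g_4,\qquad g_d\in\CC[x_2,x_3]_d,
\end{equation*}
and \Ref{prp}{norm3} records exactly which coefficients of the $g_d$'s must vanish for the singularity at $p$ to have the prescribed tag. For tags $k\in\{0,1\}$ (the non-isolated cases $C=4C_0$, resp.\ $C=3C_0+C_1$, with $C_0$ a smooth conic) I would instead place $C_0=V(f_2,x_3)$, so that $f_4$ is a multiple of $x_3^4$ modulo $f_2$ (tag $0$) or of $x_3^3$ times a linear form in $x_0,x_1,x_2$ (tag $1$).

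Next, for each $k$ I would apply the $1$-PS $\lambda_k=\diag(s^{r_0},\dots,s^{r_3})$ from Table~\ref{table101} (for $k\in\{0,1\}$ selecting a rational $\alpha$ in the family~\eqref{alphaps} strictly larger than the equalizing value, so as to get strict destabilization rather than borderline desemistabilization). A direct computation gives $\mu(f_2,\lambda_k) = r_0+r_2 = -2r_1 > 0$, while the vanishing constraints from~\Ref{prp}{norm3} together with the explicit weights $r_i$ force every monomial in an optimal representative of $f_4\bmod f_2$ to have $\lambda_k$-weight at most $w_k$, where $w_k$ is the common $\lambda_k$-weight of the two monomials appearing in the $f_4$-equation of $C^{*}_k$ (for $k\in\{3,4\}$ one checks that this bound is uniform across the pencil of critical curves). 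Because $C^{*}_k$ itself satisfies $\mu(f_2,\lambda_k)+t_k w_k=0$, one gets
\begin{equation*}
\mu(f_2,\lambda_k) + t\,\mu(f_4,\lambda_k) \;\leq\; \mu(f_2,\lambda_k)\Bigl(1 - \tfrac{t}{t_k}\Bigr),
\end{equation*}
which is strictly negative for $t>t_k$. To transfer this conclusion to $N_t$-instability on $\cP$ I would verify that $\lim_{s\to 0}\lambda_k(s)\cdot V(f_2,f_4)\in U$: the quadric $V(f_2)$ is $\lambda_k$-invariant, and the leading term of $\lambda_k(s)\cdot f_4$ (of weight $w_k$) is a nonzero polynomial not divisible by $f_2$, so $U$ is reached in the limit and~\Ref{rmk}{stabcomm} applies.

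The main obstacle is not conceptual but computational: eight cases, each requiring careful matching of the monomials permitted by~\Ref{prp}{norm3} against the $\lambda_k$-weights, and a check of the bound $\mu(f_4,\lambda_k)\leq w_k$. A subtle point throughout is that only $f_4\bmod f_2$ is intrinsic, and hence $\mu(f_4,\lambda_k)$ must be computed after minimizing over representatives, as in Benoist's formula~\eqref{benfor}; this minimization is what makes the bound sharp for the critical curves $C^{*}_k$.
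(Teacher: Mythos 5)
For tags $k\in\{2,\dots,7\}$ your plan is essentially the paper's proof: put $C$ in the normal form of \Ref{lmm}{norm3}, use the vanishing conditions of \Ref{prp}{norm3} to bound the $\lambda_k$-weight of every monomial of $f_4$ by the weight $w_k$ of $x_0x_3^3$, and conclude $\mu(f_2,\lambda_k)+t\,\mu(f_4,\lambda_k)\le \mu(f_2,\lambda_k)(1-t/t_k)<0$ for $t>t_k$, invoking \Ref{rmk}{stabcomm} after checking the $\lambda_k$-limit stays in $U$. Two small slips: $\mu(f_2,\lambda_k)=r_0+r_2=+2r_1$ (not $-2r_1$; positivity is what you want and it does hold since $r_1>0$), and $V(f_2)$ is not literally $\lambda_k$-invariant when $a\neq 0$ in the normal form (its limit is the rank-$3$ quadric $x_0x_2+x_1^2$), though your limit-in-$U$ conclusion survives because that limit is irreducible and does not divide the leading form of $f_4$. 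Also note that for the \emph{instability} direction the minimum over representatives in Benoist's formula is not needed: any representative gives an upper bound for $\mu^{\eta+t\xi}$, which is all that is used.

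The genuine gap is in your treatment of tags $0$ and $1$. The proposition (see the rows $k=0,1$ of Table~\ref{typeinp}, and the way it is used together with \Ref{prp}{unsesto}, \Ref{prp}{unquarto} and in the proof of \Ref{thm}{improving}) is a \emph{local} statement: $C$ has a point $p$, in the smooth locus of the quadric, of multiplicity $4$ with tangent cone $4A$ (resp.\ $3A+B$). You instead assume the global structure $C=4C_0$ (resp.\ $3C_0+C_1$), i.e.\ $f_4\equiv$ a multiple of $x_3^4$ (resp.\ $x_3^3\cdot\ell$) mod $f_2$; this is a much smaller class of curves and does not suffice for the applications. The paper reduces, by genericity of the tangent direction plus closedness of the unstable locus, to $f_4=x_3^4+x_2g_3$ (resp.\ $x_3^3(cx_3+x_1)+x_2g_3$) with $g_3\in\CC[x_1,x_2,x_3]_3$ \emph{arbitrary}, and then the extra monomials matter: the worst, $x_1^3x_2$, has $\lambda_\alpha$-weight $5\alpha-1$ in the family \eqref{alphaps}, so one must take $\alpha$ \emph{small} ($0<\alpha\le 1/17$, resp.\ $\le 1/13$) to get $\mu(f_2,\lambda_\alpha)+t\,\mu(f_4,\lambda_\alpha)=2\alpha(1-6t)$, resp.\ $2\alpha(1-4t)$, negative exactly for $t>1/6$, resp.\ $t>1/4$. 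Your prescription to choose $\alpha$ "strictly larger than the equalizing value" goes the wrong way: for $\alpha>1/17$ in the tag-$0$ case the bound becomes $2\alpha+t(5\alpha-1)$, which is not negative for $t$ just above $1/6$ (and for $\alpha\ge 1/5$ is nonnegative on all of $(0,1/2]$), so that choice of $1$-PS fails to destabilize under the correct (local) hypothesis.
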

\begin{proof} 
Let us assume that $\mult_p(C)=4$, i.e.~we are in one of the first two cases in Table~\ref{typeinp}. 
We may choose homogeneous  coordinates 
$[x_0,\ldots,x_4]$ so that $p=[1,0,0,0]$ and $C=V(f_2,f_4)$ where
\begin{equation}
f_2=x_0x_2+x_1^2+ax_3^2,\qquad  a\in\CC,\quad f_4\in\CC[x_1,x_2,x_3]_4.
\end{equation}
In fact, choose coordinates so that $f_2$ is as above, and let $C=V(f_2,\wt{f}_4)$. Then, since $\mult_p(C)=4$, we can add a suitable multiple of $f_2$ to $\wt{f}_4$ so that we get a quartic polynomial in $x_1,x_2,x_3$. 
 Choose affine coordinates $x_i/x_0$ around $p$, i.e.~set $x_0=1$. Then $(x_1,x_3)$ are local coordinates on $V(f_2)$ centered at $p$, and  we have an embedding  $\cC_p(C)\subset\aff^2$ as the cone $V(f_4(x_1,0,x_3))$. It follows that
\begin{enumerate}
\item
  if $\cC_p(C)=4A$ then \emph{in the generic case} we may make another change of coordinates so that 
 \begin{equation*}
f_4=x_3^4+x_2 g_3,\qquad g_3\in\CC[x_1,x_2,x_3]_3,
\end{equation*}
\item
 if $\cC_p(C)=3A+B$ (with $A\not=B$) then \emph{in the generic case} we may make another change of coordinates so that 
 \begin{equation*}
f_4=x_3^3(c x_3+x_1)+x_2 g_3,\qquad c\in\CC,\quad g_3\in\CC[x_1,x_2,x_3]_3.
\end{equation*}
\end{enumerate}
Since the locus of $N_t$-unstable points is closed, it will suffice to prove $N_t$-unstability if~(1) or~(2) above holds. Let $\lambda_{\alpha}$ be the virtual $1$ PS in~\eqref{alphaps} with $\alpha\le 1/5$. Then the exponents are ordered as follows:
\begin{equation}\label{ordinamento}
1\ge\alpha\ge -3\alpha\ge 2\alpha-1.
\end{equation}
Now suppose  that~(1) holds. Then
\begin{equation}
\mu(f_2,\lambda_{\alpha})+t\mu(f_4,\lambda_{\alpha})=2\alpha+t\max\{-12\alpha,5\alpha-1\}.
\end{equation}
Let $0<\alpha\le 1/17$. Then   $\mu(f_2,\lambda_{\alpha})+t\mu(f_4,\lambda_{\alpha})=2\alpha(1-6t)$, and hence it is strictly negative for $t>1/6$. Since $\lambda_{\alpha}(s)f_2=s^{2\alpha}f_2$, this proves that $C$ is $N_t$-unstable if $t\in(1/6,1/2]\cap\QQ$, see~\Ref{rmk}{stabcomm}. 

Next suppose  that~(2) holds. Then
\begin{equation}
\mu(f_2,\lambda_{\alpha})+t\mu(f_4,\lambda_{\alpha})=2\alpha+t\max\{-8\alpha,5\alpha-1\}.
\end{equation}
Let $0<\alpha\le 1/13$. Then   $\mu(f_2,\lambda_{\alpha})+t\mu(f_4,\lambda_{\alpha})=2\alpha(1-4t)$, and hence it is strictly negative for $t>1/4$. Since $\lambda_{\alpha}(s)f_2=s^{2\alpha}f_2$, this proves that $C$ is $N_t$-unstable if $t\in(1/4,1/2]\cap\QQ$. 

Now we suppose that the singularity of $C$ at $p$ appears in one of the remaining rows of Table~\ref{typeinp}. 
By \Ref{lmm}{norm3}, we may choose homogeneous  coordinates 
$[x_0,\ldots,x_4]$ so that $p=[1,0,0,0]$ and $C=V(f_2,f_4)$ where
\begin{eqnarray}
f_2&=&x_0x_2+x_1^2+ax_3^2 \nonumber\\
f_4&=& x_0x_3^3+x_1g_3(x_2,x_3)+g_4(x_2,x_3)\label{eqf4normal}
\end{eqnarray}
Let $\lambda$ be the $1$ PS appearing in Table~\ref{typeinp} on the corresponding row. Then $\mu(f_2,\lambda)$ and  $\mu(f_2,\lambda)$ are as in Table~\ref{pendenze}. If $t>t_k$, then $\mu(f_2,\lambda)+t\mu(f_2,\lambda)<0$, and hence  $C$ is $N_t$-unstable because $\lambda(s)f_2=s^{2r_1}f_2$, where $\lambda=\diag(r_0,r_1,r_2,r_3)$.

 \begin{table}[htb!]
\renewcommand\arraystretch{1.5}
\begin{tabular}{|c|c|l|c|c|}
\hline
$k$& Sing.~type at $p$  & Desemistabilizing $1$ PS & $t_k$\\
\hline
0& $\mult_p(C)=4$, $\cC_p(C)=4A$ & $\lambda_{\alpha}$, $0<\alpha\le 1/17$ &$\frac{1}{6}$\\
1&    $\mult_p(C)=4$, $\cC_p(C)=3A+B$   &   $\lambda_{\alpha}$, $0<\alpha\le 1/13$  &$\frac{1}{4}$\\ 
2&$J_{4,\infty}$&$(7,3,-1,-9)$&$\frac{3}{10}$\\ 
3&$J_{3,r}$&$(3,1,-1,-3)$&$\frac{1}{3}$\\
5&$E_{14}$&$(17,5,-7,-15)$&$\frac{5}{14}$\\ 
6&$E_{13}$&$(11,3,-5,-9)$&$\frac{3}{8}$\\ 
7&$E_{12}$&$(4,1,-2,-3)$&$\frac{2}{5}$\\
\hline
\end{tabular}
\vspace{0.2cm}
\caption{Singularities that eventually get desemistabilized}\label{typeinp}
\end{table}

 \begin{table}[htb!]
\renewcommand\arraystretch{1.5}
\begin{tabular}{|c|c|c|c|c|c|}
\hline
 Sing.~type at $p$&$J_{4,\infty}$&$J_{3,r}$&$E_{14}$&$E_{13}$&$E_{12}$\\
 \hline
 $\mu(f_2,\lambda)$ &$6$& $2$ & $10$ &$6$ &  $2$\\
 \hline
$\mu(f_4,\lambda)$&  $-20$&  $-6$ & $-28$ &  $-16$ &  $-5$ \\
\hline
$t_k$&$\frac{3}{10}$&$\frac{1}{3}$&$\frac{5}{14}$&$\frac{3}{8}$&$\frac{2}{5}$\\
\hline
\end{tabular}
\vspace{0.2cm}
\caption{}\label{pendenze}
\end{table}

\end{proof}
 \subsection{Proof of the first main result}\label{subsec:dimouno}
In the present subsection we prove~\Ref{thm}{improving}. The following key remark (which follows from Arnold's results) wil be useful.
\begin{remark}\label{rmk:quintaa}
A non slc singularity which is an arbitrary small deformation of a singularity appearing in~\eqref{posizione}  is  again a singularity appearing in~\eqref{posizione}.
\end{remark}
\begin{proof}[Proof of~\Ref{thm}{improving}.]
 Let us prove that Item~(1) holds for $k=0$. Let $C$ be 
$N_{1/6}$-semistable. By the classification of potential critical values of the VGIT $\gM(t)$, i.e.~\Ref{prp}{indentifyt}, either $C$ is $N_{1/6-\epsilon}$-semistable, or $C\in\cP^{ss}(N_{1/6})^{\rm new}$. In the former case $C$ sits on a smooth quadric, and defines a semistable point of $\gM$ by~\Ref{thm}{thmgitmodels}, hence 
Item~(1) holds by~\Ref{crl}{norm3}. In the latter case, the closure of the orbit of $C$ contains the curve  $C_0^{*}$ in Table~\ref{table101}. Since $C$ is a quadruple conic, Item~(1) follows from~\Ref{rmk}{quintaa}. 

Let us prove that Item~(2) holds for $k=0$. Let $C$ be $N_{t}$-semistable, where $1/6<t<1/4$. By~\Ref{prp}{indentifyt}, either $C$ is $N_{1/6-\epsilon}$-semistable, or $C\in\cP^{ss}(N_{1/6})^{\rm new}$. Thus, every non slc singularity of $C$ appears in~\eqref{posizione}. Moreover, by~\Ref{prp}{destabilizing} $C$ is not a quadruple conic. This proves that 
 Item~(2) holds for $k=0$. 
 
 Let $k_0\in\{1,\ldots,7\}$, and assume that Items~(1) and~(2) hold for all $0\le k<k_0$. We prove that  Items~(1) and~(2) hold for $k=k_0$.
 
(1): Let $C$ be $N_{t_{k_0}}$-semistable. Then either $C$ is $N_{t_{k_0}-\epsilon}$-semistable, or $C\in\cP^{ss}(N_{t_{k_0}})^{\rm new}$. In the former case, Item~(1) holds for $k=k_0$ because Item~(2) holds for $k=k_0-1$. In the latter case, the closure of the orbit of $C$ contains the curve $C_{k_0}^{*}$ in Table~\ref{table101} (if $k_0=3$ there is more than one choice for $C_{3}^{*}$). Since the unique non slc singularity of the curve in  Table~\ref{table101}  has tag $k_0$, Item~(1) holds by~\Ref{rmk}{quintaa}. 

(2): Let $C$ be $N_{t}$-semistable, where $t_{k_0}<t<t_{k_0+1}$. By~\Ref{prp}{indentifyt}, either $C$ is $N_{t_{k_0}-\epsilon}$-semistable, or 
$C\in\cP^{ss}(N_{t_{k_0}})^{\rm new}$. In the former case, Item~(2) holds for $k=k_0$ because Item~(2) holds for $k=k_0-1$. In the latter case, 
arguing as above, we get that the non slc singularities of $C$ have tag at least $k_0$. On the other hand $C$ does not have  singularities with tag $k_0$ 
by~\Ref{prp}{destabilizing}.
\end{proof}
 \subsection{Basin of attraction for the potential semistable orbits}\label{subsec:sectbasin}
 We recall the following general VGIT behavior: assume that $x$ changes stability (say goes from $t$-semistable to $t$-unstable) at some critical slope $t$ (or wall). Then there exist some $x^*$ which gives a minimal orbit at $t$ such that $\overline{G\cdot x}\supset G\cdot x^*$. As always, the orbit closures can be tested by $1$-PS subgroups. This leads to the notion of basin of attraction, which plays an important role in the GIT analyses related to the Hassett-Keel program (e.g. \cite{hh2}). 
 
 Let $x^*\in \cP$ and $t\in(\delta,1/2)\cap\QQ$; we set
 \begin{equation}
G_{N_t}(x^{*}):=\{g\in\SL(4) \mid \text{$g(x^*)=x^{*}$ and $g$ acts trivially on the fiber of $N_t$ at $x^{*}$}\}.
\end{equation}
 \begin{definition}
Let $x^*\in \cP$, and let $t\in(\delta,1/2)\cap\QQ$. Suppose that  $\lambda$ is a $1$ PS of $\SL(4)$ stabilizing $x^{*}$ and acting trivially on the fiber of $N_t$ at $x^{*}$ (the last hypothesis is satisfied if $x^{*}$ is $N_t$-semistable).   
    The $\lambda$-\emph{basin of attraction of $x^{*}$} is equal to 
 $$A_{\lambda}(x^*)=\{x\in \cP\mid \lim_{s\to 0} \lambda(s)\cdot x=x^* \}.$$
The \emph{basin of attraction of $x^{*}$} is equal to 
 $$A(x^*)=\{x\in \cP\mid \text{$\lim_{s\to 0} \lambda(s)\cdot x=x^*$ for some $1$ PS $\lambda$ of $G_{N_t}(x^*)$} \}.$$
 \end{definition}
\begin{remark}\label{rmk:sollevo}
Suppose that $x\in A_{\lambda}(x^*)$, and let $\wt{x}$ be a \emph{non zero} element of the fiber of $N_t$ at $x$. Then, since the action of  $\lambda$  on the fiber of $N_t$ at $x^{*}$ is trivial, 
$\lim_{s\to 0}\lambda(s)\wt{x}$ exists and is a non zero element of the fiber of $N_t$ at $x^*$.
\end{remark}

 \subsubsection{Transition at $t=1/6$}
Let $C^{*}=V(f_2,f_4)$, where
\begin{equation}
f_2=x_0 x_2 +x_1^2,\qquad f_4=x_3^4.
\end{equation}
\begin{proposition}\label{prp:unsesto}
Keep notation as above, and let $C\in\cP^{ss}(N_t)$ be in  the basin of attraction of $C^{*}$. The following hold:
\begin{enumerate}
\item
If  $t\in (\delta,1/6)\cap\QQ$, then $C$ has a point $p$ of multiplicity $4$, belonging to the smooth locus of the unique quadric containing $C$, and such that $\cC_p(C)=4A$, i.e.~the singularity of $C$ at $p$ is as in the first row of Table~\ref{typeinp}. 
\item
If  $t\in (1/6,1/4)\cap\QQ$ then $C=Q\cap S$, where $Q$ is an irreducible quadric, and $S$ is a quartic surface  not containing singular points of $Q$. 
\end{enumerate}
\end{proposition}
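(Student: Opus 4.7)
The plan is to analyze the basin of attraction of $C^{*}$ via the diagonal $1$-PSs stabilizing it. Up to conjugation by $\Stab(C^{*})$ (which contains $\Ort(x_0 x_2+x_1^2)\times\Gm$ acting trivially on $x_3$) and possible inversion, any such $1$-PS is of the form $\lambda_{\alpha}=\diag(s,s^{\alpha},s^{2\alpha-1},s^{-3\alpha})$ for some $\alpha>0$. A direct weight computation gives
\[
\mu^{N_t}(C^{*},\lambda_{\alpha})=\mu(x_0 x_2+x_1^2,\lambda_{\alpha})+t\,\mu(x_3^4,\lambda_{\alpha})=2\alpha-12\alpha t=2\alpha(1-6t),
\]
positive for $t<1/6$ and negative for $t>1/6$. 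Since $\mu^{N_t}(C,\lambda)=\mu^{N_t}(C^{*},\lambda)$ for every $C\in A_{\lambda}(C^{*})$, the semistability of $C$ at $t\ne 1/6$ forces $C\in A_{\lambda_\alpha}(C^{*})$ when $t<1/6$, and $C\in A_{-\lambda_\alpha}(C^{*})$ when $t>1/6$.

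For Item~(1), I would work with $C\in A_{\lambda_\alpha}(C^{*})\cap\cP^{ss}(N_t)$ for small $\alpha>0$. The condition $\lim_{s\to 0}\lambda_\alpha(s)\cdot(f_2,f_4)=(x_0x_2+x_1^2,x_3^4)$ restricts $f_2$ to monomials of $\lambda_\alpha$-weight $\le 2\alpha$: the coefficients of $x_0^2,x_0 x_1,x_0 x_3$ (weights $>2\alpha$) vanish, while $x_0 x_2$ and $x_1^2$ (both of weight $2\alpha$) have nonzero coefficients. Evaluating $\nabla f_2$ at $p=[1,0,0,0]$, only $\partial_{x_2}f_2(p)$ is nonzero, so $V(f_2)$ is smooth at $p$ with tangent plane $V(x_2)$; \Ref{thm}{thmgitmodels}(2) moreover forces $V(f_2)$ to be globally smooth. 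Passing to local coordinates $u=x_1/x_0$, $v=x_2/x_0$, $w=x_3/x_0$ and solving $f_2=0$ for $v$ as an order-$2$ power series in $(u,w)$, the effective order at $p$ of the monomial $x_0^{A_0}x_1^{A_1}x_2^{A_2}x_3^{A_3}$ (viewed in $f_4/x_0^4$) becomes $A_1+2A_2+A_3$. The only degree-$4$ monomials of effective order $\le 4$ are $x_3^4$ and $x_1 x_3^3$; the latter has $\lambda_\alpha$-weight $-8\alpha>-12\alpha$ and is therefore excluded by the basin constraint on $f_4$. Consequently $f_4|_{V(f_2)}=w^4+(\text{terms of order}\ge 5)$, whence $\mult_p C=4$ and $\cC_p(C)=4A$, with $A$ the tangent to the conic $V(f_2,x_3)\subset V(f_2)$ at $p$.

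For Item~(2), I consider $C\in A_{-\lambda_\alpha}(C^{*})\cap\cP^{ss}(N_t)$ for small $\alpha>0$. The dual degeneration condition forces $f_2$ to be supported on monomials of $\lambda_\alpha$-weight $\ge 2\alpha$, i.e.~on $\{x_0^2,x_0 x_1,x_0 x_2,x_0 x_3,x_1^2\}$; hence $f_2=x_0 L+x_1^2$ for a linear form $L=a x_0+b x_1+x_2+d x_3$ (after normalizing the $x_0 x_2$-coefficient to $1$). A direct symmetric-matrix computation shows $\rank(f_2)=3$: the $3\times 3$ minor on $\{x_0,x_1,x_2\}$ has determinant $-1/4$, while the full $4\times 4$ determinant vanishes because the $x_3$-row has only one nonzero entry, at the $x_0$-column. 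Consequently $Q=V(f_2)$ is an irreducible quadric cone with unique singular point $v=[0,0,-d,1]$. For $f_4(v)$, only monomials with $A_1=A_2=0$ contribute, and since $v_0=0$, only $x_3^4$ survives evaluation; its coefficient is nonzero by the basin description. Therefore $v\notin V(f_4)=S$, so $S$ does not contain the singular point of $Q$.

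The main obstacle will be the careful monomial/weight bookkeeping, together with verifying that, up to conjugation inside $\Stab(C^{*})$, every relevant $1$-PS in the basin can indeed be reduced to $\pm\lambda_\alpha$; the rest is then a computation splitting along the sign of $\mu^{N_t}(C^{*},\lambda_\alpha)$.
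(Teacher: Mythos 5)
Your overall route is the same as the paper's (reduce to the $\lambda_{\alpha}^{\pm 1}$-basins, decide which sign is compatible with semistability on each side of $t=1/6$ via the Hilbert--Mumford weight at $C^{*}$, then do a local monomial analysis at $p$ resp.\ at the vertex), and the sign argument you give is fine. The genuine gap is the clause ``for small $\alpha>0$''. The basin of attraction is the union of the $\lambda_{\alpha}^{\pm1}$-basins over \emph{all} $\alpha$ (up to conjugation in the stabilizer every nontrivial $1$-PS fixing $C^{*}$ is some $\lambda_{\alpha}$ with $\alpha\in(0,1]$), and the value of $\alpha$ is dictated by the given curve $C$ -- you do not get to choose it small. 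The monomial constraints change qualitatively at $\alpha=1/5$ (and at $1/17,1/13,1/9$): for $\alpha>1/5$ the perturbation $f_2'$ may contain $x_0x_3$ (weight $1-3\alpha<2\alpha$), so the tangent plane to $V(f_2)$ at $p$ is no longer $V(x_2)$ and your ``effective order $A_1+2A_2+A_3$'' bookkeeping is not valid as stated; and on the $\lambda_{\alpha}^{-1}$ side the monomials $x_2x_3^3,\,x_2^2x_3^2,\,x_2^3x_3,\,x_2^4$ \emph{are} allowed in $f_4'$ when $\alpha>1/5$, which are exactly the ones that could kill $f_4$ at the vertex $[0,0,-d,1]$; in that range the correct reason $v\notin S$ is the complementary one, namely that $x_0x_3$ is then excluded from $f_2'$, so $d=0$ and only the $x_3^4$-coefficient matters. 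So the two halves of your argument each cover only one side of the $\alpha=1/5$ dichotomy, and the omitted cases are not the same computation -- they need the opposite observation (this is precisely the case division the paper carries out).

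Two of your intermediate statements are also false as written, though harmless in the small-$\alpha$ regime you work in: (i) ``the only degree-$4$ monomials of effective order $\le 4$ are $x_3^4$ and $x_1x_3^3$'' -- every monomial in $\CC[x_0,x_1,x_3]_4$ (e.g.\ $x_1^2x_3^2$, $x_0^4$) and also e.g.\ $x_0x_2x_3^2$ has effective order $\le 4$; what is true is that, for $\alpha$ below the relevant thresholds, none of these except $x_3^4$ satisfies the basin constraint $\weight_{\lambda_\alpha}<-12\alpha$; (ii) at the vertex $v=[0,0,-d,1]$ one has $v_2\neq 0$ in general, so it is the monomials with $A_0=A_1=0$ (not $A_1=A_2=0$) that survive evaluation, and the ones divisible by $x_2$ must be ruled out by the weight constraint (valid only for $\alpha<1/5$), not by evaluation. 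With the case division at $\alpha=1/5$ made explicit and these two points corrected, your argument becomes essentially the paper's proof.
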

\begin{proof}
For $\alpha\in[0,1]\cap\QQ$, let $\lambda_{\alpha}$ be the virtual $1$ PS of $\SL(4)$ given by
\begin{equation}\label{lamalfa}
\lambda_{\alpha}(s)=(s,s^{\alpha},s^{2\alpha-1},s^{-3\alpha}).
\end{equation}
Every virtual $1$ PS fixing $C^{*}$ is equal to $\lambda_{\alpha}$ for some $\alpha\in[0,1]\cap\QQ$. 
Thus $C\in A_{\lambda_{\alpha}^{\pm 1}}(C^{*})$ for some  $\alpha\in[0,1]\cap\QQ$. 
\medskip

\noindent{\it $\lambda_{\alpha}$-basin of attraction of $C^{*}$:\/}  We determine which $(2,4)$-curves $C=V(f_2+f_2',f_4+f'_4)$ (where $f'_d\in\CC[x_0,\ldots,x_3]_d$)
are in the $\lambda_{\alpha}$-basin of attraction  of $C^{*}$ ($N_t$-semistable points of $\cP$ are actual $(2,4)$ curves by~\Ref{prp}{doppiovu}). 
The fiber of $\cO_{\cP}(N_{1/6})^{\otimes -6}$ at $C$ is identified with $(f_2+f_2')^{\otimes 6}\otimes (f_4+f'_4)$, and we must determine for which $(f'_2,f'_4)$ we have
\begin{equation}\label{eqlimittensor}
\lim\limits_{s\to 0}\lambda_{\alpha}(s)\left((f_2+f_2')^{\otimes 6}\otimes (f_4+f'_4)\right)=f_2^{\otimes 6}\otimes f_4.
\end{equation}
(See~\Ref{rmk}{sollevo}.) Now
\begin{equation}
\lambda_{\alpha}(s)\left((f_2+f_2')^{\otimes 6}\otimes (f_4+f'_4)\right)=(f_2+s^{2\alpha}\lambda_{\alpha}(s)f'_2)^{\otimes 6}
\otimes (f_4+s^{-12\alpha}\lambda_{\alpha}(s)f'_4).
\end{equation}
and hence  $C$ 
is in the basin of attraction  of $C^{*}$ for $\lambda_{\alpha}$ if and only if
\begin{equation}\label{duelimiti}
\lim\limits_{s\to 0}s^{2\alpha}\lambda_{\alpha}(s)f'_2=0,\qquad \lim\limits_{s\to 0}s^{-12\alpha}\lambda_{\alpha}(s)f'_4=0.
\end{equation}
Now notice that the ordering of the weights of $\lambda_{\alpha}$ changes as we cross the value $\alpha=1/5$. In fact
\begin{eqnarray}
1\ge \alpha\ge 2\alpha-1\ge -3\alpha & \text{if $1/5\le\alpha$,} \label{piuquinto}\\
1\ge \alpha\ge -3\alpha\ge 2\alpha-1 & \text{if $0\le\alpha \le 1/5$.} \label{menoquinto}
\end{eqnarray}
It follows that 
\begin{equation}\label{duecasi}
f'_2\in
\begin{cases}
\la x_0x_3,x_1 x_2,x_1 x_3,x_2^2,x_2 x_3,x_3^2\ra & \text{if $1/5<\alpha$,} \\
\la x_1 x_2,x_1 x_3,x_2^2,x_2 x_3,x_3^2\ra & \text{if $0\le\alpha \le 1/5$.} \\
\end{cases}
\end{equation}
Thus  $p=[1,0,0,0]$ is a smooth point of $Q':=V(f_2+f_2')$, and    local parameters on  $Q'$ around $p$ are $(x_1|_{Q'},x_3|_{Q'})$. Moreover  \emph{in $\cO_{Q',p}$} the following holds:
\begin{equation}\label{aneloc}
x_2|_{Q'}\equiv 
\begin{cases}
bx_3|_{Q'}\pmod{\gm_p^2},\ b\in\CC & \text{if $1/5<\alpha$,} \\
0\pmod{\gm_p^2} & \text{if $0\le\alpha\le 1/5$.}
\end{cases}
\end{equation}
On the other hand, if $1/5\le\alpha\le 1$, then the second equation in~\eqref{duelimiti} holds if and only if $f'_4=0$, while if $0\le\alpha<1/5$ and it holds for $f'_4$, then
\begin{equation}\label{rebus}
f'_4=x_2 P_3(x_1,x_2,x_3)+x_0 x_2^2 P_1(x_1,x_2,x_3),
\end{equation}
 where $P_d\in\CC[x_1,x_2,x_3]_d$. It follows (for all $\alpha$) that $C$ has  multiplicity $4$ at $p=[1,0,0,0]$, and that the tangent cone at $p$ is equal to $4V(x_3)$.

\medskip

\noindent{\it $\lambda^{-1}_{\alpha}$-basin of attraction of $C^{*}$:\/}  
 Let $C=V(f_2+f_2',f_4+f'_4)$. Arguing as above, we see that  $C$ 
is in the basin of attraction  of $C^{*}$ for $\lambda^{-1}_{\alpha}$ if and only if
\begin{equation}\label{ancoradue}
\lim\limits_{s\to 0}s^{-2\alpha}\lambda_{\alpha}(s^{-1})f'_2=0,\qquad \lim\limits_{s\to 0}s^{12\alpha}\lambda_{\alpha}(s^{-1})f'_4=0.
\end{equation}
It follows that 
\begin{equation}\label{altridue}
f'_2\in
\begin{cases}
\la x^2_0,x_0 x_1\ra & \text{if $1/5\le \alpha\le 1$,} \\
\la x^2_0,x_0 x_1, x_0 x_3\ra & \text{if $0\le\alpha < 1/5$,} \\
\end{cases}
\end{equation}
and hence $V(f_2+f'_2)$ has rank $3$. Moreover  $p\notin V(f_4+f'_4)$; in fact $C=V(f_2+f'_2,f_4+f'_4)$ is projectively equivalent to curves arbitrarily close to $C^{*}=V(f_2,f_4)$, and since $C^{*}$ does not contain the vertex of $V(f_2)$, it follows that $C$ does not contain the vertex of $V(f'_2)$.

\medskip

Let us prove Item~(1). If $t\in(\delta,1/6)$, then $V(f_2)$ is a smooth quadric, and hence $C$ is in the $\lambda_{\alpha}$-basin of attraction of $C^{*}$. Then Item~(1) holds by~\Ref{prp}{gitcone}.  Now assume that $t=1/6$. If  $C$ is in the $\lambda_{\alpha}$-basin of attraction of $C^{*}$, the same argument applies. Thus we may assume that $C$ is in the $\lambda^{-1}_{\alpha}$-basin of attraction of $C^{*}$, and hence $V(f_2)$   is singular. By~\Ref{lmm}{planarsing} the rank of $f_2$ is equal to $3$, and hence there exist coordinates $(x_0,\ldots,x_3)$ such that $f_2=x_0 x_2+x_1^2$. Let $\lambda(s)=\diag(s^{-1},s^{-1},s^{-1},s^3)$. Then $\lim_{s\to 0}\lambda(s) f_2^{\otimes 6}\otimes f_4=f_2^{\otimes 6}\otimes f_4(0,0,0,x_3)$, and we are done. 

Let us prove Item~(2). Since curves in the $\lambda_{\alpha}$-basin of attraction of $C^{*}$ are $N_t$-semistable for $t\le 1/6$, it follows from general results that $C$  is in the $\lambda^{-1}_{\alpha}$-basin of attraction of $C^{*}$.  Thus Item~(2) holds by the analysis carried out above. 
\end{proof}
 \subsubsection{Transition at $t=1/4$}
Let $C^{*}=V(f_2,f_4)$, where
\begin{equation}
f_2=x_0 x_2 +x_1^2,\qquad f_4=x_3^3 x_1.
\end{equation}
\begin{proposition}\label{prp:unquarto}
Keep notation as above, and let $C\in\cP^{ss}(N_t)$ be in the basin of attraction of $C^{*}$. Let $Q$ be the unique quadric containing $C$. The following hold:
\begin{enumerate}
\item
If  $t\in (1/6,1/4)\cap\QQ$, then there exists a point $p\in C$ of multiplicity $4$, with  tangent cone $\cC_p(C)=3A+B$ ($A\not=B$), and such $Q$ is smooth point  at $p$, i.e.~the singularity of $C$ at $p$ is as in  the second row of Table~\ref{typeinp}. 

\item
If  $t\in (1/4,3/10)\cap\QQ$ then $Q$ is a quadric cone (of rank $3$ by~\Ref{lmm}{planarsing}), $C$ contains the vertex $v$ of $Q$, and has an $A_1$ singularity at $v$.
\end{enumerate}
\end{proposition}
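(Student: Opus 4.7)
The plan is to mirror the proof of \Ref{prp}{unsesto}. Any virtual $1$-PS of $\SL(4)$ fixing $C^{*}=V(x_0 x_2 + x_1^2, x_1 x_3^3)$ and acting trivially on the fiber of $N_{1/4}$ must, by the same analysis as in the proof of \Ref{prp}{indentifyt}, be of the form $\lambda_{\alpha}(s) = \diag(s, s^{\alpha}, s^{2\alpha-1}, s^{-3\alpha})$ for some $\alpha \in [0,1] \cap \bQ$; this is consistent with $\mu(f_2,\lambda_{\alpha}) + \tfrac{1}{4}\mu(f_4,\lambda_{\alpha}) = 2\alpha + \tfrac{1}{4}(-8\alpha) = 0$. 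Hence every $C \in A(C^{*})$ lies in $A_{\lambda_{\alpha}^{\pm 1}}(C^{*})$ for some $\alpha$, and the usual functoriality of $\mu^{N_t}$ forces $N_t$-semistable curves in the $\lambda_{\alpha}$-basin to occur only for $t \le 1/4$, while those in the $\lambda_{\alpha}^{-1}$-basin only for $t \ge 1/4$.

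Writing $C = V(f_2 + f_2', f_4 + f_4')$ and imposing $\lim_{s \to 0} \lambda_{\alpha}(s) \cdot C = C^{*}$, I obtain, in parallel to \eqref{duelimiti}, the two limit conditions
\[
\lim_{s \to 0} s^{2\alpha}\lambda_{\alpha}(s) f_2' = 0, \qquad \lim_{s \to 0} s^{-8\alpha}\lambda_{\alpha}(s) f_4' = 0.
\]
The ordering of the weights of $\lambda_{\alpha}$ switches at $\alpha = 1/5$ (cf.~\eqref{piuquinto}, \eqref{menoquinto}), so I split into the cases $\alpha > 1/5$ and $\alpha \le 1/5$ and enumerate, in each, the monomials allowed in $f_2'$ and $f_4'$. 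For Item~(1), in the $\lambda_{\alpha}$-basin, the admissible $f_2'$ force $Q' := V(f_2 + f_2')$ to be smooth at $p = [1,0,0,0]$ with local parameters $(x_1|_{Q'}, x_3|_{Q'})$ and an analog of \eqref{aneloc} governing $x_2|_{Q'}$. Substituting $x_2|_{Q'} \equiv -x_1^2 \pmod{\gm_p^3}$ into each admissible monomial of $f_4'$ raises its order in $(x_1, x_3)$ to at least $4$, so that $\mult_p(C) = 4$ and the weight-$4$ part of $(f_4 + f_4')|_{Q'}$ equals either $x_1 x_3^3$ or $x_3^3(x_1 + c x_3)$ for some scalar $c$; in either case $\cC_p(C) = 3V(x_3)|_{Q'} + V(x_1 + c x_3)|_{Q'}$, i.e., $3A + B$ with $A \neq B$, proving Item~(1).

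For Item~(2), an analogous computation in the $\lambda_{\alpha}^{-1}$-basin yields, by the mirror of \eqref{altridue}, that $f_2' \in \langle x_0^2, x_0 x_1 \rangle$ if $\alpha > 1/5$ (and additionally includes $x_0 x_3$ if $\alpha < 1/5$). Thus $f_2 + f_2'$ has the form $x_0 \cdot \ell + x_1^2$ for a linear form $\ell$, and combined with \Ref{lmm}{planarsing} this makes $Q := V(f_2 + f_2')$ a rank-$3$ quadric cone whose vertex $v$ converges to $[0,0,0,1]$ under $\lambda_{\alpha}^{-1}(s)$. A direct check on weights shows that the only degree-$4$ monomial not vanishing at $[0,0,0,1]$, namely $x_3^4$ (of weight $-12\alpha$), fails the condition $w > -8\alpha$ for $\alpha > 0$, hence $x_3^4 \notin f_4'$ and $C$ passes through $v$. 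Finally, after moving $v$ to $[0,0,0,1]$ by an $\SL(4)$-transformation and working in the affine chart $x_3 = 1$, $V(f_4 + f_4')$ is smooth at $v$ (since $f_4$ contributes the linear term $x_1$), and eliminating $x_1$ from the equation of $Q$ gives a local equation on $C$ of the form $x_0 x_2 + \text{(higher order)} = 0$. The quadratic part $x_0 x_2$ is nondegenerate of rank $2$, which is precisely the normal form of an $A_1$-singularity. I expect the most delicate step to be verifying that the quadratic part remains nondegenerate after the allowed perturbations in $f_2'$ and $f_4'$ are incorporated; this requires a careful local coordinate expansion, as in the analogous step of \Ref{prp}{unsesto}.
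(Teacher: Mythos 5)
Your overall strategy — mirroring the proof of Proposition~5.12 (\Ref{prp}{unsesto}) by splitting into the $\lambda_{\alpha}$- and $\lambda_{\alpha}^{-1}$-basins, and then matching basins to $t$-ranges — is the same as the paper's. Item~(1) is essentially correct. There are, however, two gaps in your treatment of Item~(2).

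First, for $\alpha<1/5$ the admissible perturbation $f_2'$ includes $x_0x_3$, so $f_2+f_2'=x_0(ax_0+bx_1+x_2+cx_3)+x_1^2$ and the vertex of the quadric cone is $v=[0,0,-c,1]$, \emph{not} $[0,0,0,1]$. Your observation that $x_3^4\notin f_4'$ only shows that $[0,0,0,1]\in C$; it does not show $v\in C$ when $c\neq 0$. The paper instead shows (by checking that the weight condition excludes every monomial in $\CC[x_2,x_3]_4$, not just $x_3^4$) that $f_4'\in(x_0,x_1)$, i.e.\ $f_4'=x_0g_3+x_1h_3$ with $h_3\in(x_0,x_1)$, whence $f_4+f_4'$ vanishes at $v$. "The vertex converges to $[0,0,0,1]$ under $\lambda_\alpha^{-1}(s)$" is true but does not help: you need a statement about $C$, not about its degeneration $C^*$.

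Second, your local computation of the $A_1$-singularity is left as an expectation. The nondegeneracy you flag as delicate is in fact resolved cleanly in the paper by a different (and simpler) observation: since $f_4=x_1x_3^3$ contributes the linear term $x_1$, the quartic surface $V(f_4+f_4')$ is smooth at $v$ with tangent plane of the form $V(g_3(0,0,-c,1)x_0+x_1)$; restricting this plane to the rank-$3$ quadric yields two \emph{distinct} lines (the $x_2$-term survives), which is exactly the condition for an $A_1$-singularity of $C$ at $v$. You should replace the heuristic ``eliminating $x_1$ gives $x_0x_2+\text{h.o.t.}$'' by this tangent-plane argument, or actually carry out the substitution. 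Finally, your closing assertion that the $\lambda_\alpha$-basin is relevant only for $t\le 1/4$ and the $\lambda_\alpha^{-1}$-basin only for $t\ge 1/4$ needs justification; the paper obtains this by invoking \Ref{prp}{destabilizing}: for $t\in(1/4,3/10)$ a curve with a multiplicity-$4$ point in the smooth locus of its quadric is already $N_t$-unstable, which forces $C$ into the $\lambda_\alpha^{-1}$-basin.
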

 \begin{proof}
We proceed as in the proof of~\Ref{prp}{unsesto}. Let $\lambda_{\alpha}$ be the virtual $1$ PS of $\SL(4)$ given by~\eqref{lamalfa}. Every virtual $1$ PS fixing $C^{*}$ is equal to $\lambda_{\alpha}$ for some $\alpha\in[0,1]\cap\QQ$. 
Thus $C\in A_{\lambda_{\alpha}^{\pm 1}}(C^{*})$ for some  $\alpha\in[0,1]\cap\QQ$. 
\medskip

\noindent{\it $\lambda_{\alpha}$-basin of attraction of $C^{*}$:\/}  
 We have 
\begin{equation}
\lambda_{\alpha}(s)\left((f_2+f_2')^{\otimes 4}\otimes (f_4+f'_4)\right)=(f_2+s^{2\alpha}\lambda_{\alpha}(s)f'_2)^{\otimes 4}
\otimes (f_4+s^{-8\alpha}\lambda_{\alpha}(s)f'_4).
\end{equation}
and hence  $C$ 
is in the basin of attraction  of $C^{*}$ for $\lambda_{\alpha}$ if and only if
\begin{equation}
\lim\limits_{s\to 0}s^{2\alpha}\lambda_{\alpha}(s)f'_2=0,\qquad 
\lim\limits_{s\to 0}s^{-8\alpha}\lambda_{\alpha}(s)f'_4=0.
\end{equation}
The first equality holds if and only if~\eqref{duecasi} holds. Thus   $p=[1,0,0,0]$  is a smooth point of $V(f_2+f_2')$, and    local parameters on  $Q':=V(f_2+f_2')$ around $p$ are $(x_1|_{Q'},x_3|_{Q'})$. Moreover~\eqref{aneloc} holds. 

Next we consider $f'_4$. If $1/5\le \alpha\le 1$, then $f'_4\in\la x_3^4,x_2 x_3^3,x_2^2 x_3^2\ra$. By~\eqref{aneloc} it follows
 that in local coordinates $(x_1,x_3)$ the tangent cone $\cC_p(C)$ has equation $x_3^3(mx_3+x_1)$ for some $m\in\CC$.  If 
 $0\le \alpha< 1/5$, then $f'_4\in\la x_3^4,x_2 x_3^3,x_2^2 x_3^2\ra$.  On the other hand, if $0\le \alpha< 1/5$, then~\eqref{rebus} holds. By~\eqref{aneloc} it follows that in local coordinates $(x_1,x_3)$ the tangent cone $\cC_p(C)$ has equation $x_3^3 x_1$.

\medskip

\noindent{\it $\lambda^{-1}_{\alpha}$-basin of attraction of $C^{*}$:\/}  
The curve $C=V(f_2+f_2',f_4+f'_4)$ 
is in the basin of attraction  of $C^{*}$ for $\lambda^{-1}_{\alpha}$ if and only if
\begin{equation}\label{aridue}
\lim\limits_{s\to 0}s^{-2\alpha}\lambda_{\alpha}(s^{-1})f'_2=0,\qquad \lim\limits_{s\to 0}s^{8\alpha}\lambda_{\alpha}(s^{-1})f'_4=0.
\end{equation}
It follows that~\eqref{altridue} holds. Now suppose that $1/5\le\alpha\le 1$. Then, by~\eqref{altridue} $V(f_2+f'_2)$  has rank $3$, with singular point $v=[0,0,0,1]$, and
\begin{equation*}
f'_4= ax_0x_3^3+\sum\limits_{d=2}^4  g_{d} x_3^{4-d}, \qquad g_d\in \CC[x_0,x_1,x_2].
\end{equation*}
It follows by a straightforward computation that $V(f_2+f'_2,f_4+f'_4)$ has an $A_1$-singularity at $v$. 

If $0\le\alpha< 1/5$, then by~\eqref{altridue} $f_2+f'_2=x_0(ax_0+bx_1+x_2+cx_3)+x_1^2$ for some $a,b,c\in\CC$. Thus  $V(f_2+f'_2)$  has rank $3$, with singular point $v=[0,0,-c,1]$. By the second equation in~\eqref{aridue} we get  
\begin{equation*}
f'_4=x_0 g_3+x_1 h_3,\qquad g_3,h_3\in\CC[x_0,\ldots,x_3]_3,\quad h_3(0,0,-c,1)=0.
\end{equation*}
In particular $v\in V(f_4+f'_4)$, and hence $v\in C$. The (projective) tangent plane to  $V(f_4+f'_4)$ at $v$ has equation $g_3(0,0,-c,1)x_0+x_1=0$, and hence it intersects $V(f_2+f'_2)$ in the union of two \emph{distinct} lines. It follows that $C$ has an $A_1$ singularity at $v$.

\medskip

Now we are ready to finish the proof of the proposition. If $t\in[1/4,3/10)$, then  by~\Ref{prp}{destabilizing} $C$ cannot have a significant limit singularity of multiplicity $4$ belonging to the smooth locus of the unique quadric containing $C$. By the analysis above, it follows that if  if $t\in(1/6,1/4]$, then $C$ is in 
$\lambda_{\alpha}$-basin of attraction of $C^{*}$. It follows that  if  if $t\in(1/4,3/10)$, then $C$ is in 
$\lambda^{-1}_{\alpha}$-basin of attraction of $C^{*}$. 
 \end{proof}
 \subsubsection{Transition at the remaining potential critical values}
Table~\ref{leonardo}  will be handy.  Regarding that table: In the row corresponding to $k=4$, $(a,b)$ and $(1,1)$ are supposed to be  linearly independent, and (as in Table~\ref{table101}) $L$ is a line contained in $V(f_2)$.
\begin{table}[tbp]
\renewcommand\arraystretch{1.5}
\begin{tabular}{|c| l | l | l |c|c|}
\hline
\multirow{2}*{$k$} & \multirow{2}*{$C^{*}_k=V(f_2,f_4)$} & \multirow{2}*{$\alpha_k$} & Sing.~of $C^{*}_k$ away  & Type of $C^{*}_k$ at  & \multirow{2}*{$m$}\\
 & & & from vertex of $V(f_2)$ & vertex of $V(f_2)$ & \\
\hline
$2$ & $V(x_0x_2+x_1^2,x_0x_3^3+x_2^2x_3^2)$ & $3/7$ & $J_{4,\infty}$&$A_2$ & $2$ \\
\hline $3$  & $V(x_0x_2+x_1^2,x_0x_3^3+2x_1x_2x_3^2-x_2^3x_3)$& $1/3$ & $J_{3,\infty}$& $2$(twisted cubic) & $3$ \\
\hdashline $4$ &$V(x_0x_2+x_1^2,x_0x_3^3+2a x_1x_2x_3^2-b x_2^3x_3)$ &  $1/3$   &$J_{3,0}$&$A_3$ & $3$ \\
\hline
$5$ &$V(x_0x_2+x_1^2,x_0x_3^3+x_2^4)$& $5/17$  &$E_{14}$ & $A_4$, $\cC_v C=2T_v(L)$ & $4$ \\
\hline
$6$ & $V(x_0x_2+x_1^2,x_0x_3^3+x_1x_2^2x_3)$ & $3/11$  & $E_{13}$& $A_5$, $C^{*}_k\supset L$ & $5$ \\
\hline
$7$ & $V(x_0x_2+x_1^2,x_0x_3^3+x_1x_2^3)$& $1/4$  & $E_{12}$& $A_7$, $C^{*}_k\supset L$ & $7$ \\
\hline
\end{tabular}
\vspace{0.2cm}
\caption{}\label{leonardo}
\end{table}
\begin{proposition}\label{prp:basintk}
Let  $k\in\{2,\ldots,6,7\}$, and let $C^{*}_k$ is as in the row of Table~\ref{leonardo} corresponding to $k$. Suppose that   $C\in\cP^{ss}(N_t)$ is in the basin of attraction of $C^{*}_k$. Then the following hold:
\begin{enumerate}
\item
If $t\in (t_{k-1},t_k)\cap\QQ$,   there exists a point $p\in C$  such that the unique quadric containing $C$ is smooth  at $p$ and the singularity of $C$ at $p$ is of the same type as the unique singularity of $C_k^{*}$ away from the vertex of $V(f_2)$. 
\item
If $t\in (t_{k},t_{k+1})\cap\QQ$,  the unique quadric containing $C$ has rank $3$, and its vertex $v$ is contained in $C$. Moreover, if $k\not=3$, then  $C$ has the same type at $v$  as $C^{*}_k$ has at the vertex of $V(f_2)$. If $k=3$, then   $C$ has a singularity at $v$ of type $A_l$, where $l\ge 3$ (possibly $l=\infty$). 
\end{enumerate}
\end{proposition}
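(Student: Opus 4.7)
The plan is to adapt the basin-of-attraction strategy of~\Ref{prp}{unsesto} and~\Ref{prp}{unquarto} to each $k\in\{2,\ldots,7\}$. The main simplification compared to $k=0,1$ is that the stabilizer of $C^*_k$ in $\PGL(4)$ is the one-dimensional group generated by the $1$-PS $\lambda_k$ recorded in Table~\ref{table101}, so the basin of attraction of $C^*_k$ decomposes as $A_{\lambda_k}(C^*_k)\cup A_{\lambda_k^{-1}}(C^*_k)$. Writing $C=V(f^*_2+f'_2,\,f^*_4+f'_4)$, the same computation as in the proofs of~\Ref{prp}{unsesto}, \Ref{prp}{unquarto} (lifting to a suitable tensor power of $N_{t_k}$ as in~\Ref{rmk}{sollevo}) gives $C\in A_{\lambda_k}(C^*_k)$ iff every monomial of $f'_2$ (resp.~$f'_4$) has $\lambda_k$-weight strictly less than $w_2:=\mu(f^*_2,\lambda_k)$ (resp.~$w_4:=\mu(f^*_4,\lambda_k)$); the basin $A_{\lambda_k^{-1}}(C^*_k)$ is described by the opposite strict inequalities. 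These are finite lists of monomials, enumerated case by case by an elementary weight computation.

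To identify the basin relevant to each range of $t$, observe that for $C\in A_{\lambda_k}(C^*_k)$ the maximal-weight monomials of $f_2,f_4$ are those of $f^*_2,f^*_4$, so $\mu^{N_t}(C,\lambda_k)=w_2+tw_4=\mu^{N_t}(C^*_k,\lambda_k)$; this vanishes at $t_k=-w_2/w_4$ and, since $w_4<0$ in every row of Table~\ref{table101}, is strictly negative for $t>t_k$. The symmetric identity $\mu^{N_t}(C,\lambda_k^{-1})=-(w_2+tw_4)$ for $C\in A_{\lambda_k^{-1}}(C^*_k)$ gives strict negativity at $t<t_k$. Therefore an $N_t$-semistable $C$ in the basin of $C^*_k$ must lie in $A_{\lambda_k}(C^*_k)$ when $t\in(t_{k-1},t_k)$, and in $A_{\lambda_k^{-1}}(C^*_k)$ when $t\in(t_k,t_{k+1})$.

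For part~(1), inspection of the monomial lists shows that no $f'_2\in A_{\lambda_k}(C^*_k)$ contains $x_0^2$ or $x_0x_1$, so $\partial_{x_2}(f^*_2+f'_2)\ne 0$ at $p=[1,0,0,0]$ and the quadric is smooth at $p$; similarly no allowed $f'_4$-monomial has a factor $x_0^a$ with $a\ge 2$, so $f^*_4+f'_4$ is already in the normal form of~\Ref{lmm}{norm3}. Applying~\Ref{prp}{norm3}, the distinguishing non-vanishing coefficient of the singularity type of $C^*_k$ (e.g.\ $g_4^{2,2}$ for $k=2$, $g_4^{4,0}$ for $k=5$, $g_3^{2,1}$ for $k=6$, $g_3^{3,0}$ for $k=7$) comes entirely from $f^*_4$ and is therefore unchanged in $C$, so $C$ and $C^*_k$ have singularities of the same type at $p$. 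For $k=3,4$ the same method yields respectively a $J_{3,r}$ singularity with $r>0$ and a $J_{3,0}$ singularity, matching the strata referred to in Table~\ref{leonardo}.

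For part~(2), the monomials allowed for $f'_2$ in $A_{\lambda_k^{-1}}(C^*_k)$ include $x_0^2$ and $x_0x_1$ (and possibly $x_0x_3$), so $V(f^*_2+f'_2)$ has rank $\le 3$; by~\Ref{lmm}{planarsing} the semistability of $C$ forces rank exactly $3$, and a linear change of coordinates moves the vertex to $v=[0,0,0,1]$. A short case check (comparing $4r_3$ with $w_4$ in each row of Table~\ref{table101}) shows that $x_3^4$ is never an allowed $f'_4$-monomial, so $(f^*_4+f'_4)(v)=0$ and $v\in C$. For $k\in\{2,4,5,6,7\}$ a direct local analysis on the cone, using that the $f^*_4$-coefficients controlling the $A_m$-structure of $C^*_k$ at $v$ are unaffected by the allowed perturbation, shows that $C$ carries the same $A_m$-singularity at $v$. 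The main obstacle is the case $k=3$, where $C^*_3$ has a non-isolated singularity along a twisted cubic through $v$: one must show that a generic perturbation in $A_{\lambda_3^{-1}}(C^*_3)$ breaks the non-reduced structure transversely to the curve, producing an isolated $A_l$-singularity at $v$ with $l\ge 3$ (the case $l=\infty$ occurring only on a proper closed subset). This reduces to a local computation on the quadric cone, checking that the lowest-order terms of the allowed $f'_4$-perturbation separate the two sheets of the non-reduced structure along the axis $\{x_0=x_1=x_2=0\}$.
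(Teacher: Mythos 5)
Your overall strategy follows the paper's: split the basin into $A_{\lambda_{\alpha_k}}(C^*_k)$ and $A_{\lambda_{\alpha_k}^{-1}}(C^*_k)$, identify which half is relevant for each range of $t$, then analyze the singularities. Your route to identifying the relevant half is actually cleaner than the paper's: the paper analyzes the singularity type on both sides and then invokes Proposition~\ref{prp:destabilizing} to rule out Item~(1) when $t>t_k$ (and symmetrically), whereas you observe directly that $\mu^{N_t}(C,\lambda_{\alpha_k})=w_2+tw_4$ for $C\in A_{\lambda_{\alpha_k}}(C^*_k)$ and conclude instability for $t>t_k$ immediately. That is a valid shortcut, given that the one-parameter subgroup test applies (which is guaranteed by Remark~\ref{rmk:stabcomm}).

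However, there are gaps in the singularity analysis:

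\textbf{Part (1), normal form claim.} Your step ``$f^*_4+f'_4$ is already in the normal form of Lemma~\ref{lmm:norm3}'' is not correct, because $f'_2$ is allowed to be nonzero and in general $f^*_2+f'_2$ is \emph{not} of the form $x_0x_2+x_1^2+ax_3^2$. (For $k=5$, say, $f'_2$ may contain $x_0x_3$, $x_1x_2$, $x_1x_3$, $x_2^2$, $x_2x_3$, $x_3^2$.) To apply Proposition~\ref{prp:norm3} you would need to perform the coordinate change of Lemma~\ref{lmm:norm3}, which acts nontrivially on $f_4$; your subsequent claim that the distinguishing coefficient of $g_3$ or $g_4$ ``comes entirely from $f^*_4$ and is therefore unchanged'' is not justified after such a change of coordinates. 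The paper sidesteps this entirely by working in local analytic coordinates $(x,y)$ on the ambient quadric, assigning the weights of Table~\ref{tavola} directly to $x$ and $y$, and invoking Theorem~\ref{thm:recognize} after checking that $f'_4$ contributes only terms of weight $>1$. You would need an argument of this type (showing the linear change of coordinates needed to reach normal form is upper-triangular with respect to the relevant filtration), or else work in local coordinates as the paper does.

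\textbf{Part (2), the line $L$ for $k\in\{5,6,7\}$.} The ``type at $v$'' recorded in Table~\ref{leonardo} is not just the $A_m$ singularity: for $k=5$ the tangent cone at $v$ must be $2T_v(L)$, and for $k\in\{6,7\}$ the curve $C$ must contain the line $L$. Your argument only tracks the $A_m$-structure, not these extra conditions, which do require separate verification (the paper checks them at the end of its proof by identifying $L=V(x_0,x_1)\subset V(f_2+f'_2)$).

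\textbf{Case $k=3$.} You flag the right issue (the twisted-cubic degeneracy) but leave the conclusion to ``a local computation on the quadric cone'' without carrying it out. In the paper this case is subsumed under the same weight computation as the other $k$'s (with the weight function $\weight_m$ and Table~\eqref{trerighe}), which handles the $A_\infty$ degeneration uniformly; an explicit perturbation argument would be acceptable, but it needs to be given.
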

 \begin{proof}
Let $\alpha_k$ be as in  Table~\ref{leonardo}. Then $\lambda_{\alpha_k}$ is the component of the identity in the stabilizer of $C^{*}_k$ (consult Table~\ref{table101}). Thus $C =V(f_2+f'_2,f_4+f'_4)\in\cP^{ss}(N_t)$ is either in the $\lambda_{\alpha_k}$-basin of attraction of $C^{*}_k$ or in the $\lambda^{- 1}_{\alpha_k}$-basin of attraction of $C^{*}_k$. 

\medskip

\noindent{\it $\lambda_{\alpha_k}$-basin of attraction of $C^{*}_k$:\/}   We will prove that 
\begin{equation}\label{gallinari}
\text{if $C$ is in the $\lambda_{\alpha_k}$-basin of attraction of $C^{*}_k$, then Item~(1) holds.}
\end{equation}
Noting  that $t_k=2\alpha_k/(9\alpha_k-1)$, we get that $C$ is in the 
$\lambda_{\alpha_k}$-basin of $C^{*}_k$ if and only if 
\begin{equation}\label{deux}
\lim\limits_{s\to 0}s^{2\alpha_k}\lambda_{\alpha_k}(s)f_2'=0,\qquad 
\lim\limits_{s\to 0}s^{-(9\alpha_k-1)}\lambda_{\alpha_k}(s)f_4'=0.
\end{equation}
Since $1/5<\alpha_k$, the first equation of~\eqref{deux} gives that the first alternative in~\eqref{duecasi} holds. In particular $p:=[1,0,0,0]$ is a smooth point of the quadric $V(f_2+f'_2)$, local coordinates on $Q'$ centered at $p$ are
\begin{equation*}
x:=x_3|{Q'},\quad y:=x_1|{Q'}.
\end{equation*}
Moreover $x_2|_{Q'}=\varphi$, where $\varphi$ is an analytic function such that $\varphi\equiv bx\pmod{\gm_p^2}$ for some $b\in \CC$  (see~\eqref{aneloc}). 
Thus a local equation of $C\subset Q'$ centered at $p$ is given by
\begin{equation*}
f_4(1,y,\varphi(x,y),x)+f'_4(1,y,\varphi(x,y),x)=0.
\end{equation*}
Let us prove that the singularity of $C$ at $p$ is as in  the  row of Table~\ref{table101} corresponding to $k$. 
 
We assign weights to $x$ and $y$ as follows:
\begin{equation}\label{bilancia}
\weight_k(x):=\frac{1}{3},\qquad \weight_k(y):=\frac{1-\alpha_k}{3(3\alpha_k+1)}.
\end{equation}
Notice  the following:
\begin{enumerate}
\item
The weights of $x$ and $y$ in~\eqref{bilancia} are equal to the weights of $x$ and $y$ in Table~\ref{tavola} corresponding to the singularity type listed in Table~\ref{table101} (on the row with index $k$) - this is a straightforward computation. (Warning: the index $k$  in Table~\ref{tavola} has no relation to the index $k$ in Table~\ref{table101}.)
\item
 $\varphi(x,y)=y^2+\psi(x,y)$  (see~\eqref{aneloc}) where all monomials appearing in $\psi(x,y)$ have weight greater than $\weight_{k}(y^2)$ - this because $2\weight_{k}(y)<\weight_{k}(x)$. 
\end{enumerate}
One easily checks that $f_4(1,y,\varphi(x,y),x)=h(x,y)+g(x,y)$ where $h(x,y)$ is homogeneous of weight $1$ and all monomials appearing in $g(x,y)$ have weight strictly greater than $1$, and moreover $h(x,y)$ is equal to the leading term appearing in Table~\ref{tavola} on the row with index $k$ (with the exception of the case $k=3$ in Table~\ref{table101}, where it is equal to the leading (and \lq\lq unique\rq\rq) term of the $J_{3,\infty}$ singularity). 

Thus, by~\Ref{thm}{recognize},  in order to prove~\eqref{gallinari}, it suffices to check that all monomials appearing in 
$f'_4(1,y,\varphi(x,y),x)$ have weight strictly greater than $1$. Every   such  monomial is obtained from a  monomial  $x_0^{i_0}x_1^{i_1}x_2^{i_2}x_3^{i_3}$ appearing in $f'_4$ by setting $x_0=1$, $x_1=y$, $x_2=\varphi(x,y)$ and $x_3=x$. By item~(2) above it suffices to check that the weight of $x^{i_3}y^{i_1+2i_2}$ is strictly greater than $1$, i.e.~that 
\begin{equation*}
(i_1+2i_2)(1-\alpha_k)+i_3 (3\alpha_k+1)>3(3\alpha_k+1).
\end{equation*}
The above inequality follows from the second equation in~\eqref{deux}. 

\medskip

\noindent{\it $\lambda^{-1}_{\alpha_k}$-basin of attraction of $C^{*}_k$:\/} We will prove that 
\begin{equation}\label{baffetto}
\text{if $C$ is in the $\lambda^{-1}_{\alpha_k}$-basin of attraction of $C^{*}_k$, then Item~(2) holds.}
\end{equation}
First $C$ is in the 
$\lambda^{-1}_{\alpha_k}$-basin of $C^{*}_k$ if and only if 
\begin{equation}\label{dos}
\lim\limits_{s\to 0}s^{-2\alpha_k}\lambda_{\alpha_k}(s^{-1})f_2'=0,\qquad 
\lim\limits_{s\to 0}s^{9\alpha_k-1}\lambda_{\alpha_k}(s^{-1})f_4'=0.
\end{equation}
Since $1/5<\alpha_k$ it follows from~\eqref{altridue} that there exist $c,d\in\CC$ such that 
\begin{equation}\label{gradodue}
f_2+f'_2=x_0(cx_0+dx_1+x_2)+x_1^2.
\end{equation}
Thus $V(f_2+f'_2)$ has rank $3$, and its singular point is $v=[0,0,0,1]$. In order to examine the consequences of the second equation in~\eqref{dos}, we introduce some notation. Given a $1$ PS $\lambda(s)=\diag(s^{r_0},\ldots,s^{r_3})$
we define $\weight_{\lambda}(g)$ for $0\not=g\in\CC[x_0,\ldots,x_3]$ as
\begin{equation}
\weight_{\lambda}(g):=\min\{\weight_{\lambda}(\text{monomial in $g$})\},
\end{equation}
where $\weight_{\lambda}$ of a monomial is given by~\eqref{pesomon}. Then the second equation in~\eqref{dos} is equivalent to 
\begin{equation}\label{almeno}
\weight_{\lambda_{\alpha_k}}(f_4')>1-9\alpha_k.
\end{equation}
Inequality~\eqref{almeno} gives that $v$ is a singular point of $V(f'_4)$. Since $v$ is a smooth point of $V(f_4)$, with tangent plane $V(x_0)$, it follows that  $v$ is a smooth point of $V(f_4+f'_4)$, with tangent plane $V(x_0)$. 
Now set $x_3=1$, and hence $(x_0,x_1,x_2)$ are local coordinates in a neighborhood of $v$ in $\PP^3$; by the Implicit Function Theorem, the restrictions of $x_1,x_2$ to  in $V(f_4+f'_4)$ are local coordinates in a neighborhood of $v$ in $V(f_4+f'_4)$. Abusing notation, we use the same symbol for $x_1,x_2$ and their restrictions. There exists an analytic function $\varphi$ of two variables defined in a neighborhood of  $(0,0)$ such that $x_0=\varphi(x_1,x_2)$ \emph{on $V(f_4+f'_4)$}. By~\eqref{gradodue}, a local equation of the plane singularity $(C,v)$ is given by
\begin{equation}\label{eqloc}
x_1^2+\varphi(x_1,x_2)(c\varphi(x_1,x_2)+dx_1+x_2)=0.
\end{equation}
Define 
\begin{equation}
\weight_m(x_1):=1/2,\quad \weight_m(x_2):=1/(m+1),
\end{equation}
 and extend it to a weight function on non zero elements of $\CC[[x_1,x_2]]$ by defining $\weight_m(g)$ as the minimimum of weights of monomials appearing in $g$. In order to prove~\eqref{baffetto} it suffices to check that
\begin{equation}
\weight_m(\varphi)\ge\frac{m}{m+1},
\end{equation}
and the extra condition involving a line $L\subset V(f_2+f'_2)$ if $k\in\{5,6,7\}$.

Since $f_4(\varphi(x_1,x_2),x_1,x_2,1)+f'_4(\varphi(x_1,x_2),x_1,x_2,1)=0$, we get that 
\begin{equation}
\weight_m(\varphi)=\weight_m(f_4(0,x_1,x_2,1)+f'_4(0,x_1,x_2,1)).
\end{equation}
A straighforward computation shows that $\weight_m(f_4(0,x_1,x_2,1))= m/(m+1)$. Thus it suffices to check that
\begin{equation}
\weight_m(f'_4(0,x_1,x_2,1))\ge\frac{m}{m+1}.
\end{equation}
This follows from the values in Table~\eqref{trerighe} - notice that it suffices to consider monomials $x_3^{i_3}x_2^{i_2}x_1^{i_1}$ with $i_1\in\{0,1\}$, because if $i_1\ge 2$  the  associated weight  is at least $1$. (It helps to notice that $\alpha_k$'s are decreasing.)
\begin{equation}\label{trerighe}
\begin{tabular}{c|c|c|c|c|c|c }
monomial  $x^{I}$ &  $x_3^2x_2^2$ & $x_3^2x_2 x_1$ & $x_3 x_2^3$ &  $x_3 x_2^2 x_1$ &     $x_2^4$ &   $x_2^3 x_1$  \\
\hline
$\weight_{\lambda_{\alpha_k}}(x^I)$  & $-2\alpha_k-2$ & $-3\alpha_k-1$  & $3\alpha_k-3$  &  $2\alpha_k-2$  &   $8\alpha_k-4$  & $7\alpha_k-3$  \\
\hline
$\weight_{\lambda_{\alpha_k}}(x^I)>1-9\alpha_k$ iff  & no $k$ & $k=2$  &  $k=2$  & $k\in\{2,\ldots,5\}$  &   $k\in\{2,3,4\}$  & $k\in\{2,\ldots,6\}$  \\
\end{tabular}
\end{equation}
Lastly we prove that,  if $k\in\{5,6,7\}$, the extra condition involving a line $L\subset V(f_2+f'_2)$ holds. First notice that $L:=V(x_0,x_1)$ is a line contained in $V(f_2+f'_2)$.  If $k=5$ the condition $\cC_v C=2T_v(L)$ holds by the local equation of $C$, see~\eqref{eqloc}.  If $k\in\{6,7\}$ the line $L$ belongs to $V(f_4)$ by the computations above (see Table~\ref{leonardo}).

\medskip

Let us finish the proof of the proposition. Suppose that  $t\in (t_{k},t_{k+1})$.  By the analysis above, either the thesis of Item~(1) or the thesis of Item~(2) holds. The former is excluded by~\Ref{prp}{destabilizing}. 

Now suppose that  $t\in (t_{k-1},t_{k})$. By the analysis above, either the thesis of Item~(1) or the thesis of Item~(2) holds. Assume that the latter holds. By the analysis of the 
$\lambda_{\alpha_k}^{\pm}$-basin of attraction of $C^{*}_k$,  it follows that there exist $N_t$ semistable curves $C$, with $t_k<t$, which are in the basin of attraction of $C^{*}_k$ for which the thesis of Item~(1) holds. This contradicts~\Ref{prp}{destabilizing}. Hence  the thesis of Item~(1) holds.
\end{proof}

\subsection{Proof of the remaining main results of the section}\label{subsec:dimodue}
\begin{proof}[Proof of~\Ref{thm}{thmcriticalt}]
Let us prove that the critical values for the VGIT $\gM(t)$ in the interval $(\delta,1/2)\cap\QQ$ are given by the $t_k$'s appearing in~\eqref{valori}, with the exclusion of $t_8=1/2$. By~\Ref{prp}{indentifyt}, every critical value is equal to one of the $t_k$'s.  Hence it remains to prove that $t_k$ is  a critical value, for each $k\in\{0,\ldots,7\}$. There exists an $N_{1/6-\epsilon}$-semistable $(4,4)$ curve $C_k$ on a smooth quadric $Q$,  such that the double cover $X_{C_k}\to Q$ ramified over $C_k$  has a non slc   singularity with tag $k$ and no non slc singularity with tag strictly less than $k$. This is immediate for $k\in\{0,1\}$, and it follows from~\Ref{prp}{4unico} for $k\in\{2,\ldots,7\}$. (The argument would work equally well if we knew that  the curve $C_k^{*}$ in Table~\ref{table101} is $N_{t_k}$-semistable, but at this stage we have not yet proved this.) By~\Ref{prp}{destabilizing} $C_k$ is $N_t$-unstable for $t_k<t$. Thus it will suffice to show that $C_k$ is $N_t$-semistable for $t<t_k$. Suppose the contrary. 
Since $C_k$ is  $N_{1/6-\epsilon}$-semistable, there exists $0\le k_0<k$ such that $C_k$ is in the basin of attraction (form the left) of a curve projectively equivalent to $C_{k_0}^{*}$. By the results of~\Ref{subsec}{sectbasin} this implies that $C_k$ has a point with tag $k_0$, contradicting our choice of $C_k$.  This proves that $C_k$ is $N_t$-semistable for $t<t_k$, and hence proves that $t_k$ is critical value.

It remains to prove that the exceptional loci of 
 $\gM(t_{k-1},t_k)\to \gM(t_k)$ and  $\gM(t_{k},t_{k+1})\to \gM(t_k)$ are as claimed. We have already established the fact that the exceptional loci of $\gM(t_{k-1},t_k)\to \gM(t_k)$ are naturally birational to $W_k$. Specifically, the generic point $\zeta_k$ in $W_k\subset \gM$ is $N_t$ stable for $t<\frac{1}{6}$, it becomes unstable for $t>t_k$ (cf. \Ref{prp}{destabilizing}), but via the basin of attraction argument, $\zeta_k$ can not become unstable before $t_k$. For the exceptional loci of $\gM(t_{k},t_{k+1})\to \gM(t_k)$, we note first that the cases $k=0,1$ are discussed in detail in \Ref{prp}{unsesto} and \Ref{prp}{unquarto}. For the cases $k=2,4$, the minimal orbit $C_k^*$ at $t_k$ has a singularity of type $A_2$ and respectively $A_3$ at the vertex $v$ of the quadric cone containing $C_k^*$. For the cases $k=5,6,7$, there is a singularity at $v$ of type $A_4$, $A_5$, and $A_7$ respectively,  and additionally a line $L$ in special position with respect to the curve $C_k^*$ and the singularity at $v$ (see Table \ref{table101}).  As previously discussed, the exceptional locus $\Sigma_+(t_k)$ of $\gM(t_{k},t_{k+1})\to \gM(t_k)$ is obtained via the basin of attraction of $C_k^*$. By the arguments given in the previous subsection, it follows that the generic point $\xi_k$ of $\Sigma_+(t_k)$
will correspond to a curve having the same type of singularity at $v$ (and position of $L$) as $C_k^*$ (see also \Ref{subsec}{e14case} for further details in one of the cases). Furthermore, this curve will have at worst some additional nodes (imposed by the special position of the line $L$). The resulting conditions are exactly the conditions that have been used to define the loci $Z^{k+1}$ in $\sF^*$ (see \Ref{prp}{hypkgeom}). In other words, there are natural rational maps $\Sigma_+(t_k)\dashrightarrow Z^{k+1}$ (and clearly one-to-one onto the image). To conclude that the two spaces are birational, we note that $Z^{k+1}$ are irreducible and that $\Sigma_+(t_k)$ and $Z^{k+1}$ have the same dimension. As discussed, the index of $Z$ corresponds to the codimension. On the other hand, the dimension of $\Sigma_+(t_k)$ can be computed as being complementary to that of $\Sigma_-(t_k)$, or equivalently $W_k$. (See \Ref{rmk}{flip13} for a discussion of the dimensions in the special case $t=\frac{1}{3}$.)
  \end{proof}

\medskip

\begin{proof}[Proof of~\Ref{prp}{cod2}]
Let $U_{0}\subset U$ be the (open) subset parametrizing curves $C=V(f_2,f_4)$ such that the associated double cover  is a $K3$ surface with canonical singularities, and  such that if $\rk f_2=3$ (by the first condition $\rk f_2\in\{4,3\}$), then  
 $C$ does not contain the vertex of $V(f_2)$. We claim that
\begin{equation}\label{furbata}
\text{If  $t\in\left(\frac{1}{6},\frac{1}{2}\right)\cap\QQ$, then   every curve in $U_{0}$ is  $N_t$ semistable.}
\end{equation}
In fact, since  for $t\in \left(2/5,1/2\right)\cap\QQ$ the period map $\Phi\colon\gM\to\sF^{*}$ is regular and surjective, \eqref{furbata} holds for  $t\in \left(2/5,1/2\right)$. Next, let $C\in U_0$. Then,  by the results of~\Ref{subsec}{sectbasin}, $C$ is not in the basin of attraction (from the right) of any critical value $t_k$ with $k\in\{1,\ldots,7\}$. It follows that $C$ remains semistable for all $t\in(1/6,1/2)\cap\QQ$. We have proved~\eqref{furbata}.

Let
\begin{equation}
\gM(t)_{0}:=U_{0}\gquot_{N_t} \SL(4).
\end{equation}
Thus $\gM(t)_{0}$ is an open subset of $\gM(t)$. A dimension count shows that 
\begin{equation}\label{cod2}
\cod(\gM(t)\setminus\gM(t)_{0},\gM(t))\ge 2.
\end{equation}
Since  $\gM(t)_{0}$ is contained in the regular locus of the period map $\gp(t)$,  all that remains to prove is that the complement of $\gp(t)(\gM(t)_{0})$ in $\sF^{*}$ has codimension at least $2$. By~\Ref{prp}{hypkgeom}, the  complement of $\gp(t)(\gM(t)_{0})$ in $\sF$ is equal to $Z^2$, which has codimension $2$. Since the boundary $\sF^{*}\setminus\sF$ has codimension $17$, we are done.
 \end{proof}

 \subsection{An example - The local structure of the flip at $t=\frac{5}{14}$ (the $E_{14}$ case)}\label{subsec:e14case}
We discuss here in greater detail the structure of the flip occurring at $5/14$.  Let
 $$C=V(x_0x_2+x_1^2,x_2^4+x_0x_3^3)$$
 be a curve in the minimal orbit where semistability changes for $t=5/14$. 
 Thus $C$ has a singularity of type $E_{14}$ at $p=[1:0:0:0]$ and one of type  $A_4$ at the vertex $v$ of the quadric cone $Q=V(x_0x_2+x_1^2)$. 
 
  As expected from the general VGIT theory in this setup, the two fibers $\Sigma_{-}(5/14)$  and $\Sigma_{+}(5/14)$ are weighted projective spaces of complementary dimensions. Below, we will identify them explicitly, and see the connection to the deformation of the $E_{14}$ singularities (and Pinkham's theory of deformations of singularities with $\mathbb G_m$ action).

  The relevant $1$-PS $\lambda$ in this example is $(17,5,-7,-15)$. We are interested in computing the induced $\bC^*$ action on $T_{C}U$. We can choose an affine slice $\mathbf A$ near $C$
 such that the curves parametrized by $\mathbf A$ are given by equations
 $$V(f_2+g_2,f_4+g_4)$$
 where
  \begin{eqnarray*}
  g_4&=&a_1x_0^4+\dots+a_{24}x_1x_3^3\\
  g_2&=&b_1x_0^2+\dots+b_9x_2^2
  \end{eqnarray*}
 given by sums of all  monomials that do not contain $x_1^2$, and excluding $x_2^4$ from the degree $4$ monomials. Simply, we require that the coefficient of $x_1^2$ in $f_2+g_2$ and the coefficient of $x_2^4$ in $f_4+g_4$ are both $1$ (thus we fix affine coordinates). Also, the equation of the quartic can be modified by a multiple of the quadric - and thus the monomials containing $x_1^2$ (i.e. $x_1^2 q(x_0,x_1,x_2,x_3)$) can be cancelled by an appropriate multiple of the quadric (i.e. $(f_4+g_4)-(f_2+g_2)q$ will contain no monomial divisible by $x_1^2$). 
 
 To understand the action induced by $\lambda$ on the affine slice $\mathbf A$, note
 \begin{eqnarray}
 s\cdot(f_2+g_2,f_4+g_4)&\to&\left(\lambda(s)\cdot(f_2+g_2),\lambda(s)\cdot(f_4+g_4)\right) \label{actionslice}\\
 &=&\left(s^{-10}\lambda(s)\cdot(f_2+g_2),s^{28}\lambda(s)\cdot(f_4+g_4)\right)\nonumber\\
 &=&(f_2+s^{-10}\lambda(s)g_2, f_4+s^{28}\lambda(s)g_4)\nonumber
 \end{eqnarray}
 (i.e. we rescale the equations to preserve the coefficient $1$ for $x_1^2$ and $x_2^4$ respectively).  A straight-forward computation gives the weights: 
 \begin{itemize}
 \item[(1)] directions perturbing $f_4$ (coefficients $a_i$): $96$ (the weight corresponding to $x_0^4$,  i.e. $4\cdot 17+28=96$, where $17$ is the weight of the $\lambda$ on the $x_0$ coordinate), $84$, $72$, $64$, $60$, $52$, $48$, $40$, $36$, $32$, $28$, $24$, $20$, $16$, $12$, $8$, $4$, $0$, $-4$, $-8$, $-12$, $-16$, $-24$, $-32$.
 \item[(2)] directions perturbing $f_2$ (coefficients $b_i$): : $24$, $12$, $0$, $-8$, $-12$, $-20$, $-24$, $-32$, $-40$.
\end{itemize}
 These $33$ weights are the weights of the $\bC^*$ action induced by $\lambda$ on $T_{C}U$. 
On the other hand, the orbit to $C$ can be identify with $G/\lambda$. The tangent space to $G$ is the Lie algebra $\mathfrak{g}$ and the induced action by $\lambda$ has weights $\lambda_i-\lambda_j$ (N.B. there are three $0$ weights corresponding to the maximal torus in $\mathfrak{sl}(4)$). Explicitly, we have
\begin{itemize}
\item[(3)] The weights of the $\bC^*$ action on the tangent space to the orbit are  $\pm 24$, $\pm 20$, $\pm 12$, $\pm 12$, $\pm 8$, $0$, $0$.
\end{itemize}
In conclusion, we get
\begin{itemize}
\item[(4)] The positive weights on the normal slice to the orbit of $C$ are:  $4$, $16$,$24$, $28$, $36$, $40$, $48$, $52$, $60$, $64$, $72$, $84$, $96$
\item[(5)] The negative weights on the normal slice to the orbit of $C$ are:  $-40$, $-32$, $-24$, $-16$, $-8$, $-4$
\end{itemize}
By Luna's slice theorem, if $C$ represents a critical orbit at $t_5=5/14$, the two exceptional fibers for $\gM(\frac{5}{14}\pm \epsilon)\to \gM(\frac{5}{14})$ over $C$ are $W\bP(1,4,6,7,9,10,12,13,15,16,18,21,24)$ and $W\bP(1,2,4,6,8,10)$ respectively (we just normalized the two sets of weights from (4) and (5) by dividing by the gcd's) -- e.g. see \cite[\S4.2]{VGIT}. 

Also note that the two sets of weights from (4) and (5) correspond to the basins of attraction $A_\lambda(C)$ and $A_{\lambda^{-1}}(C)$ respectively. 

 \begin{itemize}
 \item[(6)] The monomials giving the positive weights from (4) can be chosen to be: $x_0^4$, $x_0^3x_1$, $x_0^3x_2$, $x_0^3x_3$, $x_0^2x_1x_2$, $x_0^2x_1x_3$, $x_0^2x_2^2$, $x_0^2x_2x_3$, $x_0x_1x_2^2$, $x_0x_1x_2x_3$, $x_0x_2^3$, $x_0x_2^2x_3$, $x_1x_2^2x_3$. These monomials give a normal slice in the positive direction, which is equivalent to saying that the basin of attraction $A_{\lambda}(C)$ is 
 given by perturbing the original equations $(f_2,f_4)$ by monomials in the list above. 
  \item[(7)] Similarly, the monomials giving the negative weights from (5): $x_3^2$, $x_3^4$, $x_2x_3^3$,  $x_2^2 x_3^2$, $x_2^3x_3$, $x_1x_2x_3^2$. 
  As before, this gives an explicit description of $A_{\lambda^{-1}}(C)$.
 \end{itemize}

To understand the geometric behavior, we consider first the negative weight deformations:
$$V(x_0x_2+x_1^2+cx_2^2, x_2^4+x_0x_3^3+a_1 x_1x_2x_3^2+a_2x_2^3x_3+a_3x_2^2 x_3^2+a_4x_3^4)$$
In affine coordinates at $p=[1,0,0,0]$, we get
$$x_2=-(x_1^2+c x_3^2)$$
Plugging it into the quartic equation, we get the local equation near $p=[1,0,0,0]$:
$$x_3^3+(x_1^2+cx_3^2)^4+g_4(1,x_1,x_1^2+cx_3^2,x_3)$$
Giving weights $\frac{1}{8}$ for $x_1$ and $\frac{1}{3}$ for $x_3$, we note that the equation becomes
$$x_3^3+x_1^8+\textrm{h.o.t}$$
 
Let us consider the positive directions. It is immediate to see that perturbations of $(f_2,f_4)$ in the positive directions (cf. (4) and (6) above) preserve the $A_4$ singularity at the vertex $v$ of the quadric cone. On the other hand, one gets a versal deformation of the $E_{14}$ singularity. 
We are interested at the behavior at $p=[1,0,0,0]$. Thus, in affine coordinates ($x_0=1)$, we get 
 $$x_2=-x_1^2$$
Writing $x_1=y$, $x_3=x$, $x_2=y^2$, $x_0=1$, we get that the equation of the perturbed quartic 
$$f_4+g_4=x_2^4+x_0x_3^3+g_4$$
becomes
$$x^3+y^8+g_4(1,y,y^2,x)$$ 
Now consider the monomials of positive weight (see (6) above). Plugging-in into equation above, we get indeed the versal deformation of $E_{14}$ in the negative weight directions (i.e. smoothing directions).  By Luna's slice theorem, modulo the $\bC^*$-action, this can be identified with the exceptional fiber of $\gM(\frac{5}{14}+\epsilon)\to \gM(\frac{5}{14})$. By Looijenga \cite{ltriangle2}, this weighted projective space can be also be identified with the moduli of $T_{3,3,4}$-marked $K3$s. This confirms the predictions of \cite[Sect. 6]{log2} (see loc. cit. for further discussion). 
 
 \begin{table}[htb!]
\renewcommand\arraystretch{1.5}
\begin{tabular}{ccccccccccccccc}
\hline
4&16&24&28&36&40&48&52&60&64&72&84&96\\
$x_1x_2^2x_3$&$x_0x_2^2x_3$&$x_0x_2^3$& $x_0x_1x_2x_3$&$x_0x_1x_2^2$&$x_0^2x_2x_3$&$x_0^2x_2^2$&$x_0^2x_1x_3$& $x_0^2x_1x_2$&$x_0^3x_3$& $x_0^3x_2$&$x_0^3x_1$&$x_0^4$\\
$xy^5$&$xy^4$&$y^6$&$xy^3$&$y^5$&$xy^2$&$y^4$&$xy$&$y^3$&$x$&$y^2$&$y$&$1$\\
\hline
 \end{tabular}
 \end{table}

Also recall that the defining equation of an $E_{14}$ singularity is 
$$f(x,y)=x^3+y^8$$
Thus a natural basis for the versal deformation (consider $\bC[x,y]/\langle \partial_x f,\partial_y f\rangle$) is
$$1,y,y^2,y^3,y^4,y^5,y^6,x,xy,xy^2,xy^3,xy^4,xy^5,xy^6$$
which is exactly the third column of the table above.

\section{Proof of the main result}\label{sec:proof}

 In the present section, we prove~\Ref{thm}{mainthm1}. Item~(i) needs no proof. In fact, the identification of $\sF(0)$ with $\Proj$ of the ring of sections of the Hodge bundle is a celebrated result of Baily and Borel, while the equality $\gM=\sF(1)$ is proved in Section 4 of~\cite{log1}.

The proofs of Items~(ii) and~(iii) involve our GIT models $\gM(t)$ for $t\in(1/6,1/2)$.  Let
\begin{equation}\label{betaper}
\gp(t)\colon \gM(t)\dashrightarrow \sF
\end{equation}
be the period map: if $C=V(f_2,f_4)$ represents a generic stable point  $x\in\gM(t)$, then $\gp(t)(x)$ is the period point of the double cover of $V(f_2)$ branched over $C$ (a hyperelliptic quartic $K3$ surface). The key ingredients in our proofs of Items~(ii)-(iii) are~\Ref{prp}{cod2} and results about the Picard groups of $\sF$ and $\gM(t)$ that we discuss in the following subsection.

\subsection{Divisor Classes on the locally symmetric and GIT models}
The locally symmetric variety $\sF=\cD/\Gamma$ is a $\bQ$-factorial quasi-projective variety. 
 Let $H_n$ and $H_h$ be the nodal and hyperelliptic divisors of $\sF$, respectively. (See Definition 1.3.4 in~\cite{log1}.) Informally, $H_n$ is the closure of the locus of periods of hyperelliptic quartic $K3$'s which are double covers $X\to V(f_2)$ ramified over a $(2,4)$ curve $V(f_2,f_4)$ which is smooth except for a node at a smooth point of $V(f_2)$, while $H_h$ is the locus of periods of hyperelliptic quartic $K3$'s which are double covers $X\to V(f_2)$ of a quadric cone ramified over a $(2,4)$ curve with ADE singularities. We recall that $\lambda$ is the Hodge (or automorphic) $\QQ$ divisor class.

If $Z$ is an algebraic variety, we let $\Pic(Z)_{\QQ}:=\Pic(Z)\otimes_{\ZZ}\QQ$. A $\QQ$-Cartier divisor $D$ on $Z$ determines an element $[D]$ of $\Pic(Z)_{\QQ}$.
\begin{proposition}[{cf.~\cite[Sect. 3]{log1}}]\label{prp:borchrel}
Let $\sF$ be the period space of hyperelliptic polarized $K3$'s of degree $4$. Then
\begin{enumerate}
\item $\Pic(\sF)_\bQ=\QQ [H_n]\oplus \QQ[H_h]$, and
\item $136 \lambda\equiv H_n+16 H_h$.
\end{enumerate}
\end{proposition}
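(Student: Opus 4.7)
Both statements are lattice-theoretic facts about the Type IV locally symmetric variety $\sF = \Gamma \backslash \cD^+$ attached to $\Lambda = D_{16} \oplus U^2$, and my plan follows the program laid out in Section~3 of~\cite{log1}: first pin down the Heegner divisors on $\sF$, then produce a single Borcherds product giving the relation, and finally deduce the Picard rank.

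For part~(2), I would invoke Borcherds' multiplicative lifting theorem. Given a weakly holomorphic vector-valued modular form $F$ of weight $(2 - 18)/2 = -8$ for the Weil representation $\rho_\Lambda$ on $\CC[A_\Lambda]$ (here $A_\Lambda = A_{D_{16}} \cong (\ZZ/2)^2$), Borcherds' theorem produces a meromorphic automorphic form $\Psi(F)$ on $\cD^+$ of weight equal to half the constant term of $F$, whose divisor is an explicit integer combination of Heegner divisors read off from the singular Fourier coefficients of $F$. The task is to exhibit (or invoke from~\cite{log1}) such an $F$ whose principal part records $H_n$ with multiplicity $1$ and $H_h$ with multiplicity $16$, and whose constant term equals $136$. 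Existence of $F$ with the prescribed principal part is shown by a dimension count in the space of weakly holomorphic modular forms of weight $-8$ for $\rho_\Lambda$ (or by explicit construction from theta series associated to the $E_8$ and $D_{16}$ root lattices, exploiting the decomposition $D_{16} \subset E_8 \oplus E_8$ and the obstruction theory of Bruinier). The resulting product $\Psi$ then has divisor $H_n + 16 H_h$ on $\sF$, while as a weight-$68$ modular form it represents $68 \cdot 2\lambda = 136 \lambda$ in $\Pic(\sF)_\QQ$; equating divisors yields $136\lambda \equiv H_n + 16 H_h$.

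For part~(1), I would combine the above with the general principle (Bruinier's converse theorem in the cases where it applies, or an application of the Bergeron--Li--Millson--Moeglin theorem) that $\Pic(\sF)_\QQ$ is spanned by $\lambda$ together with the classes of primitive Heegner divisors. A direct computation with the discriminant form of $\Lambda$ classifies the primitive Heegner divisors on $\sF$: the relevant $O^+(\Lambda)$-orbits of vectors of negative square and small divisibility produce only $H_n$ (from $(-2)$-vectors of divisibility~$1$) and $H_h$ (from $(-4)$-vectors of divisibility~$2$), all other orbits being either empty or reducible to these. This gives an a priori generation $\Pic(\sF)_\QQ = \QQ\lambda + \QQ [H_n] + \QQ[H_h]$, which collapses to $\QQ[H_n] \oplus \QQ[H_h]$ once (2) is used to eliminate $\lambda$. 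Linear independence of $[H_n]$ and $[H_h]$ is then verified by restricting to a suitably chosen test curve (for instance, a modular curve in one of the Type~II boundary components described in~\Ref{thm}{thmbbstrata}, on which the two divisors have different intersection numbers).

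The main obstacle is the construction and verification of the Borcherds input form $F$ of weight $-8$ in part~(2): one needs both the existence of a weakly holomorphic form with exactly the required principal part on the two nontrivial components of $A_\Lambda$ supporting $H_n$ and $H_h$, and the determination of its constant term to be precisely $136$. Everything else is essentially bookkeeping with the discriminant form of $D_{16} \oplus U^2$; the Borcherds side is where the nontrivial arithmetic input enters, and it is this computation (carried out in~\cite{log1}) that fixes the magical coefficients $1$, $16$, and $136$.
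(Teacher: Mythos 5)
Your two-step strategy — a Borcherds product for~(2), then a generation result for $\Pic(\sF)_\QQ$ by Heegner divisors to pin down~(1) — does match the approach of Section~3 of~\cite{log1}, which is all the paper cites for this proposition. But there are two places where the argument as sketched would not close. The first is the weight bookkeeping. Under the standard convention that a weight-$k$ automorphic form trivializes $\lambda^{\otimes k}$, a Borcherds product of weight~$68$ (input constant term $136$) represents $68\lambda$, not $136\lambda$. Your line ``$68\cdot 2\lambda = 136\lambda$'' introduces an unexplained factor of two, and the danger is that such a factor genuinely floats around here — $\pm v$ cut out the same wall, the reflections $s_v$ sit in $\Gamma$ and ramify the quotient along the Heegner divisor, the constant term is \emph{twice} the weight — so it must be fixed once and tracked consistently, or the coefficient of $H_h$ gets corrupted along with that of $\lambda$. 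Concretely, under the normalization you seem to be using for $\lambda$, the required constant term is $272$, not $136$.

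The second is the rank in part~(1). Saying that the orbits ``produce only $H_n$ and $H_h$, all others being empty or reducible'' is a gesture, not an argument: there are infinitely many Heegner classes $H(\gamma,d)$ on $\sF$, and the substantive fact is that the $\QQ$-span of $\lambda$ together with all of them is three-dimensional, so that after imposing the single Borcherds relation the rank is two. This requires a genuine computation, e.g.\ a dimension count for cusp forms of weight $10$ for the Weil representation attached to $\Lambda_{18}$, or an explicit inventory of Borcherds products reducing every $H(\gamma,d)$ to the span of $\lambda$, $H_n$, $H_h$; your sketch skips it. For the record, in~\cite{log1} the input form is produced by quasi-pullback of the weight-$12$ Borcherds form $\Phi_{12}$ on $\mathrm{II}_{2,26}$ along an embedding $\Lambda_{18}\hookrightarrow \mathrm{II}_{2,26}$, which is considerably more mechanical than the abstract existence argument you propose and is what makes the coefficients computable uniformly across the $D$-tower.
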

Next, we discuss the Picard group of $\gM(t)$ for $t\in(1/6,1/2)$. Let $\cP^{ss}(N_t) \subset\cP$ be the locus of $N_t$-semistable points. Let $U$ be the parameter space for $(2,4)$ complete  intersection curves in $\PP^3$. Then $\cP^{ss}(N_t)\subset U$ by~\Ref{prp}{doppiovu}. Since $U\subset \PP E$, it makes sense to restrict $\eta$ and $\xi$ (see~\eqref{genero}) to $\cP^{ss}(N_t)$.
\begin{proposition}\label{prp:discesa}
Keeping notation as above, the following hold:
\begin{enumerate}
\item Let $t\in(1/6,1/2)\cap\QQ$, and assume that $t$ is \emph{not} one of the critical slopes for the VGIT $\gM(t)$ (see~\Ref{thm}{thmcriticalt}). Then both 
$\eta|_{\cP^{ss}(N_t)}$ and $\xi|_{\cP^{ss}(N_t)}$ descend to $\QQ$-Cartier divisor classes $\ov{\eta}(t)$ and $\ov{\xi}(t)$ on $\gM(t)$, and   $\Pic(\gM(t))_{\QQ}=\QQ\ov{\eta}(t)\oplus \QQ\ov{\xi}(t)$.
\item Let  $t_k\in[1/6,1/2)\cap\QQ$ be one of the critical slopes for the VGIT $\gM(t)$. Then the restriction  
$(\eta(t_k)+t_k\xi(t_k))|_{\cP^{ss}(N_{t_k})}$ descends to 
a $\QQ$-Cartier divisor class  on $\gM(t_k)$ which generates $\Pic(\gM(t_k))_{\QQ}$.
\end{enumerate}
\end{proposition}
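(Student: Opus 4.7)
The plan is to combine Kempf's descent lemma with a computation of $\Pic(\cP^{ss}(N_t))_\QQ$. Since $\SL(4)$ has no non-trivial characters, every line bundle on $\cP^{ss}(N_t)$ carries a unique $\SL(4)$-linearization, and by Kempf's criterion a $\QQ$-class descends to $\gM(t)$ if and only if for every closed orbit $\SL(4)\cdot x$ the stabilizer acts rationally trivially on the fibre; moreover the pullback $\Pic(\gM(t))_\QQ\hookrightarrow\Pic(\cP^{ss}(N_t))_\QQ$ is injective. The first task is to show that $\Pic(\cP^{ss}(N_t))_\QQ=\QQ\eta\oplus\QQ\xi$. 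By~\Ref{prp}{doppiovu} we have $\cP^{ss}(N_t)\subset U\subset\PP E$, and $\Pic(\PP E)_\QQ=\QQ\eta\oplus\QQ\xi$ by~\eqref{genero}, so it suffices to prove $\cod(\PP E\setminus\cP^{ss}(N_t),\PP E)\ge 2$. The piece $\PP E\setminus U$ has very large codimension, since its points $([f_2],[\bar f_4])$ require both $f_2$ reducible and $f_4$ divisible by a factor of $f_2$. The piece $U\setminus\cP^{ss}(N_t)$ is $\SL(4)$-invariant; curves on rank-$\le 2$ quadrics form a codimension-$3$ locus in $\PP E$ by~\Ref{lmm}{planarsing}, while the remaining $N_t$-unstable curves are, by the classification of~\Ref{thm}{improving}, contained in the $\SL(4)$-saturation of the strata $W_k\subset\gM^{IV}$ together with the cone-strata from Table~\ref{table101}; each has codimension $\ge 2$ (in fact $\ge 11$, since $\dim W_k\le 7$ and $\dim U=33$).

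Next I would translate the descent condition into the numerical function. By~\Ref{crl}{chowlb} we have $N_t|_U=\eta+t\xi$, so for any $1$-PS $\lambda$ of $\SL(4)$ stabilizing $x=V(f_2,f_4)\in\cP^{ss}(N_t)$ (with $f_4$ minimizing $\mu(\cdot,\lambda)$ within its coset modulo $f_2$), the $\lambda$-weight on the fibre of $a\eta+b\xi$ at $x$ equals $a\mu(f_2,\lambda)+b\mu(f_4,\lambda)$, while closedness of the orbit forces $\mu(f_2,\lambda)+t\mu(f_4,\lambda)=0$. For $t$ non-critical, \Ref{prp}{indentifyt} guarantees that no such $(x,\lambda)$ can satisfy $(\mu(f_2,\lambda),\mu(f_4,\lambda))\ne(0,0)$, since the resulting relation would identify $t$ with one of the potential critical slopes listed in~\eqref{valcrit}. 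Consequently both $\eta$ and $\xi$ descend to classes $\bar\eta(t),\bar\xi(t)\in\Pic(\gM(t))_\QQ$, linearly independent by the injectivity of pullback, proving~(1).

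At a critical slope $t_k$, \Ref{prp}{indentifyt} together with \Ref{lmm}{zerozero} produces a minimal orbit $C_k^*$ and a $1$-PS $\lambda$ in its stabilizer with $\mu(f_2,\lambda)\ne 0\ne\mu(f_4,\lambda)$ and $\mu(f_2,\lambda)+t_k\mu(f_4,\lambda)=0$. The descent condition $a\mu(f_2,\lambda)+b\mu(f_4,\lambda)=0$ then forces $(a,b)\in\QQ\cdot(1,t_k)$, so $\Pic(\gM(t_k))_\QQ$ has rank at most one; the ample (hence non-zero) class $\overline{N_{t_k}}=\overline{\eta+t_k\xi}$ furnishes a generator, proving~(2). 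The chief subtlety is ensuring that at non-critical $t$ no closed-orbit stabilizer has been overlooked: finite stabilizers pose no problem rationally, and the enumeration of positive-dimensional stabilizers is precisely the content of the algorithm in~\Ref{prp}{indentifyt}, which lists every $1$-PS of $\SL(4)$ (up to conjugation) that can stabilize a semistable $(2,4)$-curve with a non-trivial relation $\mu(f_2,\lambda)+t\mu(f_4,\lambda)=0$, and pins down the corresponding $t$ via~\eqref{tizero}.
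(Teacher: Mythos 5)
Your proposal follows the same strategy as the paper's proof: apply Kempf/Drezet--Narasimhan descent to the generators of the (rational) equivariant Picard group of $\cP^{ss}(N_t)$, and use the relation $\mu(f_2,\lambda)+t\mu(f_4,\lambda)=0$ for a $1$-PS $\lambda$ stabilizing a polystable curve together with \Ref{prp}{indentifyt} to conclude that at non-critical $t$ both weights vanish. The paper simply asserts $\Pic^G(\cP^{ss}(N_t))_{\QQ}=\QQ\eta\oplus\QQ\xi$ as ``one checks easily''; you fill in the codimension argument that justifies it, which is the right thing to do.

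One correction to your dimension count, though. The claim that the codimension of $\PP E\setminus\cP^{ss}(N_t)$ is ``in fact $\ge 11$, since $\dim W_k\le 7$ and $\dim U=33$'' is false. The bound $\ge 11$ only applies to the pieces of the unstable locus that were destabilized as $t$ increased past one of the walls $t_0,\dots,t_k$ (the $W$-side strata). You also have to account for the curves on a quadric cone that will become semistable only at larger $t$: for $t$ in the first chamber $(1/6,1/4)$, all curves passing through the vertex of a rank-$3$ quadric are $N_t$-unstable, and this locus has codimension exactly $2$ in $\PP E$ (one condition for $V(f_2)$ to be singular, one for $C$ to pass through the vertex); this corresponds to the preimage of $Z^2\subset\sF$, which has codimension $2$. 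More generally, for $t\in(t_k,t_{k+1})$, the ``incoming'' unstable strata (preimages of $Z^j$ for $j>k+1$) have codimension as low as $k+2$. So the correct and uniform statement is $\cod(\PP E\setminus\cP^{ss}(N_t),\PP E)\ge 2$, which is all you need, but the inequality can be an equality, and the description of the unstable locus must include these cone strata as genuine, lower-codimension components rather than lumping them together with the $W_k$'s.
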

\begin{proof}
(1):  One checks easily that $\Pic^G(\cP^{ss}(N_t))_{\QQ}=\QQ[\eta|_{\cP^{ss}(N_t)}]\oplus\QQ[\xi|_{\cP^{ss}(N_t)}]$. In order to prove that both 
$\eta|_{\cP^{ss}(N_t)}$ and $\xi|_{\cP^{ss}(N_t)}$ descend to $\QQ$-Cartier divisor classes  on $\gM(t)$ we apply  Theorem 2.3 in~\cite{dreznara}. One has to check that if  $C=V(f_2,f_4)$ is  $N_t$-polystable, then the stabilizer $\Stab(C)$ acts trivially on the fiber of $\eta$ or $\xi$ at $C$. Since $G$ is a linearly reductive group, it suffices to check that any $1$ PS $\lambda$ contained in $\Stab(C)$ acts trivially on the fiber of $\eta$ or $\xi$ at $C$. Now, $t$ is not one of the   critical slopes for the VGIT $\gM(t)$,  hence 
$\mu(f_2,\lambda)=0$ and $\mu(f_4,\lambda)=0$. The fiber of $\eta$ at $C$ is identified with $(\CC f_2)^{\vee}$, and 
the fiber of $\xi$ at $C$ is identified with $(\CC f_4)^{\vee}\pmod{f_2}$; it follows that  $\lambda$  acts trivially both on the fiber of $\eta$ and of  $\xi$ at $C$. (2): One applies  Theorem 2.3 in~\cite{dreznara} and proceeds as in Item~(1). 
\end{proof}
\begin{definition}
For $t\in(1/6,1/2)\cap\QQ$, let $D(t)$ be the $\QQ$-Cartier divisor class on $\gM(t)$ obtained by descent from the divisor class $(\eta(t)+t\xi(t))|_{\cP^{ss}(N_{t})}$.
\end{definition}
Note that if $t$ is not a critical slope for the VGIT $\gM(t)$, then  $D(t)=\ov{\eta}(t)+t\ov{\xi}(t)$, but if $t$ is a critical slope for the VGIT 
$\gM(t)$ such an equality does not make sense (by Item~(2) of~\Ref{prp}{discesa}). 
\begin{remark}\label{rmk:ampio}
Let $t\in(1/6,1/2)\cap\QQ$. Then  $D(t)$ is the ample $\QQ$-Cartier divisor class on $\gM(t)$ obtained by descent from the $\SL(4)$-linearized ample divisor class $N_t$. In fact, this follows from~\Ref{crl}{chowlb}.
\end{remark}

Our next task is to compare $\QQ$-Cartier divisors on $\gM(t)$ with the pull-back via $\gp(t)$ of $\QQ$-Cartier divisors on $\sF$. Let 
\begin{equation}\label{perzero}
\gp(t)_0\colon \gM(t)_0\lra \sF
\end{equation}
be the (restriction of the) period map.
 Let  $t\in(1/6,1/2)\cap\QQ$. Since $U_0$ is contained in the locus of $N_t$-stable points, 
  both 
$\eta|_{U_0}$ and $\xi|_{U_0}$ descend to $\QQ$-Cartier divisor classes $\eta(t)_0$ and $\xi(t)_0$ on 
$\gM(t)_0$. 
\begin{lemma}\label{lmm:basta}
Let  $t\in(1/6,1/2)\cap\QQ$. In $\Pic(\gM(t)_0)_\bQ$, we have
\begin{equation*}
\gp(t)_0^{*} H_h = 4 \eta(t)_0,\qquad  \gp(t)_0^{*} H_n = 72 \eta(t)_0 +68 \xi(t)_0.
\end{equation*}
\end{lemma}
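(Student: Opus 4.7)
The plan is to identify classes in $\Pic(\gM(t)_0)_\QQ$ via intersection with test curves dual to the natural generators. Since $\gM(t)_0$ is the GIT quotient of $U_0$, and $U_0$ is an open $\SL(4)$-invariant subset of $\PP E$ (whose Picard group is $\ZZ\eta \oplus \ZZ\xi$), descent identifies $\Pic(\gM(t)_0)_\QQ$ as spanned by $\eta(t)_0, \xi(t)_0$. I therefore write
$$\gp(t)_0^{*} H_h = a\,\eta(t)_0 + b\,\xi(t)_0, \qquad \gp(t)_0^{*} H_n = c\,\eta(t)_0 + d\,\xi(t)_0$$
for rationals $a,b,c,d$, and pin these down using the generic projective curves $\Gamma,\Omega \subset U$ introduced in \eqref{curvagam}--\eqref{curvaom} (both of which lie in $U_0$ for generic choices of defining data). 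By the intersection table \eqref{etaxi}, these form a dual basis: intersecting against $\Omega$ reads off the $\eta$-coefficient, and against $\Gamma$ reads off the $\xi$-coefficient.

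For $H_h$ the computation is essentially immediate from \Ref{prp}{keyone}. On $\Gamma$ the quadric is fixed and may be chosen smooth, so no member of $\Gamma$ maps into the closure of $H_h$, giving $\Gamma\cdot \gp_0^{*} H_h = 0$. On $\Omega$, the pencil $\mu_0 q + \mu_1 r$ meets the discriminant of degenerate quadrics in $\PP^3$ (of degree $4$) in exactly $4$ points, and at each of these the vertex of $Q_t$ avoids $V(f)$ for generic $f$; by \Ref{prp}{keyone} these $4$ points are precisely the intersections of $\Omega$ with $\gp_0^{*} H_h$. Thus $a=4$, $b=0$.

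For $H_n$ I will count singular members of each pencil using the standard Euler-characteristic identity: if $\widetilde{S} \to \PP^1$ is a family whose only singular fibers are ordinary nodes, then their number equals $\chi_{\mathrm{top}}(\widetilde{S}) - 2\chi_{\mathrm{top}}(F_{\mathrm{gen}})$. The general fiber in both families is a genus-$9$ curve (a smooth $(4,4)$ curve on $\PP^1\times\PP^1$, respectively a smooth $(2,4)$ complete intersection in $\PP^3$), so $\chi_{\mathrm{top}}(F_{\mathrm{gen}}) = -16$. For $\Gamma$, the total space is the blow-up of $\PP^1\times\PP^1$ at the $(4,4)\cdot(4,4)=32$ base points, whence $\chi_{\mathrm{top}} = 4 + 32 = 36$ and $\Gamma\cdot \gp_0^{*} H_n = 36 - 2(-16) = 68$. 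For $\Omega$, the total space is the blow-up of the smooth quartic $K3$ surface $V(f)$ at the $q\cdot r\cdot f = 16$ base points, whence $\chi_{\mathrm{top}} = 24 + 16 = 40$ and $\Omega\cdot\gp_0^{*} H_n = 40 - 2(-16) = 72$. This gives $c=72$, $d=68$.

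The main technical point is justifying the ``Lefschetz-type'' hypothesis for $\Omega$, namely that for generic $q,r,f$ the only singular members of $\Omega$ are ordinary nodal curves. One must check that the $4$ values of $t$ at which $Q_t$ becomes a cone do \emph{not} simultaneously produce a tangency with $V(f)$ (so that $C_t$ remains smooth at those values, its period indeed lying on $H_h$ but away from $H_n$), and that at the remaining critical $t$ the singularity is a single ordinary node arising from a simple tangency of $Q_t$ with $V(f)$ in the smooth locus of $Q_t$. Both are standard Bertini/transversality arguments, using that the generic point of the discriminant in $|\cO_{\PP^3}(2)\oplus\cO_{\PP^3}(4)|$ parametrizes complete intersections with a single node in the smooth locus of the quadric. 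Once these genericity statements are in hand, the claimed identities follow from dualizing against $\{\Omega,\Gamma\}$.
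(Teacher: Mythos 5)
Your proof is correct and, in substance, provides the computation that the paper sketches and then delegates to the external reference \cite[Prop.~1.1]{g4ball}. The paper's own argument identifies $\wt{\gp}_0^*H_h$ with the pullback of the degree-$4$ quadric discriminant (giving $4\eta$ immediately) and then states that $\wt{\gp}_0^*H_n$ is computed by a calculation ``analogous to'' the $(2,3)$ case, omitting details. You instead dualize against the test curves $\Gamma,\Omega$ already introduced in \eqref{curvagam}--\eqref{curvaom}, which by \eqref{etaxi} form a dual basis for $\{\eta,\xi\}$, and count singular fibers via the topological Euler characteristic. The numbers check out: the general fiber in both pencils is a genus-$9$ curve with $\chi=-16$; the total space over $\Gamma$ is $\Bl_{32\,\mathrm{pts}}(\PP^1\times\PP^1)$ with $\chi=36$ giving $68$ nodes, and over $\Omega$ is $\Bl_{16\,\mathrm{pts}}(\mathrm{quartic}\ K3)$ with $\chi=40$ giving $72$ nodes, hence $\gp_0^*H_n = 72\eta + 68\xi$. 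A useful sanity check you could have added: combining your answer with $\gp(t)_0^*\lambda = \eta + \tfrac12\xi$ recovers the Borcherds relation $136\lambda = H_n + 16H_h$ of \Ref{prp}{borchrel}, since $136(\eta+\tfrac12\xi) = (72\eta + 68\xi) + 16\cdot 4\eta$. You correctly isolate the transversality statements (the four cone-values of $t$ on $\Omega$ must avoid tangencies and their vertices must miss $V(f)$; remaining singular members must be single ordinary nodes in the smooth locus of the quadric) as the technical crux; these are indeed standard Lefschetz-pencil/Bertini assertions. One small imprecision: you write the discriminant as living in ``$|\cO_{\PP^3}(2)\oplus\cO_{\PP^3}(4)|$''—the natural home is the bundle $\PP E$ (or the open piece $U$)—but this does not affect the argument.
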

\begin{proof}
Let $\wt{\gp}_0\colon U_0\to \sF$ be the period map. It suffices to prove that 
\begin{equation}\label{dadim}
\wt{\gp}^{*}_0 H_h= 4 \eta|_{U_0},\qquad \wt{\gp}^{*}_0 H_n= 72 \eta|_{U_0}+68 \xi|_{U_0}.
\end{equation}
Now
$$\wt{\gp}^{*}_0H_h=\{([f_2],[\bar f_4])\in  U_0\mid V(f_2)  \textrm{ is singular}\},$$
and
$$\wt{\gp}^{*}_0H_n=
\text{closure of $\{([f_2],[\bar f_4])\in  U_0\mid C=V(f_2,f_4) \textrm{is singular at a smooth point of $V(f_2)$}\}$}.$$
 Since the locus of singular quadrics is a degree $4$ hypersurface in $|\cO_{\PP^3(2)}|$, the first equality in~\eqref{dadim} is clear. 
 The second equality in~\eqref{dadim} is proved by a  computation analogous to the one done in~\cite[Prop. 1.1]{g4ball} for $(2,3)$ complete intersections in  $\bP^3$. We omit the details. 
\end{proof}
For $t\not=0$, let $\beta(t)=\frac{1-2t}{4t}$.
\begin{proposition}\label{prp:morise}
In $\Pic(\gM(t))_\bQ$ we have the relation
\begin{equation}\label{enough}
D(t) = 2t \gp(t)^{*}(\lambda+\beta(t) \Delta).
\end{equation}
\end{proposition}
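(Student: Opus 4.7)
The plan is to first verify the asserted identity on the open subset $\gM(t)_0 \subset \gM(t)$, where the period map is regular and the divisor classes $\eta(t)_0,\xi(t)_0$ generate a summand of the Picard group, and then to extend the identity to all of $\gM(t)$ by a codimension-two argument using~\Ref{prp}{cod2}. I will use the Borcherds-type relation of~\Ref{prp}{borchrel} to rewrite $\lambda$, the definition $\Delta=H_h/2$ to rewrite $\Delta$, and the pull-back formulae of~\Ref{lmm}{basta} to convert classes on $\sF$ into classes on $\gM(t)_0$.

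On $\gM(t)_0$, we have by definition $D(t)|_{\gM(t)_0}=\eta(t)_0+t\,\xi(t)_0$. For the right-hand side, using~\Ref{prp}{borchrel} and $\Delta=H_h/2$, I compute
\[
\lambda+\beta(t)\Delta = \tfrac{1}{136}\bigl(H_n+16H_h\bigr)+\tfrac{\beta(t)}{2}H_h.
\]
Pulling back to $\gM(t)_0$ via~\Ref{lmm}{basta} and collecting terms gives
\[
\gp(t)_0^{*}(\lambda+\beta(t)\Delta) = \bigl(1+2\beta(t)\bigr)\eta(t)_0+\tfrac{1}{2}\xi(t)_0.
\]
Plugging in $\beta(t)=\tfrac{1-2t}{4t}$ yields $1+2\beta(t)=\tfrac{1}{2t}$, so the right-hand side equals $\tfrac{1}{2t}\bigl(\eta(t)_0+t\,\xi(t)_0\bigr)$. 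Multiplying by $2t$ recovers $\eta(t)_0+t\,\xi(t)_0=D(t)|_{\gM(t)_0}$, which is the desired equality on $\gM(t)_0$.

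To pass from $\gM(t)_0$ to $\gM(t)$, I invoke the fact that $\gM(t)$ is a GIT quotient of the smooth open subset $\cP^{ss}(N_t)\subset U$ by the reductive group $\SL(4)$, hence is normal and $\bQ$-factorial. By the codimension estimate established inside the proof of~\Ref{prp}{cod2} (specifically that $\cod(\gM(t)\setminus\gM(t)_0,\gM(t))\ge 2$), the natural restriction map $\Pic(\gM(t))_\bQ\to \Pic(\gM(t)_0)_\bQ$ is injective, so the equality of $\bQ$-Cartier divisor classes established above on $\gM(t)_0$ lifts to an equality of $\bQ$-Cartier divisor classes on $\gM(t)$. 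The main subtlety here is to confirm the extension is well-posed, i.e.~that $\gp(t)^{*}(\lambda+\beta(t)\Delta)$ is defined as a $\bQ$-Cartier class on all of $\gM(t)$ (and not merely on $\gM(t)_0$); but this follows from the unique extension of $\bQ$-Cartier divisors across a codimension-two locus on a $\bQ$-factorial normal variety. This completes the argument.
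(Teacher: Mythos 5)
Your proposal is correct and follows essentially the same route as the paper: restrict both sides to $\gM(t)_0$, compute $\gp(t)_0^{*}\lambda=\eta(t)_0+\tfrac{1}{2}\xi(t)_0$ and $\gp(t)_0^{*}\Delta=2\eta(t)_0$ from Proposition~\ref{prp:borchrel} and Lemma~\ref{lmm:basta}, use $1+2\beta(t)=\tfrac{1}{2t}$, and then extend across the codimension-two complement via~\eqref{cod2}. One small caveat: your appeal to $\QQ$-factoriality of $\gM(t)$ is both unjustified (a GIT quotient of a smooth variety need not be $\QQ$-factorial -- indeed $\gM(1/2)\cong\sF^{*}$ is not, and at critical $t$ the Picard rank drops) and unnecessary, since normality of $\gM(t)$ together with the codimension-two estimate already makes the restriction $\Pic(\gM(t))_{\QQ}\to\Pic(\gM(t)_0)_{\QQ}$ injective, and $\gp(t)^{*}$ of a divisor class is well defined because $\gp(t)$ is an isomorphism in codimension one (Proposition~\ref{prp:cod2}).
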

\begin{proof}
By~\eqref{cod2}, it suffices to prove that the restrictions of the left and right hand sides of~\eqref{enough} to $\gM(t)_0$ are equal. Since   $D(t) |_{\gM(t)_0}=\eta(t)_0+t\xi(t)_0$, we must prove that  
\begin{equation}\label{liverpool}
\eta(t)_0+t\xi(t)_0= 2t \gp(t)_0^{*}(\lambda+\beta(t) \Delta).
\end{equation}
Since $\Delta=\frac{1}{2}H_h$, we have $\gp(t)^{*}_0\Delta=2\eta(t)_0$ by~\Ref{lmm}{basta}. On the other hand,   by~\Ref{prp}{borchrel} and by~\Ref{lmm}{basta}, 
$$\gp(t)_0^{*}\lambda=\frac{1}{136}\gp(t)_0^{*}(H_n+16H_h)=\frac{1}{136}(72 \eta(t)_0+68 \xi(t)_0+64\eta(t)_0)=
\eta(t)_0+\frac{1}{2}\xi(t)_0.$$
 Thus, 
$$\gp(t)_0^{*}(\lambda+\beta(t)\Delta)= \eta(t)_0+\frac{1}{2}\xi(t)_0+\left(\frac{1}{2t}-1\right) \eta(t)_0=
\frac{1}{2t}(\eta(t)_0+t\xi(t)_0).$$
This proves~\eqref{liverpool}.
\end{proof}
\subsection{Proof of Items~(ii) and~(iii)}
Let us prove Item~(ii). By Item~(i) there is nothing to prove for $t\in\{0,1\}$. Thus we assume that $t\in(0,1)\cap\QQ$. Let $\sF_0:=\im(\gp(t)_0)$. The (restriction of the) period map defines an isomorphism $\gp(t)_0\colon\gM(t)_0\overset{\sim}{\lra} \sF_0$. Moreover
\begin{equation}
\cod(\gM(t)\setminus \gM(t)_0,\gM(t))\ge 2,\qquad \cod(\sF\setminus \sF_0,\sF)\ge 2.
\end{equation}
In fact, the first inequality is~\eqref{cod2}, the second one has been proved in the proof of~\Ref{prp}{cod2}. By~\Ref{prp}{morise} it follows that the period map induces an isomorphism of rings
\begin{equation}\label{isomorfi}
\gp(t)^{*}\colon R(\sF,\lambda+\beta(t)\Delta)\overset{\sim}{\lra} R(\gM(t),D(t)).
\end{equation}
(Both $\sF$ and $\gM(t)$ are normal varieties.) Since $D(t)$ is ample on $\gM(t)$ (see~\Ref{rmk}{ampio}), the right hand side 
of~\eqref{isomorfi} is a finitely generated $\CC$-algebra. This proves Item~(ii), because $\beta$ defines an invertible function $[1/6,1/2]\cap\QQ\to [0,1]\cap\QQ$. In fact the inverse is given by
\begin{equation*}
t(\beta):=\frac{1}{4\beta+2}.
\end{equation*}
In order to prove Item~(iii) we take the $\Proj$ of both sides of~\eqref{isomorfi}, and we get  
  an isomorphism
\begin{equation}\label{gitisoper}
\gp(t(\beta))^{-1}\colon  \sF(\beta)\overset{\sim}{\lra} \gM(t(\beta)).  
\end{equation}
Thus Item~(iii) follows from~\Ref{thm}{thmcriticalt}, except  that we do not know yet whether $\beta=1$ is a critical value. For this we must show that  for $\epsilon\in\QQ_{+}$ small the period map $\gp(t(\epsilon))\colon \sF(t(\epsilon))\dra \sF^{*}$ is not an isomorphism.  Suppose that it is an isomorphism. Then $\lambda+t(\epsilon)\Delta$ is a $\QQ$-Cartier divisor, and since $\lambda$ is $\QQ$-Cartier, so is $\Delta$. Thus $H_h$ is $\QQ$-Cartier, and this is a contradiction, one knows that $H_h$ is not $\QQ$-Cartier (e.g. it follows from \cite[Cor. 3.5]{looijengacompact}).  
\subsection{Proof of Item~(iv)} By the discussion above, we have $\sF(\epsilon)\cong \gM\left(\frac{1}{2}-\epsilon'\right)\cong \Hilb_{(2,4)}^{\gg 0}\gquot \SL(4)$ (see \Ref{thm}{thmgitmodels}) (with $\frac{1}{2}-\epsilon'=t(\epsilon)$, and $0<\epsilon,\epsilon'\ll 1$). Similarly, $\sF^*\cong \sF(0)\cong \gM\left(\frac{1}{2}\right)\cong \Chow_{(2,4)}\gquot \SL(4)$. Furthermore, with these identifications, $\sF(\epsilon)\to \sF^*$ is compatible with the natural Hilbert-to-Chow map  (see \Ref{rmk}{defgit12}). 

By \Ref{prp}{discesa}, it follows that $\sF(\epsilon)$ is $\bQ$-factorial with Picard number $2$. As already noted, $\sF^*$ is not $\bQ$-factorial with (the closure of) $H_h$ being a Weil divisor, which is not $\bQ$-Cartier. By the GIT description, it is clear that $\sF(\epsilon)\to \sF^*$ is a small map (e.g. \Ref{prp}{cod2}). It follows then that $\sF(\epsilon)$ is isomorphic to the $\bQ$-factorialization of Looijenga associated to the divisor $H_h$ (see \cite[Lemma 6.2]{km}). 

%

\section{The structure of the Chow and (asymptotic) Hilbert GIT Quotients}\label{sec:chow}
As previously discussed, the period map induces an isomorphism 
$\gM(t(\beta))\cong \sF(\beta)$ for $\beta\in[0,1]\cap\QQ$. 
The purpose of this section is to discuss the geometric meaning of this isomorphism for  $\beta$ close to $0$ (or equivalently $t$ close to $\frac{1}{2}$).  Specifically, we are interested in the following diagram:
\begin{equation}\label{diagram12}
\xymatrix{
\gM(\frac{1}{2}-\epsilon)\cong \Hilb_{(2,4)}^{m\gg0}\gquot \SL(4)\ar@{->}[r]^{\ \ \ \ \ \ \ \ \ \ \ \cong}\ar@{->}[d]^{ \Psi}&\widehat\sF\cong \sF(\epsilon)\ar@{->}[d]^{ \Pi}\\ 
\gM(\frac{1}{2}) \cong \Chow_{(2,4)}\gquot \SL(4)\ar@{->}[r]^{\ \ \ \ \ \ \ \ \ \cong}& \sF^*\cong \sF(0)&
}
\end{equation}
where
\begin{itemize}
\item[i)] 
$\sF^*$ is the Baily-Borel compactification of $\sF$, $\widehat \calF$ is  Looijenga's $\bQ$-factorialization of $\sF^{*}$, and $\Pi:\widehat \sF\to \sF^*$ is the structural morphism constructed by Looijenga~\cite{looijengacompact},
\item[ii)] 
$\Hilb_{(2,4)}^{m\gg0}\gquot \SL(4)$ is the GIT quotient of the Hilbert scheme for $(2,4)$ complete intersections (see~\Ref{subsubsec}{mHilbert} and~\Ref{thm}{thmgitmodels}), and similarly $\Chow_{(2,4)}\gquot \SL(4)$ is the Chow quotient. The map $\Psi$ is induced by the Hilbert-to-Chow morphism, 
\item[iii)] the horizontal isomorphisms are those of \eqref{gitisoper} (i.e. induced by the period map). 
\end{itemize}

An immediate consequence of the above identification is the following (which amplifies the statement of \Ref{thm}{improving} for $t\in(2/5,1/2)$). 
\begin{corollary}\label{crl:adestable}
Let $C=V(f_2,f_4)$ be a reduced $(2,4)$ complete intersection such that the associated quadric $Q=V(f_2)$ is either smooth or a quadric cone. Assume that the double cover $S$ of $Q$ branched over $C$ has at worst ADE singularities (i.e. $S$ is a $K3$ with canonical singularities). Then $C$ is Chow GIT stable and asymptotic Hilbert GIT stable. 
\end{corollary}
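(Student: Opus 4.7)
The plan is to reduce both stability statements to semistability together with closed orbit (via a finite-stabilizer argument), and then to transport the question through diagram \eqref{diagram12} to the period side, where everything is transparent.

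First I would establish finite stabilizer. Since $C$ is reduced, the unique quadric $Q=V(f_2)$ in its ideal is determined by $C$, so any $g\in \Stab_{\SL(4)}(C)$ preserves $(Q,C)$ and therefore lifts to an automorphism of the polarized surface $(S,L)$, where $L=\pi^*\cO_Q(1)$. Because $S$ is a $K3$ with canonical singularities, $\Aut(S,L)$ is finite, and the kernel of the resulting map $\Stab_{\SL(4)}(C)\to\Aut(S,L)$ is contained in $\{\pm\Id\}$. Hence $\Stab_{\SL(4)}(C)$ is finite, and to conclude $N_t$-stability for $t\in\{\tfrac{1}{2}-\epsilon,\tfrac{1}{2}\}$ it is enough to show that the orbit $\SL(4)\cdot C$ is closed in the corresponding semistable locus $\cP^{ss}(N_t)$.

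Next I would identify the unique polystable orbit associated to the period point $[S]$. By \Ref{prp}{keyone}, $[S]$ lies in $\sF\subset\sF^*$ regardless of the shape of $Q$ (in $\sF\setminus H_h$ if $Q$ is smooth, in $H_h\subset\sF$ if $Q$ is a cone). Looijenga's map $\Pi\colon\widehat \sF\to \sF^*$ is small and an isomorphism over $\sF$, so $[S]$ lifts to a unique point of $\widehat \sF$. Transporting through the isomorphisms $\gM(\tfrac{1}{2})\cong\sF^*$ and $\gM(\tfrac{1}{2}-\epsilon)\cong\widehat \sF$ of \eqref{diagram12} singles out unique polystable orbits $O^\diamond\subset \cP^{ss}(N_{1/2})$ and $O^\dagger\subset \cP^{ss}(N_{1/2-\epsilon})$. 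To identify $O^\dagger$ (and likewise $O^\diamond$) with $\SL(4)\cdot C$, pick any representative $V(f_2',f_4')$: by \Ref{thm}{improving} the associated double cover $S'$ has slc singularities, and since its period equals $[S]\in\sF$, \Ref{prp}{est3} forces $S'$ to be of Type I, i.e.\ to have ADE singularities. Global Torelli for polarized $K3$ surfaces with canonical singularities then gives $(S',L')\cong (S,L)$, and the embedding by $L$ into $\PP^3$ exhibits an element of $\SL(4)$ carrying $V(f_2',f_4')$ to $C$. Thus $O^\dagger=O^\diamond=\SL(4)\cdot C$, which combined with the finite-stabilizer step yields both Chow stability and asymptotic Hilbert stability.

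The hard part will be the Torelli-matching step: one must argue that the polystable orbits produced from the period side are represented by curves whose associated double cover is a canonically singular $K3$, rather than a Type II/III surface which could a priori also be semistable with the same quotient point. The argument above finesses this by combining \Ref{thm}{improving} (slc singularities for $t$-semistable curves when $t$ is close to $\tfrac{1}{2}$) with \Ref{prp}{est3} (the period map respects the Type I/II/III stratification), so that a period landing in the interior $\sF$ forces the double cover to be Type I. Once this regularity is in place, the rest is a direct translation through the GIT-to-period dictionary of diagram \eqref{diagram12}.
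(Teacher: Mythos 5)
Your proof is correct and follows essentially the same route as the paper's: both rest on the identification $\Chow_{(2,4)}\gquot\SL(4)\cong\sF^*$ (resp.\ $\Hilb^{\gg 0}_{(2,4)}\gquot\SL(4)\cong\widehat\sF$), Global Torelli for polarized $K3$'s, and finiteness of $\Aut(S,L)$. You spell out a step the paper leaves implicit -- that the closed orbit over $[S]\in\sF$ must itself be represented by a curve whose double cover has ADE singularities (combining \Ref{thm}{improving} with the Type I/II/III stratification), so that Torelli genuinely applies -- which is a welcome clarification rather than a change of method.
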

\begin{proof}
The identification $\Chow_{(2,4)}\gquot \SL(4)\cong \sF^*$ corresponds to (up to passing to Veronese subrings) an identification between the rings of $\SL(4)$-invariant sections on $\Chow_{(2,4)}$ and the ring of automorphic forms on the period domain. If $S$ is $K3$ surface with canonical singularities, by the Torelli theorem and the Baily-Borel theorem, there are enough automorphic forms to separate $S$. Since $S$ determines $C$, and it has finite automorphisms, it follows that $C$ is Chow properly stable. The claim on the Hilbert stability follows from the behavior of VGIT (see \Ref{rmk}{defgit12}).
\end{proof}


\begin{remark}
We emphasize that diagram \eqref{diagram12} is remarkable for (at least) the following reasons:
\begin{itemize}
\item[(1)] It is quite rare to exhibit an isomorphism between GIT quotients and Baily-Borel compactifications. The standard example is that of elliptic curves $(\fh/\SL(2,\bZ))^*\cong |\calO_{\bP^2}(3)|\gquot \SL(3)$. Other examples include some of the Deligne--Mostow examples (weighted points in $\bP^1$), and the moduli of cubic surfaces (cf. Allcock--Carlson--Toledo). However, we are not aware of any example of Baily-Borel compactifications of moduli of $K3$s to be known to be isomorphic to GIT quotients. 
\item[(2)] Even more interesting is the realization of the Looijenga compactification $\widehat \calF$ as a GIT quotient, and of the structural morphism $\Pi$ as a VGIT morphism. 
\end{itemize}
\end{remark}

\subsection{Structure of  Looijenga's $\bQ$-factorization $\widehat \sF$} 
We have already discussed the structure of the Baily-Borel compactification $\sF^*$ in \Ref{subsec}{sectbb}. We recall that while $\sF$ is $\bQ$-factorial, its compactification $\sF^*$ is typically not. For this reason, Looijenga \cite{looijengacompact} has introduced the semitoric compactifications (that offer common generalization of both Baily-Borel and toroidal compactifications) that for appropriate choices give  the $\bQ$-factorializations of the closures of Heegner divisors in $\sF$. As already used elsewhere in the paper, we denote by $\widehat \sF$ the $\bQ$-factorialization associated to the divisor $\Delta$, and we have $\widehat \sF\cong \sF(\epsilon)$.

By definition, $\widehat \sF\to \sF^*$ is a small map, which is an isomorphism over $\sF$ (recall $\sF$ is $\bQ$-factorial). One of the main points of Looijenga \cite{looijengacompact} is that  the structure of $\widehat \calF\to \calF^*$ is purely arithmetic, reflecting how the hyperplanes associated to the divisor $\Delta$ intersect at the boundary. 
In \cite[Sect. 7]{log2}, we have analyzed the $\bQ$-factorialization $\widehat{\sF}(19)\to \sF(19)^*$ for quartic surfaces. In particular, we have determined the dimensions of the Type II boundary components (\cite[Prop. 7.6]{log2}), and the geometric meaning (see \cite[Def. 7.7]{log2} and \cite[Prop. 7.11]{log2}). The same arguments as in loc. cit. give the following result:
\begin{proposition}\label{prp:structureqfact}
Let $\widehat \sF\to \sF^*$ be the Looijenga $\bQ$-factorialization associated to the hyperelliptic divisor. Then
\begin{itemize}
\item[i)] The dimensions of the pre-images in $\widehat \sF$ of the eight Type II components in $\sF^*$ are as given in Table \ref{tabletype2}.
\item[ii)] With the exception of the component labeled $D_{16}$, the remaining $7$ Type II boundary components in $\widehat \sF$ are naturally birational to $7$ Type II boundary components in the GIT quotient for $(4,4)$ curves in $\bP^1\times \bP^1$ (see \Ref{prp}{TypeIIShah} - the labeling are chosen compatibly). The geometric meaning is given in Table \ref{tabletype2}. 
\end{itemize}
\end{proposition}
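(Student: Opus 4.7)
The plan is to follow the blueprint of \cite[Sect.~7]{log2} (specifically Proposition~7.6, Definition~7.7 and Proposition~7.11), adapted to the hyperelliptic lattice $\Lambda=D_{16}\oplus U^2$ and the divisor $\Delta=H_h/2$. The proof naturally splits into an arithmetic part (item~(i), dimensions) and a geometric part (item~(ii), moduli meaning), tied together by the VGIT identification $\widehat{\sF}\cong \sF(\epsilon)\cong \gM(\tfrac{1}{2}-\epsilon')$ established in~\Ref{sec}{proof}.

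For item~(i), I would invoke Looijenga's construction of the $\QQ$-factorialization directly: the fiber of $\Pi\colon\widehat\sF\to\sF^*$ over a Type~II cusp corresponding to a rank-$2$ isotropic sublattice $E\subset\Lambda$ is a projective toric variety governed by the hyperplane arrangement on $E^\perp/E$ cut out by the images of the hyperelliptic vectors perpendicular to $E$ (cf.~\cite[Cor.~3.5]{looijengacompact}). For each of the eight rank-$2$ isotropic sublattices classified in~\Ref{thm}{thmbbstrata}, one enumerates up to $N_E(\Gamma)$-equivalence the primitive hyperelliptic vectors orthogonal to $E$, computes the maximal rank of the sublattice they span inside $E^\perp/E$, and reads off the fiber dimension. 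This is purely a lattice exercise, entirely parallel to the one carried out in \cite[\S7]{log2} for the $D_{17}\oplus U^2$ lattice; in particular, the eight Type~II sublattices here are naturally quotients (via the $D_{16}\subset D_{17}$ embedding, see~\Ref{rmk}{remnormalize}) of seven of Scattone's Type~II sublattices for quartic $K3$s, plus the extra $D_{16}$ cusp which has no orthogonal hyperelliptic vectors (and hence trivial fiber), matching the entries in Table~\ref{tabletype2}.

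For item~(ii), I would use the isomorphism $\widehat\sF\cong\gM(\tfrac12-\epsilon')$ from~\eqref{gitisoper}. A point on the Type~II boundary of $\widehat\sF$ corresponds to a VGIT semistable $(2,4)$ complete intersection $V(f_2,f_4)$ whose double cover acquires a Type~II (i.e.~simple elliptic, elliptic double curve, or rational curve with four pinch points) limit singularity in the sense of~\Ref{dfn}{cavalier}. Each such degenerate cover carries an intrinsic $j$-invariant by~\Ref{rmk}{jinv}, and the moduli curve of that $j$-invariant is exactly $\fh/\SL(2,\ZZ)$ or $\fh/\Gamma_0(2)$ depending on the cusp. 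On the other side, each Type~II stratum of $\gM$ listed in~\Ref{prp}{TypeIIShah} is itself birational to a modular curve via its associated $j$-invariant. The period map $\gp\colon\gM\dra\sF^*$ then induces, on the strict transforms, a birational map between the corresponding components, and this map matches the $j$-invariants (by~\cite[Prop.~7.11]{log2}, which is based on Friedman's heuristic~\cite[Sect.~5]{friedmanannals} and applies verbatim). The $D_{16}$ component is excluded precisely because it has $0$-dimensional fiber and corresponds to a degeneration (a double cover of a double plane) that never occurs as a limit of $(4,4)$ curves on $\PP^1\times\PP^1$.

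The main obstacle will be the bookkeeping in the arithmetic part: for the two \emph{Type~IIb} cusps ($(E_8)^2$ and $D_{16}^+$), the divisibility condition ($\divisore_{\Lambda}(v)=2$ for some primitive $v\in E$) forces one to work with the congruence subgroup $\Gamma_0(2)$ rather than $\SL(2,\ZZ)$, and one must check carefully that the ramified $3{:}1$ cover $\widehat{\sF}^{II}\to\sF(19)^{*,II}$ described in~\Ref{rmk}{remnormalize} is compatible with the Looijenga fiber dimension calculation. Once this compatibility (and the matching of orbits of orthogonal hyperelliptic vectors at each cusp) is verified, both (i) and (ii) follow by the same mechanism as in \cite[\S7]{log2}, with the labels inherited from~\Ref{thm}{thmbbstrata}.
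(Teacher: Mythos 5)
For item (i) your plan is essentially the paper's own argument: the paper simply says that the dimension count has the same proof as \cite[Prop.~7.6]{log2}, with one dimension lost ($D_7$ for quartics replaced by $D_8$ here) and with the type a/b dichotomy read off from whether $D_8$ embeds primitively into the relevant Niemeier lattice (Table~\ref{tabledkembed}); your proposal to enumerate hyperelliptic vectors orthogonal to each rank-$2$ isotropic $E$ and compute the rank they span in $E^{\bot}/E$ is exactly that lattice exercise. One caution: your parenthetical assertion that the $D_{16}$ cusp carries \emph{no} orthogonal hyperelliptic vectors is not checked; what Table~\ref{tabletype2} records is only that the Looijenga fiber there is a point, and you should let the lattice computation, not the expected answer, decide why.

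For item (ii) your route genuinely differs from the paper's, and it has two soft spots. The paper does not redo the matching on the hyperelliptic side: it identifies $\gM$ with the exceptional divisor of the Kirwan blow-up of the GIT space of quartic surfaces at the double quadric, notes that every quartic Type II stratum whose closure contains the double quadric restricts to a Type II stratum of $\gM$ of one dimension less, and so inherits the Baily-Borel/GIT matching from \cite[Prop.~7.11]{log2}. Your direct argument via $\widehat\sF\cong\gM(\frac{1}{2}-\epsilon)$ leans on matching $j$-invariants, but a one-dimensional invariant cannot by itself produce the asserted birational identifications of strata of dimension up to $9$ (e.g.\ $D_8\oplus E_8$): the missing step, which you state but do not argue, is that the centers of the wall crossings for $t\in(1/6,1/2)$ (the strict transforms of the $W_k\subset\gM^{IV}$ and of the $Z$-strata) meet each Type II stratum of~\Ref{prp}{TypeIIShah} only in proper closed subsets, so that each such stratum survives birationally into $\gM(\frac{1}{2}-\epsilon)\cong\widehat\sF$ (cf.~\Ref{subsec}{chowhilbstruct}); Friedman's heuristic is then only needed to decide \emph{which} cusp each stratum maps to, which is where \cite[Def.~7.7, Prop.~7.11]{log2} enter. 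Second, your reason for excluding the $D_{16}$ component is wrong: having a point as Looijenga fiber is shared by the $(D_8)^2$, $(E_8)^2$ and $(D_{16})^+$ components, which \emph{do} correspond to strata of $\gM$. The correct reason is geometric: the polystable curves over that component are double twisted cubics plus a residual conic on the quadric cone (not a ``double cover of a double plane''), which become semistable only after the wall at $t=1/3$ and are not $(4,4)$ curves on $\PP^1\times\PP^1$; see~\Ref{rmk}{flip13} and Case D in the proof of~\Ref{thm}{thmchow}.
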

\begin{table}[htb!]
\renewcommand{\arraystretch}{1.50}
 \begin{tabular}{l|c|c|l||l}
 Label&Type&Dim. in $\widehat \sF$&Geometric Meaning&Quartic Case\\
 \hline
 \hline
$D_{16}$&a&1&double twisted cubic  (on quadric cone)&$D_{17}$\\
 \hline
$D_8\oplus E_8$&a&$9$&$\widetilde E_8$, double line&$D_9\oplus E_8$\\
 $D_{12}\oplus D_4$&a&$5$&double conic&$D_{12}\oplus D_5$\\
$(E_7)^2\oplus D_2$&a&$3$&two $\widetilde E_7$ singularities& $(E_7)^2\oplus D_3$\\
$A_{15}\oplus D_1$&a&$2$& double elliptic quartic& $A_{15}\oplus D_2$ \\
$(D_8)^2$&a&1& two skew double lines;  smooth quadric&  $(D_8)^2\oplus D_1$\\
  \hdashline
$(E_8)^2$&b&$1$&two $\widetilde E_8$ singularities; smooth quadric &$(E_8)^2\oplus D_1$\\
$(D_{16})^+$&b&$1$& double twisted cubic; smooth quadric& $D_{16}\oplus D_1$\\
\hline
 \end{tabular}
 \vspace{0.2cm}
\caption{The Type II boundary components of $\widehat \sF\to \sF^*$}\label{tabletype2}
 \end{table}

\begin{proof}
The first item has the same proof as \cite[Proposition 7.6]{log2}. The only difference is that one loses one dimension (one replaces $D_7$ for quartics, by $D_8$ for hyperelliptic quartic $K3$s). The Type a or b corresponds to the case that one has a primitive or not embedding of $D_8$ into the corresponding Niemeier lattice (see Table \ref{tabledkembed}). 

For the second item, the geometric meaning is obtained via the heuristic of Friedman \cite{friedmanannals} (see \cite[Def. 7.7]{log2}) and then one proceeds as in the proof of \cite[Prop. 7.11]{log2}). We also recall that the GIT quotient $\gM$ for $(4,4)$ curves can be identified with the exceptional divisor in the Kirwan blow-up of the point $\omega$ corresponding to the double quadric. It is not hard to see that a Type II stratum in the GIT for quartic surfaces which contains in its closure the double quadric (i.e. all strata except for the stratum corresponding to the cone over a cubic curve union a transversal plane) will restrict to a Type II stratum (with one less dimension) in the GIT for $(4,4)$ curves on $\bP^1\times\bP^1$. Thus, the matching of GIT and Baily-Borel Type II components for hyperelliptic quartic is in fact a consequence of the matching that we obtained in \cite{log2} for quartics.  
\end{proof}

\subsection{Arithmetic dictates the structure of the Chow and Hilbert quotients}\label{subsec:chowhilbstruct} In conclusion, the isomorphisms of \eqref{diagram12}, the structure of Baily-Borel compactification (\Ref{thm}{thmbbstrata}), and the structure of the $\bQ$-factorialization (\Ref{prp}{structureqfact}), give the following information on the structure of the Chow GIT quotient $\Chow_{(2,4)}\gquot \SL(4)$ and asymptotic Hilbert quotients $\Hilb_{(2,4)}^{\gg 0}\gquot \SL(4)$:
 \begin{enumerate}
 \item There are $8$ one-dimensional Type II strata in the Chow GIT $\Chow_{(2,4)}\gquot \SL(4)$. Additionally, there are two Type III points in $\Chow_{(2,4)}\gquot \SL(4)$. The union of the Type II and III boundary strata is the complement of the ADE locus (as in \Ref{crl}{adestable}) in $\Chow_{(2,4)}\gquot \SL(4)$.
 \item In the Hilbert GIT $\Hilb_{(2,4)}^{m}\gquot \SL(4)$ ($m\gg0$), there are  $8$ Type II boundary components of dimensions between $1$ and $9$ according to the 
  third column of Table \ref{tabletype2}. 
 \item Seven of the $8$ Type boundary II components in $\Hilb_{(2,4)}^{m}\gquot \SL(4)$ are birational to the seven Type II boundary components of $\gM$ identified by 
 \Ref{prp}{TypeIIShah}. More precisely, the VGIT $\gM(t)$ for $t\in (\delta,1/2)$  affects these seven components only birationally. In particular, they have the same dimension in $\gM$ as in $\Hilb_{(2,4)}^{m}\gquot \SL(4)$ (given by  Table \ref{tabletype2}). 
  \item Finally, the eighth Type II component (label $D_{16}$) only exists in the range $t\in (1/3,1/2]$.  (It appears in the exceptional locus of the flip at $t=\frac{1}{3}$, when the locus $W_4\subset \gM$ is replaced by the stratum $Z^4\subset \sF$; see \Ref{rmk}{flip13}).
 \end{enumerate}
 
The above results are much more involved and subtle than the existing comparable literature (\cite{shah}, \cite{looijengacompact}, \cite{lcubic}, \cite{cubic4fold}). Especially, item (4) is completely new, and it offers an elegant explanation to an apparent contradiction to the Shah/Looijenga study of GIT versus Baily-Borel for quartic surfaces (i.e. that the number of Type II components in the two natural compactifications does not agree).

\subsection{The GIT analysis of the Chow and asymptotic Hilbert quotients}\label{GITtype2} The following result is the geometric counterpart of the discussion from  \Ref{subsec}{chowhilbstruct}. The essential new aspect here is the geometric explanation for the drop in dimensions for Type II components as we pass from the Hilbert to Chow GIT quotient. Somewhat surprisingly, there are four different geometric behaviors (labeled (A)--(D) in the proof below)  that occur here. 

\begin{theorem}\label{thm:thmchow}
Consider $\gM(\frac{1}{2})=\Chow_{(2,4)}\gquot \SL(4)(\cong \sF^*)$. Then the following hold:
\begin{itemize}
\item[a)] Let $C$ be $(2,4)$ complete intersection with only planar singularities of type ADE (on an irreducible quadric). Then $C$ is GIT stable (wrt the Chow polarization). In fact, $C$ is $t$-stable for $t\in(\frac{2}{5},\frac{1}{2}]$. Consequently, we can view $\sF$ as an open subset in $\gM(\frac{1}{2})$.
\item[b)] The boundary of $\sF$ in the Chow GIT $\Chow_{(2,4)}\gquot \SL(4)$ is the union of $8$ rational curves, meeting as in diagram \eqref{incidencebb}. 
\end{itemize}
Furthermore, 
\begin{itemize}
\item[(II)] The equations for the polystable curves parametrized by the $8$ Type II boundary components are given in Table \ref{tabletype2git} (the geometric meaning is given by the fifth column of Table \ref{tabletype2}). 
\item[(III)] The polystable orbits corresponding to the two Type III points have equations: 
\begin{eqnarray*}
(III_a) &:& V(x_0x_3,x_1^2x_2^2)\\
(III_b) &:& V(x_0x_3-x_1x_2,x_0x_2^3+2x_1^2x_2^2+x_1^3x_3)
\end{eqnarray*}
\end{itemize}
\end{theorem}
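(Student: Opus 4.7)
Part (a) is essentially already in hand: by \Ref{crl}{adestable}, any reduced $(2,4)$ complete intersection $C$ sitting on a smooth quadric or quadric cone whose double cover has at worst canonical (ADE) singularities is $t$-stable for every $t\in(\tfrac{2}{5},\tfrac{1}{2}]$, and in particular Chow-stable. Since this is the locus on which the period map is regular (with values in $\sF$), the image of this open set in $\gM(\tfrac{1}{2})$ is exactly $\sF\subset\sF^{*}$ under the isomorphism \eqref{gitisoper}. Part (b) then follows by transporting the description of $\sF^{*}\setminus\sF$ given in \Ref{thm}{thmbbstrata}: the eight Type II boundary components are each isomorphic to a modular curve (hence rational once compactified) and meet at the two Type III points exactly as in \eqref{incidencebb}.

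The substantive content lies in (II) and (III), where we have to exhibit explicit polystable representatives for each of the ten boundary strata and match them to the Baily--Borel labels via arithmetic. My plan is to proceed stratum by stratum, combining three sources of information: (i) the Type II classification of $\gM$ in \Ref{prp}{TypeIIShah}, (ii) the VGIT analysis of \Ref{sec}{sec-VGIT}, and (iii) the description of $\widehat\sF\to\sF^{*}$ in \Ref{prp}{structureqfact}. For the seven Type II components other than $D_{16}$, \Ref{prp}{structureqfact} tells us that the VGIT only modifies them birationally, so natural representatives can be taken from the normal forms in \Ref{prp}{git44} and \Ref{prp}{TypeIIShah}. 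For each such representative $C=V(f_2,f_4)$ I would verify:
\begin{itemize}
\item[(A)] \emph{Polystability at $t=\tfrac{1}{2}$}: using the numerical criterion and the basin-of-attraction machinery of \Ref{subsec}{sectbasin}, in its Chow-polarization incarnation via \eqref{chowlim}.
\item[(B)] \emph{Correct Baily--Borel label}: by computing the lattice of classes coming from the exceptional locus of a semistable reduction and matching to the list of $D_{16}$-embeddings (Friedman's heuristic, cf.\ the proof of \Ref{prp}{structureqfact} and \cite[Prop.~7.11]{log2}).
\end{itemize}

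For the remaining stratum $(D_{16})$, which by \Ref{rmk}{flip13} and item~(4) of \Ref{subsec}{chowhilbstruct} only enters the picture after the $t=\tfrac{1}{3}$ flip, the candidate polystable curve is a double twisted cubic on a quadric cone,
\[
V(x_0x_2+x_1^2,\,x_0x_3^3+2x_1x_2x_3^2-x_2^3x_3),
\]
up to the action of the $\Gm$-stabilizer; one then propagates this through the subsequent walls by the basin-of-attraction analysis of \Ref{prp}{basintk} to check it survives as a closed orbit at $t=\tfrac{1}{2}$. For the Type III points, $III_a$ corresponds to the Shah case B-III-iii (four double lines in a cycle), which is already a minimal orbit for $t$ arbitrarily close to $0$ in $\gM$ and remains $\tfrac{1}{2}$-polystable by direct numerical check; $III_b$ is the new specialization of the $(D_{16})$ Type~II stratum (double twisted cubic with two tangent lines), and its polystability is proved by the same basin-of-attraction analysis specialized to the closed orbit stabilized by a two-dimensional torus.

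The hardest step will be (B): matching each explicit polystable Chow orbit to the correct Niemeier-style lattice label in the Baily--Borel stratification, because the period map is not directly computable and the only available dictionary is Friedman's construction of the limit mixed Hodge structure from a semistable model. I expect to handle this by building, for each representative curve $C$, an explicit semistable degeneration of the associated hyperelliptic quartic $K3$ (via repeated blow-ups resolving the non-ADE singularities along the branch locus), reading off the lattice $D_8$, $D_{12}\oplus D_4$, $E_7^2\oplus D_2$, etc., spanned by the vanishing cycles, and then invoking \Ref{prp}{structureqfact} to identify the corresponding Type II component. The $D_{16}^{+}$ versus $(E_8)^2$ distinction (Type IIb) is the most delicate one and will rely on the divisibility computation in the proof of \Ref{thm}{thmbbstrata}, namely that the hyperelliptic class has divisibility $2$ in the isotropic plane. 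Once all ten orbits are matched, the incidence diagram in \eqref{incidencebb} is automatic from the specialization relations between the Chow orbits (degeneration of the $j$-invariant to $\infty$, collision of pinch points, etc.), as recalled after \Ref{prp}{TypeIIShah}.
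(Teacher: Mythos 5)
Your treatment of (a) and (b) is correct and matches the paper: (a) is exactly \Ref{crl}{adestable} combined with the observation that any destabilization would force a worse-than-ADE singularity by the basin-of-attraction analysis, and (b) follows by transporting \Ref{thm}{thmbbstrata} through the isomorphism \eqref{gitisoper}.

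The gap is in your plan for (II)--(III). You propose to take representatives from the normal forms of \Ref{prp}{git44} and \Ref{prp}{TypeIIShah} and check their polystability at $t=\tfrac{1}{2}$. But those normal forms represent \emph{stable} curves for $t<\tfrac{1}{2}$; at $t=\tfrac{1}{2}$ the generic such curve is not polystable but rather $S$-equivalent to a genuinely new closed orbit with positive-dimensional stabilizer. The heart of the paper's proof is identifying three new families of minimal orbits that become semistable only at $t=\tfrac{1}{2}$, namely $V(x_1^2+x_0x_2,\,x_1^2f_2(x_2,x_3)+x_0f_3(x_2,x_3))$ (stabilized by $\lambda=(5,1,-3,-3)$), $V(x_0x_3,\,f_4(x_1,x_2))$ (stabilized by $(3,1,1,-1)$), and $V(x_1^2+x_0x_2,\,q(x_0,x_1,x_2)x_3^2)$ (stabilized by $(1,1,1,-3)$), and then organizing the eight Type II strata into four distinct geometric behaviors (stable throughout; stable but Chow-forgets-the-quadric; absorbed by a new orbit at $t=\tfrac{1}{2}$; born at $t=\tfrac{1}{3}$). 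Without the explicit enumeration of these three new critical orbit families you cannot produce the equations in Table \ref{tabletype2git} --- for instance $V(x_1^2+x_0x_2,\,x_1(x_0+ax_2)x_3^2)$ for $D_{12}\oplus D_4$ is not a normal form from \Ref{prp}{TypeIIShah}, it is a specialization coming from the $(1,1,1,-3)$-orbit. Your proposal simply does not say how you would find these equations.

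A second point, more stylistic than fatal: your ``hardest step (B)'' --- matching each orbit to the correct Niemeier-type label via Friedman's heuristic and a bespoke semistable reduction --- unnecessarily re-derives what the paper has already established in \Ref{prp}{structureqfact}. Once the VGIT identifies which Type II strata in $\gM$ survive to $\Hilb_{(2,4)}^{m\gg 0}\gquot\SL(4)$ and how they collapse in $\Chow_{(2,4)}\gquot\SL(4)$, the lattice labels come for free by chasing the birational identification $\widehat\sF\cong\sF(\epsilon)$; the only genuinely new label is $D_{16}$, which is pinned down by \Ref{rmk}{flip13}. Spending the bulk of the effort on Friedman's heuristic, while mathematically coherent, would duplicate \S7 and is not where the novel difficulty of this theorem lies.
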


\begin{table}[htb!]
\renewcommand{\arraystretch}{1.50}
 \begin{tabular}{l|l|c}
 Label&Equations&$\dim$ in $\gM(\frac{1}{2}-\epsilon)$\\
 \hline
 \hline
$D_{16}$& $V\left(x_1^2+x_0x_2,(x_3+x_1+ax_2)(x_0x_3^2+2x_1x_2x_3+x_2^2x_3)\right)$&$1$\\
 \hline
$D_8\oplus E_8$&$V\left(x_1^2+x_0x_2, x_0x_3^3+x_1^2x_2^2+a x_1^2x_2x_3\right)$&$9$\\
 $D_{12}\oplus D_4$&$V\left(x_1^2+x_0x_2,x_1(x_0+ax_2)x_3^2\right)$&$5$\\
$(E_7)^2\oplus D_2$& $V\left(x_0x_3,x_1x_2(x_1-x_2)(x_1-ax_2)\right)$&$3$\\
$A_{15}\oplus D_1$&$2E$ (with $E=V(q_1,q_2)$)& $2$\\
$(D_8)^2$&$V(u_0^2u_1^2v_0v_1(v_0-v_1)(v_0-av_1))\subset\bP^1\times \bP^1$& $1$\\
  \hdashline
$(E_8)^2$&$V\left(u_0u_1(u_0v_1^2+u_1v_0^2)(u_0v_1^2+a u_1v_0^2)\right)\subset\bP^1\times \bP^1$&$1$\\
$(D_{16})^+$&$V\left((u_0+u_1)(u_0+a u_1)(u_0v_1^2+u_1v_0^2)^2\right)\subset\bP^1\times \bP^1
$&$1$\\
\hline
 \end{tabular}
 \vspace{0.2cm}
\caption{The Type II boundary components of $\gM(\frac{1}{2})$}\label{tabletype2git}
 \end{table}

\begin{proof}
From \cite[Thm. 4.8]{shah4}, we see that a curve with $ADE$ singularities on a smooth quadric is GIT stable. Such a curve can not become $t$-semistable for any $t\in(0,\frac{1}{2}]$ since if it were the case, $C$ would have in its orbit closure one of the critical orbits identified in \Ref{thm}{thmcriticalt}, but then $C$ would have a worse than ADE singularity (see \Ref{subsec}{sectbasin}). A similar argument applies to curves on the quadric cone (e.g. the ADE curves that do not pass through the vertex of the cone become semistable, and then stable, at $t=\frac{1}{6}$, and can not be subsequently destabilized).  

Some of the Type II GIT orbits in $\gM(t)$ for $t<\frac{1}{2}$ have dimension larger than $1$ (see \Ref{prp}{TypeIIShah}; they will be affected only birationally by the VGIT for $t\in(0,\frac{1}{2})$). They will drop to dimension $1$ (as expected from the isomorphism $\gM(\frac{1}{2})\cong \sF^*$) due to the occurrence of new polystable orbits at $t=\frac{1}{2}$. To identify these new minimal orbits at $t=\frac{1}{2}$, we proceed as in  \Ref{sec}{sec-VGIT}. First, we run our algorithm to identify potential critical orbits at $t=\frac{1}{2}$. We obtain three cases: 
\begin{itemize}
\item[i)] 
\begin{equation}\label{orb12_case1}
V(x_1^2+x_0x_2,x_1^2f_2(x_2,x_3)+x_0f_3(x_2,x_3))
\end{equation}
with stabilizer $\lambda=(5,1,-3,-3)$. This curve has a double line passing through the vertex $v$ of the cone, and an $\widetilde E_8$ singularity at the point $p=[1,0,0,0]$. 
\item[ii)] \begin{equation}\label{orb12_case2}
V(x_0x_3,f_4(x_1,x_2))
\end{equation}
stabilized by $\lambda=(3,1,1,-1)$. In this situation, the quadric becomes reducible, and it is cut out by $4$ planes (that share an axis). 
\item[iii)] \begin{equation}\label{orb12_case3}
V(x_1^2+x_0x_2,q(x_0,x_1,x_2)x_3^2)
\end{equation}
stabilized by $\lambda=(1,1,1,-3)$. In this situation, we have a double conic, together with $4$ lines passing through the vertex $v$. 
\end{itemize}
The same arguments as in \Ref{sec}{sec-VGIT} will show that these potential critical orbits actually occur at $t=\frac{1}{2}$. The effect of the occurrence of these new orbits is to collapse some of the Type II strata in the Hilbert GIT $\Hilb_{(2,4)}^m\gquot \SL(4)$ (see third column of Table \ref{tabletype2git}) to $1$ dimensional strata in $\Chow_{(2,4)}\gquot \SL(4)$. Note also that the basin of attraction argument of \Ref{sec}{sec-VGIT} shows that no Type II strata will collapse in the VGIT $\gM(t)$ for $t<\frac{1}{2}$.

\medskip

In conclusion, we identify the following cases for the behavior of the Type II GIT boundary:

\smallskip

\noindent {\bf Case A}: {\it The $1$-dimensional Type II boundary components in \Ref{prp}{TypeIIShah} (label $(D_8)^2$, $(E_8)^2$, and $(D_{16})^+$)}. The stability in these cases does not changes in the interval $(0,\frac{1}{2}]$ (two of the cases are strictly semistable at all time, while the third one is stable). In all cases, the curves are on the smooth quadric, and only have Type II singularities (isolated singularities of Type $\widetilde E_r$, or double rational curves with $4$ distinct pinch points). In their closure, they will contain either $III_a$ or both $III_a$ and $III_b$. 

\medskip

\noindent {\bf Case B}: {\it The stratum $A_{15}\oplus D_1$ (double elliptic normal curve).} Such curves $C=2C'$ are stable at all time (roughly, GIT will see such a singularity as a double point -- GIT only detects linear geometric features). The difference to the case A is that these dimension of the stratum drops by one via the Hilbert-to-Chow morphism. In the Hilbert scheme, the quadric containing $C$ is recorded (N.B. there is a pencil of quadrics containing $C'$), while in the Chow variety it is not, leading to a $1$-dimensional drop. 

\medskip

\noindent {\bf Case C}: {\it Strata $D_8\oplus E_8$, $D_{12}\oplus D_4$ and $(E_7)^2\oplus D_2$}. In these cases the stability is not affected in the interval $(0,\frac{1}{2})$, but at $t=\frac{1}{2}$, new orbits become semistable and absorb the polystable orbits (of the given $3$ types). The arguments and computations are very similar to those of \Ref{sec}{sec-VGIT}. 

For instance, assume that we are in the situation of the stratum with a single $\widetilde E_8$ and no special line. Then, \Ref{lmm}{norm3} gives the normal form: 
\begin{eqnarray*}
f_2&=&x_0x_2+x_1^2+ax_3^2\\
f_4&=&bx_0x_3^3+x_1^2g_2(x_2,x_3)+x_1g_3(x_2,x_3)+g_4(x_2,x_3)
\end{eqnarray*}
As usual, we are interested in singularity at $p=[1,0,0,0]$. In affine coordinates, $x_2=x_1^2+ax_3^2$, and once we substitute ($x_0=1$, $x_3=v$, $x_1=w$, $x_2=v^2+aw^3$) the leading term of $f_4$ becomes 
$$v^3+w^2g_2(w^2,v)$$ 
If the leading term defines an isolated singularity, we get the singularity $J_{2,0}=\widetilde{E_8}$. If this is not the case, we get $J_{2,p}$, which is the same as $T_{2,3,6+p}$ (a cusp singularity, still insignificant, but of Type III). Here $g_2$ is assumed non-vanishing, otherwise we get  Type IV case discussed previously.  Very similarly to the $E_{12}$ case discussed in \Ref{sec}{sec-VGIT}, the potential critical orbit (case \eqref{orb12_case1} above) is
$$V(x_0x_2+x_1^2, x_0x_3^3+x_1^2g_2(x_2,x_3))$$
with stabilizer 
$$\lambda=(5,1,-3,-3)$$
This can be semistable only at $t=\frac{1}{2}$. For $t>\frac{1}{2}$, we see $\nu^t(x,\lambda)<0$ for all points in the $\widetilde E_8$ stratum. At $t=\frac{1}{2}$, the limit of $x$ in this stratum with respect to $\lambda$ is the orbit with $\bC^*$ as above. 

\smallskip

The case of $2\widetilde E_7$ is similar. The polystable orbits $V(f_2, f_4(x_1,x_2))$ (i.e. a quadric cut by $4$-coaxial planes) will further degenerate to the case of $f_2=x_0x_3$ (as discussed  in~\Ref{lmm}{planarsing}, the reducible quadric case can not be semistable until $t=\frac{1}{2}$). This leads to \eqref{orb12_case2} above.

\smallskip

In the case of double conic, the residual curve will be (in general) an elliptic curve $E$ (Type (2,2) in the smooth quadric case) cutting the double conic in $4$ points. At $t=\frac{1}{2}$ this will degenerate to the curve of arithmetic genus $1$ with a $4$-tuple elliptic point (i.e. $4$ lines passing through the origin in $\bA^3$). This type of degeneration is not allowed until $t=\frac{1}{2}$ (N.B. by \Ref{lmm}{planarsing}, for $t<\frac{1}{2}$ only planar singularities are allowed). This corresponds to \eqref{orb12_case3} above.

\medskip

\noindent {\bf Case D}: {\it This is the stratum $D_{16}$ that is not visible in GIT quotient $\gM$ for $(4,4)$ curves. } Geometrically, the relevant polystable curves sit only on the quadric cone. It consists of a double twisted cubic, together with a residual conic. 
The minimal orbits at $t=\frac{1}{3}$ are 
$$V(x_1^2+x_0x_2,x_0x_3^3+\alpha x_1x_2x_3^2+\beta x_2^3x_3).$$
For generic $\alpha$, there will be a singularity of type $A_3$ at the vertex of the cone $v=[0,0,0,1]$, and a singularity of type $E_{3,0}$ at $p=[1,0,0,0]$. For the special value of $\alpha=1$, we obtain the double twisted cubic. The singularity at $v$ will be of type $A_{\infty}$, while the singularity at $p$ is of type $J_{3,\infty}$. The curves that will have this polystable orbit in their orbit closure at $t=\frac{1}{2}$ will have either a singularity of type $J_{3,k}$ ($k>0$) at $p$ or a singularity of type $A_k$ ($k>3$, and such that the curve doesn't split a line through the vertex). At $t$ increases, we have seen that all $J_{3,k}$ (we allow also $k=0,\infty$) are destabilized, while the curves with $A_k$ singularity (allow also $k=\infty$, but require that it doesn't split a line) at the vertex are allowed to become stable. In practice, we modify the equation $f_4=x_0x_3^3+\alpha x_1x_2x_3^2+\beta x_2^3x_3$ by adding monomials of lower weight with respect to $\lambda=(3,1,-1,-3)$. 
We are interested in preserving the double twisted cubic thus
$$f_4=x_3(x_0x_3^2+2x_1x_2x_3+x_2^2x_3)$$
is modified by moving the conic $V(x_1^2+x_0x_2,x_3)$, i.e. modify the linear form $x_3$ to $\ell(x_0,x_1,x_2,x_3)$. It needs to avoid passing through the vertex, which finally gives the form in the table. 
\end{proof}

\appendix
\section{The Baily-Borel compactification for the $D$ tower}\label{sec:bbcompact}
Any locally symmetric variety $\Gamma\backslash\cD$ has a canonical projective compactification, the Baily-Borel compactification $(\Gamma\backslash\cD)^*=\Proj R(\Gamma\backslash\cD,\cL)$, where $\cL$ is the automorphic (or Hodge) line bundle. In the case of Type IV domains $\cD$, the boundary consists of a union of modular curves, the {\it Type-II} boundary components, and some isolated points, the {\it Type-III} boundary components, which are in the closure of these modular curves. In the present section we will describe  the boundary components of  $\sF(N)^{*}$ for $N\le 20$. The  Baily-Borel boundary components for  $N=19$ (quartic $K3$s)  and $N=20$ (EPW sextics modulo duality) have been described by Scattone~\cite{scattone} and  Camere~\cite{camere} respectively. We extend their analysis to cover the lower dimensional cases (this is necessary for our inductive study). The number of boundary components are listed in Table~\ref{table:tabletype2}. 
  \begin{table}[htb!]
\caption{Number of  boundary components of $\sF(N)$}\label{table:tabletype2}
\renewcommand{\arraystretch}{1.60}
\begin{tabular}{c|c|c|c|c|c|c|c|c|c}
N &    $\le 9$   &10     &11--13  &14   & 15--16 &17&18&19&20    \\
\hline
Type II   &1  &2&2  & 4 & 3   &5   &8   &9  &13\\
\hline
Type III &1&2&1&1&1&1&2&1&1\\\end{tabular}
\vspace{0.2cm}
\end{table}
Within the section we will provide a more detailed description of the boundary components, see~\Ref{prp}{classifyType3} and~\Ref{thm}{bbtype2comp}. 

\subsection{Type II and Type III boundary components in general}\label{subsec:ingenerale}
Let $\Lambda$ be a lattice of signature $(2,m)$, and  $\Gamma< O^{+}(\Lambda)$ be a finite-index subgroup. For $k\in\{1,2\}$, we let $\cI_k(\Lambda)$ be the set of  rank-$k$ saturated isotropic subgroups of $\Lambda$.
 The set of   Type-III boundary components  of $\sF_{\Lambda}(\Gamma)$ is in one-to-one correspondence with  $\Gamma\backslash\cI_1(\Lambda)$, and  the set of   Type-II boundary components  of $\sF_{\Lambda}(\Gamma)$ is in one-to-one correspondence with  $\Gamma\backslash\cI_2(\Lambda)$. Thus our task will be to enumerate $\Gamma_{\xi_N}$-orbits of elements of $\cI_k(\Lambda_N)$, for $k\in\{1,2\}$. 

\subsection{Type III boundary components}
\begin{proposition}\label{prp:classifyType3}
Let $3\le N$. 
\begin{enumerate}
\item 
If $N\not\equiv 2\pmod{8}$,  there is one Type-III boundary components of $\sF(N)$.
\item 
If $N\equiv 2\pmod{8}$,     there are two Type-III boundary components of $\sF(N)$, one corresponds to the $\Gamma_{\xi_N}$-orbit in $\cI_1(\Lambda)$ whose elements are those $L$ such that  
$(L,\Lambda)=\ZZ$ and  the other to the $\Gamma_{\xi_N}$-orbit  whose elements are those $L$ such that    $(L,\Lambda)=2\ZZ$.  
\end{enumerate}
\end{proposition}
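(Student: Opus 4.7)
The plan is to reduce the classification of Type~III boundary components to a discriminant computation via Eichler's criterion. Since $\Lambda_N = U^{2} \oplus D_{N-2}$ contains two orthogonal hyperbolic planes, Eichler's lemma applies to primitive vectors of $\Lambda_N$: two primitive vectors of the same square lie in the same $O^{+}(\Lambda_N)$-orbit if and only if they define the same class in $\Lambda_N^{*}/\Lambda_N$ (up to the action of $O(q_{\Lambda_N})$). For a primitive isotropic vector $v$, the relevant invariant is the pair consisting of the divisibility $d=\divisore(v)$ and the element $[v/d]\in \Lambda_N^{*}/\Lambda_N$, which is automatically isotropic with respect to the discriminant form $q\colon \Lambda_N^{*}/\Lambda_N\to \QQ/2\ZZ$. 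Thus the enumeration of Type~III boundary components reduces, modulo the action of $O(q_{\Lambda_N})$, to listing isotropic elements of the discriminant group of $D_{N-2}$.

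Next I compute explicitly. Recall $D_{N-2}^{*}/D_{N-2}\cong \ZZ/4$ if $N-2$ is odd and $\cong (\ZZ/2)^{2}$ if $N-2$ is even, with standard generators $[v_0]=[(1/2,\ldots,1/2)]$ and $[w_0]=[(1,0,\ldots,0)]$; for the negative definite form one has $q([v_0])\equiv -(N-2)/4 \pmod{2\ZZ}$, $q([w_0])\equiv 1\pmod{2\ZZ}$, and $b([v_0],[w_0])\equiv 1/2\pmod{\ZZ}$. If $N$ is odd then the only isotropic element of $\ZZ/4$ is zero (since $q(2[v_0])\equiv -(N-2)\equiv 1\pmod{2\ZZ}$), so there is a single orbit of primitive isotropic vectors, necessarily of divisibility $1$. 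If $N$ is even, $[w_0]$ is never isotropic, the class $[v_0]$ is isotropic iff $N-2\equiv 0\pmod 8$, i.e.~$N\equiv 2\pmod 8$, and the same computation shows $q([v_0+w_0])\equiv q([v_0])\pmod{2\ZZ}$; moreover $[v_0]$ and $[v_0+w_0]$ differ by a single sign change in $D_{N-2}$, so they lie in the same orbit of $O(D_{N-2})$ (and hence of $O(q_{\Lambda_N})$). It follows that for $N$ even and $N\not\equiv 2\pmod 8$ there is again a unique orbit, of divisibility $1$, whereas for $N\equiv 2\pmod 8$ there are exactly two orbits: one of divisibility $1$ (class $0$) and one of divisibility $2$ (class $[v_0]$), which translate respectively into the conditions $(L,\Lambda)=\ZZ$ and $(L,\Lambda)=2\ZZ$ on the corresponding rank one saturated isotropic sublattices.

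The main obstacle is to make sure that the count of $O^{+}(\Lambda_N)$-orbits agrees with the count of $\Gamma_{\xi_N}$-orbits in the exceptional case $N\equiv 6 \pmod 8$, where by the convention of~\cite{log1} the group $\Gamma_{\xi_N}$ is an index-$3$ subgroup of $O^{+}(\Lambda_N)$. Since $N\equiv 6\pmod 8$ falls in the case $N\not\equiv 2\pmod 8$, there is already a single $O^{+}(\Lambda_N)$-orbit; the point I would need to verify is that the image of the stabilizer of a primitive isotropic vector under the defining character $O^{+}(\Lambda_N)\to \ZZ/3$ is surjective, so that the orbit does not split when passing to $\Gamma_{\xi_N}$. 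This is a standard check using the fact that the stabilizer contains unipotent elements (translations along an attached hyperbolic plane) on which the character is non-trivial, and completes the proof.
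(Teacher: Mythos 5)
Your overall strategy — reduce to Eichler's criterion, which (since $\Lambda_N\supset U^2$) says that two primitive vectors of the same square are in the same $O^+(\Lambda_N)$-orbit if and only if they define the same class in $\Lambda_N^*/\Lambda_N$ up to $\im\bigl(O^+(\Lambda_N)\to O(q_{\Lambda_N})\bigr)$, and then enumerate the isotropic elements of $q_{D_{N-2}}$ — is the standard method (it is exactly Scattone's approach, which the paper invokes implicitly), and your discriminant-form computations are correct. The reduction from "$\Gamma_{\xi_N}$-orbits of rank-one saturated isotropic sublattices" to "orbits of isotropic classes in $A_{D_{N-2}}$" is the right move, and the dichotomy by $N \bmod 8$ drops out cleanly.

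The one step that does not withstand scrutiny is your handling of $N\equiv 6\pmod 8$. You propose to show that the $O^+(\Lambda_N)$-orbit does not split into several $\Gamma_{\xi_N}$-orbits by arguing that the stabilizer of a primitive isotropic vector contains unipotent elements (Eichler transvections along an attached hyperbolic plane) on which ``the defining character'' is nontrivial. But Eichler transvections $E_{e,a}$ with $e$ lying in a unimodular summand and $a\in\Lambda_N$ act \emph{trivially} on $A_{\Lambda_N}=\Lambda_N^*/\Lambda_N$: one checks directly that $E_{e,a}(x)-x\in\Lambda_N$ for all $x\in\Lambda_N^*$, so such elements lie in the stable orthogonal group $\widetilde O(\Lambda_N)$ and hence in any subgroup of $O^+(\Lambda_N)$ containing $\widetilde O^+(\Lambda_N)$, in particular in $\Gamma_{\xi_N}$. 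They therefore cannot witness the surjectivity you need. The correct mechanism is the orthogonal part of the stabilizer, not the unipotent part: for $v$ primitive isotropic of divisibility $1$, choose $w$ with $\langle v,w\rangle\cong U$ and write $\Lambda_N=\langle v,w\rangle\oplus M$ with $M\cong U\oplus D_{N-2}$; then $O^+(M)$ embeds in $\Stab_{O^+(\Lambda_N)}(v)$, and since $M$ is indefinite of rank $\ge 3$ the natural map $O^+(M)\to O(q_{\Lambda_N})$ is onto. From this $\Stab_{O^+(\Lambda_N)}(v)\cdot\Gamma_{\xi_N}=O^+(\Lambda_N)$, so the orbit does not split. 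With this replacement the argument is complete; everything else is fine.
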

\begin{definition}\label{dfn:treatreb}
Let $3\le N$. A Type III boundary component of $\sF(N)$ is of \emph{class a} if it corresponds to the   $\Gamma_{\xi_N}$-orbit of $L\in\cI_1(\Lambda)$ such that  
$(L,\Lambda)=\ZZ$, and it is  of \emph{class b}  if it corresponds to the  $\Gamma_{\xi_N}$-orbit of $L\in\cI_1(\Lambda)$ such that  
$(L,\Lambda)=2\ZZ$. 
\end{definition}
\begin{remark}\label{rem:normalizationbb}
We have an injection $f_{N+1}\colon\sF(N)\hra \sF(N+1)$ with image $H_h(N+1)$. 
The map $f_{N+1}$  extends to a finite map 
\begin{equation}\label{effennestar}
f_{N+1}^{*}\colon\sF(N)^{*}\longrightarrow\sF(N+1)^{*},
\end{equation}
 mapping Type II (Type III) boundary components to Type II (Type III) boundary components. Since the number of Type III boundary components is $1$ if    $N\not\equiv 2\pmod{8}$, and $2$ if $N\equiv 2\pmod{8}$, it follows that if $N\equiv 2\pmod{8}$ then $f_{N+1}^{*}$ is not injective, and  $H_h(N+1)^{*}$ (the closure of $H_h(N+1)$ in $\sF(N+1)^{*}$) is not normal. In fact, $\sF(N)^{*}\longrightarrow H_h(N+1)^{*}$ is the normalization map (since it is birational, finite, with normal source) and the fiber over the unique Type III   boundary point (of class $a$) consists of the two Type III boundary components, one of class $a$, the other of class $b$.
\end{remark}
\subsection{Type II boundary components}
Let $J\in\cI_2(\Lambda_N)$, i.e.~$J$ is a rank-$2$ saturated isotropic subgroup of $\Lambda_N$. The quadratic form on $\Lambda_N$ induces a (negative-definite) quadratic form $q_J$ on $J^{\bot}/J$. If $J_1,J_2\in\cI_2(\Lambda_N)$ are $O(\Lambda_N)$-equivalent, then the corresponding  lattices $(J_r^{\bot}/J_r,q_{J_r})$ are isomorphic. Since e $\Gamma_{\xi_N}< O(\Lambda_N)$, we have a well-defined map 
\begin{equation}\label{mappalfa}
\begin{matrix}
\scriptstyle
\Gamma_{\xi_N}\backslash \cI_2(\Lambda_N) & \scriptstyle \overset{\alpha_N} {\lra} & 
 \scriptstyle\{\text{negative definite lattices of rank $(N-2)$}\}/\text{isom} \\
\scriptstyle \text{$\Gamma_{\xi_N}$-orbit of $J$} & \scriptstyle\mapsto &  \scriptstyle\text{isom.~class   of $J^{\bot}/J$}
\end{matrix}
\end{equation}
We will describe $\Gamma_{\xi_N}\backslash \cI_2(\Lambda_N)$ (i.e.~the set of boundary components of Type II) by describing the image of $\alpha_N$, and by analyzing the fibers of $\alpha_N$.
\begin{proposition}\label{prp:isoexe}
Let $3\le N$, and let  $J\subset\Lambda_N$ be a rank-$2$ saturated isotropic subgroup. Then 
one of the following holds:
\begin{enumerate}
\item 
The map $\Lambda_N\to \Hom(J,\ZZ)$ defined by the quadratic form is surjective, and there exist a 
  sublattice  $H\subset\Lambda_N$  isomorphic to $U\oplus U$, containing $J$, and a decomposition $\Lambda_N=H \oplus L$. Moreover   $J^{\bot}/J$ is  isomorphic to $L$, a lattice in the genus of $D_{N-2}$  (i.e.~of rank $(N-2)$,  even, negative definite, with $(A_L,q_L)\cong (A_{D_{N-2}},q_{D_{N-2}})$).
\item 
The map $\Lambda_N\to  \Hom(J,\ZZ)$ defined by the quadratic form has image a subgroup of index $2$, and there exist a 
  sublattice  $H\subset\Lambda_N$  isomorphic to $U\oplus U(2)$, containing $J$, and a decomposition $\Lambda_N=H \oplus L$. Moreover  $J^{\bot}/J$ is isomorphic to $L$,  a  rank $(N-2)$  even negative definite unimodular lattice.
\end{enumerate}
If $N\not\equiv 2\pmod{8}$ then Item~(2) does not occur.
\end{proposition}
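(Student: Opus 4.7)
The plan is to analyse $J$ by iteratively splitting off hyperbolic planes from $\Lambda_N$ and tracking the divisibility of a basis of $J$; the two cases of the proposition will correspond to whether the second split produces a standard $U$-summand or its rescaling $U(2)$.

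First I would bound the cokernel of the evaluation map $\phi\colon\Lambda_N\to J^*$, $v\mapsto(v,\cdot)|_J$. Since $A_{\Lambda_N}\cong A_{D_{N-2}}$ has order $4$, this cokernel has order dividing $4$; if it were exactly $4$, then $J\subset 2\Lambda_N^*$ and the natural inclusion $\tfrac12 J/J\hookrightarrow A_{\Lambda_N}$ would be an isomorphism onto a totally isotropic subgroup (since $J$ is isotropic), forcing the discriminant form of $A_{D_{N-2}}$ to vanish identically---which is impossible because the class $[\epsilon_1]$ has square $-1\not\equiv 0\pmod{2\ZZ}$. Hence the cokernel has order $1$ or $2$, and by the Smith normal form I can choose a basis $\{e_1,e_2\}$ of $J$ with $\divisore_{\Lambda_N}(e_1)=1$ and $\divisore_{\Lambda_N}(e_2)\in\{1,2\}$.

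Because $\divisore_{\Lambda_N}(e_1)=1$, the standard Eichler/Nikulin splitting (exploiting $\Lambda_N\supset U^2$) gives $\Lambda_N=U_1\oplus\Lambda_N'$ with $e_1\in U_1$ and $\Lambda_N'\cong U\oplus D_{N-2}$; after subtracting a multiple of $e_1$ I may take $e_2\in\Lambda_N'$, with unchanged divisibility. If $\divisore_{\Lambda_N'}(e_2)=1$ (case (1)), iterating produces $\Lambda_N'=U_2\oplus L$ with $e_2\in U_2$; setting $H:=U_1\oplus U_2\cong U\oplus U$, unimodularity of $H$ forces $J^\perp=J\oplus L$, and the identification $A_L=A_{\Lambda_N}=A_{D_{N-2}}$ places $L$ in the genus of $D_{N-2}$. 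If $\divisore_{\Lambda_N'}(e_2)=2$ (case (2)), then $\bar e_2:=e_2/2$ is a nonzero order-two class in $A_{D_{N-2}}$ with $\bar e_2^{\,2}=e_2^2/4=0\in\QQ/2\ZZ$; cycling through the cosets of $A_{D_n}$, whose squares are $0,\ 1,\ -n/4,\ -n/4\pmod 2$, shows that such an isotropic class exists if and only if $n=N-2\equiv 0\pmod 8$---the last assertion of the proposition.

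Assuming $N\equiv 2\pmod 8$, I would construct $H\cong U\oplus U(2)$ explicitly. Decomposing $\Lambda_N'=\langle e,f\rangle\oplus D_{N-2}$ and writing $e_2=2\alpha e+2\beta f+d$ with $d\in 2D_{N-2}^*$, primitivity together with transitivity of $O^+(\Lambda_N')$ on primitive isotropic vectors of divisibility $2$ with fixed class $\bar e_2$ (an Eichler transvection argument) lets me arrange $\beta=1$; then $f_2:=e$ satisfies $(e_2,f_2)=2$ and $f_2^2=0$, so $\langle e_2,f_2\rangle\cong U(2)$. With $H:=U_1\oplus\langle e_2,f_2\rangle$, Nikulin's criterion gives $\Lambda_N=H\oplus L$ once one checks that the induced map $A_H=(\ZZ/2)^2\to A_{\Lambda_N}=(\ZZ/2)^2$ is an isomorphism, and $\det L=\det\Lambda_N/\det H=4/4=1$ forces $L$ to be even unimodular of rank $N-2$. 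The main obstacle will be precisely this last verification: showing $A_H\xrightarrow{\sim} A_{\Lambda_N}$ depends on the explicit structure of $A_{D_n}$ for $n\equiv 0\pmod 8$ and on the uniqueness (up to $O^+$) of the Eichler orbit of a divisibility-$2$ primitive isotropic vector with prescribed discriminant class.
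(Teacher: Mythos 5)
The paper does not prove this proposition, so I assess your argument on its own. Your treatment of case (1) and the derivation of $N\equiv 2\pmod 8$ from the isotropy of $\overline{e_2/2}$ in $A_{D_{N-2}}$ are correct. (A small remark: in ruling out cokernel $4$ you treat only the $(\ZZ/2)^2$ structure; the $\ZZ/4$ structure is excluded by the same isotropy argument, since when $A_{D_n}\cong\ZZ/4$, i.e.\ $n$ odd, an order-$4$ class has $q$-value $-n/4\not\equiv 0\pmod{2\ZZ}$.)

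The case (2) construction has a gap beyond the unproved transitivity you flag. Even granting $\beta=1$, the choice $f_2:=e$ fails: writing $e_2=2\alpha e+2f+d$ with $d\in 2D_{N-2}^*\cap D_{N-2}$, the orthogonal complement of $\langle e_2,e\rangle$ in $\Lambda'_N$ is $\{-(w,d/2)\,e+w : w\in D_{N-2}\}\cong D_{N-2}$, of determinant $4$, so $\langle e_2,e\rangle\oplus\langle e_2,e\rangle^\perp$ has index $2$ in $\Lambda'_N$. Thus $\langle e_2,e\rangle$ is a primitive $U(2)$-sublattice but \emph{not} a direct summand, the planned check that $A_H\to A_{\Lambda_N}$ is an isomorphism would fail, and your scheme offers no mechanism for choosing a better $f_2$. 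A route that sidesteps both this and the Eichler issue: the isotropic order-$2$ class $\overline{e_2/2}$ gives an even unimodular overlattice $\widetilde\Lambda_N:=\Lambda_N+\ZZ\,(e_2/2)$ of signature $(2,N)$ (whose existence again forces $N\equiv 2\pmod 8$), and $\widetilde J:=J+\ZZ\,(e_2/2)$ is saturated isotropic in it; from $J^\perp\cap\widetilde J=J$ and $J^\perp+\widetilde J=\widetilde J^\perp$ one gets a canonical isometry $J^\perp/J\cong\widetilde J^\perp/\widetilde J$, which is therefore even unimodular negative definite of rank $N-2$. Any section of $J^\perp\twoheadrightarrow J^\perp/J$ then yields a unimodular sublattice $L\subset\Lambda_N$, automatically a direct summand, so $\Lambda_N=H\oplus L$ with $H:=L^\perp\supset J$ even of rank $4$, signature $(2,2)$, and discriminant form $q_{D_{N-2}}\cong q_{U(2)}$; splitting a $U$ off $H$ containing $e_1$ and examining the rank-$2$ complement (determinant $-4$, isotropic $e_2$ of divisibility $2$, discriminant form $q_{U(2)}$) then pins down $H\cong U\oplus U(2)$.
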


\begin{corollary}\label{crl:isoexe}
Let $3\le N$. The image of  $\alpha_N$ is equal to the set of isomorphism classes of 
\begin{enumerate}
\item
 lattices in the genus of $D_{N-2}$  (i.e.~rank $(N-2)$  even negative definite lattices $L$ with $(A_L,q_L)\cong (A_{D_{N-2}},q_{D_{N-2}})$), if $N\not\equiv 2\pmod{8}$,  
\item
  lattices which are in the genus of $D_{N-2}$, or are  even, negative definite, unimodular, of rank $(N-2)$, if  $N\equiv 2\pmod{8}$.
\end{enumerate}
\end{corollary}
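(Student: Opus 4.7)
The inclusion of $\im(\alpha_N)$ in the set described by the corollary is immediate from \Ref{prp}{isoexe}: for any $J \in \cI_2(\Lambda_N)$, that proposition shows $J^\perp/J$ is either a lattice in the genus of $D_{N-2}$ (case (1)), or, only when $N\equiv 2\pmod{8}$, an even negative-definite unimodular lattice of rank $N-2$ (case (2)). So the proof reduces to the reverse inclusion: every lattice $L$ of the indicated type must actually arise as some $J^\perp/J$.

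The plan for the reverse inclusion is to exhibit, for each admissible $L$, a rank-$2$ saturated isotropic sublattice $J\subset \Lambda_N$ with $J^\perp/J \cong L$. I would first produce a decomposition $\Lambda_N\cong H\oplus L$ where $H\cong U\oplus U$ in case (1) and $H\cong U\oplus U(2)$ in case (2), and then take $J$ to be a rank-$2$ saturated isotropic sublattice of $H$ satisfying $J = J^{\perp_H}$. Concretely, writing $H=\langle e_1,f_1\rangle \oplus \langle e_2,f_2\rangle$ with $e_i^2=f_i^2=0$, the sublattice $J:=\langle e_1,e_2\rangle$ is saturated, isotropic, of rank $2$, and one checks directly that $J^{\perp_H}=J$. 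It follows that $J^\perp = J\oplus L$ inside $\Lambda_N$, and hence $J^\perp/J\cong L$ as required.

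The main technical step in the plan is verifying the existence of the decomposition $\Lambda_N\cong H\oplus L$. In case (1), the lattice $U^2\oplus L$ has signature $(2,N)$ and (since $U$ is unimodular) discriminant form equal to that of $L$, which by hypothesis matches $(A_{D_{N-2}},q_{D_{N-2}})$, i.e.~the discriminant form of $\Lambda_N=U^2\oplus D_{N-2}$. Nikulin's uniqueness theorem for indefinite even lattices with prescribed signature and discriminant form then gives $U^2\oplus L\cong \Lambda_N$; the hypotheses are satisfied since $\Lambda_N$ is indefinite of rank $N+2\geq 5$ and the discriminant group has at most $2$ generators. In case (2), the discriminant form of $U\oplus U(2)\oplus L$ coincides with that of $U(2)$, and the same Nikulin-type uniqueness argument applies once we know this matches $(A_{D_{N-2}},q_{D_{N-2}})$.

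I expect the principal obstacle to be the discriminant-form calculation needed in case (2): one must check explicitly that $(A_{D_{N-2}},q_{D_{N-2}})\cong (A_{U(2)},q_{U(2)})$ whenever $N\equiv 2\pmod{8}$ (equivalently $N-2\equiv 0\pmod 8$), i.e.~that for such $n$ the discriminant form of $D_n$ is the hyperbolic plane $u$ over $\ZZ/2$ rather than the elliptic form $v$. This is a routine but fiddly computation with the standard basis of $D_n$; once it is in place, the isomorphism $U\oplus U(2)\oplus L\cong \Lambda_N$ and hence the whole reverse inclusion follows formally.
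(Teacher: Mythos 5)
The paper states the corollary without an explicit proof, since it is meant to follow directly from Proposition~\ref{prp:isoexe}. Your reconstruction is correct, and it is the argument one expects. The forward inclusion is indeed immediate from the proposition. For the reverse inclusion, the construction of $J$ from a decomposition $\Lambda_N\cong H\oplus L$ is the right move, and the verification that $J^{\perp_H}=J$ for $J=\langle e_1,e_2\rangle$ works in both $H\cong U^2$ and $H\cong U\oplus U(2)$ (in the latter case, $x\cdot e_2 = 2d$ still forces $d=0$). The Nikulin/Eichler uniqueness step is applied correctly: $\Lambda_N=U^2\oplus D_{N-2}$ contains two hyperbolic planes, so it is the unique even lattice of signature $(2,N)$ with discriminant form $(A_{D_{N-2}},q_{D_{N-2}})$, and the candidate lattices $U^2\oplus L$ and $U\oplus U(2)\oplus L$ have matching invariants by hypothesis. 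Your identification of the discriminant-form computation as the crux of case (2) is also accurate: for $n\equiv 0\pmod 8$ one checks, using the glue vectors $\alpha=\tfrac{1}{2}(1,\dots,1)$ and $\xi=(1,0,\dots,0)$, that $q(\alpha)=n/4\equiv 0$ and setting $\beta=\xi+\alpha$ gives $q(\beta)=0$, $b(\alpha,\beta)=1/2$, so $q_{D_n}\cong u\cong q_{U(2)}$, while for $n\equiv 4\pmod 8$ one gets $v$ instead, consistent with case (2) occurring only when $N\equiv 2\pmod 8$. No gaps.
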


Now we restrict to the case $3\le N\le 20$. In order to describe the image of $\alpha_N$ we must classify the lattices appearing in~\Ref{crl}{isoexe} for $N$ in the chosen range. 
As is well-known, there is only one even, negative definite, unimodular lattice of rank $8$ up to isomorphism, namely  $E_8$, and there are two isomorphism classes of even, negative definite, unimodular lattices of rank $16$, namely $E_8\oplus E_8$, and the unique (up to isomorphism) unimodular overlattice of $D_{16}$, call it $D_{16}^{+}$.  

Before classifying lattices in the genus of $D_n$ for $1\le n\le 18$, we introduce a piece of notation. Given a negative definite lattice $L$, the \emph{root lattice} of $L$ is the  sublattice  $R(L)\subset L$ generated (over $\ZZ$) by \emph{roots} of $L$, i.e.~vectors $v\in L$ such that $v^2=-2$. 
\begin{theorem}\label{thm:classifydn}
Let $1\le n\le 18$. The isomorphism class of a lattice $M$ in the genus  of $D_n$ is determined
 by the isomorphism class of its root sublattice $R(M)$.  
The root sublattices that one gets are the following:
\begin{enumerate}
\item
$n=1$: $\es$.
\item
$2\le n\le 8$: $D_n$.
\item 
$n=9$: $D_9$, $E_8$.
\item 
$10\le n \le 12$: $D_n$, $D_{n-8}\oplus E_8$.
\item 
$n=13$: $D_{13}$, $D_{5}\oplus E_8$, $D_{12}$.
\item 
$n=14$: $D_{14}$, $D_{6}\oplus E_8$, $D_{12}\oplus D_{2}$.
\item 
$n=15$: $D_{15}$, $D_7\oplus E_8$, $D_{12}\oplus D_3$,  $A_{15}$, $(E_7)^2$.
\item $n=16$: $D_{16}$, $D_8\oplus E_8$, $D_{12}\oplus D_4$, $D_2\oplus (E_7)^2$, $(D_8)^2$, $A_{15}$.
\item 
$n=17$: $D_{17}$, $D_9\oplus E_8$,  $D_{12}\oplus D_5$, $D_3\oplus(E_7)^2$, $A_{15}\oplus D_2$,  , $A_{11}\oplus E_6$,  $(D_8)^2$, $D_{16}$, $(E_8)^2$.
\item
 $n=18$: $D_{18}$, $D_{10}\oplus E_8$,  $D_{12}\oplus D_6$, $D_4\oplus (E_7)^2$, $A_{15}\oplus D_3$, $(D_8)^2\oplus D_2$, $D_{16}\oplus D_2$, 
 $D_2\oplus (E_8)^2$, $A_{11}\oplus E_6$, $(D_6)^3$, $(A_9)^2$, $D_{10}\oplus E_7\oplus A_1$, $A_{17}\oplus A_1$.
\end{enumerate}
Moreover, if $R(M)$ has rank strictly smaller than $M$ (and hence $\rk R(M)=\rk M-1$, by the above list), then $M$ is an overlattice of $R(M)\oplus D_1$. 
\end{theorem}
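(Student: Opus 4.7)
The plan is to reduce the classification to standard facts about root lattices, Nikulin's theory of discriminant forms, and the Niemeier classification of even negative-definite unimodular lattices of rank $24$. Throughout, a lattice $M$ in the genus of $D_n$ is characterized by having signature $(0,n)$ and discriminant form $(A_M, q_M) \cong (A_{D_n}, q_{D_n})$; recall that $A_{D_n}$ is cyclic of order $4$ when $n$ is odd and is $(\ZZ/2\ZZ)^2$ when $n$ is even, with $q_{D_n}$ an explicit quadratic form in each case.

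For the uniqueness half of the theorem---that $R(M)$ determines $M$---I would proceed as follows. When $\rk R(M) = n$, the lattice $M$ is a finite-index overlattice of $R := R(M)$ corresponding, by Nikulin's theory of overlattices, to an isotropic subgroup $I \subset A_R$ with $I^\perp/I \cong (A_{D_n}, q_{D_n})$. For each candidate root lattice appearing in the statement, the isotropic subgroups of $A_R$ realizing $q_{D_n}$ can be enumerated directly from the standard discriminant forms of $A_m$, $D_m$, $E_6$, $E_7$, $E_8$, and in every case one verifies that they form a single orbit under $\Aut(R)$. When $\rk R(M) = n - 1$, the same analysis applies with $R$ replaced by $R \oplus D_1$, which simultaneously proves the final sentence of the theorem.

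For existence and enumeration, I would split into two regimes. For $n \le 8$, the constraint $q_M = q_{D_n}$ together with the smallness of the rank forces the root sublattice to have full rank $n$; combined with the classification of negative-definite root lattices of rank at most $8$ and the uniqueness of overlattices above, only $D_n$ itself occurs. For $9 \le n \le 18$, the strategy is to invoke Nikulin's theorem on primitive embeddings, which guarantees that $M$ embeds primitively in some Niemeier lattice $N$, with $M^\perp \subset N$ lying in the genus of $D_{24-n}$. The root system $R(M) \oplus R(M^\perp)$ then embeds into the root system of $N$, so that running through Niemeier's $24$ lattices (classified by their root systems) and computing the possible orthogonal complements recovers the tables in items (3)--(10).

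The main obstacle is the combinatorial book-keeping for $n = 17$ and $n = 18$, where the root sublattice can decompose into several components in many distinct ways (nine and thirteen respectively). Here the argument follows the template of Scattone~\cite{scattone} for $n = 17$ (relevant to quartic $K3$ surfaces) and of Camere~\cite{camere} for $n = 18$ (relevant to double EPW sextics); what remains is a systematic verification that every candidate root lattice on the list is realized by exactly one $O(N)$-orbit of primitive sublattices in some Niemeier lattice, and that no other root system is compatible with the discriminant constraint $q_{D_n}$. The uniqueness within each root-system type is then inherited from the orbit analysis in the second paragraph.
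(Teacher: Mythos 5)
Your proposal follows essentially the same route as the paper. The paper does not spell out a proof; it defers to Scattone, Camere, and Nishiyama and states (in the remark following the theorem) that the method is to study primitive embeddings of $D_{24-n}$ into Niemeier lattices, with Table~\ref{tabledkembed} recording exactly the information your third paragraph describes (which Niemeier root systems admit a saturated copy of each $D_k$, and from which values of $N$ onward this is relevant). Your uniqueness step — enumerating isotropic subgroups of $A_R$ realizing $q_{D_n}$ and checking they form one $\Aut(R)$-orbit — is implicitly part of the same bookkeeping, since by Nikulin's theory the orbits of isotropic overlattice data of a fixed root system $R$ correspond to the orbits of primitive embeddings of the orthogonal complement into a Niemeier lattice. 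So there is no genuine difference in strategy; your write-up just makes explicit what the paper delegates to the references. One small caveat: in the uniqueness step you must also rule out overlattices $M_I \supset R$ that acquire \emph{extra} roots, i.e.\ check $R(M_I) = R$ for each admissible $I$; this is a routine but nontrivial verification that your sketch glosses over, and similarly the assertion that for $n\le 8$ the root sublattice is automatically of full rank deserves an argument (the cleanest is the standard fact that the genus of $D_n$ contains a single class for $n\le 8$, which one can deduce from the mass formula or by embedding into $E_8$).
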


\begin{remark}
The most involved cases, namely $n=17$ and $18$, were previously studied by Scattone \cite[\S6.3]{scattone} and Camere \cite[Prop. 6.21]{camere} (based on Nishiyama \cite[Thm. 3.1]{nishiyama}). The main method (occurring already in \cite{scattone}) for the proof of theorem is to study the embeddings of the $D_k$ into the unimodular rank $24$ lattices (the Niemeier lattices); the essential information is contained in Table \ref{tabledkembed} below.
\end{remark}

\begin{table}[htb!]
\renewcommand{\arraystretch}{1.60}
\begin{tabular}{|c|c|c|l|}
\hline
$R(\text{Niemeier})$& Max Sat. $D_k$& Non-Sat. $D_k$&Min $N$\\
\hline\hline
$D_{24}$ & $D_{23}$ & $D_{24}$& 3 \\
\hline
$\underline{D_{16}}\oplus E_8$ & $D_{15}$ & $D_{16}$ &10 \\
\hline
$(D_{12})^2$ & $D_{12}$ & -- &14 \\
$\underline{D_{10}}\oplus  (E_7)^2$ & $D_{10}$ & -- & 16 \\
$A_{15}\oplus  D_9$ & $D_{9}$ &  -- &17 \\
\hline
$(D_{8})^3$ & $D_{8}$ &  -- &18 \\
$D_{16}\oplus \underline{E_8}$ & $D_{7}$ &  $D_{8}$ & 19 \\
$(E_{8})^3$ & $D_{7}$ & $D_{8}$ & 19 \\
\hline
$A_{11}\oplus  D_7\oplus  E_6$ & $D_{7}$ & -- & 19 \\
$(D_{6})^4$ & $D_{6}$ &  -- &20 \\
$D_6\oplus (A_9)^2$ & $D_{6}$ &  -- & 20 \\
$D_{10}\oplus  \underline{(E_7)^2}$ & $D_{6}$ & -- & 20 \\
$A_{17}\oplus  E_7$ & $D_{6}$ &  -- & 20\\
\hline
\end{tabular}
\vspace{0.2cm}
\caption{Embeddings of $D_k$ ($k\ge 6$) into Niemeier lattices}\label{tabledkembed}
\end{table}

 \begin{remark}
 Many of the lattices in the genus of $D_n$  listed above are obtained as an index $2$ overlattice $M$ of $D_{a}\oplus D_{b}$ (with $a+b=n$). We recall that such an $M$ is determined by an order $2$ isotropic subgroup $H$ of $A_{D_a}\oplus A_{D_b}$ (such that $H^\perp/H\cong A_{D_n}$). We can take  $\alpha_{a}= (\frac{1}{2},\frac{1}{2},\dots, \frac{1}{2})$, $\xi_a=(1,0,\dots,0)$, and $\beta_{a}= (-\frac{1}{2},\frac{1}{2},\dots, \frac{1}{2})$ to be the non-zero elements of $A_{D_a}=(D_a)^*/D_a$ (and similarly for $D_b$).  The choice $H=\langle (\xi_a,\xi_b)\rangle\subset A_{D_a}\oplus A_{D_b}$ is always possible and leads to $D_{n(=a+b)}$. For special choices of $a,b$, there are additional choices for $H$, leading to different lattices in the genus of $D_n$. For instance, if $a=b=8$, we can take $H=\langle (\alpha_a,\alpha_b)\rangle\cong \bZ/2$ which corresponds to adjoining to $D_8\oplus D_8$ the norm $-4$ vector $\frac{1}{2}((1,\dots,1),(1,\dots,1))$.  Similarly, if $a=12$, $b\ge 1$, we can take $H=\langle (\alpha_{12},\xi_b)\rangle$ which corresponds to adjoining to $D_{12}\oplus D_b$ the norm $-4$ vector $\frac{1}{2}((1,\dots, 1), (2,0,\dots,0))$. 
 \end{remark}
In describing the fibers of   $\alpha_N$, we will appeal to the following observation.
\begin{remark}\label{rmk:fibercard}
Let $X$ be  a set, and  $G$   a group acting (on the left) on $X$. Let $H<G$ be a finite-index subgroup, and $\pi\colon H\backslash X\to G\backslash X$  the natural map. Then the  fiber of $\pi$ over the $G$-orbit $[x]_G$ has cardinality given by
\begin{equation*}
|\pi^{-1}([x]_G )|=\frac{[G:H]}{[\Stab_G(x):\Stab_H(x)]},
\end{equation*}
where $\Stab_G(x)$ and $\Stab_H(x)$ are the stabilizers of $x$ in $G$ and $H$ respectively. 
\end{remark}
We will also need the following elementary result.  
\begin{lemma}\label{lmm:minilemma}
\begin{enumerate}
\item
The group  $O(U\oplus U)$ acts transitively on $\cI_2(U\oplus U)$. 
Moreover, given $J\in \cI_2(U\oplus U)$,  there exists $\varphi\in(O(U\oplus U)\setminus O^{+}(U\oplus U))$ which maps $J$ to itself.
\item
The group  $O(U\oplus U(2))$ acts transitively on the set of $J\in\cI_2(U\oplus U(2))$ such that the map $(U\oplus U(2))\to\Hom(J,\ZZ)$ is not surjective. Moreover, given $J$ as above,  there exists $\varphi\in(O(U\oplus U)\setminus O^{+}(U\oplus U))$ which maps $J$ to itself. 
\end{enumerate}
\end{lemma}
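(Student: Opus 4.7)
The plan is to treat both statements via a single template: given the isotropic sublattice $J$, I will produce a distinguished orthogonal decomposition of the ambient lattice into two rank-$2$ hyperbolic summands aligned with $J$. Transitivity of the orthogonal group on the relevant set of $J$'s then follows by matching one such decomposition to another, while the orientation-reversing involution stabilizing $J$ is supplied by $-\id$ acting on a single summand.

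For Item~(1), fix $J\in\cI_2(U\oplus U)$. Because $U\oplus U$ is unimodular and $J$ is saturated, the adjoint map $U\oplus U\to\Hom(J,\ZZ)$ is surjective, so given a $\ZZ$-basis $e_1,e_2$ of $J$ there exist $f_1',f_2'\in U\oplus U$ with $(e_i,f_j')=\delta_{ij}$. Since $U\oplus U$ is even, each $(f_i')^2$ is an even integer; replacing $f_i'$ by $f_i:=f_i'-\tfrac{1}{2}(f_i')^2\,e_i$ achieves $(f_i,f_i)=0$ without disturbing the dual-basis property, and a further adjustment of one of the $f_i$ by an integer multiple of an $e_j$ arranges $(f_1,f_2)=0$. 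The resulting basis exhibits an orthogonal splitting $U\oplus U=\langle e_1,f_1\rangle\oplus\langle e_2,f_2\rangle$ with each summand a copy of $U$ and $J=\langle e_1,e_2\rangle$. Given two such $J,J'$, the linear map matching the two standardized bases is an isometry in $O(U\oplus U)$ carrying $J$ to $J'$, which proves transitivity. For the second assertion, let $\varphi$ act as $-\id$ on $\langle e_1,f_1\rangle$ and as $+\id$ on $\langle e_2,f_2\rangle$; then $\varphi(J)=J$, and on the positive-definite plane $P=\Span_{\RR}(e_1+f_1,e_2+f_2)$ the matrix of $\varphi|_P$ is $\diag(-1,+1)$, so $\varphi$ reverses the orientation of positive-definite $2$-planes and hence lies in $O(U\oplus U)\setminus O^+(U\oplus U)$.

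For Item~(2), the same template applies, with one modification at the normalization step. The hypothesis that the image of $U\oplus U(2)\to\Hom(J,\ZZ)$ has index $2$ means that, by a $\GL_2(\ZZ)$ change of basis of $J$ (i.e.~Smith normal form of the inclusion), one may choose $e_1,e_2\in J$ so that the image is exactly $\ZZ\oplus 2\ZZ$; equivalently $e_1$ has divisibility $1$ and $e_2$ has divisibility $2$ in $U\oplus U(2)$. Then there exist $f_1,f_2\in U\oplus U(2)$ with $(e_1,f_1)=1$, $(e_2,f_1)=0$, $(e_1,f_2)=0$, $(e_2,f_2)=2$, and the same even-lattice corrections force $(f_1,f_1)=(f_2,f_2)=(f_1,f_2)=0$. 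This produces the splitting $U\oplus U(2)=\langle e_1,f_1\rangle\oplus\langle e_2,f_2\rangle$ with summands $\cong U$ and $\cong U(2)$ respectively and $J=\langle e_1,e_2\rangle$. Transitivity and the construction of the orientation-reversing involution (again $-\id$ on the first summand, $+\id$ on the second) proceed exactly as in Item~(1).

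The main obstacle is purely bookkeeping: one must check that all the integer corrections invoked when normalizing the $f_i$ actually lie in $\ZZ$, which uses evenness of the lattice and, for Item~(2), the precise divisibility pattern $(1,2)$ extracted from the index-$2$ hypothesis. The orientation analysis is basis-independent, since $O^+$ is defined as the stabilizer of a connected component of the space of oriented positive-definite $2$-planes; once one such plane is handled, all are. Once this is in place, Item~(2) is essentially a rerun of Item~(1) with the second $U$-summand replaced by $U(2)$.
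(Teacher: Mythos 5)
Your treatment of Item~(1) is correct: the two normalization steps are legitimate, the resulting hyperbolic basis gives transitivity, and the involution $-\id\oplus\id$ is visibly orientation-reversing on a positive-definite $2$-plane while preserving $J$.

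In Item~(2), however, the normalization of $f_2$ has a genuine gap. You invoke ``the same even-lattice corrections,'' i.e.\ $f_2:=f_2'-\tfrac12(f_2')^2\,e_2$, but since $(e_2,f_2')=2$ rather than $1$, this produces $(f_2)^2=(f_2')^2-(f_2')^2\cdot(e_2,f_2')=-(f_2')^2$, not $0$. To make $(f_2)^2$ vanish you must instead subtract $\tfrac{(f_2')^2}{4}e_2$, and this lies in the lattice only when $(f_2')^2\equiv 0\pmod 4$. Evenness alone does not give this, and you cannot escape by re-choosing $f_2'$: the vectors with the prescribed pairings against $e_1,e_2$ form a coset of $J^\perp$, and $J^\perp=J$ here (rank-$2$ saturated isotropic in a rank-$4$ lattice), so the only available corrections are by $\alpha e_1+\beta e_2$, which shift $(f_2')^2$ by $4\beta$. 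Hence $(f_2')^2\bmod 4$ is an invariant of $J$, and a separate argument is needed to show it vanishes before your normalization can proceed. A clean repair that stays within your template is to split in two stages: after normalizing $f_1$, the sublattice $\langle e_1,f_1\rangle\cong U$ is unimodular, so $U\oplus U(2)=\langle e_1,f_1\rangle\oplus K$ orthogonally, with $K$ carrying the full discriminant form of $U\oplus U(2)$; this pins down $K\cong U(2)$. Replacing $e_2$ by $e_2-(e_2,f_1)e_1$ fixes $J$, keeps $e_2$ primitive isotropic of divisibility $2$, and places it inside $K$, where any primitive isotropic vector of $U(2)$ completes to a standard hyperbolic pair $(e_2,f_2)$ with $(e_2,f_2)=2$, $(f_2)^2=0$, and $(f_2,e_1)=(f_2,f_1)=0$. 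From there your transitivity and orientation-reversal arguments run unchanged. (One further word is also due on the Smith-normal-form step: the cokernel of $U\oplus U(2)\to\Hom(J,\ZZ)$ is a quotient of the discriminant group $(\ZZ/2)^2$, so when nonzero its index is $2$ or $4$; a short primitivity computation rules out index $4$, and only then is your reduction to the divisibility pattern $(1,2)$ justified.)
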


\begin{proposition}
Let $3\le N$. If $N\not\equiv 6\pmod{8}$, the map $\alpha_N$ in~\eqref{mappalfa} is injective. The map  $\alpha_{6}$ is injective as well. The fiber of $\alpha_{14}$ over the isomorphism class of $D_{12}$ has cardinality $3$, while the fiber over the isomorphism class  of $D_{4}\oplus E_8$ is a  singleton. 
\end{proposition}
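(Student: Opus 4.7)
My plan is to factor $\alpha_N$ through a tower of coarser orbit quotients
\begin{equation*}
\Gamma_{\xi_N}\backslash\cI_2(\Lambda_N) \xrightarrow{\pi_1} O^+(\Lambda_N)\backslash\cI_2(\Lambda_N) \xrightarrow{\pi_2} O(\Lambda_N)\backslash\cI_2(\Lambda_N) \xrightarrow{\pi_3} \{\text{lattices}\}/\mathrm{isom},
\end{equation*}
and analyze each piece. The map $\pi_3$ is injective: given $J_1,J_2\in\cI_2(\Lambda_N)$ with $L_i:=J_i^\perp/J_i$ isomorphic, \Ref{prp}{isoexe} provides splittings $\Lambda_N=H_i\oplus L_i$ of matching type (the type is determined by $L$, since genus-$D_{N-2}$ lattices and unimodular ones lie in different genera), and an isometry combining $H_1\simeq H_2$ with $L_1\simeq L_2$ carries the splittings onto each other; by \Ref{lmm}{minilemma}(1) (or (2), in the type~(2) case) one may further precompose with an element of $O(H_1)$ moving $J_1$ to the preimage of $J_2$. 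The map $\pi_2$ is a bijection: by \Ref{lmm}{minilemma} each $J$ admits an orientation-reversing stabilizing isometry $\varphi\in O(H)\setminus O^+(H)$, which extends by $\id_L$ to an element of $O(\Lambda_N)\setminus O^+(\Lambda_N)$, and \Ref{rmk}{fibercard} then gives $|\pi_2^{-1}([J])|=2/2=1$. Hence for $N\not\equiv 6\pmod 8$, where $\Gamma_{\xi_N}=O^+(\Lambda_N)$, the composition $\pi_3\circ\pi_2\circ\pi_1$ is injective, so $\alpha_N$ is injective.

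For $N\equiv 6\pmod 8$ the subgroup $\Gamma_{\xi_N}\subset O^+(\Lambda_N)$ is the stabilizer of a distinguished nonzero $2$-torsion class $\xi_N$ in the discriminant group $A_{\Lambda_N}\cong A_L$ (characterized, following~\cite{log1}, as the hyperelliptic class, i.e.~the image in the discriminant of a vector $w\in\Lambda_N$ with $w^2=-4$ and $\divisore(w)=2$), and has index $3$. Applying \Ref{rmk}{fibercard} to $\pi_1$, the fiber of $\alpha_N$ over $[L]$ has cardinality
\begin{equation*}
\frac{3}{|\text{orbit of }\xi_N\text{ under the image of }\Stab_{O^+(\Lambda_N)}(J)\text{ in }\Aut(A_L,q)|}.
\end{equation*}
Every $g\in\Stab_{O^+}(J)$ descends to an isometry of $L$, and conversely every $h\in O(L)$ extends by the identity on $H$, so this image equals the image of $O(L)\to\Aut(A_L,q)$. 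A direct computation on the generators $\alpha=(\tfrac12,\dots,\tfrac12)$ and $\xi=e_1$ of $A_{D_{N-2}}$ shows that for $N-2\equiv 4\pmod 8$ all three nonzero classes have the same value of $q$ modulo $2\bZ$, so $\Aut(A_L,q)\cong S_3$ acts as the full symmetric group on the three nonzero classes, and the orbit of $\xi_N$ has size either $1$ or $3$.

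The computation then reduces to identifying the image of $O(L)\to S_3$ case by case. For $N=6$ the only lattice is $L=D_4$ and $O(D_4)$ realizes the full triality $S_3$, so the orbit of $\xi_6$ has size $3$ and the fiber is a singleton. For $N=14$, $L=D_4\oplus E_8$, the $O(D_4)$ factor again realizes triality, so the same conclusion holds. The critical case is $L=D_{12}$: since triality exists only for $D_4$, the image of $O(D_{12})$ in $\Aut(A_{D_{12}},q)=S_3$ is only the order-$2$ subgroup, namely the one interchanging the two spinor classes $\alpha$ and $\alpha+\xi$ while fixing the vector class $\xi=e_1$. The main obstacle is to pin down $\xi_{14}$ as precisely this vector class; I would do so by exhibiting an explicit hyperelliptic vector $w=2e_i\in D_{12}\subset\Lambda_{14}$ (which has $w^2=-4$ and divisibility $2$ in $\Lambda_{14}$) and observing that $w/2\equiv e_i\bmod D_{12}$ is the vector class of $A_{D_{12}}$. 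Granted this identification, the $\bZ/2$-image of $O(D_{12})$ fixes $\xi_{14}$, the orbit has size~$1$, and the fiber has cardinality~$3$, completing the proof.
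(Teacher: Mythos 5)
Your factorization of $\alpha_N$ through $\pi_1,\pi_2,\pi_3$ is the natural route, and your treatment of $\pi_3$ (via \Ref{prp}{isoexe} and \Ref{lmm}{minilemma}) and $\pi_2$ (via \Ref{rmk}{fibercard} with $H=O^+(\Lambda_N)\trianglelefteq O(\Lambda_N)$) is sound. The difficulty is the application of \Ref{rmk}{fibercard} to $\pi_1$. The formula $|\pi^{-1}([x]_G)|=[G:H]/[\Stab_G(x):\Stab_H(x)]$ holds when $H\trianglelefteq G$ (equivalently, when all stabilizers $H\cap g\Stab_G(x)g^{-1}$ have the same order), but not in general: take $G=S_3$ acting on $\{1,2,3\}$ and $H=\langle(12)\rangle$, where the fiber has $2$ elements while the formula gives $3/2$. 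For $\pi_1$ you take $H=\Gamma_{\xi_N}$, the preimage in $O^+(\Lambda_N)$ of $\Stab_{S_3}(\xi_N)\cong\ZZ/2<S_3\cong\Aut(A_{\Lambda_N},q)$. That $\ZZ/2$ is not normal in $S_3$, hence $\Gamma_{\xi_N}$ is not normal in $O^+(\Lambda_N)$, and the remark does not apply.

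The quantity you actually want is the double-coset count $|\Gamma_{\xi_N}\backslash O^+(\Lambda_N)/\Stab_{O^+(\Lambda_N)}(J)|$. Since $O^+(\Lambda_N)\to\Aut(A_{\Lambda_N})$ is surjective and $\Gamma_{\xi_N}$ contains its kernel, this equals $|\Stab_{S_3}(\xi_N)\backslash S_3/P|$, i.e.\ the number of $P$-orbits on the three nonzero classes of $A_L$, where $P$ is the image of $\Stab_{O^+(\Lambda_N)}(J)$ in $\Aut(A_L)$ --- which you correctly identify with the image of $O(L)\to\Aut(A_L)$. For $N=6$ and for the $D_4\oplus E_8$ fiber of $\alpha_{14}$ triality gives $P=S_3$, so there is one orbit and both your formula and the double-coset count give $1$. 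But for $L=D_{12}$ you yourself note $P\cong\ZZ/2$ (the $\alpha\leftrightarrow\bar\alpha$ swap); its orbits on the three classes are $\{\xi\}$ and $\{\alpha,\bar\alpha\}$, i.e.\ two orbits, not three. The passage from ``the $P$-orbit of $\xi_{14}$ is a singleton'' to ``the fiber has cardinality $3/1=3$'' is exactly the point where the misapplied remark enters, and the double-coset computation gives $2$ rather than $3$. You should either supply a justification of the formula in this non-normal setting that I am not seeing, or flag the apparent inconsistency with the stated cardinality.
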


 We conclude:
 \begin{theorem}\label{thm:bbtype2comp}
 Let $3\le N\le 20$. The number of Type II  components of $\sF(N)$ is given in ~\Ref{table}{tabletype2}. More precisely,
 \begin{enumerate}
 \item 
 For $N\not \equiv 2 \pmod 8$,  Type II  components are uniquely labeled by the lattices in the genus of $D_{N-2}$ (as listed in~\Ref{thm}{classifydn}), with the exception of $N=14$, in this case  there are three Type II components labeled by $D_{12}$.  The Type II components are pairwise disjoint, but their closures  all meet in a single point, corresponding to the unique Type III boundary point (cf.~\Ref{prp}{classifyType3}). 
 \item
  If $N\equiv 2 \pmod 8$, there are two kinds of Type II components, those mapped by $\alpha_N$ to the isomorphism class of a lattice in the genus of $D_{N-2}$, and those which are mapped 
   by $\alpha_N$ to the isomorphism class of an even negative definite  unimodular lattice of rank $N-2$; in both cases the isomorphism class of the corresponding lattice uniquely determined the boundary component.  
The Type II components are pairwise disjoint, but    the closure of each component of the first kind contains the Type III point  of  class a  (see~\Ref{dfn}{treatreb}) and not the Type III component of class b, while 
  the closure of each Type II component of the second kind contains both Type III boundary points. 
 \end{enumerate}
 \end{theorem}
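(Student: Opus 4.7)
The proof decomposes naturally into three pieces: (i) a bijective parametrization of Type II components, (ii) verification of the entries in \Ref{table}{tabletype2}, and (iii) the incidence analysis with Type III points. All three will follow by combining the lattice classification of \Ref{thm}{classifydn} with the fiber structure of $\alpha_N$ stated in the proposition immediately preceding the theorem, together with the structural description of $\cI_2(\Lambda_N)$ given in \Ref{prp}{isoexe}.

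For (i) and (ii) the plan is as follows. The Type II components of $\sF(N)^{*}$ are by definition in bijection with $\Gamma_{\xi_N}\backslash \cI_2(\Lambda_N)$, and by \Ref{crl}{isoexe} the image of $\alpha_N$ consists of all isomorphism classes of lattices in the genus of $D_{N-2}$, together with all even negative definite unimodular lattices of rank $N-2$ precisely when $N\equiv 2\pmod 8$. I would first enumerate these using \Ref{thm}{classifydn} on the $D_{N-2}$-genus side, plus the standard fact that the only rank-$8$ even negative definite unimodular lattice is $E_8$, and that there are exactly two of rank $16$, namely $(E_8)^2$ and $D_{16}^{+}$. For $N\notin\{6,14\}$ the preceding proposition asserts that $\alpha_N$ is injective, so the component count equals the isomorphism-class count. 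For $N=14$ one adjusts by the cited fiber cardinalities ($|\alpha_{14}^{-1}(D_{12})|=3$, singleton over $D_4\oplus E_8$), producing $3+1=4$ components. A direct tabulation for $3\le N\le 20$ then recovers exactly the numbers in \Ref{table}{tabletype2}.

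For (iii), recall from \Ref{subsec}{ingenerale} that a Type III point $[L]\in\Gamma_{\xi_N}\backslash\cI_1(\Lambda_N)$ lies in the closure of the Type II component labelled by $[J]\in\Gamma_{\xi_N}\backslash\cI_2(\Lambda_N)$ iff (after $\Gamma_{\xi_N}$-conjugation) $L\subset J$. When $N\not\equiv 2\pmod 8$ there is a unique Type III point by \Ref{prp}{classifyType3}, and since every $J\in\cI_2(\Lambda_N)$ contains rank-$1$ isotropic saturated subgroups, its closure must contain this unique point. When $N\equiv 2\pmod 8$ I would exploit the dichotomy of \Ref{prp}{isoexe}: if $J$ is of the first kind then $J$ is contained in a direct summand $H\cong U\oplus U$, so for any primitive isotropic $L\subset J$ one has $(L,\Lambda_N)=(L,H)=\bZ$, i.e.\ class a, so the class-b Type III point is \emph{not} in the closure; whereas if $J$ is of the second kind, it is contained in a direct summand $H\cong U\oplus U(2)$ whose two $U$- and $U(2)$-components contribute primitive isotropic vectors of divisibility $1$ and $2$ respectively, so both classes of Type III points appear in the closure. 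This matches item~(2) of the theorem exactly.

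The pairwise disjointness of the open Type II strata is immediate from the bijection with $\Gamma_{\xi_N}$-orbits. The delicate point of the argument is the non-injectivity case $N=14$: the hardest step will be to verify that the three components labelled $D_{12}$ are genuinely distinct $\Gamma_{\xi_N}$-orbits. I expect to obtain this by applying \Ref{rmk}{fibercard} to the natural map from $\Gamma_{\xi_{14}}$-orbits to $O(\Lambda_{14})$-orbits, using the index $[O^{+}(\Lambda_{14}):\Gamma_{\xi_{14}}]=3$ together with the stabilizer information in \Ref{lmm}{minilemma} to force the fiber to split into three orbits rather than coalesce. Once this is in place, everything else is bookkeeping against \Ref{thm}{classifydn}.
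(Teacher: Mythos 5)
Your argument is correct and follows exactly the route the paper intends: Type II components are $\Gamma_{\xi_N}$-orbits of rank-$2$ saturated isotropics, counted via Corollary~\ref{crl:isoexe}, Theorem~\ref{thm:classifydn}, and the injectivity/fiber-size proposition (with $N=14$ as the only non-injective case in range), and the incidence with Type III points is read off from the standard Baily-Borel closure relation $L\subset J$ combined with the $U\oplus U$ versus $U\oplus U(2)$ dichotomy of Proposition~\ref{prp:isoexe}, since divisibility of primitive isotropics in $J$ is $1$ in the first case and takes both values $1,2$ in the second. The only small inaccuracy is attributing the closure criterion ``$[L]\in\overline{F(J)}$ iff $L\subset J$ after conjugation'' to \Ref{subsec}{ingenerale} --- that subsection only records the bijection with $\Gamma\backslash\cI_k$, not the incidence rule, which is a background fact from Baily-Borel theory that you are correctly invoking; and your planned rederivation of the $\alpha_{14}$ fiber sizes via \Ref{rmk}{fibercard} and \Ref{lmm}{minilemma} is a welcome check but not strictly needed, as the preceding proposition already asserts it.
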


Diagram~\eqref{bbpicture}  illustrates the structure of the Baily--Borel boundary for  $N\in \{9,10,11\}$. Type II and Type III boundary components are represented by $\circ$ and $\bullet$ respectively. We recall that a Type III component in the Baily--Borel compactification is just a point, while  Type II components are modular curves $G\backslash\mathfrak h$, where $\mathfrak h$ is the Siegel upper half-space and $G< \SL(2,\QQ)$ is a 
 subgroup commensurable to $\SL(2,\bZ)$  (typically $G=\SL(2,\bZ)$, e.g. this is the case for all Type II components for quartic $K3$s ($N=19$) cf. Scattone  \cite[Fig. 5.5.7, on p. 70]{scattone}). The incidence between Type II and Type III (meaning that a Type III component is contained in the closure of a Type II component) is illustrated by a line. 
\begin{equation}\label{bbpicture}
\xymatrix{
N=9: & &&\bullet^{III_a}\ar@{-}[r]&\circ^{II(D_7)} \\
N=10: & \bullet^{III_b}\ar@{-}[r]&\circ^{II(E_8)}\ar@{-}[r]&\bullet^{III_a}\ar@{-}[r]&\circ^{II(D_8)}\\
 N=11: &&\circ^{II(E_8\oplus D_1)}\ar@{-}[r]&\bullet^{III_a}\ar@{-}[r]&\circ^{II(D_9)}
}
\end{equation}
The appendices $a$ and $b$ for Type III components refer to the notation introduced in~\Ref{dfn}{treatreb}. 

\begin{remark}
Let $f_{N+1}^{*}\colon\sF(N)^{*}\to\sF(N+1)^{*}$ be the map in~\eqref{effennestar}. Then $f_{N+1}^{*}$ maps a Type II boundary component of $\sF(N)^{*}$ to a Type II boundary component of $\sF(N+1)^{*}$, and the inverse image by $f_{N+1}^{*}$ of a Type II boundary component  of $\sF(N+1)^{*}$ is a union of Type II boundary components  of $\sF(N)^{*}$. The corresponding map between Type II boundary components is related to inclusion relations among  the lattices  listed in~\Ref{thm}{classifydn} for $n\in\{N-2,N-1\}$ (we must include also the lattice $E_8$ if $N\in\{9,10\}$, and the lattices $E_8\oplus E_8$, $D_{16}^{+}$  if $N\in\{18,19\}$).
\end{remark}

 \bibliography{refk3}
 \end{document}